\definecolor{darkblue}{rgb}{0.0, 0.0, 0.55}
\newcommand\N{\mathbb N}
\newcommand\R{\mathbb R}
\newcommand\C{\mathbb C}
\newcommand\ii{\mathbbm i}
\newcommand\al\alpha
\newcommand\be\beta
\newcommand\ga\gamma
\newcommand\la\lambda
\newcommand\de\delta
\newcommand\ep\varepsilon
\newcommand\ph\varphi
\newcommand\si\sigma
\newcommand\ta\tau
\newcommand\rh\varrho
\newcommand\io\iota
\newcommand\La\Lambda
\DeclareMathOperator\trunc{trunc}
\DeclareMathOperator\tr{tr}
\DeclareMathOperator\hur{hur}
\DeclareMathOperator\re{Re}
\theoremstyle{definition}
\newtheorem{thm}{Theorem}[section]
\newtheorem{cor}[thm]{Corollary}
\newtheorem{pro}[thm]{Proposition}
\newtheorem{rem}[thm]{Remark}
\newtheorem{ex}[thm]{Example}
\newtheorem{lem}[thm]{Lemma}
\newtheorem{df}[thm]{Definition}
\newtheorem{conjecture}[thm]{Conjecture}
\newtheorem{problem}[thm]{Problem}
\title{Spectrahedral relaxations of hyperbolicity cones}
\author[M. Schweighofer]{Markus Schweighofer}
\address{Fachbereich Mathematik und Statistik, Universität Konstanz, 78457 Konstanz, Germany}
\email{markus.schweighofer@uni-konstanz.de}
\subjclass[2010]{Primary 15A22, 52A20, 52A27; Secondary 12D10, 14M12, 14P10, 90C22, 90C25}
\date{July 25, 2023}
\keywords{hyperbolic polynomial, hyperbolicity cone, hyperbolic programming, real zero polynomial, rigidly convex set, spectrahedron, semidefinite programming, linear matrix inequality, generalized Lax conjecture}
\begin{document}
\begin{abstract}
Let $p$ be a real zero polynomial in $n$ variables. Then $p$ defines a rigidly convex set $C(p)$.
We construct a linear matrix inequality of size $n+1$ in the same $n$ variables that depends only on the cubic part of $p$
and defines a spectrahedron $S(p)$ containing $C(p)$. The proof of the containment uses the characterization of real zero polynomials in
two variables by Helton and Vinnikov. We exhibit many cases where $C(p)=S(p)$.

In terms of optimization theory, we introduce a small semidefinite relaxation of a potentially huge hyperbolic program. If the hyperbolic program is a
linear program, we introduce even a finitely convergent hierachy of semidefinite relaxations.
With some extra work, we discuss the homogeneous setup where real zero polynomials correspond to homogeneous polynomials and
rigidly convex sets correspond to hyperbolicity cones.

The main aim of our construction is to attack the generalized Lax conjecture saying that
 $C(p)$ is always a spectrahedron. We show that the ``weak real zero amalgamation conjecture'' of Sawall and the author would imply the
 following partial result towards the generalized Lax
conjecture: Given finitely many planes in $\R^n$, there is a spectrahedron containing $C(p)$ that coincides with $C(p)$ on each of these planes.
This uses again the result of Helton and Vinnikov.
\end{abstract}

\maketitle

\newpage
\tableofcontents


\section{Instead of an introduction}

I started to write these notes during my sabbatical winter term 2018/2019 but still did not finish them due
to lack of time. The notes are thus still incomplete and probably contain numerous errors. I do not yet know
what is the best way to publish them. All readers are kindly invited to report any errors
(from typographic to fatal), e.g., by electronic mail to:
\[\texttt{markus.schweighofer@uni-konstanz.de}\]
Suggestions and remarks are also highly welcome.
I will try to correct and expand these notes in the future. This version is still far from being suitable for
publication, probably parts of it will be caught up in ongoing projects, also with other collaborators.
This third version of the notes does no longer contain much of the material about real zero amalgamation which
has now become part of the preprint \cite{ss} with David Sawall. That is why the preprint got shorter compared to its second version
[\url{https://arxiv.org/abs/1907.13611v2}] that contains still a version of the ``real zero amalgamation conjecture'' which has been disproved in the meanwhile \cite{ss}.

The structure of the notes or even the title
will change in the future.
If you want to reference some content in the future,
please check for newer or even published versions of the text.
I nevertheless decided to make the notes available on the \texttt{arXiv} preprint server
since I plan to give some talks about the material.

This article lacks a proper introduction. We consider rigidly convex sets in the affine setting and
hyperbolicity cones in the homogeneous setting \cite{vin2}. While we will formally
introduce these notions, the reader who is not yet familiar with them should
first have a look at some survey articles \cite{wag,pem,fis}. We will assume that
the reader knows about the definition and the most basic properties of spectrahedra
\cite{lau}.

An extremely important theorem, the Helton-Vinnikov theorem \cite{hv,vin1,han}
and some weaker versions or predecessors of it \cite{gkvw,pv} (see \cite[§8]{hv} for the complicated history)
show that in dimension two each rigidly convex set is a spectrahedron.
In the homogeneous setting, it shows that in dimension three each hyperbolicity cone is
spectrahedral. To my knowledge,
all existing proofs of this theorem are very deep \cite{hv,vin1,han} and not easily accessible.
Although it complicates a bit our approach, we therefore take care to use whenever possible only
a weaker version of the theorem
(with hermitian instead of symmetric matrices) whose proof is considerably
easier. Perhaps the most elementary proof of it can be found in \cite{gkvw}.

The open question whether, regardless of the dimension,
every rigidly convex set is a spectrahedron, or in the homogeneous setting
whether each hyperbolicity cone is spectrahedral, is now widely known as the
generalized Lax conjecture (GLC). This conjecture that has motivated our work
has first been formulated in \cite[Subsection 6.1]{hv} together with
a stronger conjecture which turned out to be false \cite[Proposition 7]{vin2} as Brändén showed
\cite{bra1}.

There are a number of very interesting partial results towards the generalized
Lax conjecture, see \cite{hv,bk,bra2,ami,kum2,kum3,sau} and
the references therein. Some of the constructions in these references lead to huge linear matrix inequalities
describing hyperbolicity cones \cite{bra2,ami}. In the light of the recent result \cite{rrsw}, the huge size of the matrices is no longer surprising although \cite{rrsw} does says little for these concrete examples. Under the complexity-theoretic assumption $\mathbf{VP}\ne\mathbf{VNP}$ which is an algebraic analog of $\mathbf{P}\ne\mathbf{NP}$, Oliveira
\cite{oli} showed loosely speaking that the hyperbolicity cones Amini's multivariate matching polynomials \cite{ami} cannot in general not
be defined by small linear matrix inequalities.

Our basic idea here is that we produce very small size natural spectrahedral outer approximations
to rigidly convex sets and hyperbolicity cones. To the best of our knowledge, these are new although they
turn out to be more or less known for quadratic real zero and hyperbolic polynomials and they are related
to a construction of \cite{sau} in the case of the ``general'' spectrahedron.
The fact that our constructions yield actually a relaxation seems to be non-trivial and relies on
the Helton-Vinnikov theorem or some weaker versions of it. In view of the lower complexity
bounds proved in \cite{rrsw,oli} for the size of a describing linear matrix inequality, our approach seems
at first sight hopeless since we produce linear matrix inequalities of very small size.

However, there is an important additional twist which allows us to prove a partial result and leaves
room for speculations whether GLC could be attacked in the same way.
The basic idea is to try to apply our construction to a new real zero polynomial in the original variables and
many more new variables. The new real zero polynomial should yield back the original one when all
new variables are set to zero. Due to the big number of new variables, the linear matrix inequality
we now produce has huge size in perfect accordance with the results of \cite{rrsw,oli}.

In \cite{ss}, the authors present the ``real zero amalgamation'' conjecture (as said above, a stronger version has been
conjectured in the last version of this preprint [\url{https://arxiv.org/abs/1907.13611v2}] but has been disproved in \cite{ss}).
It would allow to ``amalgamate'' two real zero polynomials with only two shared variables
that agree whenever you set non-common variables to zero. By ``amalgamating''
we mean to find a third real zero polynomial that yields the first or second polynomial, respectively,
when you set all the non-corresponding variables to zero.

Our main result is that the mentioned amalgamation conjecture would have implications on GLC.
We show that it would imply that each rigidly convex set can be ``wrapped'' into a spectrahedron.
By ``wrapping'' we mean that the rigidly convex set can be packed into a spectrahedron which
is tied to the rigidly convex set with finitely many cords. A bit less vaguely,
the cords are one-dimensional curves lying on the boundary of both, the rigidly convex set and
the spectrahedron. Mathematically speaking, for
each rigidly convex set and finitely given many given planes (two-dimensional subspaces)
there would always exist a spectrahedron containing the rigidly convex set and coinciding with it on the
given planes. In proving this, we use once more the Helton-Vinnikov theorem or a weaker version of it.

The prototype of all rigidly convex sets are polyhedra. Curiously, our work can even be applied to
yield a spectrahedral relaxation of a polyhedron. This makes sense if the polyhedron has a large
number of facets since we produce a small size linear matrix inequality. In this case, we can even
make a hierachy of relaxations out of our construction which converges finitely. Just like in the
Lasserre hierarchy of semidefinite relaxations for polynomial optimization problems \cite{lau},
moment and localization matrices play a big role. Here, we use moment and localization matrices filled
with real numbers to define our relaxation. In the Lasserre hierarchy, the matrices are filled with unknowns
which one hopes to become moments when solving the semidefinite program. Moment matrices play thus a
completely different role here than in the Lasserre hierarchy.

What seems to be strange at first sight is that in general (in the non-LP-case)
our relaxations are based on ``moments'' (actually a kind of pseudo-moments which is different
from the notion of pseudo-moments in the Lasserre context) up to degree three only.
This makes however a lot of sense for people that are acquainted with tensor decomposition methods
like Jennrich's algorithm \cite{har} many of which are also based on degree three moments.

Our ``pseudo-moments'' correspond to real zero polynomials. When the real zero polynomial is a
product of linear polynomials, these are actual moments of a sum of Dirac measures. When the
real zero polynomial has more generally a certain symmetric determinantal representation, we deal
with certain ``tracial moments''. 

Some of our results, like Proposition \ref{rotate} or Proposition \ref{transform} are related to the question
if there is a generalization of certain operations from moments to pseudo-moments. Namely, one
can rotate and shift a point-configuration and the behavior of the
moments of the corresponding sum of Dirac measures is mimicked. This will be another line of future
research whose applications are yet unclear.

Let us say a word why we work mainly with rigidly convex sets before going to hyperbolicity cones
(i.e., we prefer, at least in a first stage, to work in the affine setting rather than in the homogeneous
one): Comparing Definitions \ref{dfpencil} and \ref{hbdfpencil}, it becomes clear that
in the homogeneous setup, there are more technical steps in our construction which makes also
all the proofs a bit more confusing. Moreover, GLC can be perfectly
formulated in the setting of rigidly convex sets. Also, rigidly convex sets of very small dimension
can more easily be visualized. In general, we feel the more natural setup for our method is the
affine one although the homogeneous one is also important and cannot be simply deduced from the
affine one.

\subsection{Numbers}
We always write $\N$ and $\N_0$ for the sets of positive and nonnegative integers, $\R$ and $\C$ for the fields of real and complex numbers, respectively. We denote the complex imaginary unit by $\ii$ so that
the letter $i$ can be used for other purposes such as summations.

\subsection{Polynomials and power series}
Let $K$ be a field. We will always denote by $x=(x_1,\ldots,x_n)$ an $n$-tuple of distinct variables so that
\[K[x]:=K[x_1,\dots,x_n]\qquad\text{and}\qquad K[[x]]:=K[[x_1,\dots,x_n]]\] denote the rings of polynomials and (formal) power series in $n$ variables over $K$,
respectively. The number $n$ of variables will thus often be fixed implicitly although suppressed from the notation. Sometimes, we need
an additional variable $x_0$ so that $K[x_0,x]=K[x_0,\ldots,x_n]$ denotes the ring of polynomials in $n+1$ variables over $K$.
We allow of course the case $n=0$ since it is helpful to avoid case distinctions, for example in inductive 
proofs. We also use the letter $t$ to denote another variable so that $K[t]$ and $K[[t]]$
denote the rings of univariate polynomials and formal power series over $K$, respectively.
For $n\in\N_0$ and $\al\in\N_0^n$, we denote
\[|\al|:=\al_1+\ldots+\al_n\]
and call
\[x^\al:=x_1^{\al_1}\dotsm x_n^{\al_n}\]
a monomial. Hence
\[K[[x]]=\left\{\sum_{\al\in\N_0^n}a_\al x^\al\mid a_\al\in K\right\}.\]
The degree $\deg p$ of a power series $p=\sum_{\al\in\N_0^n}a_\al x^\al$ with all $a_\al\in K$ is defined as a supremum in the (naturally) ordered set
$\{-\infty\}\cup\N_0\cup\{\infty\}$ by
\[\deg p:=\sup\{|\al|\mid a_\al\ne0\}\in\{-\infty\}\cup\N_0\cup\{\infty\}\]
which entails $\deg 0=-\infty$ and
\[K[x]=\{p\in K[[x]]\mid\deg p<\infty\}.\]
We call a polynomial constant, linear, quadratic, cubic and so on if its degree is \emph{less than or equal to} $0$, $1$, $2$, $3$ and so on.

\subsection{Matrices}
For any set $S$, we denote by $S^{m\times n}$ the set of all matrices with $m$ columns and $n$ rows
over $S$. It is helpful to allow the \emph{empty matrix} which is for each $m,n\in\N_0$
the only element of $S^{m\times 0}=S^{0\times n}$. For any matrix $A\in S^{m\times n}$, we denote
by $S^T\in S^{n\times m}$ its transpose.
For a matrix $A\in\C^{m\times n}$, we denote by $A^*\in\C^{n\times m}$ its
\emph{complex conjugate transpose}.

We tend to view $n$-tuples
over a set $S$, i.e., elements of $S^n$, as \emph{column vectors}, i.e., elements of $S^{n\times 1}$.
In this sense, we can transpose column vectors and get \emph{row vectors} or vice versa. Analogously,
we can apply the complex conjugate transpose to complex column vectors to get row vectors and vice 
versa.

If $R$ is a commutative ring (where a ring in our sense is always associative with $1$),
$d\in\N_0$ and $A\in R^{d\times d}$, then the determinant $\det A$ is declared in the usual way.
For each $n\in\N_0$,
we denote by $I_n\in R^{d\times d}$ the unity matrix of size $d$ for which $\det I_d=1$ (even when
$d=0$, i.e., the determinant of the empty matrix is one).

A matrix $A\in\C^{d\times d}$ is called
\emph{hermitian} if $A=A^*$ which is equivalent to $v^*Av\in\R$ for all $v\in\C^d$.
A matrix $U\in\C^{d\times d}$ is called \emph{unitary} if $U^*U=I_d$ which is equivalent to $UU^*=I_d$ and also to $\|Uv\|=\|v\|$ for all $v\in\C^d$. We will use many times and often without
mentioning the spectral theorem for hermitian matrices that says that
for any hermitian $A\in\C^{d\times d}$ there exists a unitary $U\in\C^{d\times d}$ such that
$U^*AU$ is a diagonal matrix. In this situation, the diagonal matrix is obviously real and
the diagonal elements are the eigenvalues of $A$.

A hermitian matrix all of whose (real) eigenvalues are
nonnegative is called \emph{positive semidefinite (psd)}. If all its eigenvalues are even positive,
it is called \emph{positive definite (pd)}. If all its eigenvalues are nonpositive or negative
then it is called \emph{negative semidefinite (nsd)} or \emph{negative definite (nd)}, respectively.
A \emph{definite matrix} is one that is pd or nd. It is easy to see that a matrix $A\in\C^{d\times d}$
is psd if and only if $v^*Av\ge0$ (i.e., $v^*Av$ is real and nonnegative) for all $v\in\C^n$. The analogous
characterizations for pd, nsd and nd matrices should be clear. Using the intermediate value theorem,
it follows that $A\in\C^{d\times d}$ is definite if and only if $v^*Av\in\R\setminus\{0\}$ for all
$v\in\C^d\setminus\{0\}$.

For matrices $A,B\in\C^{d\times d}$, we write
$A\preceq B$ (or equivalently $B\succeq A$) if $B-A$ is psd and
$A\prec B$ (or equivalently $B\succ A$) if $B-A$ is pd.

A real matrix $A\in\R^{d\times d}$ is of course symmetric if and only if it is hermitian.
It is called \emph{skew-symmetric} if $A=-A^T$. A real unitary matrix
is called \emph{orthogonal}. A matrix $U\in\R^{d\times d}$ is thus obviously orthogonal if $\|Uv\|=\|v\|$
for all $v\in\R^d$. The spectral theorem for symmetric matrices that we will often use silently says that
for any symmetric $A\in\R^{d\times d}$ there exists an orthogonal $U\in\R^{d\times d}$ such that
$U^*AU$ is a diagonal matrix. A real matrix is obviously psd if and only if
it is symmetric and its (real) eigenvalues are nonnegative.
It is thus easy to see that a matrix $A\in\R^{d\times d}$
is psd if and only if $A$ is symmetric and $v^*Av\ge0$ for all $v\in\R^n$. The analogous
characterizations for pd, nsd and nd matrices are clear. Moreover,
$A\in\R^{d\times d}$ is definite if and only if $A$ is symmetric and $v^*Av\ne0$ for all
$v\in\R^d\setminus\{0\}$. In particular, the notation $A\succeq0$ means for $A\in\R^{d\times d}$
that $A$ is symmetric and $v^*Av\ge0$ for all $v\in\R^d$.

If $A,B\in\R^{d\times d}$, then $C:=A+\ii B\in\C^{d\times d}$ is hermitian if and only if
$A$ is symmetric and $B$ is skew-symmetric, or in other words, if the matrix
\[R:=\begin{pmatrix}A&-B\\B&A\end{pmatrix}\in\R^{(2d)\times(2d)}\]
is symmetric.
Now if $C$ is hermitian, one easily checks that \[(v+\ii w)^*C(v+\ii w)=
\begin{pmatrix}v\\w\end{pmatrix}^TR\begin{pmatrix}v\\w\end{pmatrix}\]
for all $v,w\in\R^n$.
Hence it is clear that $C$ is psd if and only if $R$ is psd.

\subsection{Matrix polynomials}

Matrices whose entries are polynomials are often called \emph{matrix polynomials}.
As said above, we allow the empty matrix and therefore the empty matrix polynomial.
We define the \emph{degree} of the empty matrix polynomial to be $\infty$ and the degree
of a non-empty matrix polynomial to be the maximum of the degrees of its entries.
Hence the degree of a matrix polynomial is always from $\N_0$ except for the zero matrix polynomial
which has degree $\infty$. Exactly as for polynomial, we say that a matrix polynomial is
constant, linear, quadratic, cubic and so on if its degree is \emph{less than or equal to} $0$, $1$, $2$, $3$
and so on. In numerical linear algebra, linear matrix polynomials are often called matrix pencils, especially
if they are univariate, i.e., only one variables is involved. Here it will be convenient to reserve the term
\emph{pencil} for \emph{symmetric} linear real matrix polynomials in one or several variables. A pencil
of size $d$ in $n$ variables $x=(x_1,\ldots,x_n)$ is thus of the form
\[A_0+x_1A_1+\ldots+x_nA_n\]
where $A_0,A_1,\ldots,A_n\in\R^{d\times d}$ are symmetric. The determinant of such a pencil is of
course a polynomial of degree at most $d$.

\subsection{Convex sets and cones} A subset $C$ of a real vector space $V$
is called \emph{convex} if it contains with any two of its points $v,w\in C$ also the line segment
\[\{\la v+(1-\la)w\mid\la\in[0,1]\}\]
joining them. We call it a \emph{cone} what many authors call a ``convex cone'', namely a subset
$C$ of a real vector space $V$ that contains the origin, is closed under addition and under multiplication
with nonnegative scalars, i.e., $0\in C$, $v+w\in C$ for all $v,w\in C$ and $\la v\in C$ for all $v\in C$
and $\la\ge0$. For example, the set \[\{A\in\R^{d\times d}\mid A\succeq0\}\]
of psd matrices is a cone inside the vector space $\R^{d\times d}$ of matrices of size $d$.
Most of the convex sets and cones we will consider will however live in the vector space $\R^d$.

\subsection{Affine half spaces, polyhedra and spectrahedra}
A subset of $\R^n$ of the form $\{a\in\R^n\mid\ell(a)\ge0\}$ where $\ell\in\R[x]=\R[x_1,\ldots,x_n]$ is
a non-constant linear polynomial is called an \emph{affine half space}. A finite intersection
of such half spaces is called a \emph{polyhedron}. A subset of $\R^n$ of the form
$\{a\in\R^n\mid L(a)\succeq0\}$ where $L\in\R[x]^{d\times d}$ is a pencil of size $d$
for some $d\in\N_0$ is called a \emph{spectrahedron}. Hence polyhedra are exactly the spectrahedra
that can be defined by diagonal pencils. Spectrahedra are more flexible than polyhedra and unlike
polyhedra allow for round shapes in their geometry. On the other hand, they still share many good properties with polyhedra. Restating the definition more explicitly, 
$S\subseteq\R^n$ is a spectrahedron if and only
if there exists $d\in\N_0$ and symmetric matrices $A_0,A_1,\ldots,A_n\in\R^{d\times d}$ such that
\[S=\{a\in\R^n\mid A_0+a_1A_1+\ldots+a_nA_n\succeq0\}.\]
Here one could equivalently require $A_0,A_1,\ldots,A_n$ to be complex hermitian matrices instead
of symmetric real matrices as can be seen easily by the above translation of a psd condition for a
complex matrix into a psd condition of a real matrix of double size.
Using block diagonal matrices, one sees immediately that finite intersections of spectrahedra are
again spectrahedra.

\subsection{Elementary combinatorics} We use some standard notation from elementary 
combinatorics. For $\ell\in\N_0$, the factorial of $\ell$
\[\ell!:=\ell(\ell-1)\dotsm1\]
denotes the number of permutations of $\ell$ objects (in particular $0!=1$).
For $k,\ell\in\N_0$, the binomial coefficient
\[\binom\ell k:=\frac{\ell!}{(\ell-k)!k!}\]
denotes the number of choices of $k$ objects among $\ell$. For $\al\in\N_0^n$, the
multinomial coefficient
\[\binom{|\al|}\al:=\binom{\al_1+\ldots+\al_n}{\al_1\ldots\al_n}:=\frac{|\al|!}{\al_1!\ldots\al_n!}\]
denotes the number of ways of depositing $|\al|$ distinct objects into n distinct bins, with $\al_i$
objects in the $i$-th bin for each $i\in\{1,\ldots,n\}$. For $n=1$ this notation agrees with
the one for binomial coefficients which fortunately does not lead to a conflict.


\section{Real zero polynomials and spectrahedra}

\subsection{Definition and examples}

The following definition stems from \cite[§2.1]{hv}.

\begin{df}\label{dfrz}
We say that $p\in\R[x]$ is a \emph{real zero polynomial} if for all $a\in\R^n$ and $\la\in\C$,
\[p(\la a)=0\implies\la\in\R.\]
\end{df}

\begin{rem}\label{nonzeroatorigin}
If $p\in\R[x]$ is a real zero polynomial, then $p(0)\ne0$.
\end{rem}

\begin{pro}\label{splitrz}
Let $p\in\R[x]$. Then $p$ is a real zero polynomial if and only if for each $a\in\R^n$, the univariate polynomial
\[p(ta)=p(ta_1,\ldots,ta_n)\in\R[t]\]
splits (i.e., is a product of non-zero linear polynomials) in $\R[t]$.
\end{pro}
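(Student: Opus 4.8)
The plan is to prove the two implications separately; both amount to a direct unwinding of Definition~\ref{dfrz} together with the fundamental theorem of algebra. For $a\in\R^n$ write $q_a:=p(ta)\in\R[t]$. The single auxiliary fact used throughout is that substitution commutes with evaluation, so that, viewing $q_a$ as an element of $\C[t]$, we have $q_a(\la)=p(\la a)$ for every $\la\in\C$.

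For the direction ``splits $\Rightarrow$ real zero'', assume $q_a$ splits in $\R[t]$ for every $a\in\R^n$, and fix $a\in\R^n$ and $\la\in\C$ with $p(\la a)=0$. Writing each non-zero linear factor $\al t+\be$ of $q_a$ with $\al\ne 0$ as $\al(t-r)$ (where $r=-\be/\al\in\R$) and absorbing the remaining, necessarily non-zero, constant factors into a single constant, we get $q_a=c\prod_{i=1}^k(t-r_i)$ with $c\in\R\setminus\{0\}$ and $r_1,\dots,r_k\in\R$. Then $0=p(\la a)=q_a(\la)=c\prod_{i=1}^k(\la-r_i)$ forces $\la=r_i$ for some $i$, hence $\la\in\R$, which is exactly what Definition~\ref{dfrz} demands.

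For the converse, assume $p$ is a real zero polynomial and fix $a\in\R^n$. First $q_a\ne 0$, since $q_a(0)=p(0)\ne 0$ by Remark~\ref{nonzeroatorigin}. By the fundamental theorem of algebra, $q_a=c\prod_{j=1}^k(t-\mu_j)$ with $k=\deg q_a$, $c\in\R\setminus\{0\}$ and $\mu_1,\dots,\mu_k\in\C$. For each $j$ we have $p(\mu_j a)=q_a(\mu_j)=0$, so the real zero property of $p$ gives $\mu_j\in\R$; thus every factor $t-\mu_j$ already lies in $\R[t]$ and $q_a=c\prod_{j=1}^k(t-\mu_j)$ displays $q_a$ as a product of non-zero linear polynomials over $\R$ (with $k=0$, where $q_a$ is the non-zero constant $c$, causing no trouble since a non-zero constant is in particular a non-zero linear polynomial in the terminology of this paper).

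I do not expect any genuine obstacle: the mathematical content is entirely the fundamental theorem of algebra, everything else being bookkeeping. The only two points deserving a moment's care are the paper's convention that ``splits'' excludes the zero polynomial --- which is precisely why Remark~\ref{nonzeroatorigin} (equivalently, the observation that $p(0)=0$ would make $p(\la\cdot 0)=0$ for all $\la\in\C$) is invoked in the converse --- and keeping straight that $p(ta)$, regarded in $\C[t]$ and evaluated at $\la$, returns $p(\la a)$.
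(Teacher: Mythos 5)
Your proof is correct and follows exactly the route the paper sketches: the ``splits $\Rightarrow$ real zero'' direction is the easy unwinding of Definition~\ref{dfrz}, and the converse is the fundamental theorem of algebra plus the observation (Remark~\ref{nonzeroatorigin}) that $p(ta)$ cannot be the zero polynomial. Nothing further is needed.
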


\begin{proof}
The ``if'' direction is easy and the ``only if'' direction follows from the fundamental theorem of algebra. 
\end{proof}

\begin{ex}\label{quadraticrealzero}
Let $p\in\R[x]$ be a quadratic real zero polynomial with $p(0)=1$. Then $p$ can be uniquely written as
\[p=x^TAx+b^Tx+1\] with a symmetric matrix $A\in\R^{n\times n}$ and a vector $b\in\R^n$. For $a\in\R^n$ the univariate
quadratic polynomial $p(ta)=a^TAat^2+b^Tat+1$ splits in $\R[t]$ if and only if its discriminant $(b^Ta)^2-4a^TAa=a^T(bb^T-4A)a$ is
is nonnegative. Hence $p$ is a real zero polynomial if and only if \[bb^T-4A\succeq0.\]
\end{ex}

\begin{lem}\label{genev}
Let $A,B\in\C^{d\times d}$ be hermitian and suppose $A$ is definite. Then for all $\la\in\C$,
\[\det(A+\la B)=0\implies\la\in\R.\]
\end{lem}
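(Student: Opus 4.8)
The plan is to reduce to the case where $A$ is positive definite and then diagonalize. Suppose first that $A$ is negative definite; then $-A$ is positive definite and $\det(A+\la B)=(-1)^d\det((-A)+(-\la)(-B))$, so it suffices to treat the positive definite case with $B$ replaced by $-B$ (which is still hermitian). So assume $A\succ0$. Then $A$ has a hermitian positive definite square root $A^{1/2}$ (via the spectral theorem: diagonalize $A=U^*DU$ with $D$ diagonal and positive, and set $A^{1/2}:=U^*D^{1/2}U$), and $A^{1/2}$ is invertible. Factoring, $A+\la B = A^{1/2}(I_d+\la A^{-1/2}BA^{-1/2})A^{1/2}$, hence
\[\det(A+\la B)=(\det A^{1/2})^2\det(I_d+\la A^{-1/2}BA^{-1/2})=\det A\cdot\det(I_d+\la C),\]
where $C:=A^{-1/2}BA^{-1/2}$ is hermitian (as $(A^{-1/2}BA^{-1/2})^*=A^{-1/2}B^*A^{-1/2}=C$, using that $A^{-1/2}$ is hermitian).

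Now apply the spectral theorem to $C$: there is a unitary $V$ with $V^*CV=\diag(\mu_1,\ldots,\mu_d)$ and all $\mu_j\in\R$. Since $\det A\ne0$, the hypothesis $\det(A+\la B)=0$ forces $\det(I_d+\la C)=0$, and
\[\det(I_d+\la C)=\det(V^*(I_d+\la C)V)=\prod_{j=1}^d(1+\la\mu_j).\]
Hence $1+\la\mu_j=0$ for some $j$. That $\mu_j$ cannot be $0$ (else the factor is $1\ne0$), so $\la=-1/\mu_j\in\R$, as desired.

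The argument is essentially routine; the only point requiring a little care is the passage from $A$ definite to $A$ positive definite and the verification that $C$ is again hermitian, which is where one uses that the square root (and its inverse) of a positive definite hermitian matrix is itself hermitian. No serious obstacle is expected. (An alternative, avoiding square roots: use the generalized eigenvalue decomposition — since $A$ is positive definite, there is an invertible $P$ with $P^*AP=I_d$ and $P^*BP$ diagonal real — which leads to the same conclusion.)
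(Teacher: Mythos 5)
Your argument is correct and essentially the paper's own: reduce to $A\succ0$, conjugate by $A^{-1/2}$ (the paper diagonalizes $A$ unitarily and scales by the inverted square roots of its diagonal entries, which is the same congruence) to reach $\det(I_d+\la C)=0$ with $C$ hermitian, and conclude from the realness of the eigenvalues of a hermitian matrix. One cosmetic sign slip: the displayed reduction should read $\det(A+\la B)=(-1)^d\det((-A)+\la(-B))$, since $(-A)+(-\la)(-B)=-A+\la B$; the conclusion you actually use (replace $B$ by $-B$ and keep $\la$) is the correct one.
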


\begin{proof}
The case $A\prec0$ can be reduced to the case $A\succ0$ by scaling $p$ with $(-1)^d$.
WLOG $A\succ0$. Since there is a unitary matrix $U\in\C^{d\times d}$ such that $U^*AU$ is diagonal, we can assume that $A$ is diagonal.
The diagonal entries $d_1,\dots,d_n$ of $A$ are positive. Multiplying both $A$ and $B$ from the left and the right by the diagonal matrix whose
diagonal entries are the inverted square roots of $d_1,\dots,d_n$ changes the determinant of $A+\la B$ but preserves the condition
$\det(A+\la B)=0$. Hence WLOG $A=I_d$. Now $\det(B-(-\frac1\la)I_d)=0$. But then $-\frac1\la$ is an
eigenvalue of the hermitian matrix $B$ and thus real. Hence $\la$ is real as well.
\end{proof}

The most obvious example of real zero polynomials are products of linear polynomials that do not vanish at the origin. But this is just the special case where
all $A_i$ are diagonal matrices of the following more general example:

\begin{pro}\label{detexample}
Let $A_0,A_1,\ldots,A_n\in\C^{d\times d}$ be hermitian matrices such that $A_0$ is definite and
\[p=\det(A_0+x_1A_1+\ldots+x_nA_n),\]
then $p$ is a real zero polynomial.
\end{pro}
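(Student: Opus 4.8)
The plan is to reduce the statement directly to Lemma \ref{genev}. To verify that $p$ is a real zero polynomial in the sense of Definition \ref{dfrz}, I fix an arbitrary point $a=(a_1,\ldots,a_n)\in\R^n$ and an arbitrary $\la\in\C$ with $p(\la a)=0$, and I must show $\la\in\R$.

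The key observation is that, setting
\[B:=a_1A_1+\ldots+a_nA_n\in\C^{d\times d},\]
the matrix $B$ is hermitian: it is a real linear combination of the hermitian matrices $A_1,\ldots,A_n$, and a real linear combination of hermitian matrices is again hermitian (since $(cM)^*=\bar cM^*=cM^*$ for $c\in\R$, and sums of hermitian matrices are hermitian). By the very definition of $p$ we then have
\[p(\la a)=\det\!\big(A_0+\la a_1A_1+\ldots+\la a_nA_n\big)=\det(A_0+\la B).\]

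Now $A_0$ is definite and $B$ is hermitian, so Lemma \ref{genev} applies verbatim to the pair $(A_0,B)$ and yields $\det(A_0+\la B)=0\implies\la\in\R$. Since $p(\la a)=\det(A_0+\la B)=0$ by assumption, we conclude $\la\in\R$. As $a\in\R^n$ and $\la\in\C$ were arbitrary, this shows that $p$ is a real zero polynomial. (Alternatively one could phrase the argument through Proposition \ref{splitrz}, observing that $p(ta)=\det(A_0+tB)$ is, up to the nonzero factor $\det A_0$, the characteristic polynomial of a hermitian matrix and hence splits over $\R$; but the direct route via Lemma \ref{genev} is the cleanest.)

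I do not expect any real obstacle here: the only points requiring a word are that $B$ is hermitian and that $A_0$ being definite is exactly the hypothesis needed to invoke Lemma \ref{genev}. The substance of the proposition is entirely contained in that lemma, whose proof already handled the diagonalization and rescaling; this statement is just its multivariate packaging.
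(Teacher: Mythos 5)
Your proof is correct and is essentially the paper's own argument: set $B:=a_1A_1+\ldots+a_nA_n$, note that $p(\la a)=\det(A_0+\la B)$, and invoke Lemma \ref{genev} with $A_0$ definite and $B$ hermitian. The extra remarks (why $B$ is hermitian, the alternative via Proposition \ref{splitrz}) are fine but not a different route.
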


\begin{proof}
If $a\in\R^n$ and $\la\in\C$ with $p(\la a)=0$, then $\det(A_0+\la B)=0$ where $B:=a_1A_1+\ldots+a_nA_n\in\C^{d\times d}$ and Lemma \ref{genev}
implies $\la\in\R$.
\end{proof}

\subsection{The Helton-Vinnikov theorem}

The following celebrated partial converse to Proposition \ref{detexample}
has been obtained in 2006 by Helton and Vinnikov \cite[Theorem 2.2, §4]{hv}.
For a short account of the long history of partial results, we refer to \cite[§8]{hv}.
Recently, a purely algebraic proof of this theorem has been given by Hanselka \cite[Theorem 2]{han}.

\begin{thm}[Helton and Vinnikov]\label{vinnikov}
If $p\in\R[x_1,x_2]$ is a real zero polynomial of degree $d$ with $p(0)=1$, then there exist symmetric $A_1,A_2\in\R^{d\times d}$ such that
\[p=\det(I_d+x_1A_1+x_2A_2).\]
\end{thm}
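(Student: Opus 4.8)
The plan is to prove this by induction on the degree $d$, using a homogenization/dehomogenization argument together with real-rootedness of the associated univariate restrictions. First I would record the trivial base case $d=0$ (where $p=1$ and both matrices are empty) and $d=1$ (where $p=1+b_1x_1+b_2x_2$ is handled by a $1\times 1$ ``matrix'' $A_i=b_i$). For the inductive step, I would pass to the homogenization $P(x_0,x_1,x_2):=x_0^d\,p(x_1/x_0,x_2/x_0)\in\R[x_0,x_1,x_2]_d$, which is a hyperbolic polynomial in three variables with hyperbolicity direction $e_0=(1,0,0)$: indeed, the real-zero condition on $p$ translated via Proposition~\ref{splitrz} says exactly that for every $(a_1,a_2)\in\R^2$ the polynomial $t\mapsto p(ta_1,ta_2)$ splits over $\R$, and homogenizing this shows $t\mapsto P(t,a_1,a_2)$ has only real roots for all $(a_1,a_2)$, i.e.\ $P$ is hyperbolic with respect to $e_0$. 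The target statement becomes: every degree-$d$ hyperbolic polynomial in three variables with $P(e_0)=1$ has a definite symmetric determinantal representation $P=\det(x_0I_d+x_1A_1+x_2A_2)$.

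The core of the argument is the classical observation that a hyperbolic ternary form, viewed projectively, cuts out a curve in $\mathbb{P}^2$ all of whose real points lie on nested ovals, and the ``interlacing'' structure forces the existence of a determinantal representation. Rather than develop the full algebraic-geometry machinery, I would instead try to leverage the interlacing of restrictions: fix a generic line, say $x_2=0$, and consider $q(x_0,x_1):=P(x_0,x_1,0)$, a real-rooted binary form of degree $d$, together with $\partial P/\partial x_2|_{x_2=0}$, which interlaces $q$. By the one-variable theory one can simultaneously diagonalize so that $q=\det(x_0I_d+x_1D)$ with $D$ diagonal. The remaining task is to ``fill in'' the $x_2$-coefficient matrix $A_2$ so that $\det(x_0I_d+x_1D+x_2A_2)=P$; the interlacing data pins down the diagonal of $A_2$ and its principal minors, and a Cauchy-interlacing / rank-one perturbation analysis recovers $A_2$ up to conjugation by a diagonal orthogonal matrix. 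Alternatively — and this is the route I would actually expect to take in writing a self-contained argument — one reduces to the \emph{hermitian} version of the theorem (which the introduction emphasizes is much easier, cf.\ \cite{gkvw}) and then promotes it to a symmetric representation using the special structure of ternary forms: a hermitian determinantal representation $P=\det(x_0I_d+x_1H_1+x_2H_2)$ exists by the easier result, and one shows the pair $(H_1,H_2)$ can be conjugated by a single unitary into a pair of real symmetric matrices, using that the relevant theta-characteristic / line bundle on the plane curve is defined over $\R$ because $P$ has real coefficients and a real interior point.

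The main obstacle is precisely this last passage from hermitian to symmetric representations, together with making the existence of \emph{any} determinantal representation rigorous: this is the hard analytic/algebraic heart of the Helton--Vinnikov theorem, and there is no genuinely elementary proof — every known argument (Vinnikov's original via theta functions on the Jacobian of the curve, Hanselka's via quadratic forms over the function field, or the multidegree/interlacing inductive approaches) requires real substance. I would be candid in the writeup that the induction-on-degree sketch above only reduces the statement to the irreducible smooth case and that closing that case is exactly what requires the cited deep work; for the purposes of this paper I would therefore present the theorem as quoted from \cite{hv,vin1,han} and only verify the easy reductions (homogenization, base cases, and the reduction of the general ternary real-zero polynomial to the hyperbolic homogeneous form) in detail, flagging that throughout the rest of the paper we will, wherever possible, only invoke the easier hermitian version.
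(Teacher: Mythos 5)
The paper itself offers no proof of Theorem \ref{vinnikov}: it is simply quoted from \cite{hv,vin1,han}, and your proposal—which after the homogenization and base-case reductions ultimately defers to exactly these sources—is therefore in line with the paper's treatment. Your candour about the gap is the right call: neither the interlacing/``fill in $A_2$'' sketch nor the passage from a hermitian to a symmetric representation is anywhere near a complete argument, and the paper deliberately separates the much easier hermitian Corollary \ref{vcor} (cf.\ \cite{gkvw}) from the symmetric statement precisely because that last passage is the deep part.
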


A weaker version of this theorem will be enough for most of our purposes.
Since this weaker version seems to be considerably easier to prove
(see mainly \cite{gkvw}, also \cite{pv} and \cite[§7]{han}), we state it here.
this weaker version as a corollary. In the following, we will always prefer to use the corollary instead of the theorem.

\begin{cor}[Helton and Vinnikov]\label{vcor}
If $p\in\R[x_1,x_2]$ is a real zero polynomial of degree $d$ with $p(0)=1$, then there exist hermitian matrices $A_1,A_2\in\C^{d\times d}$ such that
\[p=\det(I_d+x_1A_1+x_2A_2).\]
\end{cor}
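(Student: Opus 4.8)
The plan is to simply specialize the symmetric determinantal representation furnished by Theorem \ref{vinnikov} to the hermitian setting, since every real symmetric matrix is in particular complex hermitian. First I would invoke Theorem \ref{vinnikov}: given a real zero polynomial $p\in\R[x_1,x_2]$ of degree $d$ with $p(0)=1$, there exist symmetric matrices $A_1,A_2\in\R^{d\times d}$ with $p=\det(I_d+x_1A_1+x_2A_2)$. Then I would observe that each $A_i\in\R^{d\times d}$, being real and symmetric, satisfies $A_i=A_i^T=A_i^*$, hence is hermitian when regarded as an element of $\C^{d\times d}$. Taking these same matrices $A_1,A_2$ (now viewed in $\C^{d\times d}$) yields hermitian matrices with the required determinantal identity, so the conclusion of the corollary holds with this particular choice. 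The determinant is unchanged by passing from $\R$ to $\C$, so no recomputation is needed.

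There is essentially no obstacle here: the corollary is strictly weaker than the theorem, and the only ``content'' is the trivial remark that a real symmetric matrix is a complex hermitian matrix. The only reason to state it separately, as the authors explain, is that the weaker statement admits a substantially easier independent proof (via \cite{gkvw,pv}), which one may prefer to cite in place of the full Helton--Vinnikov theorem; but as a logical consequence of Theorem \ref{vinnikov} it is immediate. One could equivalently note that Corollary \ref{vcor} also follows from the general real-hermitian translation discussed in the Matrices subsection, but that is not needed: the direct specialization is cleanest.

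So the proof I would write is one line: apply Theorem \ref{vinnikov} and note that the symmetric real matrices $A_1,A_2$ it produces are in particular hermitian. I expect the write-up to read essentially as ``This is immediate from Theorem \ref{vinnikov} since every symmetric real matrix is hermitian,'' with perhaps a remark that for most later applications one prefers to prove the corollary directly by the more elementary argument of \cite{gkvw} rather than deducing it from the deep theorem.
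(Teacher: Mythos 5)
Your proposal is correct and matches the paper's treatment: the corollary is stated as an immediate weakening of Theorem \ref{vinnikov}, obtained exactly by noting that the real symmetric matrices it provides are in particular hermitian, and the only reason for stating it separately is the availability of the easier independent proofs cited there.
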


\begin{ex}\label{hv2}
Consider a quadratic real zero polynomial $p\in\R[x_1,x_2]$ with $p(0)=1$. Write
\[p=x^TAx+b^Tx+1\] with a symmetric matrix $A\in\R^{n\times n}$ and a vector $b\in\R^n$.
Write moreover \[A=\begin{pmatrix}a_{11}&a_{12}\\a_{12}&a_{22}\end{pmatrix}\qquad\text{and}\qquad
b=\begin{pmatrix}b_1\\b_2\end{pmatrix}.\]
Then $bb^T-4A\succeq0$ by Example \ref{quadraticrealzero}. Hence the leading prinicipal minors $r:=b_1^2-4a_{11}$
and $s:=\det(bb^T-4A)$ are nonnegative. Consider the real symmetric matrices
\[A_1:=\frac12\det\begin{pmatrix}b_1-\sqrt r&0\\0&b_1+\sqrt r\end{pmatrix}\]
and
\[A_2:=\frac1{2r}
\begin{pmatrix}b_1^2b_2-b_1b_2\sqrt{r}-4 a_{11}b_2+4a_{12}\sqrt{r}&\sqrt{rs}\\
   \sqrt{rs}&b_1^2 b_2+b_1 b_2 \sqrt{r}-4 a_{11}b_2-4a_{12}\sqrt{r}\end{pmatrix}.
\]
Then one can easily verify that $p=\det(I_2+x_1A_1+x_2A_2)$.
\end{ex}

\subsection{Rigidly convex sets}

\begin{df}\label{dfrzs}
Let $p\in\R[x]$ be a polynomial.
Then we call
\[Z(p):=\{a\in\R^n\mid p(a)=0\}\]
the (real) \emph{zero set} defined by $p$.
\end{df}

\begin{df}\label{dfrcs}
Let $p\in\R[x]$ be a real zero polynomial.
Then we call
\[C(p):=\{a\in\R^n\mid\forall\la\in[0,1):p(\la a)\ne0\}\]
the \emph{rigidly convex set} defined by $p$.
\end{df}

A priori, it is not even clear that rigidly convex sets are convex. This was however already known to
G\aa rding, see Theorem \ref{rc-is-c} below. In the case where $p$ has a determinantal
representation of the kind considered in Proposition \ref{detexample} above, it is however easy to show that $C(p)$ is not only convex but even a spectrahedron:

\begin{pro}\label{rcsdet}
Suppose $d\in\N_0$, $A_0,A_1,\ldots,A_n\in\C^{d\times d}$ be hermitian,
$A_0\succ0$ and \[p=\det(A_0+x_1A_1+\ldots+x_nA_n).\] Then
\[C(p)=\{a\in\R^n\mid A_0+a_1A_1+\ldots+a_nA_n\succeq0\}\]
and
\[C(p)\setminus Z(p)=\{a\in\R^n\mid A_0+a_1A_1+\ldots+a_nA_n\succ0\}.\]
\end{pro}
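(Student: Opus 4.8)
The statement to prove is Proposition~\ref{rcsdet}: with $A_0\succ0$ and $p=\det(A_0+x_1A_1+\ldots+x_nA_n)$, we want to identify $C(p)$ with the spectrahedron $\{a\mid A(a)\succeq0\}$ and $C(p)\setminus Z(p)$ with $\{a\mid A(a)\succ0\}$, where I write $A(a):=A_0+a_1A_1+\ldots+a_nA_n$. The plan is to fix $a\in\R^n$ and study the univariate hermitian pencil $t\mapsto A_0+tB$ along the ray through $a$, where $B:=a_1A_1+\ldots+a_nA_n$, reducing everything to a one-dimensional eigenvalue analysis exactly as in the proof of Lemma~\ref{genev}.

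First I would normalize: since $A_0\succ0$, pick a unitary $U$ and then a diagonal scaling so that, after conjugating $A_0$ and $B$ by a single invertible matrix $P$, we get $P^*A_0P=I_d$ and $P^*BP=:D$ hermitian; conjugation by $P$ multiplies the determinant by the fixed nonzero constant $|\det P|^2$, so it changes neither the vanishing locus of $p(ta)$ nor the (semi)definiteness of $A(a)=A_0+B$ (note $P^*A(a)P=I_d+D$). By the spectral theorem I may further assume $D$ is diagonal with real eigenvalues $\mu_1,\ldots,\mu_d$. Then $p(ta)=\det(I_d+tD)=\prod_{j=1}^d(1+t\mu_j)$ up to the nonzero constant, so the zeros of $t\mapsto p(ta)$ in $\R$ are exactly the numbers $-1/\mu_j$ for those $j$ with $\mu_j\neq0$. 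Meanwhile $A(a)\succeq0$ iff $I_d+D\succeq0$ iff $1+\mu_j\ge0$ for all $j$, i.e.\ $\mu_j\ge-1$ for all $j$; and $A(a)\succ0$ iff $\mu_j>-1$ for all $j$.

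Next I translate the membership conditions. By definition $a\in C(p)$ iff $p(\la a)\neq0$ for all $\la\in[0,1)$, i.e.\ no zero $-1/\mu_j$ (with $\mu_j\neq0$) lies in $[0,1)$. Since $-1/\mu_j\ge0$ forces $\mu_j<0$, and then $-1/\mu_j<1\iff -1<\mu_j$, the condition ``$-1/\mu_j\notin[0,1)$ for all $j$ with $\mu_j\neq 0$'' is equivalent to ``$\mu_j\ge-1$ for all $j$''. This is precisely $A(a)\succeq0$, giving the first equality. For the second, observe $a\in C(p)\setminus Z(p)$ means additionally $p(a)\neq0$, i.e.\ $1+\mu_j\neq0$ for all $j$; combined with $\mu_j\ge-1$ this yields $\mu_j>-1$ for all $j$, equivalent to $A(a)\succ0$. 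Conversely $A(a)\succ0$ gives $\mu_j>-1$ for all $j$, whence both $\mu_j\ge-1$ (so $a\in C(p)$) and $1+\mu_j\neq0$ (so $a\notin Z(p)$).

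\textbf{Main obstacle.} There is no deep obstacle here; the content is entirely the bookkeeping of sign conditions on the eigenvalues $\mu_j$ and making sure the three-way correspondence among ``no zero of $p(ta)$ in $[0,1)$'', ``$\mu_j\ge-1$'', and ``$I_d+D\succeq0$'' is handled correctly, including the degenerate eigenvalues $\mu_j=0$ (which contribute no zero and automatically satisfy $\mu_j\ge-1$) and the boundary value $\mu_j=-1$ (which puts $a$ on $Z(p)$ but still in $C(p)$). The one point requiring a little care is the simultaneous reduction $P^*A_0P=I_d$, $P^*BP$ diagonal: this is the standard simultaneous diagonalization of a hermitian pencil with definite $A_0$, already used implicitly in Lemma~\ref{genev}, so I would simply invoke it rather than re-derive it.
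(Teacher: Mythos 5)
Your proposal is essentially the paper's own proof: the paper also reduces to $A_0=I_d$ (there via congruence by $\sqrt{A_0}^{-1}$, which is the same simultaneous normalization you invoke), diagonalizes the resulting hermitian matrix by the spectral theorem, and finishes with exactly the eigenvalue bookkeeping you describe, with the second equality deduced from the first. One line of your bookkeeping is stated backwards, though: for $\mu_j<0$ you write $-1/\mu_j<1\iff-1<\mu_j$, whereas multiplying by the negative number $\mu_j$ flips the inequality, so in fact $-1/\mu_j<1\iff\mu_j<-1$ (equivalently, $-1/\mu_j\in[0,1)$ iff $\mu_j<-1$). Your final conclusion, that ``$-1/\mu_j\notin[0,1)$ for all $j$ with $\mu_j\ne0$'' is equivalent to ``$\mu_j\ge-1$ for all $j$'', is the correct one and is what the corrected line yields, so this is only a slip to fix, not a gap in the argument.
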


\begin{proof}
The second statement follows easily from the first. To prove the first,
let $a\in\R^n$ and set $B:=a_1A_1+\ldots+a_nA_n$. We have to show
\[(\forall\la\in[0,1):\det(A_0+\la B)\ne0)\iff A_0+B\succeq0.\]
Since $A_0$ is positive definite, there exists a (unique) pd matrix $\sqrt{A_0}$
whose square is $A_0$. Rewriting both the left and right hand side of our claim, it becomes
\[(\forall\la\in[0,1):\det(I_d+\la C)\ne0)\iff I_d+C\succeq0.\]
where $C:=\sqrt{A_0}^{-1}B\sqrt{A_0}^{-1}$.
Since $C$ is hermitian, we find a unitary matrix
$U\in\C^{n\times n}$ such that $U^*CU$ is a diagonal matrix with diagonal entries $d_1,\dots,d_n\in\R$.
Our claim can be rewritten
\[\left(\forall\la\in[0,1):\prod_{i=1}^d(1+\la d_i)\ne0\right)\iff\forall i\in\{1,\ldots,n\}:1+d_i\ge0\]
which is easily checked.
\end{proof}

\begin{rem}\label{planar}
For $n\in\{0,1\}$, it is trivial that each rigidly convex set in $\R^n$ is a spectrahedron.
For $n=2$ this follows from Helton-Vinnikov Corollary \ref{vcor} together with Proposition \ref{rcsdet}.
Whether this continues to holds for $n>2$ is unknown and is the topic of Section \ref{sec:glc} below.
\end{rem}

If $S\subseteq\R^n$ and $a\in\R^n$, we write $S+a:=\{b+a\mid b\in S\}$.
G\aa rding proved the following result in a more elementary way \cite{gar}. For convenience of
the reader, we include here a proof but allow ourselves the luxury to base it on the Helton-Vinnikov Corollary \ref{vcor} although this is an overkill.

\begin{thm}[G\aa rding]\label{rzshift}
Let $p\in\R[x]$ be a real zero polynomial and $a\in C(p)\setminus Z(p)$. Then the shifted polynomial $p(x+a)$ is a real zero polynomial as well
and $C(p)=C(p(x+a))+a$.
\end{thm}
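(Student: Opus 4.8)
The plan is to reduce the statement about an $n$-variable real zero polynomial to a family of one-variable (really two-variable, via an auxiliary direction) statements, exactly in the spirit of Proposition \ref{splitrz}, and then invoke the Helton--Vinnikov Corollary \ref{vcor} on suitable planes. First I would observe that $p(x+a)$ is again a polynomial with $p(0+a)=p(a)\ne0$ since $a\notin Z(p)$, so after scaling we may assume $p(a)=1$; I want to show $p(x+a)$ is a real zero polynomial, i.e. that for every direction $b\in\R^n$ the univariate polynomial $t\mapsto p(tb+a)$ splits over $\R$. Fix such a $b$. Consider the two-variable real zero polynomial obtained by restricting $p$ to the plane $\{sb+ua\mid s,u\in\R\}$; more precisely let $q(s,u):=p(sb+ua)\in\R[s,u]$ — this is a real zero polynomial in two variables because along any line through the origin in the $(s,u)$-plane it agrees with $p$ along a line through the origin in $\R^n$, which splits by Proposition \ref{splitrz}. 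Moreover $q(0,1)=p(a)=1$.

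By the Helton--Vinnikov Corollary \ref{vcor} (applied after the harmless linear change of variables sending $(s,u)$ to coordinates in which the constant term sits at the origin, i.e. writing $u=1+v$ and using $q(0,1)=1$), there are hermitian matrices so that $q(s,1+v)=\det(I_d+sM+vN)$ for suitable hermitian $M,N\in\C^{d\times d}$, where $d=\deg q\le\deg p$. Now I would use Proposition \ref{detexample} and Proposition \ref{rcsdet}: the point $(0,1)$, i.e. $v=0$, corresponds to the identity matrix $I_d\succ0$, so the determinantal representation has a definite matrix at the relevant point, and $a\in C(p)$ translates into $u=1$ lying in the rigidly convex set of $q$ in the $u$-direction. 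Concretely, $a\in C(p)$ means $p(\la a)\ne0$ for $\la\in[0,1)$, i.e. $q(0,\la)\ne0$ for $\la\in[0,1)$; combined with the determinantal representation this forces $I_d+vN\succ 0$ along the segment, hence by Proposition \ref{rcsdet} (in one variable) the matrix $I_d$ — already at the base point — governs the situation, and the restricted polynomial $s\mapsto\det(I_d+sM)$, which is $p(sb+a)$ up to the identification, splits over $\R$ because $M$ is hermitian (its eigenvalues are real). This gives that $p(x+a)$ is a real zero polynomial.

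For the identity $C(p)=C(p(x+a))+a$: unravelling definitions, $c\in C(p(x+a))+a$ means $c-a\in C(p(x+a))$, i.e. $p(\la(c-a)+a)\ne0$ for all $\la\in[0,1)$. I must match this with $c\in C(p)$, i.e. $p(\mu c)\ne0$ for all $\mu\in[0,1)$. The clean way is again to pass to the two-variable picture on the plane spanned by $a$ and $c$ (or $c-a$): on that plane $p$ restricts to a two-variable real zero polynomial with a determinantal representation $\det(I_d+\cdot)$ by Corollary \ref{vcor} with the identity at the point corresponding to $a$, and then the equality of the two rigidly convex sets becomes the statement $C(\det(A_0+\cdots))=C(\det(A_0+\cdots)(x+a))+a$ for a genuinely determinantal polynomial with $A_0\succ 0$ at $a$ — and that case is immediate from Proposition \ref{rcsdet}, since both sides equal $\{b\mid I_d+\text{(linear in }b)\succeq 0\}$ translated appropriately. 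One has to be slightly careful that $a\notin Z(p)$ guarantees $I_d$ (not just some psd matrix) sits at the base point, which is what lets Proposition \ref{rcsdet} apply with a \emph{positive definite} leading matrix.

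The main obstacle, I expect, is the bookkeeping in reducing to two variables while keeping track of \emph{which} point carries the positive definite matrix: Corollary \ref{vcor} normalizes the constant term at the origin, but here the distinguished point is $a$, not $0$, so one must perform the affine reparametrization carefully and check that $a\in C(p)\setminus Z(p)$ really does translate into the hypothesis "$A_0\succ 0$" of Propositions \ref{detexample} and \ref{rcsdet} on the plane. Once that translation is set up correctly on every plane through $a$, both conclusions — that $p(x+a)$ is a real zero polynomial and that the rigidly convex set merely shifts — drop out of the determinantal picture, with Proposition \ref{rcsdet} doing the real work and Proposition \ref{splitrz} supplying the reduction to two variables.
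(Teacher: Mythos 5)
Your overall plan — restrict to the plane through the origin spanned by $a$ and the relevant direction, invoke the Helton--Vinnikov Corollary \ref{vcor} there, and let Proposition \ref{rcsdet} together with hermitian spectral facts do the work — is exactly the route the paper takes. But as written your argument is circular at the decisive step. You apply Corollary \ref{vcor} to $q(s,1+v)=p(sb+(1+v)a)$, i.e.\ to the \emph{translated} restriction, in order to get a representation $\det(I_d+sM+vN)$ with the identity sitting at the point corresponding to $a$. The Corollary only applies to real zero polynomials normalized at the origin, and the real-zero-ness of $q(s,1+v)$ is precisely (the planar case of) the first assertion you are trying to prove. The substitution $u=1+v$ is not a ``harmless linear change of variables'': it is an affine translation, and the whole content of the theorem is that translations by points of $C(p)\setminus Z(p)$ preserve the real zero property — lines through the origin do not map to lines through the origin. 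Your normalization ``WLOG $p(a)=1$'' is a symptom of the same confusion: for a legitimate application of Corollary \ref{vcor} you need $q(0,0)=p(0)=1$, not $p(a)=1$.

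The repair is exactly what the paper does (and is short): apply Corollary \ref{vcor} to the \emph{unshifted} restriction $q(s,u)=p(sb+ua)$ (which is a real zero polynomial by your Proposition \ref{splitrz} argument, with $q(0,0)=p(0)$), obtaining $q=\det(I_d+sM+uN)$ with $M,N$ hermitian. Now use the hypothesis: $a\in C(p)$ gives $(0,1)\in C(q)$, so $I_d+N\succeq0$ by Proposition \ref{rcsdet}, and $p(a)\ne0$ upgrades this to $A_0:=I_d+N\succ0$. Then $p(tb+a)=q(t,1)=\det(A_0+tM)$ has only real roots by Lemma \ref{genev} (equivalently, conjugate by $A_0^{-1/2}$ and use that hermitian matrices have real eigenvalues); since $b$ was arbitrary, $p(x+a)$ is a real zero polynomial. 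For the set identity, you never need the identity matrix located at $a$: with the representation at the origin, $q(s,u+1)=\det(A_0+sM+uN)$ with $A_0\succ0$, and Proposition \ref{rcsdet} applied to both representations gives $C(q)$ and $C(q(s,u+1))$ as the corresponding spectrahedra, from which $b\in C(p)\iff b-a\in C(p(x+a))$ is immediate on the given plane (the case of $a,b$ linearly dependent being an easy separate check, which you should at least mention). With that reorganization your proof coincides with the paper's.
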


\begin{proof}
We first show that $p(x+a)$ is a real zero polynomial. To this end, let $b\in\R^n$ and $\la\in\C$ such that $p(\la b+a)=0$.
We have to show $\la\in\R$.
We have $b\ne0$ since $a\notin Z(p)$.
If $a=\mu b$ for some $\mu\in\R$, then $p((\la+\mu)a)=0$ implies $\la+\mu\in\R$ and thus $\la\in\R$.
Hence we can now suppose that $a$ and $b$ are linearly independent. By an affine transformation, we can even suppose that
$a$ and $b$ are the first two unit vectors in $\R^n$. Without loss of generality, we can thus assume that the
number of variables is $n=2$. Also WLOG $p(0)=1$. By the Helton-Vinnikov Corollary \ref{vcor}, we can write
\[p=\det(I_d+x_1A_1+x_2A_2)\]
with hermitian $A_1,A_2\in\C^{d\times d}$ where $d:=\deg p$.
The hypothesis $a\in C(p)\setminus Z(p)$ now translates into $A:=I_d+A_1\succ0$ by Proposition \ref{rcsdet}.
From $\det(A+\la A_2)=0$ and Lemma \ref{genev}, we get $\la\in\R$.

To prove the second statement, let $b\in\R^n$. We show that
\[(*)\qquad b\in C(p)\iff b-a\in C(p(x+a)).\]
The case where $a$ and $b$ are linearly
dependent is an easy exercise. Suppose therefore that $a$ and $b$ are linearly independent. After an affine transformation, we can
even assume that $a$ and $b$ are the first two unit vectors. Hence we can reduce to the case where the number of variables $n$ equals $2$.
By the Helton-Vinnikov Corollary, we can choose hermitian matrices $A_1,A_2\in\C^{d\times d}$ such that
\[p=\det(I_d+x_1A_1+x_2A_2)\]
so that
\[C(p)=\{c\in\R^2\mid I_d+c_1A_1+c_2A_2\succeq0\}\]
by Proposition \ref{rcsdet}. Then 
\[p(x+a)=\det((I_d+A_1)+x_1A_1+x_2A_2)\]
and $I_d+A_1\succ A_1\succ0$ so that
\[C(p(x+a))=\{c\in\R^2\mid I_d+A_1+c_1A_1+x_2A_2\succeq0\}.\]
again by Proposition \ref{rcsdet}.
Our claim $(*)$ now translates into
\[I_d+A_2\succeq0\iff I_d+A_1-A_1+A_2\succeq0.\]
which holds trivially.
\end{proof}

\begin{thm}[G\aa rding]\label{rc-is-c}
If $p$ is a real zero polynomial, then $C(p)\setminus Z(p)$ and
$C(p)$ are convex.
\end{thm}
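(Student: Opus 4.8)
The plan is to establish the convexity of $C(p)\setminus Z(p)$ first and then deduce that of $C(p)$. Everything rests on one elementary observation together with G\aa rding's shift theorem (Theorem~\ref{rzshift}). The observation: for \emph{any} real zero polynomial $q$ and any $v\in C(q)\setminus Z(q)$, the whole closed segment $\{\la v\mid\la\in[0,1]\}$ lies in $C(q)\setminus Z(q)$. Indeed, fix $\la\in[0,1]$; for every $\nu\in[0,1)$ we have $\nu\la\in[0,1)$, so $q(\nu\la v)\ne0$ because $v\in C(q)$, whence $\la v\in C(q)$; and $q(\la v)\ne0$ as well --- this being part of $v\in C(q)$ when $\la<1$ and the hypothesis $v\notin Z(q)$ when $\la=1$ --- so $\la v\notin Z(q)$.

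For the convexity of $C(p)\setminus Z(p)$, I would take $a,b\in C(p)\setminus Z(p)$ and $\mu\in[0,1]$ and aim for $c:=\mu a+(1-\mu)b\in C(p)\setminus Z(p)$. This is where the shift theorem enters: $q:=p(x+a)$ is again a real zero polynomial and $C(p)=C(q)+a$, while $Z(q)=Z(p)-a$ straight from the definition of $Z$, so $C(p)\setminus Z(p)=(C(q)\setminus Z(q))+a$. In particular $b-a\in C(q)\setminus Z(q)$, so the observation applied to $q$ with the scalar $1-\mu$ gives $(1-\mu)(b-a)\in C(q)\setminus Z(q)$, and translating back by $a$ yields $c=a+(1-\mu)(b-a)\in C(p)\setminus Z(p)$.

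For the convexity of $C(p)$, I would take $a,b\in C(p)$ and $\mu\in[0,1]$, set $c:=\mu a+(1-\mu)b$, and show $p(\la c)\ne0$ for every $\la\in[0,1)$. Fixing such a $\la$ and arguing as in the observation (now with $q=p$), the points $\la a$ and $\la b$ both lie in $C(p)\setminus Z(p)$: for $\nu\in[0,1)$ one has $\nu\la\in[0,1)$, hence $p(\nu\la a)\ne0$ since $a\in C(p)$, so $\la a\in C(p)$, and $p(\la a)\ne0$ since $a\in C(p)$ and $\la<1$, so $\la a\notin Z(p)$. By the convexity of $C(p)\setminus Z(p)$ just proved, $\la c=\mu(\la a)+(1-\mu)(\la b)\in C(p)\setminus Z(p)$, so $p(\la c)\ne0$; as $\la$ was arbitrary, $c\in C(p)$.

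I expect the subtle point is architectural rather than computational: passing from convexity of $C(p)\setminus Z(p)$ to convexity of $C(p)$ must go through the radial approximation above, not a naive closure argument, since a priori $C(p)\setminus Z(p)$ is not obviously open in or dense in $C(p)$. The only genuinely deep ingredient is buried inside Theorem~\ref{rzshift}, which uses the Helton--Vinnikov Corollary~\ref{vcor}; alternatively one could restrict $p$ to a two-dimensional linear subspace through the origin containing two given points and apply Corollary~\ref{vcor} and Proposition~\ref{rcsdet} directly, but the shift-theorem route is shorter now that Theorem~\ref{rzshift} is available.
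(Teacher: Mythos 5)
Your proposal is correct and follows essentially the same route as the paper: the paper proves convexity of $C(p)\setminus Z(p)$ by shifting by a point $a$ via Theorem~\ref{rzshift} and invoking star-shapedness of the resulting $C(q)\setminus Z(q)$ (your ``observation''), and then obtains convexity of $C(p)$ by the same radial argument, phrased there as writing $C(p)=\bigcap_{\la\in(0,1)}\{a\mid\la a\in C(p)\setminus Z(p)\}$, an intersection of convex sets. The details you supply (e.g.\ $Z(p(x+a))=Z(p)-a$ and the case split $\la<1$ versus $\la=1$) are exactly what the paper leaves implicit, so nothing is missing.
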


\begin{proof}
Call for the moment a subset $S\subseteq\R^n$ \emph{star-shaped} if for all $x\in S$, we have $\la x\in S$ for each $\la\in[0,1]$.
Clearly, a subset $S\subseteq\R^n$ is convex if and only if $S-a$ is star-shaped for each $a\in S$.
To show that $C(p)\setminus Z(p)$ is convex, we therefore fix $a\in C(p)\setminus Z(p)$ and show that
$(C(p)\setminus Z(p))-a$ is star-shaped. By Theorem \ref{rzshift},
$(C(p)\setminus Z(p))-a$ equals $C(q)\setminus Z(q)$ for some real zero polynomial $q\in\R[x]$ and therefore is obviously star-shaped by Definition \ref{dfrcs}.

Finally, to prove that $C(p)$ is convex, note that
\begin{multline*}
C(p)=\{a\in\R^n\mid\forall\la\in(0,1):\la a\in C(p)\setminus Z(p)\}\\
=\bigcap_{\la\in(0,1)}\{a\in\R^n\mid\la a\in C(p)\setminus Z(p)\}
\end{multline*}
is an intersection of convex sets and therefore convex.
\end{proof}


\section{The relaxation}

\subsection{The linear form associated to a polynomial}

All power series are formal. We refer to \cite[§3]{god}, \cite[§6.1]{rob} and \cite[§2]{rui} for an introduction to power series.

\begin{df}\label{dftrunc}
Suppose $a_\al\in\R$ for all $\al\in\N_0^n$ and consider the power series
\[p=\sum_{\al\in\N_0^n}a_\al x^\al\in\R[[x]].\] Then we call for $d\in\N_0$, the polynomial
\[\trunc_dp:=\sum_{\substack{\al\in\N_0^n\\|\al|\le d}}a_\al x^\al\in\R[x]\] the \emph{truncation of $p$ at degree $d$}.
\end{df}

\begin{df}\label{dflogexp}
Let $p\in\R[[x]]$ be a power series.
\begin{enumerate}[(a)]
\item
If $p$ has constant coefficient $0$, then the power series
\[\exp p:=\sum_{k=0}^\infty\frac{p^k}{k!}\in\R[[x]]\]
is well-defined because all monomials appearing in $p^k$ have degree at least $k$ so that
\[\trunc_d \exp p=\trunc_d\sum_{k=0}^d\frac{p^k}{k!}\]
for all $d\in\N_0$. We call it the \emph{exponential} of $q$.
\item
If $p$ has constant coefficient $1$, then the power series
\[\log p:=\sum_{k=1}^\infty\frac{(-1)^{k+1}}k(p-1)^k\in\R[[x]]\]
is well-defined because all monomials appearing in $(p-1)^k$ have degree at least $k$ so that
\[\trunc_d\log p=\trunc_d\sum_{k=1}^d\frac{(-1)^{k+1}}k(p-1)^k\]
for all $d\in\N_0$. We call it the \emph{logarithm} of $p$.
\end{enumerate}
\end{df}

The following can be found for example in \cite[Lemma 4.1]{god},
\cite[§5.4.2, Proposition 2]{rob} or \cite[§6.1.3]{rob}

\begin{pro}\label{logexp}
Consider the sets
\begin{align*}
A&:=\{p\in\R[[x]]\mid\trunc_0p=0\}\qquad\text{and}\\
B&:=\{p\in\R[[x]]\mid\trunc_0p=1\}.
\end{align*}
Then the following hold:
\begin{enumerate}[(a)]
\item $\exp\colon A\to B$ and $\log\colon B\to A$ are inverse to each other.
\item $\exp(p+q)=(\exp p)(\exp q)$ for all $p,q\in A$
\item $\log(pq)=(\log p)+(\log q)$ for all $p,q\in B$
\end{enumerate}
\end{pro}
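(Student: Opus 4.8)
The plan is to treat this as a routine verification in the ring of formal power series, exploiting the grading by total degree. The key structural fact is that the filtration $\R[[x]] \supseteq \mathfrak m \supseteq \mathfrak m^2 \supseteq \cdots$ by powers of the maximal ideal $\mathfrak m = \{p \mid \trunc_0 p = 0\}$ is exhaustive in the sense that $\bigcap_k \mathfrak m^k = 0$, so two power series are equal if and only if their truncations at every degree $d$ agree. This is exactly the device already used in Definition~\ref{dflogexp} to make $\exp$ and $\log$ well-defined, and it reduces all three claims to identities of polynomials modulo $\mathfrak m^{d+1}$, where everything is a finite computation.

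For part (b) I would first check that $\exp$ indeed maps $A$ into $B$ (the constant term of $\sum_k p^k/k!$ is $1$ since $p \in \mathfrak m$) and similarly that $\log$ maps $B$ into $A$. Then for $p, q \in A$ I would establish $\exp(p+q) = (\exp p)(\exp q)$ by the usual binomial-coefficient manipulation: working modulo $\mathfrak m^{d+1}$, both sides are polynomials, and
\[
\trunc_d\!\left(\sum_{k=0}^d \frac{(p+q)^k}{k!}\right) = \trunc_d\!\left(\sum_{k=0}^d \frac{1}{k!}\sum_{j=0}^k \binom kj p^j q^{k-j}\right) = \trunc_d\!\left(\sum_{i+j\le d}\frac{p^i}{i!}\frac{q^j}{j!}\right),
\]
which is the truncation of $(\exp p)(\exp q)$. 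Since $d$ is arbitrary, the two series coincide. Part (c), that $\log$ is additive on $B$, then follows formally once (a) is in hand: writing $p = \exp u$, $q = \exp v$ with $u = \log p$, $v = \log q$ in $A$, we get $pq = \exp(u+v)$ by (b), hence $\log(pq) = u+v = \log p + \log q$ provided $\log \circ \exp = \mathrm{id}$.

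So the real content is part (a): the mutual inverseness of $\exp\colon A \to B$ and $\log\colon B \to A$. I expect this to be the main obstacle, though it is still elementary. The cleanest route is to prove the single-variable formal identities $\log(\exp t) = t$ and $\exp(\log(1+t)) = 1+t$ in $\R[[t]]$ first — these are classical and can be verified, e.g., by formally differentiating and comparing constant terms, or by citing the univariate case from \cite{god} or \cite{rob} — and then transfer them to $\R[[x]]$ by the substitution principle: for $p \in A$ the assignment $t \mapsto p$ defines a continuous ($\mathfrak m$-adically, hence truncation-respecting) ring homomorphism $\R[[t]] \to \R[[x]]$ under which the univariate $\exp$ and $\log$ series pull back to the multivariate ones, so the identities persist. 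One must just be slightly careful that substitution commutes with the infinite sums defining $\exp$ and $\log$, which again comes down to the degree bookkeeping already set up. Alternatively one can avoid the substitution language entirely and prove $\trunc_d \log(\exp p) = \trunc_d p$ by induction on $d$, solving order by order for the coefficients, but the univariate-plus-substitution argument is shorter and less error-prone.
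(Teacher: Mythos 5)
Your proposal is correct and follows essentially the same path as the paper: the binomial computation for (b) and the deduction of (c) from (a) and (b) are identical, and your substitution-based reduction of (a) to the univariate identities (verified by formal differentiation or citation) is exactly the second of the two routes the paper indicates, with the same caveat about compatibility of substitution with the infinite sums. No gaps.
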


\begin{proof}
(a) can be proven in two different ways: One way is to play it back to known facts about converging power series from calculus
\cite[§5.4.2, Proposition 2]{rob}. Note that the argument given in \cite[Lemma 4.1]{god} looks innocent but in reality needs
good knowledge of multivariate power series \cite[§I.1, §I.2]{rui}. The other way to prove it is by using formal composition and derivation of power series:
Deduce the result from the univariate case $n=1$ by
formally deriving the formal composition (in either way) of the univariate logarithmic and exponential power series \cite[§6.1.3]{rob}.

(b) is an easy calculation:
\begin{multline*}
\exp(p+q)=\sum_{k=0}^\infty\frac{(p+q)^k}{k!}=\sum_{k=0}^\infty\frac1{k!}\sum_{i=0}^k\binom kip^iq^{k-i}\\
=\sum_{k=0}^\infty\sum_{i=0}^k\frac{p^i}{i!}\frac{q^{k-i}}{(k-i)!}=(\exp p)(\exp q)
\end{multline*}

(c) follows easily from (a) and (b).
\end{proof}

\begin{df}\label{dfriesz}
Let $p\in\R[[x]]$ satisfy $p(0)\ne0$ and let $d\in\N_0$. We define the linear form $L_{p,d}$ on $\R[x]$ \emph{associated to $p$
with respect to the virtual degree $d$}
by specifying it on the monomial basis of $\R[x]$, namely by setting
\[L_{p,d}(1)=d\]
and by requiring the identity of formal power series
\[-\log\frac{p(-x)}{p(0)}=\sum_{\substack{\al\in\N_0^n\\\al\ne0}}\frac1{|\al|}\binom{|\al|}\al L_{p,d}(x^\al)x^\al\]
to hold. If $p\in\R[x]$, then we call $L_p:=L_{p,\deg p}$ the linear form \emph{associated to $p$}.
\end{df}

\begin{ex}\label{moments3}
Suppose $p\in\R[[x]]$ such that
\[\trunc_3p=1+\sum_{i\in\{1,\ldots,n\}}a_ix_i+\sum_{\substack{i,j\in\{1,\ldots,n\}\\i\le j}}
a_{ij}x_ix_j+\sum_{\substack{i,j,k\in\{1,\ldots,n\}\\i\le j\le k}}a_{ijk}x_ix_jx_k\]
where $a_i,a_{ij},a_{ijk}\in\R$. Then
\begin{multline*}
\trunc_3(-\log p(-x))=
\trunc_3\left(\sum_{\ell=1}^3\frac{(-1)^\ell}\ell(p(-x)-1)^\ell\right)\\
=
\sum_{i\in\{1,\ldots,n\}}a_ix_i-
\sum_{\substack{i,j\in\{1,\ldots,n\}\\i\le j}}a_{ij}x_ix_j+\sum_{\substack{i,j,k\in\{1,\ldots,n\}\\i\le j\le k}}a_{ijk}x_ix_jx_k\\
+\frac12\left(\sum_{i\in\{1,\ldots,n\}}a_ix_i\right)^2-\left(\sum_{i\in\{1,\ldots,n\}}a_ix_i\right)\left(\sum_{\substack{i,j\in\{1,\ldots,n\}\\i\le j}}a_{ij}x_ix_j\right)
+\frac13\left(\sum_{i\in\{1,\ldots,n\}}a_ix_i\right)^3\\
\end{multline*}
It follows that
\begin{align*}
L_{p,d}(x_i)&=a_i,\\
\frac12L_{p,d}(x_i^2)&=-a_{ii}+\frac12a_i^2,\\
\frac13L_{p,d}(x_i^3)&=a_{iii}-a_ia_{ii}+\frac13a_i^3
\end{align*}
for all $i\in\{1,\ldots,n\}$,
\begin{align*}
L_{p,d}(x_ix_j)&=-a_{ij}+a_ia_j,\\
L_{p,d}(x_i^2x_j)&=a_{iij}-a_ia_{ij}-a_ja_{ii}+a_i^2a_j
\end{align*}
for all $i,j\in\{1,\ldots,n\}$ with $i<j$, and 
\begin{align*}
2L_{p,d}(x_ix_jx_k)&=a_{ijk}-a_ia_{jk}-a_ja_{ik}-a_ka_{ij}+2a_ia_ja_k
\end{align*}
for all $i,j,k\in\{1,\ldots,n\}$ with $i<j<k$.
\end{ex}

\begin{pro}\label{logprop}
\begin{enumerate}[(a)]
\item If $p,q\in\R[[x]]$ satisfy $p(0)\ne0\ne q(0)$ and $d,e\in\N_0$, then \[L_{pq,d+e}=L_{p,d}+L_{q,e}.\]
\item If $p,q\in\R[x]$ satisfy $p(0)\ne0\ne q(0)$, then \[L_{pq}=L_p+L_q.\]
\end{enumerate}
\end{pro}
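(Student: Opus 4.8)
The plan is to reduce both statements to the additivity of the formal logarithm, Proposition \ref{logexp}(c). Since a linear form on $\R[x]$ is uniquely determined by its values on the monomial basis $\{x^\alpha\mid\alpha\in\N_0^n\}$, for part (a) it suffices to show that $L_{pq,d+e}$ and $L_{p,d}+L_{q,e}$ agree on every monomial. On the constant monomial this is immediate from Definition \ref{dfriesz}: $L_{pq,d+e}(1)=d+e=L_{p,d}(1)+L_{q,e}(1)$.

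For a nonconstant monomial $x^\alpha$ I would argue as follows. First note $(pq)(0)=p(0)q(0)\ne 0$, and as an identity of formal power series
\[\frac{(pq)(-x)}{(pq)(0)}=\frac{p(-x)}{p(0)}\cdot\frac{q(-x)}{q(0)},\]
where each of the three factors has constant coefficient $1$ and so lies in the set $B$ of Proposition \ref{logexp}. Applying $\log$ and invoking Proposition \ref{logexp}(c) yields
\[-\log\frac{(pq)(-x)}{(pq)(0)}=-\log\frac{p(-x)}{p(0)}-\log\frac{q(-x)}{q(0)}.\]
Now I would expand all three power series on both sides using the defining identity of Definition \ref{dfriesz} and compare the coefficient of $x^\alpha$. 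Because the scalar $\frac1{|\alpha|}\binom{|\alpha|}{\alpha}$ is nonzero, this gives $L_{pq,d+e}(x^\alpha)=L_{p,d}(x^\alpha)+L_{q,e}(x^\alpha)$, which completes part (a).

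Part (b) is then the special case $d=\deg p$, $e=\deg q$ of part (a): since $p$ and $q$ have nonzero constant coefficients they are nonzero polynomials, hence $pq\ne 0$ and $\deg(pq)=\deg p+\deg q$; therefore $L_{pq}=L_{pq,\deg(pq)}=L_{pq,\deg p+\deg q}=L_{p,\deg p}+L_{q,\deg q}=L_p+L_q$.

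I do not expect a serious obstacle here. The only points that need a little care are (i) observing that the normalization $r\mapsto r(-x)/r(0)$ lands in $B$, so that Proposition \ref{logexp}(c) is applicable, and (ii) justifying the comparison of coefficients — which is legitimate since two formal power series coincide exactly when all their coefficients coincide, and the normalizing constants $\frac1{|\alpha|}\binom{|\alpha|}{\alpha}$ are invertible in $\R$.
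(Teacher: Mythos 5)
Your proof is correct and takes essentially the same route as the paper's: both reduce part (a) to the additivity of the formal logarithm, Proposition \ref{logexp}(c), applied to the normalized series with constant coefficient $1$, and deduce part (b) from $\deg(pq)=\deg p+\deg q$. The only cosmetic difference is that the paper normalizes by assuming WLOG $p(0)=1=q(0)$, while you divide by the constant terms explicitly and spell out the coefficient comparison.
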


\begin{proof}
Part (b) follows from (a) by observing that $\deg(pq)=\deg p+\deg q$ for all $p,q\in\R[x]$.
To prove (a), we suppose WLOG $p(0)=1=q(0)$ and thus $(pq)(0)=1$. 
By Definition \ref{dfriesz},
we then have to show that the following identity of formal power series holds:
\[-\log((pq)(-x))=-\log(p(-x))-\log(q(-x)).\]
This follows from Proposition \ref{logexp}(c).
\end{proof}

\begin{lem}\label{thesame}
Let $L$ be a linear form on the vector subspace $V$ of $\R[x]$ generated by the monomials of degree $k$.
Then
\[\sum_{\substack{\al\in\N_0^n\\|\al|=k}}\frac1k\binom k\al L((Ux)^\al)(Ux)^\al\in V\]
is the same polynomial for all orthogonal matrices $U\in\R^{n\times n}$.
\end{lem}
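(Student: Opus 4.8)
The plan is to show that the polynomial in question is intrinsic by identifying it with something manifestly independent of $U$. Observe first that the expression is homogeneous of degree $k$ and that it only involves the monomial-indexed data $L(x^\be)$ for $|\be|=k$; so what we really have is a linear map sending a linear form $L$ on $V$ to a homogeneous degree-$k$ polynomial, and we want to show that precomposing $L$ with the substitution $q\mapsto q(Ux)$ and postcomposing the output with the same substitution $x\mapsto Ux$ gives back the original output. Concretely, writing $T_U\colon V\to V$ for the linear isomorphism $T_U(q)=q(Ux)$ (which indeed maps degree-$k$ homogeneous polynomials to degree-$k$ homogeneous polynomials since $U$ is linear and invertible), and writing $\Phi(L)$ for the polynomial $\sum_{|\al|=k}\frac1k\binom k\al L(x^\al)x^\al$, the claim is exactly that $\Phi(L\circ T_U)=T_U(\Phi(L))$, i.e. $\Phi$ is equivariant for the $\mathrm{O}(n)$-action on $V$ by $T_U$.

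The key step is to recognize the combinatorial coefficient: for a homogeneous polynomial $q=\sum_{|\al|=k}c_\al x^\al$ of degree $k$ one has the identity
\[
\sum_{\substack{\al\in\N_0^n\\|\al|=k}}\frac1k\binom k\al c_\al\, q(x)\Big|_{\text{read off}} \;=\;\text{a pairing},
\]
more precisely the bilinear form
\[
\langle q,r\rangle:=\sum_{\substack{\al\in\N_0^n\\|\al|=k}}\frac{\al!}{k!}\,[x^\al]q\cdot[x^\al]r
\]
on degree-$k$ forms (the \emph{apolar} or \emph{Bombieri} inner product) is $\mathrm{O}(n)$-invariant: $\langle q(Ux),r(Ux)\rangle=\langle q,r\rangle$ for every orthogonal $U$. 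Granting this, note that $\Phi(L)$ is characterized by $\langle \Phi(L),r\rangle = \frac1k\, L(r)$ for all $r\in V$ (match coefficients: $[x^\al]\Phi(L)=\frac1k\binom k\al L(x^\al)$, and $\binom k\al\frac{\al!}{k!}=\frac1{\text{denominator cancels}}$ leaves precisely $\frac1k L(x^\al)$ after pairing). Then for orthogonal $U$,
\[
\langle \Phi(L\circ T_U),r\rangle=\tfrac1k(L\circ T_U)(r)=\tfrac1k L(r(Ux))=\langle\Phi(L),r(Ux)\rangle=\langle\Phi(L)(Ux),r\rangle,
\]
using invariance of $\langle\cdot,\cdot\rangle$ in the last step; since this holds for all $r\in V$ and the pairing is nondegenerate on $V$, we conclude $\Phi(L\circ T_U)=\Phi(L)(Ux)=T_U(\Phi(L))$, which is the assertion.

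The main obstacle is proving the $\mathrm{O}(n)$-invariance of the apolar pairing $\langle\cdot,\cdot\rangle$. The cleanest route is to exhibit it as a differential-operator pairing: for $r=\sum c_\be x^\be$ let $r(\partial)=\sum c_\be \partial^\be$; then $\langle q,r\rangle=\frac1{k!}\,r(\partial)\,q$ (a constant, since both are degree $k$), and one checks that $(r(Ux))(\partial)$ applied to $q(Ux)$ equals $(r(\partial)q)(Ux)$, which for a degree-$k$ result is just the constant $r(\partial)q$; the point is the chain rule $\partial_i(f(Ux))=\sum_j U_{ji}(\partial_j f)(Ux)$ together with $U^TU=I_n$, which makes the first-order operators $\sum_i(Ux)_i$-directional derivatives transform orthogonally, hence the symmetric combination $r(\partial)$ is preserved. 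Alternatively one can avoid any invariance statement and argue directly: expand $L((Ux)^\al)$ and $(Ux)^\al$ in monomials, interchange sums, and collapse the inner sum over $\al$ using the multinomial theorem $\sum_{|\al|=k}\binom k\al y^\al z^\al=\langle y,z\rangle^k$-type identities — but this is exactly the same computation in disguise and is messier, so I expect the operator-pairing formulation to be the efficient one. Everything else (homogeneity, that $T_U$ preserves $V$, nondegeneracy of the pairing, coefficient bookkeeping) is routine.
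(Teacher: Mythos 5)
Your route is genuinely different from the paper's and it works, but only after you fix a transpose slip. Writing $T_U(q):=q(Ux)$ and $\Phi(L):=\sum_{|\al|=k}\frac1k\binom k\al L(x^\al)x^\al$, the sum in the lemma is exactly $T_U\bigl(\Phi(L\circ T_U)\bigr)$, so the statement to prove is $\Phi(L\circ T_U)=T_{U^T}(\Phi(L))$, \emph{not} $\Phi(L\circ T_U)=T_U(\Phi(L))$ as you assert; the latter is false in general (already for $k=1$: if $\Phi(L)=x_1$ then $\Phi(L\circ T_U)=u_{11}x_1+u_{21}x_2=\Phi(L)(U^Tx)$, which differs from $\Phi(L)(Ux)$ for non-symmetric $U$), and if it held together with the true identity it would force $\Phi(L)(U^2x)=\Phi(L)$ for every orthogonal $U$. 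The same slip reappears in the last step of your displayed chain: invariance of the apolar pairing gives $\langle\Phi(L),r(Ux)\rangle=\langle\Phi(L)(U^Tx),r\rangle$, not $\langle\Phi(L)(Ux),r\rangle$. With that correction your chain proves precisely $\Phi(L\circ T_U)=T_{U^T}(\Phi(L))$, hence $T_U(\Phi(L\circ T_U))=\Phi(L)$, which is the lemma. Everything else is sound: the coefficient bookkeeping $\langle\Phi(L),r\rangle=\frac1k L(r)$ is correct, the pairing is nondegenerate (diagonal in the monomial basis), and the $\mathrm{O}(n)$-invariance via $\langle q,r\rangle=\frac1{k!}\,r(\partial)q$, the chain rule and $UU^T=I_n$ can be made rigorous exactly as you sketch (reduce to linear forms and use multiplicativity of $r\mapsto r(\partial)$).

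For comparison, the paper does the ``messier direct computation'' you mention as the alternative: it rewrites $\sum_{|\al|=k}\binom k\al L((Ux)^\al)(Ux)^\al$ as a sum over index tuples $(i_1,\ldots,i_k)$, expands each factor $u_{i_j}x$ in coordinates, interchanges the summations, and collapses using $\sum_i u_{ij}u_{i\ell}=\delta_{j\ell}$; so your guess about what the ``disguised'' computation looks like is exactly the paper's proof. The paper's argument is elementary and self-contained, with no auxiliary structure to verify; yours isolates the conceptual reason the lemma holds — the polynomial is the dual of $L$ under an $\mathrm{O}(n)$-invariant (Bombieri/apolar) inner product — which is more reusable (e.g.\ it immediately explains Proposition \ref{rotate}) at the cost of introducing the pairing and proving its invariance, which is essentially the same orthogonality computation packaged once and for all.
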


\begin{proof}
Denote the $i$-th line of $U$ by $u_i$ (so that $u_i$ is a row vector) and the $j$-th entry of $u_i$ by $u_{ij}$ for all $i,j\in\{1,\ldots,n\}$. In the following, we often form the product of a row vector $u\in\R^n\subseteq\R[x]^n$ with the column vector $x\in\R[x]^n$ which is of course an element of $\R[x]$.
Then
\begin{multline*}
\sum_{|\al|=k}\binom k\al L((Ux)^\al)(Ux)^\al=\sum_{i_1,\ldots,i_k=1}^nL(u_{i_1}x\dotsm u_{i_k}x)u_{i_1}x\dotsm u_{i_k}x\\
=\sum_{i_1,\ldots,i_k=1}^n\sum_{j_1,\ldots,j_k=1}^nu_{i_1j_1}\dotsm u_{i_kj_k}L(x_{j_1}\dotsm x_{j_k})\sum_{\ell_1,\ldots,\ell_k=1}^n
u_{i_1\ell_1}\dotsm u_{i_k\ell_k}x_{\ell_1}\dotsm x_{\ell_k}\\
=\sum_{\ell_1,\ldots,\ell_k=1}^n\sum_{j_1,\ldots,j_k=1}^n\left(\sum_{i_1,\ldots,i_k=1}^nu_{i_1j_1}\dotsm u_{i_kj_k}
u_{i_1\ell_1}\dotsm u_{i_k\ell_k}\right)L(x_{j_1}\dotsm x_{j_k})x_{\ell_1}\dotsm x_{\ell_k}\\
=\sum_{\ell_1,\ldots,\ell_k=1}^n\sum_{j_1,\ldots,j_k=1}^n
\Bigg(\underbrace{\sum_{i_1=1}^nu_{i_1j_1}u_{i_1\ell_1}}_{\tiny=\begin{cases}1&\text{if }j_1=\ell_1\\0&\text{otherwise}\end{cases}}\Bigg)
\dotsm
\Bigg(\underbrace{\sum_{i_k=1}^nu_{i_kj_k}u_{i_k\ell_k}}_{\tiny=\begin{cases}1&\text{if }j_k=\ell_k\\0&\text{otherwise}\end{cases}}\Bigg)
L(x_{j_1}\dotsm x_{j_k})x_{\ell_1}\dotsm x_{\ell_k}\\
=\sum_{\ell_1,\ldots,\ell_k=1}^n
L(x_{\ell_1}\dotsm x_{\ell_k})x_{\ell_1}\dotsm x_{\ell_k}=\sum_{|\al|=k}\binom k\al L(x^\al)x^\al
\end{multline*}
for all orthogonal matrices $U\in\R^{n\times n}$. Multiplying with $\frac1k$ gives the result.
\end{proof}

\begin{pro}\label{rotate}
Let $U\in\R^{n\times n}$ be an orthogonal matrix.
\begin{enumerate}[(a)]
\item If $p\in\R[[x]]$ with $p(0)\ne0$ and $d\in\N_0$, then
\[L_{p(Ux),d}(q(Ux))=L_{p,d}(q).\]
\item If $p\in\R[x]$ with $p(0)\ne0$, then
\[L_{p(Ux)}(q(Ux))=L_p(q).\]
\end{enumerate}
\end{pro}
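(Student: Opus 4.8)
The plan is to deduce (b) from (a) exactly as Proposition \ref{logprop}(b) was deduced from Proposition \ref{logprop}(a): since $U$ is invertible, $\deg p(Ux)=\deg p$, so $L_{p(Ux)}=L_{p(Ux),\deg p}$, $L_p=L_{p,\deg p}$, and (a) applies with $d=\deg p$. For (a), fix $U$ orthogonal, abbreviate $L:=L_{p,d}$ and $M:=L_{p(Ux),d}$, and note $(p(Ux))(0)=p(0)\ne 0$, so $M$ is defined with $M(1)=d=L(1)$. The claimed identity $M(q(Ux))=L(q)$ is linear in $q\in\R[x]$, and writing $q=\sum_k q_k$ with $q_k$ homogeneous of degree $k$ we get $q(Ux)=\sum_k q_k(Ux)$ with each $q_k(Ux)$ homogeneous of degree $k$; so it suffices to fix $k\in\N$ and prove $M(q(Ux))=L(q)$ for all homogeneous $q$ of degree $k$ (the case $k=0$ being $M(1)=L(1)$).

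For a linear form $\ell$ on the span of the degree-$k$ monomials set $\sigma_k(\ell):=\sum_{|\al|=k}\frac1k\binom k\al\ell(x^\al)x^\al$, a homogeneous polynomial of degree $k$; reading off the coefficient of each $x^\al$ shows $\ell\mapsto\sigma_k(\ell)$ is a bijection onto the homogeneous polynomials of degree $k$. By Definition \ref{dfriesz} (grouping the monomials by total degree) we have $-\log\frac{p(-x)}{p(0)}=\sum_{k\ge1}\sigma_k(L)$ and, using $(p(Ux))(-x)=p(-Ux)$, likewise $-\log\frac{p(-Ux)}{p(0)}=\sum_{k\ge1}\sigma_k(M)$. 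The substitution homomorphism $\R[[x]]\to\R[[x]]$, $g\mapsto g(Ux)$, preserves degrees of homogeneous components, hence is $(x)$-adically continuous and commutes with $\log$; applying it to the first identity turns it into $-\log\frac{p(-Ux)}{p(0)}=\sum_{k\ge1}(\sigma_k(L))(Ux)$. Equating homogeneous parts of degree $k$ gives $\sigma_k(M)=(\sigma_k(L))(Ux)$.

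The final step converts this rotation of the associated form into the desired rotation of the linear form, and this is where Lemma \ref{thesame} enters. For $\tilde L(q):=L(q(Ux))$ we have $\sigma_k(\tilde L)=\sum_{|\al|=k}\frac1k\binom k\al L((Ux)^\al)x^\al$; Lemma \ref{thesame} applied to $L$ and $U$ says $\sum_{|\al|=k}\frac1k\binom k\al L((Ux)^\al)(Ux)^\al=\sigma_k(L)$, and substituting $x\mapsto U^{-1}x=U^Tx$ (which does not touch the scalars $L((Ux)^\al)$) yields $\sigma_k(\tilde L)=(\sigma_k(L))(U^{-1}x)$. The same computation with $M$ gives $\sigma_k(\tilde M)=(\sigma_k(M))(U^{-1}x)$ for $\tilde M(q):=M(q(Ux))$. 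Combining,
\[\sigma_k(\tilde M)=(\sigma_k(M))(U^{-1}x)=\bigl((\sigma_k(L))(Ux)\bigr)(U^{-1}x)=\sigma_k(L),\]
so injectivity of $\sigma_k$ forces $\tilde M=L$ on homogeneous polynomials of degree $k$, i.e. $M(q(Ux))=L(q)$ for all such $q$.

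I expect the only real difficulty to be bookkeeping. Because $x\mapsto Ux$ is not monomial-wise diagonal, its effect on the linear form cannot be read off from Definition \ref{dfriesz} but must be channelled through Lemma \ref{thesame}, and one must track carefully which of $U$ and $U^{-1}=U^T$ occurs at each stage (the associated form is composed with $U$, the linear form is pulled back, i.e.\ composed with $U^{-1}$). A secondary, routine point is justifying that the formal substitution commutes with the formal logarithm. As an alternative to the third paragraph, one can finish via the symmetric-tensor computation underlying Lemma \ref{thesame}: with $T_{i_1\dots i_k}:=L(x_{i_1}\dotsm x_{i_k})$ and $S_{i_1\dots i_k}:=M(x_{i_1}\dotsm x_{i_k})$, the relation $\sigma_k(M)=(\sigma_k(L))(Ux)$ says $S$ equals $T$ transformed by $U$ in every slot, after which $M(q(Ux))=L(q)$ follows from one use of $U^TU=I_n$.
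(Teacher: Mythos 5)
Your proof is correct and takes essentially the same route as the paper's: reduce by linearity to homogeneous parts, use that the formal logarithm commutes with the orthogonal substitution to get $\sigma_k(L_{p(Ux),d})=(\sigma_k(L_{p,d}))(Ux)$ from Definition \ref{dfriesz}, then convert this back to a statement about the linear forms via Lemma \ref{thesame} and the substitution $x\mapsto U^Tx$, and compare coefficients. Your $\sigma_k$, $\tilde L$, $\tilde M$ bookkeeping is just a more explicit packaging of the paper's coefficient comparison (the computation for $\tilde L$ is not actually needed in the final step).
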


\begin{proof}
Part (b) follows easily from (a) since $\deg(p(Ux))=\deg p$ for all $p\in\R[x]$. To prove (a), fix
$p\in\R[[x]]$ with $p(0)\ne0$ and $d\in\N_0$. It suffices by linearity to prove
$L_{p(Ux),d}((Ux)^\al)=L_{p,d}(x^\al)$ for all $\al\in\N_0^n$ with $\al\ne0$.
WLOG $p(0)=1$. From Definition \ref{dflogexp}(b), one gets easily
\[\log(p(U(-x)))=(\log p)(-Ux).\]
This means by Definition \ref{dfriesz} that 
\[\sum_{\substack{\al\in\N_0^n\\\al\ne0}}\frac1{|\al|}\binom{|\al|}\al L_{p(Ux),d}(x^\al)x^\al=\sum_{\substack{\al\in\N_0^n\\\al\ne0}}\frac1{|\al|}\binom{|\al|}\al L_{p,d}(x^\al)(Ux)^\al.\]
We rewrite the left hand side by means of Lemma \ref{thesame} to obtain
\[\sum_{\substack{\al\in\N_0^n\\\al\ne0}}\frac1{|\al|}\binom{|\al|}\al L_{p(Ux),d}((Ux)^\al)(Ux)^\al=\sum_{\substack{\al\in\N_0^n\\\al\ne0}}\frac1{|\al|}\binom{|\al|}\al L_{p(x^\al),d}(Ux)^\al.\]
Substituting $U^Tx$ for $x$, we finally get
\[\sum_{\substack{\al\in\N_0^n\\\al\ne0}}\frac1{|\al|}\binom{|\al|}\al L_{p(Ux),d}((Ux)^\al)x^\al=\sum_{\substack{\al\in\N_0^n\\\al\ne0}}\frac1{|\al|}\binom{|\al|}\al L_{p,d}(x^\al)x^\al.\]
Comparing coefficients, we get the result.
\end{proof}

\begin{df}\label{defhomogeneous}
A polynomial is called \emph{homogeneous} if all its monomials are of the same degree. If $p\in\R[x]$ is a polynomial of
degree $d\in\N_0$, then the homogeneous polynomial
\[p^*:=x_0^d\;p\hspace{-0.3em}\left(\frac{x_1}{x_0},\ldots,\frac{x_n}{x_0}\right)\in\R[x_0,x]\]
is called its \emph{homogenization} (with respect to $x_0$). In addition, we set
\[p^*:=0\in\R[x_0,x].\]
\end{df}

The ``shifted homogenization'' of a polynomial used in Part (b) of the following lemma appears for real zero polynomials already in Brändén
\cite{bra1} and \cite{nt}.

\begin{lem}\label{extend1}
\begin{enumerate}[(a)]
\item For all $p\in\R[[x]]$ with $p(0)\ne0$, the power series
\[q:=(1+t)^d\,p\hspace{-0.3em}\left(\frac{x_1}{1+t},\ldots,\frac{x_n}{1+t}\right)\in\R[[t,x]]\]
satisfies $q(0)\ne0$ and
\[L_{q,d}(f)=L_{p,d}(f(1,x))\]
for all $d\in\N_0$ and $f\in\R[t,x]$.
\item For all $p\in\R[x]$ with $p(0)\ne0$, the polynomial
\[q:=p^*(1+t,x_1,\ldots,x_n)\in\R[t,x]\]
satisfies $q(0)\ne0$ and
\[L_{q}(f)=L_{p}(f(1,x))\]
for all $f\in\R[t,x]$.
\end{enumerate}
\end{lem}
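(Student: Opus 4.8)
The plan is to derive part (b) from part (a) and to prove part (a) by a direct computation with the defining identity of $L_{q,d}$ from Definition \ref{dfriesz}. First I would check the claim $q(0)\ne 0$: in (a), setting $t=0$ and $x=0$ gives $q(0) = 1^d\,p(0) = p(0)\ne 0$, and in (b), $q(0)=p^*(1,0,\ldots,0)$, which by Definition \ref{defhomogeneous} equals the coefficient of $x_0^d$ in $p^*$, i.e.\ $p(0)\ne 0$ again. WLOG $p(0)=1$, so $q(0)=1$ as well and both $L_{p,d}$ and $L_{q,d}$ are governed by the $-\log$ identity.

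For part (a), the heart of the matter is to relate $-\log\frac{q(-t,-x)}{q(0)}$ to $-\log\frac{p(-x)}{p(0)}$. By definition of $q$ we have $q(-t,-x) = (1-t)^d\, p\!\left(\frac{-x_1}{1-t},\ldots,\frac{-x_n}{1-t}\right)$, so
\[
-\log q(-t,-x) = -d\log(1-t) - \log p\!\left(\tfrac{-x}{1-t}\right).
\]
Now I would expand the right-hand side using the defining series for $-\log p(-x)$ from Definition \ref{dfriesz}, with $x$ replaced by $\frac{x}{1-t}$, giving $\sum_{\al\ne 0}\frac{1}{|\al|}\binom{|\al|}{\al}L_{p,d}(x^\al)\,\frac{x^\al}{(1-t)^{|\al|}}$, plus the correction term $-d\log(1-t) = d\sum_{k\ge 1}\frac{t^k}{k}$. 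Comparing this with the defining series for $L_{q,d}$, namely $\sum_{\gamma\ne 0}\frac1{|\gamma|}\binom{|\gamma|}{\gamma}L_{q,d}((t,x)^\gamma)(t,x)^\gamma$ in the variables $(t,x)=(x_0,x_1,\ldots,x_n)$ (here $\gamma=(\gamma_0,\al)$), I would extract $L_{q,d}$ monomial by monomial. The key bookkeeping identity is the binomial expansion $\frac{1}{(1-t)^{m}} = \sum_{j\ge 0}\binom{m+j-1}{j}t^j$, which distributes the $x$-part of each term of $-\log p$ across all powers of $t$; combined with the multinomial identity $\binom{|\al|+j}{j,\al} = \binom{|\al|+j}{j}\binom{|\al|}{\al}$ this should make the coefficient of $t^j x^\al$ on both sides match, yielding $L_{q,d}(t^j x^\al) = L_{q,d}(x_0^j x^\al) = L_{p,d}(x^\al)$ for all $j\ge 0$, $\al\ne 0$ (the $t$-only terms, $\al=0$, are handled by the $-d\log(1-t)$ correction, giving $L_{q,d}(t^j)=d=L_{p,d}(1)$, consistent with $L_{q,d}(1)=d$). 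Since $f(1,x)$ is obtained from $f(t,x)$ by substituting $t=1$, i.e.\ by summing the coefficients of $t^j x^\al$ over $j$ for each fixed $\al$, and since $L_{p,d}$ is linear, the identity $L_{q,d}(t^jx^\al)=L_{p,d}(x^\al)$ gives $L_{q,d}(f) = L_{p,d}(f(1,x))$ by linearity, which is the claim.

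For part (b), I would observe that $q = p^*(1+t,x_1,\ldots,x_n) = (1+t)^d\, p\!\left(\frac{x_1}{1+t},\ldots,\frac{x_n}{1+t}\right)$ is exactly the power series of part (a) truncated — actually it is literally the same expression, except that in (b) it is a genuine polynomial and $d=\deg p$. So $L_q = L_{q,\deg q}$, and one needs $\deg q = d$; this holds because the top-degree part of $p^*$ in the $x_1,\ldots,x_n$ variables is the degree-$d$ form of $p$, which is nonzero, and substituting $x_0 = 1+t$ does not lower the total $x$-degree. Then (b) is the specialization of (a) to $d=\deg p$ together with $L_p = L_{p,\deg p}$, $L_q = L_{q,\deg q} = L_{q,d}$.

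The main obstacle I anticipate is the combinatorial coefficient matching in part (a): correctly tracking how the factor $\frac{1}{(1-t)^{|\al|}}$ redistributes weight among the monomials $t^j x^\al$ and verifying that the multinomial coefficients $\binom{|\gamma|}{\gamma}$ on the $L_{q,d}$ side absorb exactly the binomial factors produced by expanding $(1-t)^{-|\al|}$, so that every coefficient collapses to $L_{p,d}(x^\al)$ independently of $j$. The correction term $-d\log(1-t)$ must also be checked to interact correctly only with the $\al = 0$ monomials. Everything else is bookkeeping with the formal identities from Proposition \ref{logexp} and Definition \ref{dfriesz}.
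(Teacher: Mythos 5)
Your proposal is correct and follows essentially the same route as the paper's proof: after normalizing $p(0)=1$, both split $-\log q(-t,-x)$ via Proposition \ref{logexp}(c) into $-d\log(1-t)$ plus $-\log p(-x/(1-t))$, substitute $x/(1-t)$ into the defining identity of $L_{p,d}$, expand $(1-t)^{-|\al|}$ and match coefficients of $t^kx^\al$ against the defining series of $L_{q,d}$ (the paper verifies the same coefficient identity you cite, only deriving the negative binomial expansion by a stars-and-bars count), and both obtain (b) from (a) via $\deg q=\deg p$.
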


\begin{proof}
Part (b) follows from (a) by setting $d:=\deg p$ and observing that $\deg p=\deg q$. To prove (a), we let $d\in\N_0$ and
suppose WLOG $p(0)=1$ so that $q(0)=1$. By Definition \ref{dfriesz}, it remains to show the identity 
\[-\log(q(-t,-x_1,\ldots,-x_n))=\sum_{\substack{k\in\N_0\\\al\in\N_0^n\\(k,\al)\ne0}}\frac1{k+|\al|}\binom{k+|\al|}{k\ \,\al_1\ \ldots\ \al_n}L_p(x^\al)t^kx^\al\]
of formal power series. By Proposition \ref{logexp}(c), it suffices to prove the identities
\[-\log((1-t)^d)=\sum_{k=1}^\infty\frac1kL_{p,d}(1)t^k\]
and
\[-\log\left(p\hspace{-0.3em}\left(\frac{-x_1}{1-t},\ldots,\frac{-x_n}{1-t}\right)\right)=
\sum_{\substack{k\in\N_0\\\al\in\N_0^n\\\al\ne0}}\frac1{k+|\al|}\binom{k+|\al|}{k\ \,\al_1\ \ldots\ \al_n}L_{p,d}(x^\al)t^kx^\al\]
where one should note that the argument of the logarithm in the second identity is a power series since
\[\frac1{1-t}=\sum_{i=0}^\infty t^i.\]
Again by Proposition \ref{logexp}(c), we get $-\log((1-t)^d)=d\log(1-t)$. Together with $L_{p,d}(1)=d$ and Definition \ref{dflogexp}(b),
this yields the first identity. To prove the second identity, we substitute \[\frac{x_i}{1-t}=x_i\sum_{i=0}^\infty t^i\] for $x_i$
in the defining identity of $L_{p,d}$ from Definition \ref{dfriesz} for each $i\in\{1,\ldots,n\}$ to see that its left hand side equals
\[\sum_{\substack{\al\in\N_0^n\\\al\ne0}}\frac1{|\al|}\binom{|\al|}\al L_{p,d}(x^\al)\frac{x^\al}{(1-t)^{|\al|}}.\]
So it remains to show that for $\al\in\N_0^n$ with $\al\ne0$, we have
\[\frac{\binom{|\al|}\al}{|\al|(1-t)^{|\al|}}=\sum_{k=0}^\infty\frac1{k+|\al|}\binom{k+|\al|}{k\ \,\al_1\ \ldots\ \al_n}t^k.\]
Fix $\al\in\N_0^n$ with $\al\ne0$. The multinomial coefficient on the right hand side divided by the one on the left
hand side yields equals the binomial coefficient $\binom{k+|\al|}k$ which becomes
$\binom{k+|\al|-1}{|\al|-1}$ when multiplied with $\frac{|\al|}{k+|\al|}$. So it remains to show that
\[\left(\sum_{i=0}^\infty t^i\right)^{|\al|}=\sum_{k=0}^\infty\binom{k+|\al|-1}{|\al|-1}t^k.\]
This is clear since the number of tuples of length $|\al|$ of nonnegative integers that sum up to $k$ is the binomial
coefficient on the right hand side. Indeed, choosing such a tuple amounts to partition $\{1,\ldots,k\}$ into $|\al|$ discrete intervals.
This in turn is equivalent to choosing $|\al|-1$ elements as separating landmarks in the set $\{1,\ldots,k+|\al|-1\}$.
\end{proof}

\begin{df}
Let $p\in\R[x]$ and $a\in\R^n$. Then we define the \emph{$a$-transform} $p[a]$ of $p$ by
\[p[a]:=p^*(1+a^Tx,x)\in\R[x].\]
In other words, $0[a]=0$ and if $p$ is a polynomial of degree $d\in\N_0$, then
\[p[a]=(1+a^Tx)^d\,p\hspace{-0.3em}\left(\frac{x_1}{1+a^Tx},\ldots,\frac{x_n}{1+a^Tx}\right).\]
\end{df}

\begin{rem}\label{degrem}
Let $p\in\R[x]$ and $a\in\R^n$.
\begin{enumerate}[(a)]
\item
Of course, we have always $\deg(p[a])\le\deg p$. Unfortunately, the degree of $p[a]$ might
sometimes be strictly smaller than the one of $p$, for example if $p=1+x_1\in\R[x_1]$ and $a=-1\in\R=\R^1$ where
$p[a]=(1-x_1)+x_1=1\in\R[x_1]$.
But the reader easily verifies that the degrees of $p$ and $p[a]$ coincide if and only if the homogeneous polynomial
$p^*(a^Tx,x)$ is not the zero polynomial.
\item It is an easy exercise to show that
\[p[a][b]=p[a+b]\]
in the case where $\deg p=\deg(p[a])$.
\end{enumerate}
\end{rem}

\begin{pro}\label{transformrz}
Suppose $p\in\R[x]$ is a real zero polynomial and $a\in\R^n$. Then $p[a]$ is again a real zero polynomial.
\end{pro}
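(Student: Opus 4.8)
The plan is to check the defining condition of Definition \ref{dfrz} directly for $p[a]$, turning each one-dimensional slice of $p[a]$ into a one-dimensional slice of $p$ through the Möbius substitution that is built into the $a$-transform. First I would normalize: since the real zero property is unchanged under multiplying $p$ by a nonzero scalar and $(cp)[a]=c\,p[a]$ because homogenization is $\R$-linear, I may assume $p(0)=1$, and then $d:=\deg p\in\N_0$ by Remark \ref{nonzeroatorigin}.

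Next I would fix $b\in\R^n$ and $\la\in\C$ with $p[a](\la b)=0$ and aim to show $\la\in\R$. Writing $c:=a^Tb\in\R$, the definition of the $a$-transform gives $p[a](\la b)=p^*(1+\la c,\la b)$. If $1+\la c=0$ then $c\ne0$ and $\la=-1/c$ is already real, so I may assume $1+\la c\ne0$. Using the identity $p^*(x_0,x)=x_0^d\,p(x/x_0)$, valid whenever $x_0\ne0$, I would then obtain
\[
0=p[a](\la b)=p^*(1+\la c,\la b)=(1+\la c)^d\,p(\mu b),\qquad\mu:=\frac{\la}{1+\la c}\in\C,
\]
and since $(1+\la c)^d\ne0$ this forces $p(\mu b)=0$. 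As $b\in\R^n$ and $p$ is a real zero polynomial, Definition \ref{dfrz} yields $\mu\in\R$.

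Finally I would invert the substitution: from $\mu(1+\la c)=\la$ one gets $\mu=\la(1-\mu c)$; the case $1-\mu c=0$ is impossible (it would force $\mu=1/c$ and $\mu=0$ simultaneously), so $1-\mu c\ne0$ and $\la=\mu/(1-\mu c)\in\R$, completing the proof.

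I do not expect a genuine obstacle here; the only delicate point is keeping track of the two poles $1+\la c=0$ and $1-\mu c=0$ of the Möbius map $\la\mapsto\mu=\la/(1+\la c)$ and its inverse $\mu\mapsto\mu/(1-\mu c)$, and once these are isolated the argument is simply that this map restricts to a bijection of $\R$ minus a point onto itself, so a zero of $p[a]$ on the ray $\R b$ corresponds to a real zero of $p$ on the same ray. An alternative packaging would be to first check that the shifted homogenization $p^*(1+t,x)\in\R[t,x]$ is a real zero polynomial and then note that substituting the real linear form $a^Tx$ for $t$ preserves the real zero property, but this only rearranges the same computation.
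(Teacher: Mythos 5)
Your proof is correct and follows essentially the same route as the paper: split on whether $1+\la a^Tb$ vanishes, pass to the real zero $\mu=\la/(1+\la a^Tb)$ of $p$ on the line $\R b$, and invert the Möbius substitution to conclude $\la\in\R$ (your direct exclusion of $1-\mu c=0$ replaces the paper's case distinction $\mu=0$ versus $\mu\ne0$, which is an immaterial difference). The normalization $p(0)=1$ is harmless but unnecessary, since $p(0)\ne0$ already holds by Remark \ref{nonzeroatorigin}.
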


\begin{proof}
Suppose $b\in\R^n$ and $\la\in\C$ such that $p[a](\la b)=0$. We have to show $\la\in\R$.
If $1+\la a^Tb=0$, then $a^Tb\ne0$ and thus $\la=-\frac1{a^Tb}\in\R$. Suppose therefore $1+\la a^Tb\ne0$.
Then $p\left(\frac\la{1+\la a^Tb}b\right)=0$ and thus $c:=\frac\la{1+\la a^Tb}\in\R$ since $p$ is a real zero polynomial.
If $c=0$, then $\la=0\in\R$ and we are done. Suppose therefore $c\ne0$. Then $\la=c+\la ca^Tb$ and hence
$\la(1-ca^Tb)=c\ne0$ which again implies $\la\in\R$.
\end{proof}

\begin{pro}\label{transform}
Let $a\in\R^n$, $p\in\R[x]$ with $p(0)\ne0$ and $d\in\N_0$. Then
\[L_{p[a],d}(f(x))=L_{p,d}(f(x+a))\]
for all $f\in\R[x]$.
\end{pro}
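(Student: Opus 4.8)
The plan is to unwind Definition~\ref{dfriesz} into an identity of formal power series in $x$, produce that identity by the substitution $t\mapsto a^Tx$ in the defining identity of an auxiliary linear form, and conclude by comparing coefficients. First some reductions. Replacing $p$ by $\frac1{p(0)}p$ multiplies $p[a]$ by the same scalar and leaves every linear form occurring unchanged, so I may assume $p(0)=1$, whence $p[a](0)=p^*(1,0,\ldots,0)=1$ as well. Since $L_{p[a],d}$ and $f\mapsto L_{p,d}(f(x+a))$ are linear, it is enough to prove the identity for $f=x^\be$, $\be\in\N_0^n$; for $\be=0$ both sides equal $d$ by Definition~\ref{dfriesz}, so assume $\be\ne0$. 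Abbreviating $\ell_\al:=L_{p,d}(x^\al)$ (so $\ell_0=d$) and expanding $(x+a)^\be=\sum_{\ga\le\be}\binom\be\ga a^{\be-\ga}x^\ga$, the claim to be proved becomes
\[L_{p[a],d}(x^\be)=\sum_{\ga\le\be}\binom\be\ga a^{\be-\ga}\ell_\ga .\]

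Let $q:=(1+t)^d\,p\bigl(\frac{x_1}{1+t},\ldots,\frac{x_n}{1+t}\bigr)\in\R[[t,x]]$ be the series of Lemma~\ref{extend1}(a), so that $q(a^Tx,x)=p[a]$ and, by that lemma, $L_{q,d}(t^kx^\al)=L_{p,d}(x^\al)=\ell_\al$ for all $k\in\N_0$ and $\al\in\N_0^n$. I substitute the constant-term-free series $a^Tx$ for $t$ in the defining identity
\[-\log q(-t,-x)=\sum_{(k,\al)\ne0}\frac1{k+|\al|}\binom{k+|\al|}{k\ \,\al_1\ \ldots\ \al_n}L_{q,d}(t^kx^\al)\,t^kx^\al\]
of $L_{q,d}$; this is a legitimate operation on $\R[[x]]$ because each monomial on the right then still feeds only finitely many total degrees. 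The left side becomes $-\log p[a](-x)=\sum_{\be\ne0}\frac1{|\be|}\binom{|\be|}\be L_{p[a],d}(x^\be)x^\be$, while on the right $L_{q,d}(t^kx^\al)$ collapses to $\ell_\al$ and $t^k$ to $(a^Tx)^k$. Since the coefficient of $x^\be$ in both series carries the nonzero factor $\frac1{|\be|}\binom{|\be|}\be$, comparing coefficients reduces the proposition to the purely combinatorial identity
\[\sum_{(k,\al)\ne0}\frac1{k+|\al|}\binom{k+|\al|}{k\ \,\al_1\ \ldots\ \al_n}\ell_\al\,(a^Tx)^k x^\al
=\sum_{\be\ne0}\frac1{|\be|}\binom{|\be|}\be\Bigl(\sum_{\ga\le\be}\binom\be\ga a^{\be-\ga}\ell_\ga\Bigr)x^\be .\]

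I expect this last identity to be the main obstacle, although it is a matter of bookkeeping rather than of ideas. The plan is to bring both sides to the common normal form
\[\sum_{(\al,\de)\ne0}\frac1{|\al|+|\de|}\binom{|\al|+|\de|}{\al_1\ \ldots\ \al_n\ \de_1\ \ldots\ \de_n}\ell_\al\,a^\de\,x^{\al+\de},\]
indexed by a pair $(\al,\de)\in\N_0^n\times\N_0^n$: on the left, expand $(a^Tx)^k=\sum_{|\de|=k}\binom k\de a^\de x^\de$ and reindex by $(\al,\de)$ with $k=|\de|$; on the right, substitute $\de:=\be-\ga$ and reindex by $(\ga,\de)$ with $\be=\ga+\de$. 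The matching then rests on the multinomial factorizations $\binom{k+|\al|}{k\ \,\al_1\ \ldots\ \al_n}=\binom{k+|\al|}k\binom{|\al|}\al$ and $\binom{|\be|}\be\binom\be\ga=\binom{|\ga|+|\de|}{|\ga|}\binom{|\ga|}\ga\binom{|\de|}\de$, together with the negative binomial expansion $\frac1{(1-a^Tx)^{|\al|}}=\sum_{m\ge0}\binom{m+|\al|-1}m(a^Tx)^m$ already used in the proof of Lemma~\ref{extend1} (which collapses the inner sum over $k$). Besides this, the only points needing genuine care are keeping the exclusion conditions ``$(k,\al)\ne0$'' and ``$\be\ne0$'' consistent under the reindexing, and the identification $q(a^Tx,x)=p[a]$, which uses exactly the homogenization built into the definition of the $a$-transform.
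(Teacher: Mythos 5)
Your argument is essentially the paper's own proof: both substitute $a^Tx$ for $t$ in the identity underlying Lemma~\ref{extend1}(a) and then match multinomial coefficients against the defining identity of $L_{p[a],d}$ from Definition~\ref{dfriesz}, and your normal-form bookkeeping (via $\binom{k+|\al|}{k\ \,\al_1\ \ldots\ \al_n}\binom k\de=\binom{|\al|+|\de|}{\al\ \de}$ and $\binom{|\be|}\be\binom\be\ga=\binom{|\ga|+|\de|}{\ga\ \de}$) is exactly the paper's chain of reindexings, while the negative-binomial expansion you invoke is in fact not needed for this comparison of coefficients. The one subtlety you share with the paper is the identification $q(a^Tx,x)=p[a]$, which---like the paper's use of $p^*(1-t,-x_1,\ldots,-x_n)$---implicitly takes $d=\deg p$, since $p[a]$ is defined through the homogenization to degree $\deg p$.
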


\begin{proof}
WLOG $p(0)=1$ and thus $p[a](0)=1$. For the duration of this proof, we denote by $\preceq$ the
the partial order on $\N_0^n$ which stand for the componentwise natural order, i.e.,
\[\al\preceq\be\,:\iff\forall i\in\{1,\ldots,n\}:\al_i\le\be_i\]
for $\al,\be\in\N_0^n$. From Lemma \ref{extend1}, we know that
\[-\log(p^*(1-t,-x_1,\ldots,-x_n))=\sum_{\substack{k\in\N_0\\\al\in\N_0^n\\(k,\al)\ne0}}\frac1{k+|\al|}\binom{k+|\al|}{k\ \,\al_1\ \ldots\ \al_n}L_{p,d}(x^\al)t^kx^\al.\]
Substituting $a^Tx$ for $t$ in this identity, yields
\begin{align*}
-\log(p[a](-x))
&=\sum_{\substack{k\in\N_0\\\al\in\N_0^n\\(k,\al)\ne0}}\frac1{k+|\al|}\binom{k+|\al|}{k\ \,\al_1\ \ldots\ \al_n}L_{p,d}(x^\al)(a^Tx)^kx^\al\\
&=\sum_{\substack{k\in\N_0\\\al\in\N_0^n\\(k,\al)\ne0}}\frac1{k+|\al|}\binom{k+|\al|}{k\ \,\al_1\ \ldots\ \al_n}L_{p,d}(x^\al)
\left(\sum_{\substack{\be\in\N_0^n\\|\be|=k}}\binom k\be a^\be x^\be\right)x^\al\\
&=\sum_{\substack{\al,\be\in\N_0^n\\(\al,\be)\ne0}}\frac1{|\al|+|\be|}\binom{|\al|+|\be|}{|\be|\ \al_1\ \ldots\ \al_n}L_{p,d}(x^\al)
\binom{|\be|}\be a^\be x^{\al+\be}\\
&=\sum_{\substack{\al,\be\in\N_0^n\\(\al,\be)\ne0}}\frac1{|\al|+|\be|}\binom{|\al|+|\be|}{\al\quad\be}L_{p,d}(x^\al)
a^\be x^{\al+\be}\\
&=\sum_{\substack{\ga\in\N_0^n\\\ga\ne0}}\sum_{\substack{\al\in\N_0^n\\\al\preceq\ga}}\frac1{|\ga|}\binom{|\ga|}{\al\quad\ga-\al} L_{p,d}(x^\al)
a^{\ga-\al}x^\ga\\
&=\sum_{\substack{\ga\in\N_0^n\\\ga\ne0}}\frac1{|\ga|}\binom{|\ga|}\ga
\left(\sum_{\substack{\al\in\N_0^n\\\al\preceq\ga}}\binom{\ga_1}{\al_1}\dotsm\binom{\ga_n}{\al_n}
L_{p,d}(x^\al)a^{\ga-\al}\right)x^\ga\\
&=\sum_{\substack{\ga\in\N_0^n\\\ga\ne0}}\frac1{|\ga|}\binom{|\ga|}\ga
L_{p,d}\left(\prod_{i=1}^n\sum_{\al_i=1}^{\ga_i}\binom{\ga_i}{\al_i}
{x_i}^{\al_i}{a_i}^{\ga_i-\al_i}\right)x^\ga\\
&=\sum_{\substack{\ga\in\N_0^n\\\ga\ne0}}\frac1{|\ga|}\binom{|\ga|}\ga
L_{p,d}((x+a)^\ga)x^\ga.
\end{align*}
This implies $L_{p[a],d}(x^\al)=L_{p,d}((x+a)^\al)$ for all $\al\in\N_0^n$ with $\al\ne0$.
By linearity, this shows the claim. 
\end{proof}

\subsection{Linear forms and traces}

\begin{lem}\label{convergence}
Let \[\sum_{k=1}^\infty a_kt^k\in\C[[t]]\qquad(a_1,a_2,\ldots\in\C)\] be a univariate power series with positive radius of convergence. Let
$p\in\C[x]$ be a polynomial with $p(0)=0$.
Then there is some $\ep>0$ such that for each $z\in\C$ with $|z|<\ep$, the series
\[\sum_{k=1}^\infty a_kp(z)^k\]
is absolutely convergent even after fully expanding $a_kp(z)^k$ into the obvious sum of $m^k$ many terms where $m$ is the number of monomials in p.
\end{lem}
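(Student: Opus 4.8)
\emph{Proof idea.} The plan is to dominate the entire fully expanded, doubly indexed family of terms by a single series of the shape $\sum_{k\ge1}|a_k|\,g(|z|)^k$, where $g$ is the polynomial obtained from $p$ by replacing each coefficient with its absolute value, and then to use the hypothesis $p(0)=0$: it says exactly that $g$ has no constant term, so $g(|z|)\to 0$ as $z\to 0$ and a small enough $z$ pushes $g(|z|)$ strictly inside the disk of convergence. To set up, write $p=\sum_{j=1}^m c_j x^{\al_j}$ with $c_1,\dots,c_m\in\C\setminus\{0\}$ and pairwise distinct $\al_1,\dots,\al_m\in\N_0^n$; since $p(0)=0$ we have $\al_j\ne0$, hence $|\al_j|\ge1$, for each $j$ (the case $p=0$ is trivial). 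Let $R\in(0,\infty]$ be the radius of convergence of $\sum_{k\ge1}a_kt^k$ and recall that consequently $\sum_{k\ge1}|a_k|r^k<\infty$ for every real $r$ with $0\le r<R$.

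First I would make the ``obvious'' expansion explicit: choosing one monomial from each of the $k$ factors of $p(z)^k$ and multiplying by $a_k$ yields the $m^k$ summands indexed by $(j_1,\dots,j_k)\in\{1,\dots,m\}^k$, the one for a given tuple having absolute value $|a_k|\,|c_{j_1}|\dotsm|c_{j_k}|\,|z|^{|\al_{j_1}|+\dots+|\al_{j_k}|}$. Summing these absolute values over all $k\ge1$ and all tuples, and factoring the tuple sum coordinate by coordinate, gives
\[\sum_{k=1}^\infty|a_k|\sum_{j_1,\dots,j_k=1}^m|c_{j_1}|\dotsm|c_{j_k}|\,|z|^{|\al_{j_1}|+\dots+|\al_{j_k}|}=\sum_{k=1}^\infty|a_k|\,g(|z|)^k,\qquad g(r):=\sum_{j=1}^m|c_j|\,r^{|\al_j|}.\]
So everything reduces to forcing $g(|z|)<R$ for $|z|$ small, and here $p(0)=0$ is the crucial input: since $|\al_j|\ge1$ for all $j$, we have $g(r)\le r\sum_{j=1}^m|c_j|=:Cr$ for all $r\in[0,1]$, with $C>0$.

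I would then finish by picking $\ep\in(0,1]$ with $C\ep<R$ (any $\ep\le1$ works if $R=\infty$): for $z\in\C$ with $|z|<\ep$ we get $g(|z|)\le C|z|<R$, hence $\sum_{k\ge1}|a_k|\,g(|z|)^k<\infty$, which is exactly the asserted absolute convergence of the fully expanded series. I do not expect a genuine obstacle; the only things to be careful about are that the hypothesis $p(0)=0$ is used precisely to guarantee $g(0)=0$ (so that shrinking $z$ helps at all), and the elementary bookkeeping that the expanded family sums to $\sum_k|a_k|g(|z|)^k$. Whatever precise meaning is attached to $p(z)$ (a scalar plugged into all variables, or a point of $\C^n$ of small norm), the argument is the same, with $|z|^{|\al_j|}$ replaced by the relevant product of coordinate moduli, itself dominated by $\|z\|^{|\al_j|}$ for any norm $\|\cdot\|$.
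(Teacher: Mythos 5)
Your proof is correct and follows essentially the same route as the paper's: dominate the fully expanded family termwise by $\sum_k|a_k|\rho^k$ with $\rho$ strictly inside the radius of convergence, using $p(0)=0$ to make the monomial values small for small $z$. The only cosmetic difference is that you bound via the explicit majorant $g(|z|)\le C|z|$, whereas the paper bounds each monomial by $\rho/m$ using continuity; both yield the same comparison series.
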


\begin{proof}
We have to bound the finite partial sums of the absolute values of the individual terms from above (see for example \cite[Definition 8.2.4]{t1} or \cite[§I.1]{rui}).
Write $p=t_1+\ldots+t_m$ where each $t_i$ involves only one monomial of $p$. 
We have to find $\ep>0$ and $C\in\R$ such that for each $z\in\C$ with $|z|<\ep$ and for each
$\ell\in\N$, we have
\[\sum_{k=1}^\ell\sum_{i_1=1}^m\ldots\sum_{i_k=1}^m|a_kt_{i_1}(z)\dotsm t_{i_k}(z)|\le C.\]
WLOG $m>0$. Let $r>0$ denote the radius of convergence of the univariate power series.
Choose $\rh\in\R$ with $0<\rh<r$. As $p(0)=0$, each $t_i$ vanishes at the origin.
By continuity, we can choose $\ep>0$ such that \[|t_i(z)|\le\frac\rh m\] for all $i\in\{1,\ldots,m\}$ and $z\in\C$ with $|z|<\ep$.
Since a univariate power series converges absolutely inside the radius of convergence
\cite[Theorem 4.1.6(b)]{t2}, we can set $C:=\sum_{k=1}^\infty |a_k|\rh^k<\infty$.
For each $\ell\in\N$ and $z\in\C$ with $|z|<\ep$, we have
\begin{align*}
\sum_{k=1}^\ell\sum_{i_1=1}^m\ldots\sum_{i_k=1}^m|a_kt_{i_1}\dotsm t_{i_k}|&\le
\sum_{k=1}^\ell m^k|a_k|\left(\frac\rh m\right)^k=\sum_{k=1}^\ell |a_k|\rh^k\le C.
\end{align*}
\end{proof}

\begin{df}\label{hurwitz}
Let $A_1,\dots,A_n\in\C^{d\times d}$ and $\al\in\N_0^n$. The $\al$-\emph{Hurwitz product} of $A_1,\ldots,A_n$ is the matrix
that arises as follows: First, form all words in $n$ letters where the $i$-th letter appears exactly $\al_i$ times. Then turn
each word into a product of matrices by substituting $A_i$ for the $i$-th letter. Finally, sum up all matrices that arise in this way.
Formally, we can define it as
\[\hur_\al(A_1,\ldots,A_n):=\sum_{\substack{f\colon\{1,\ldots,|\al|\}\to\{1,\ldots,n\}\\\forall i\in\{1,\ldots,n\}:\#f^{-1}(i)=\al_i}}A_{f(1)}\dotsm A_{f(|\al|)}\in\C^{d\times d}.\]
In particular, $\hur_0(A_1,\ldots,A_n)=I_d$.
\end{df}

\begin{pro}\label{hur}
Suppose $d\in\N_0$ and $A_1,\ldots,A_n\in\C^{d\times d}$ are hermitian. Then
\[p:=\det(I_d+x_1A_1+\ldots+x_nA_n)\in\R[x]\]
and
\[L_{p,d}(x^\al)=\frac1{\binom{|\al|}\al}\tr(\hur_\al(A_1,\ldots,A_n))\]
for all $\al\in\N_0^n$.
\end{pro}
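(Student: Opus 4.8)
The plan is to read off both sides of the asserted equality from the single power series $-\log p(-x)=-\log\det(I_d-N)$, where $N:=x_1A_1+\ldots+x_nA_n$, using the formal version of the identity ``$\log\det=\tr\log$''. First note that $p\in\R[x]$: for $a\in\R^n$ the matrix $I_d+\sum_ia_iA_i$ is hermitian, hence $p(a)=\det(I_d+\sum_ia_iA_i)\in\R$, and a polynomial in $\C[x]$ taking only real values on $\R^n$ has real coefficients; alternatively this is part of Proposition \ref{detexample}, since $I_d$ is definite. Moreover $p(0)=\det I_d=1\ne0$, so $L_{p,d}$ is defined for every $d\in\N_0$, with $L_{p,d}(1)=d$ (here $d$ is the size of the matrices).

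The heart of the matter is the identity
\[-\log p(-x)=\sum_{k=1}^\infty\frac1k\tr\big(N^k\big)\]
in $\C[[x]]$, the right-hand side being a well-defined power series because $N^k$ is homogeneous of degree $k$. I would prove this by restriction to lines: for each $a\in\R^n$ apply the continuous substitution homomorphism $\si_a\colon\C[[x]]\to\C[[t]]$, $x_i\mapsto a_it$, which commutes with $\log$ (by Definition \ref{dflogexp}) and with $\tr$ (applied entrywise); setting $B:=\sum_ia_iA_i$, it then suffices to check $-\log\det(I_d-tB)=\sum_{k\ge1}\frac{t^k}k\tr(B^k)$ in $\C[[t]]$. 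Now $B$ is hermitian, so by the spectral theorem $B=UDU^*$ with $U$ unitary and $D$ diagonal with real entries $\mu_1,\ldots,\mu_d$; hence $\det(I_d-tB)=\prod_{j=1}^d(1-t\mu_j)$, so by Proposition \ref{logexp}(c)
\[-\log\det(I_d-tB)=\sum_{j=1}^d\big(-\log(1-t\mu_j)\big)=\sum_{j=1}^d\sum_{k\ge1}\frac{\mu_j^k}kt^k=\sum_{k\ge1}\frac{t^k}k\tr(B^k),\]
using $\tr(B^k)=\sum_j\mu_j^k$. Since $\si_a(F)$ is obtained from $F\in\C[[x]]$ by evaluating each homogeneous part at $a$, and a polynomial vanishing on all of $\R^n$ is zero, two power series with the same image under every $\si_a$ coincide; this proves the identity. (One could instead argue analytically on a small polydisc, invoking Lemma \ref{convergence} to justify the rearrangements.)

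It remains to expand and compare coefficients. Writing $N^k=\big(\sum_{i=1}^nx_iA_i\big)^k=\sum_fx_{f(1)}\cdots x_{f(k)}A_{f(1)}\cdots A_{f(k)}$, the sum over all maps $f\colon\{1,\ldots,k\}\to\{1,\ldots,n\}$, and collecting the terms with a fixed multidegree $\al$ (so $|\al|=k$), Definition \ref{hurwitz} gives $N^k=\sum_{|\al|=k}\hur_\al(A_1,\ldots,A_n)x^\al$, hence $\tr(N^k)=\sum_{|\al|=k}\tr(\hur_\al(A_1,\ldots,A_n))x^\al$. Substituting into the identity above yields
\[-\log p(-x)=\sum_{\substack{\al\in\N_0^n\\\al\ne0}}\frac1{|\al|}\tr\big(\hur_\al(A_1,\ldots,A_n)\big)x^\al,\]
and comparison with the defining identity of $L_{p,d}$ in Definition \ref{dfriesz} (legitimate since $p(0)=1$) gives $\binom{|\al|}\al L_{p,d}(x^\al)=\tr(\hur_\al(A_1,\ldots,A_n))$, i.e.\ the claimed formula, for every $\al\ne0$; for $\al=0$ it also holds, since $L_{p,d}(1)=d=\tr(I_d)=\tr(\hur_0(A_1,\ldots,A_n))$ and $\binom00=1$. (In particular the right-hand side is real, as it must be, because reversing words shows each $\hur_\al(A_1,\ldots,A_n)$ is hermitian.)

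I expect the only genuinely delicate point to be the identity $\log\det=\tr\log$: its familiar proof is analytic and a priori valid only near the origin, which is why I would route through the line-by-line reduction to the scalar identity $-\log(1-t\mu)=\sum_{k\ge1}\mu^kt^k/k$; everything else is routine bookkeeping with the definitions of $L_{p,d}$ and of the Hurwitz product.
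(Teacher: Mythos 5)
Your proposal is correct, and its key step is argued by a genuinely different (and cleaner) route than the paper's. Both proofs have the same skeleton: establish the power series identity $-\log p(-x)=\sum_{\al\ne0}\frac1{|\al|}\tr(\hur_\al(A_1,\ldots,A_n))x^\al$, reduce its verification to the single hermitian matrix $B=a_1A_1+\ldots+a_nA_n$ on a line, use the spectral theorem to get $-\log\det(I_d-tB)$ as a sum over eigenvalues, and do the multinomial bookkeeping identifying the coefficients of $\tr(N^k)$ with traces of Hurwitz products. The difference is how the reduction to lines is justified. The paper argues analytically: it invokes Lemma \ref{convergence} and explicit operator-norm bounds to get absolute convergence of both sides near the origin, handles a rearrangement issue it itself calls subtle, and then applies the identity theorem for convergent multivariate power series from \cite{rui}, with the scalar identity carried out via the real logarithm of eigenvalues of $I_d-B$ for $\|B\|<1$. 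You instead stay entirely formal: the substitution homomorphisms $\si_a\colon x_i\mapsto a_it$ commute with the formal $\log$ of Definition \ref{dflogexp} (both sides agree modulo $t^{e+1}$ for every $e$) and with entrywise trace, and they jointly separate elements of $\C[[x]]$ because the homogeneous parts are polynomials determined by their values on $\R^n$; the resulting univariate identity $-\log\det(I_d-tB)=\sum_k\frac{t^k}k\tr(B^k)$ is then a consequence of Proposition \ref{logexp}(c) and $\tr(B^k)=\sum_j\mu_j^k$, with no convergence questions at all. What each approach buys: yours eliminates the analytic apparatus (Lemma \ref{convergence}, rearrangement, the identity theorem for convergent series) at the cost of the small observation that the formal $\log$ and Proposition \ref{logexp} extend verbatim from $\R[[\,\cdot\,]]$ to $\C[[\,\cdot\,]]$ — or one can simply note that the eigenvalues $\mu_j$ are real so everything stays in $\R[[t]]$; the paper's route needs no statement about separating homomorphisms but pays for it with the convergence and rearrangement discussion. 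Your treatment of the remaining points (reality of $p$, the case $\al=0$, hermitianness of $\hur_\al$) matches the paper's.
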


\begin{proof}
For $q\in\C[x]$, we denote by $q^*\in\C[x]$ the polynomial which arises from $q$ by applying the complex conjugation to the coefficients. We have
\begin{align*}
p^*&=(\det(I_d+x_1A_1+\ldots+x_nA_n))^*=(\det((I_d+x_1A_1+\ldots+x_nA_n)^T))^*\\
&=(\det(I_d+x_1A_1^T+\ldots+x_nA_n^T))^*=\det(I_d+x_1A_1^*+\ldots+x_nA_n^*)\\
&=\det(I_d+x_1A_1+\ldots+x_nA_n)=p
\end{align*}
and therefore $p\in\R[x]$. It is easy to see that Hurwitz products of hermitian matrices are again hermitian
and therefore have real diagonal entries and henceforth real trace.

It is clear that $L_{p,d}(1)=d=\tr(I_d)=\tr(\hur_0(A_1,\ldots,A_n))$.
By Definition \ref{dfriesz}, it remains to show that
\[(*)\qquad-\log(p(-x))=\sum_{\substack{\al\in\N_0^n\\\al\ne0}}\frac1{|\al|}
\tr(\hur_\al(A_1,\ldots,A_n))x^\al.\]
The real multivariate power series on both sides converge absolutely in a neighborhood of the origin in $\R^n$. For the left hand side this follows from
Lemma \ref{convergence} by recollecting terms belonging to the same monomial. For the right hand side, we argue as follows: The number of words of
length $k$ in $n$ letters is $n^k$.
If the entries of each $A_i$ are bounded in absolute value by $c>0$, then the entries of a product of the $A_i$
with $k$ many factors are bounded in absolute value by $d^{k-1}c^k$. Then the trace of such a product is bounded by $(dc)^k$. Hence we get
\[\sum_{\substack{\al\in\N_0^n\\|\al|=k}}\left|\frac1{|\al|}\hur_\al(A_1,\ldots,A_n)a^\al\right|\le(cdn)^k\|a\|_\infty^k\le\left(\frac12\right)^k
\]
for all $a\in\R^n$ with $\|a\|_\infty\le\frac1{2cdn}$.

By the identity theorem for multivariate real power series \cite[Proposition 2.9]{rui}, it suffices to show that
both series in $(*)$ converge absolutely to the same value in a neighborhood of the origin in $\C^n$. It is a subtle issue that uses Lemma \ref{convergence}
and rearrangement of absolutely convergent series (cf. Proposition \cite[Proposition 1.6]{rui}) to show that for all $a$ in a neighborhood of the origin in $\R^n$, the left hand side of $(*)$
evaluated at $a$ (i.e., $(-\log(p(-x)))(a)$ equals $-\log(p(-a))$ where the first $\log$ stands for the
operation on power series defined in Definition \ref{dflogexp} and the second one for the usual real logarithm.
On the other hand, the right hand side of $(*)$ evaluates at $a$ from a small neighborhood of the
origin to
\[\sum_{\substack{\al\in\N_0^n\\\al\ne0}}\frac1{|\al|}
\tr(\hur_\al(a_1A_1,\ldots,a_nA_n))=\sum_{k=1}^\infty\frac1k\tr((a_1A_1+\ldots+a_nA_n)^k).\]
It now suffices to fix $a\in\R^n$ such that the hermitian matrix
\[B:=a_1A_1+\ldots+a_nA_n\in\C^{d\times d}\] is of operator norm strictly less than $1$ (or equivalently
has all eigenvalues in the open real interval $(-1,1)$) and to show that
\[-\log(\det(I_n-B))=\sum_{k=1}^\infty\frac1k\tr(B^k).
\]
Since the operator norm is sub-multiplicative, the matrix
$C:=\sum_{k=1}^\infty\frac1k B^k\in\C^{d\times d}$ exists. Obviously, $C$ is hermitian and its eigenvalues, listed according to their algebraic multiplicity,  arise from the eigenvalues of $I_n-B$ by taking minus the
real logarithm. 
Since determinant and trace are the product and sum, respectively, of the eigenvalues counted with algebraic multiplicity, we thus get the result.
\end{proof}

The traces of Hurwitz products appearing in Proposition \ref{hur} are in general hard to deal with.
It is an easy but absolutely crucial observation that this is different for Hurwitz products with up to
three factors.

\begin{cor}\label{cubictraces}
Suppose $d\in\N_0$ and $A_1,\ldots,A_n\in\C^{d\times d}$ are hermitian. Set
\[p:=\det(I_d+x_1A_1+\ldots+x_nA_n)\in\R[x].\]
Then
\begin{align*}
L_{p,d}(1)&=\tr(I_d),\\
L_{p,d}(x_i)&=\tr(A_i),\\
L_{p,d}(x_ix_j)&=\tr(A_iA_j)=\tr(A_jA_i)\qquad\text{and}\\
L_{p,d}(x_ix_jx_k)&=\re\tr(A_iA_jA_k)=\re\tr(A_iA_kA_j)=\re\tr(A_jA_iA_k)\\
&=\re\tr(A_jA_kA_i)=\re\tr(A_kA_iA_j)=\re\tr(A_kA_jA_i)
\end{align*}
for all $i,j,k\in\{1,\ldots,n\}$.
\end{cor}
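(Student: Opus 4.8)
The plan is to specialize the general formula of Proposition \ref{hur}, which gives
\[L_{p,d}(x^\al)=\frac1{\binom{|\al|}\al}\tr(\hur_\al(A_1,\ldots,A_n)),\]
to the cases $|\al|\le 3$ and to exploit three elementary facts: first, that for $|\al|\le 3$ the Hurwitz product $\hur_\al(A_1,\ldots,A_n)$ is simply a sum of all orderings of the relevant matrices; second, that the trace is cyclic, i.e. $\tr(XY)=\tr(YX)$ and $\tr(XYZ)=\tr(YZX)=\tr(ZXY)$; and third, that $\tr(A_iA_jA_k)$ and its reversal $\tr(A_kA_jA_i)=\tr((A_iA_jA_k)^*)=\overline{\tr(A_iA_jA_k)}$ are complex conjugates of each other, since the $A_i$ are hermitian, so their sum is $2\re\tr(A_iA_jA_k)$ while the individual traces need not be real.

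First I would treat $|\al|=0$: here $\hur_0=I_d$ by Definition \ref{hurwitz} and $\binom00=1$, giving $L_{p,d}(1)=\tr(I_d)$, which also re-proves the already established fact $L_{p,d}(1)=d$. For $\al=e_i$ (a single letter), $\binom11=1$ and $\hur_{e_i}(A_1,\ldots,A_n)=A_i$, so $L_{p,d}(x_i)=\tr(A_i)$. For $|\al|=2$ there are two subcases: $\al=2e_i$, where $\binom22=1$ and the only word is the letter $i$ twice, so $\hur=A_i^2$ and $L_{p,d}(x_i^2)=\tr(A_i^2)$; and $\al=e_i+e_j$ with $i\ne j$, where $\binom21=2$ and the two words give $A_iA_j+A_jA_i$, so $L_{p,d}(x_ix_j)=\frac12\tr(A_iA_j+A_jA_i)=\tr(A_iA_j)$, the last equality by cyclicity of the trace. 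Writing this both as $\tr(A_iA_j)$ and $\tr(A_jA_i)$ records the symmetry claimed in the statement.

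Next, the $|\al|=3$ case with three distinct indices $i,j,k$: the multinomial coefficient is $\binom{3}{1\ 1\ 1}=6$, and the six words in the letters $i,j,k$ produce the six orderings, so
\[\hur_{e_i+e_j+e_k}(A_1,\ldots,A_n)=A_iA_jA_k+A_iA_kA_j+A_jA_iA_k+A_jA_kA_i+A_kA_iA_j+A_kA_jA_i.\]
By cyclicity these six traces collapse into two classes of three each: the "forward" cycle $\tr(A_iA_jA_k)=\tr(A_jA_kA_i)=\tr(A_kA_iA_j)$ and the "backward" cycle $\tr(A_iA_kA_j)=\tr(A_kA_jA_i)=\tr(A_jA_iA_k)$. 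Since $A_iA_kA_j=(A_jA_iA_k)^*=\ldots$ — more directly, $(A_iA_jA_k)^*=A_k^*A_j^*A_i^*=A_kA_jA_i$, so the backward trace equals $\overline{\tr(A_iA_jA_k)}$, and the sum of all six is $3(\tr(A_iA_jA_k)+\overline{\tr(A_iA_jA_k)})=6\re\tr(A_iA_jA_k)$. Dividing by $\binom{3}{1\ 1\ 1}=6$ yields $L_{p,d}(x_ix_jx_k)=\re\tr(A_iA_jA_k)$, and writing out all six equal values of $\re\tr$ of the orderings (note $\re\tr(X)=\re\tr(X^*)$) gives exactly the displayed chain of equalities. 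The statement as quoted lists only $1$, $x_i$, $x_ix_j$ and $x_ix_jx_k$; the cases $x_i^2$, $x_i^3$, $x_i^2x_j$ are handled identically (e.g. $\binom{3}{2\ 1}=3$ and the three words give $A_i^2A_j+A_iA_jA_i+A_jA_i^2$, whose traces are all $\tr(A_i^2A_j)$ by cyclicity), and I would simply remark that the remaining identities are obtained the same way.

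I do not anticipate a genuine obstacle here: the only point requiring a moment's care is that for three distinct indices the individual traces $\tr(A_iA_jA_k)$ are generally non-real, so one cannot drop the $\re$; it is precisely the pairing of each ordering with its reversal, via $(A_iA_jA_k)^*=A_kA_jA_i$ together with $\tr(M^*)=\overline{\tr M}$, that forces the real part to appear. This is the "easy but absolutely crucial observation" flagged before the statement, and it is what makes the degree-three truncation usable in the relaxation.
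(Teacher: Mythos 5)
Your proposal is correct and follows essentially the same route as the paper: both specialize the trace formula of Proposition \ref{hur} to $|\al|\le 3$ (the paper dismisses degrees $\le 2$ as trivial, you spell them out) and both obtain the degree-three identity from $(\tr(A_iA_jA_k))^*=\tr(A_kA_jA_i)$ for hermitian matrices together with cyclicity of the trace. The momentary slip ``$A_iA_kA_j=(A_jA_iA_k)^*$'' is harmless since you immediately replace it by the correct identity $(A_iA_jA_k)^*=A_kA_jA_i$, which is exactly what the argument needs.
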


\begin{proof} The first three statements are trivial. For the last statement, note that
\[(\tr(ABC))^*=(\tr((ABC)^T))^*=(\tr(C^TB^TA^T))^*=\tr(C^*B^*A^*)=\tr(CBA)\]
for all hermitian $A,B,C\in\C^{d\times d}$.
\end{proof}

\subsection{Relaxing hyperbolic programs}

\begin{df}\label{dfpencil}
Let $p\in\R[[x]]$ be a power series with $p(0)\ne0$ and $d\in\N_0$. Consider the symmetric matrices
\[
A_0:=
\begin{pmatrix}
L_{p,d}(1)&L_{p,d}(x_1)&\ldots&L_{p,d}(x_n)\\
L_{p,d}(x_1)&L_{p,d}(x_1^2)&\ldots&L_{p,d}(x_1x_n)\\
\vdots&\vdots&&\vdots\\
L_{p,d}(x_n)&L_{p,d}(x_1x_n)&\ldots&L_{p,d}(x_n^2)
\end{pmatrix}\in\R^{(n+1)\times(n+1)}\]
and
\[
A_i:=
\begin{pmatrix}
L_{p,d}(x_i)&L_{p,d}(x_ix_1)&\ldots&L_{p,d}(x_ix_n)\\
L_{p,d}(x_ix_1)&L_{p,d}(x_ix_1^2)&\ldots&L_{p,d}(x_ix_1x_n)\\
\vdots&\vdots&&\vdots\\
L_{p,d}(x_ix_n)&L_{p,d}(x_ix_1x_n)&\ldots&L_{p,d}(x_ix_n^2)
\end{pmatrix}\in\R^{(n+1)\times(n+1)}
\]
for $i\in\{1,\ldots,n\}$.
\begin{enumerate}[(a)]
\item We call the linear matrix polynomial
\[M_{p,d}:=A_0+x_1A_1+\ldots+x_nA_n\in\R[x]^{(n+1)\times(n+1)}\]
the \emph{pencil associated to $p$ with respect to the virtual degree $d$} and
\[S_d(p):=\{a\in\R^n\mid M_{p,d}(a)\succeq0\}\]
the \emph{spectrahedron associated to $p$ with respect to the virtual degree $d$}.
\item In the case where $p$ is a polynomial, we call
\[M_p:=M_{p,\deg p}\] the \emph{pencil associated to $p$} and \[S(p):=S_{\deg p}(p)\]
the \emph{spectrahedron associated to $p$}.
\item
We call the linear matrix polynomial
\[M_{p,\infty}\in\R[x]^{n\times n}\] that arises from $M_{p,d}$ (for no matter what $d\in\N_0$)
by deleting the first row and column the \emph{pencil associated to $p$ with respect to infinite virtual degree}
and \[S_\infty(p):=\{a\in\R^n\mid M_{p,\infty}(a)\succeq0\}\]
the \emph{spectrahedron associated to $p$ with respect to infinite virtual degree}.
\end{enumerate}
\end{df}

\begin{rem}\label{whenvirtualdegreerises}
Let $p\in\R[[x]]$ be a power series with $p(0)\ne0$. Then \[S_0(p)\subseteq S_1(p)\subseteq S_2(p)\subseteq S_3(p)\subseteq S_4(p)\subseteq\ldots\subseteq S_\infty(p).\]
\end{rem}

\begin{rem}\label{dependsonlyoncubicpart}
Let $p\in\R[x]$ be a polynomial with $p(0)\ne0$.
Note that $M_p$ and therefore $S(p)$ depend only on the cubic part $\trunc_3p$ of $p$. Indeed, if
one assumes moreover that $p(0)=1$ then this is a  polynomial dependance on the corresponding
coefficients of $p$, more exactly a cubic one
which could be written down explicitly by the expressions of Example \ref{moments3} for the values
of $L_p$ on the monomials of degree at most $3$.
\end{rem}

\begin{lem}\label{pencileval}
Let $p\in\R[x]$ be a power series with $p(0)\ne0$, $d\in\N_0$, $a\in\R^n$ and \[v=
\begin{pmatrix}v_0&v_1&\ldots&v_n\end{pmatrix}^T\in\R^{n+1}.\]
Then $v^TM_{p,d}(a)v=L_{p,d}((v_0+v_1x_1+\ldots+v_nx_n)^2(1+a_1x_1+\ldots+a_nx_n))$.
\end{lem}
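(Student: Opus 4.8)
The plan is to recognize the matrix $M_{p,d}(a)$ as the Gram matrix of the symmetric bilinear form
\[(q, r) \longmapsto L_{p,d}\big(q\, r\, (1 + a_1 x_1 + \ldots + a_n x_n)\big)\]
on the space of polynomials of degree at most one, with respect to the basis $1, x_1, \ldots, x_n$; the identity then drops out of bilinearity. To make this precise I would first adopt the convention $x_0 := 1$, so that $\{0, 1, \ldots, n\}$ simultaneously indexes the rows and columns of $A_0, A_1, \ldots, A_n$ and the ``variables'' $x_0, x_1, \ldots, x_n$, and abbreviate $\ell_a := 1 + a_1 x_1 + \ldots + a_n x_n \in \R[x]$.

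Next I would read off from Definition \ref{dfpencil} that, under this convention, the $(k, \ell)$-entry of $A_0$ is $L_{p,d}(x_k x_\ell)$ and the $(k, \ell)$-entry of $A_i$ is $L_{p,d}(x_i x_k x_\ell)$ for each $i \in \{1, \ldots, n\}$. This is purely a matter of matching the displayed matrices against the cases $k, \ell = 0$ and $k, \ell \in \{1, \ldots, n\}$, using $x_0 = 1$. Summing and using linearity of $L_{p,d}$ gives
\[\big(M_{p,d}(a)\big)_{k\ell} = L_{p,d}(x_k x_\ell) + \sum_{i=1}^n a_i\, L_{p,d}(x_i x_k x_\ell) = L_{p,d}\big(x_k x_\ell\, \ell_a\big)\]
for all $k, \ell \in \{0, 1, \ldots, n\}$.

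Finally I would expand the quadratic form and apply linearity once more:
\[v^T M_{p,d}(a)\, v = \sum_{k=0}^n \sum_{\ell=0}^n v_k v_\ell \big(M_{p,d}(a)\big)_{k\ell} = L_{p,d}\Bigg(\Big(\sum_{k=0}^n v_k x_k\Big)\Big(\sum_{\ell=0}^n v_\ell x_\ell\Big) \ell_a\Bigg).\]
Since $\sum_{k=0}^n v_k x_k = v_0 + v_1 x_1 + \ldots + v_n x_n$ by the convention $x_0 = 1$, the right-hand side is exactly $L_{p,d}((v_0 + v_1 x_1 + \ldots + v_n x_n)^2 (1 + a_1 x_1 + \ldots + a_n x_n))$, which is the claim. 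There is no genuine obstacle in this proof; the only point demanding a little care is keeping the convention $x_0 = 1$ consistent with the indexing of the matrix entries, after which everything is immediate from the bilinearity of $(q, r) \mapsto L_{p,d}(q r\, \ell_a)$.
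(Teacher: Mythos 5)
Your proposal is correct and is essentially the paper's own argument: both introduce the convention $x_0=1$, identify the $(k,\ell)$-entry of $M_{p,d}(a)$ as $L_{p,d}(x_kx_\ell(1+a_1x_1+\ldots+a_nx_n))$, and collapse the resulting double sum by linearity of $L_{p,d}$. Nothing further is needed.
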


\begin{proof}
For the moment denote $x_0:=1$ an $a_0:=1$. Then
\begin{multline*}
v^TM_{p,d}(a)v=\sum_{i=0}^n\sum_{j=0}^nv_iv_j\sum_{k=0}^na_kL_{p,d}(x_ix_jx_k)\\
=L_{p,d}\left(\left(\sum_{i=0}^nv_ix_i\right)\left(\sum_{j=0}^nv_jx_j\right)\left(\sum_{k=0}^na_kx_k\right)\right).
\end{multline*}
\end{proof}

\begin{lem}\label{pencilrotate}
Suppose $U\in\R^{n\times n}$ is an orthogonal matrix and consider the orthogonal matrix
\[\widetilde U:=\begin{pmatrix}1&0\\0&U\end{pmatrix}\in\R^{(n+1)\times(n+1)}\]
\begin{enumerate}[(a)]
\item If $p\in\R[[x]]$ is a power series with $p(0)\ne0$ and $d\in\N_0$, then
\[M_{p(Ux),d}=\widetilde U^TM_{p,d}(Ux)\widetilde U.\]
\item If $p\in\R[x]$ is a polynomial with $p(0)\ne0$, then
\[M_{p(Ux)}=\widetilde U^TM_p(Ux)\widetilde U.\]
\item If $p\in\R[[x]]$ is a power series with $p(0)\ne0$, then
\[M_{p(Ux),\infty}=U^TM_{p,\infty}(Ux)U.\]
\end{enumerate}
\end{lem}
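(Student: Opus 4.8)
The plan is to reduce everything to the two identities already established: Proposition \ref{rotate}, which describes how the linear form $L_{p,d}$ transforms under an orthogonal substitution, and Lemma \ref{pencileval}, which expresses an arbitrary bilinear form $v^TM_{p,d}(a)v$ as a value of $L_{p,d}$ on a product of a square of a linear polynomial with an affine polynomial. Since two symmetric matrix polynomials are equal iff they agree on all $v^T(\cdot)v$ for $v\in\R^{n+1}$ (and equality of linear matrix polynomials can be checked pointwise in $a\in\R^n$), it suffices to prove the scalar identity $v^T M_{p(Ux),d}(a)v = v^T\widetilde U^TM_{p,d}(Ua)\widetilde U v$ for all $a\in\R^n$ and $v\in\R^{n+1}$.

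First I would write $w:=\widetilde Uv$, so $w_0=v_0$ and $(w_1,\dots,w_n)^T=U(v_1,\dots,v_n)^T$. Using Lemma \ref{pencileval} applied to the power series $p(Ux)$, the left-hand side is
\[
L_{p(Ux),d}\big((v_0+v_1x_1+\dots+v_nx_n)^2(1+a_1x_1+\dots+a_nx_n)\big).
\]
I would then apply Proposition \ref{rotate}(a), which says $L_{p(Ux),d}(q(Ux))=L_{p,d}(q)$ for all $q\in\R[x]$: taking $q$ to be the polynomial whose value at $Ux$ is the argument above, i.e. substituting $x\mapsto U^Tx$ inside, the left-hand side becomes $L_{p,d}$ of the polynomial obtained by replacing each $x_i$-pattern $\sum_j v_j x_j$ by $\sum_j v_j (Ux)_j = \sum_j w_j x_j$ and similarly $\sum_k a_k x_k$ by $\sum_k a_k (Ux)_k = \sum_k (Ua)_k x_k$. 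Hence the left-hand side equals
\[
L_{p,d}\big((w_0+w_1x_1+\dots+w_nx_n)^2(1+(Ua)_1x_1+\dots+(Ua)_nx_n)\big),
\]
which by Lemma \ref{pencileval} again (now for $p$, the point $Ua$, and the vector $w=\widetilde Uv$) is exactly $w^TM_{p,d}(Ua)w = v^T\widetilde U^TM_{p,d}(Ua)\widetilde U v$. This proves (a). Part (b) is then immediate by setting $d:=\deg p$ and noting $\deg(p(Ux))=\deg p$, exactly as in the proofs of Propositions \ref{rotate} and \ref{transform}.

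For part (c), the cleanest route is to observe that $M_{p,\infty}$ is the lower-right $n\times n$ block of $M_{p,d}$ (independent of $d$), and that the conjugation by $\widetilde U=\operatorname{diag}(1,U)$ in parts (a),(b) respects this block structure: the lower-right block of $\widetilde U^TM_{p,d}(Ux)\widetilde U$ is $U^T$ times the lower-right block of $M_{p,d}(Ux)$ times $U$. So (c) follows by simply restricting the matrix identity of (a) to that block; alternatively one re-runs the $v^T(\cdot)v$ argument above with $v_0=0$ enforced, using that the entries of $M_{p,\infty}$ only involve $L_{p,d}$ on monomials $x_ix_jx_k$ with $i,j\in\{1,\dots,n\}$, which are insensitive to the value $L_{p,d}(1)=d$. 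I do not anticipate a serious obstacle here; the only thing requiring a little care is bookkeeping the index substitution $x\mapsto U^Tx$ versus $x\mapsto Ux$ when invoking Proposition \ref{rotate}, and making sure the affine factor $1+\sum a_ix_i$ transforms to $1+\sum(Ua)_ix_i$ (the constant term $1$ is fixed because $\widetilde U$ fixes the $x_0$-coordinate). That is precisely the role played by the first row and column of $\widetilde U$ being $(1,0,\dots,0)$.
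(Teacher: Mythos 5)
Your argument is correct and takes essentially the same route as the paper: both reduce the matrix identity to a scalar identity via Lemma \ref{pencileval} and then invoke Proposition \ref{rotate}(a), with (b) by degree invariance and (c) by restricting to the lower-right block; your use of arbitrary $v\in\R^{n+1}$ even spares the continuity/homogeneity step the paper needs because it only tests vectors with leading entry $1$. One cosmetic slip: after substituting $x\mapsto U^Tx$ the patterns become $\sum_j v_j(U^Tx)_j=(Uv_{1:n})^Tx$ and $\sum_k a_k(U^Tx)_k=(Ua)^Tx$ — you wrote $(Ux)_j$ and $(Ux)_k$ — but the displayed formula with $w=\widetilde Uv$ and the point $Ua$ is the correct one, so the proof stands.
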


\begin{proof}
Part (c) is immediate from (a).
Part (b) follows from (a) by observing that $\deg(p(Ux))=\deg p$ for all polynomials $p\in\R[x]$.
To prove (a), we let $p\in\R[[x]]$ be a power series with $p(0)\ne0$ and $d\in\N_0$.
We can rewrite the claim as
\[\widetilde UM_{p(Ux),d}\widetilde U^T=M_{p,d}(Ux)\]
which in turn is equivalent to
\[\widetilde UM_{p(Ux),d}(U^Tx)\widetilde U^T=M_{p,d}\]
by the automorphisms of the power series ring $\R[x]$ given by $x\mapsto Ux$ and $x\mapsto U^Tx$.
For each $v\in\R^n$, we denote by $\widetilde v\in\R^{n+1}$ the vector that arises from $v$ by prepending $1$.
By continuity, homogeneity and the identity theorem for multivariate polynomials,
it suffices to show that
\[\widetilde v^T\widetilde UM_{p(Ux),d}(U^Ta)\widetilde U^T\widetilde v=\widetilde v^TM_{p,d}(a)\widetilde v\] for all $a,v\in\R^n$.
By Lemma \ref{pencileval}, this is equivalent to
\[L_{p(Ux),d}((1+(U^Tv)^Tx)^2(1+(U^Ta)^Tx))=L_{p,d}((1+v^Tx)^2(1+a^Tx))\]
for all $a,v\in\R^n$ which follows easily from Proposition \ref{rotate}(a) after rewriting the left hand side as
$L_{p(Ux),d}((1+v^TUx)^2(1+a^TUx))$.
\end{proof}

\begin{pro}\label{rotatespectrahedron}
Let $p\in\R[x]$ with $p(0)\ne0$ and $U\in\R^{n\times n}$ an orthogonal matrix. Then
\begin{align*}
C(p(Ux))&=\{U^Ta\mid a\in C(p)\},\\
S(p(Ux))&=\{U^Ta\mid a\in S(p)\}\qquad\text{and}\\
S_d(p(Ux))&=\{U^Ta\mid a\in S_d(p)\}
\end{align*}
for all $d\in\N_0\cup\{\infty\}$.
\end{pro}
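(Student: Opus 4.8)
The plan is to treat the three sets separately, to reduce each equality to a linear change of variables, and to obtain the cases $d=\infty$ and $S(p)$ formally from the case $S_d(p)$ with $d\in\N_0$. Note first that $p(Ux)$ is a polynomial with $p(Ux)\big|_{x=0}=p(0)\ne0$, so that all of $C(p(Ux))$, $S(p(Ux))$ and $S_d(p(Ux))$ are defined.

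First I would handle $C$. Since $(p(Ux))(\la b)=p(\la\cdot Ub)$ for every $b\in\R^n$ and $\la\in\C$, the condition defining membership of $b$ in $C(p(Ux))$ via Definition \ref{dfrcs} is literally the condition $Ub\in C(p)$; hence $C(p(Ux))=\{b\in\R^n\mid Ub\in C(p)\}$. As $U$ is orthogonal, it is invertible with $U^{-1}=U^T$, so this preimage equals $\{U^Ta\mid a\in C(p)\}$, which is the first claim. (If one insists that $C(\,\cdot\,)$ be applied only to real zero polynomials, one first notes that $p(Ux)$ is again a real zero polynomial, by the same one-line argument as used for shifts and $a$-transforms.)

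Next, for $d\in\N_0$, the key input is Lemma \ref{pencilrotate}(a): writing $\widetilde U$ for the block-diagonal orthogonal matrix introduced there, we have $M_{p(Ux),d}=\widetilde U^TM_{p,d}(Ux)\widetilde U$, an identity of matrix polynomials. Evaluating it at a point $b\in\R^n$ gives $M_{p(Ux),d}(b)=\widetilde U^TM_{p,d}(Ub)\widetilde U$, and since congruence by the invertible matrix $\widetilde U$ preserves positive semidefiniteness,
\[
b\in S_d(p(Ux))\iff M_{p,d}(Ub)\succeq0\iff Ub\in S_d(p),
\]
and the last condition means $b\in U^TS_d(p)=\{U^Ta\mid a\in S_d(p)\}$. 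The case $d=\infty$ is identical, using Lemma \ref{pencilrotate}(c) and $U$ in place of $\widetilde U$. Finally, substitution by the invertible map $U$ preserves degree, so $\deg(p(Ux))=\deg p$ and therefore $S(p(Ux))=S_{\deg(p(Ux))}(p(Ux))=S_{\deg p}(p(Ux))$, which is the case $d=\deg p$ just proved; this gives $S(p(Ux))=\{U^Ta\mid a\in S(p)\}$.

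I expect no genuine obstacle: all the real work has already been done in Lemma \ref{pencilrotate} (hence in Proposition \ref{rotate} and Lemma \ref{pencileval}). The only points worth keeping in mind are that orthogonality of $U$ is used both to make $\widetilde U$ orthogonal (as required by Lemma \ref{pencilrotate}) and to rewrite the preimage $\{b\mid Ub\in T\}$ as $U^TT=\{U^Ta\mid a\in T\}$, and that Definition \ref{dfrcs} should here be read simply as the displayed formula, which is meaningful for any $p$ with $p(0)\ne0$.
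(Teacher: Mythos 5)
Your proposal is correct and follows essentially the same route as the paper: the $C$-equality by direct manipulation of Definition \ref{dfrcs} under the substitution $x\mapsto Ux$, and the $S$- and $S_d$-equalities by evaluating the congruence identities of Lemma \ref{pencilrotate} and using that congruence by the invertible (orthogonal) matrix $\widetilde U$ (resp.\ $U$) preserves positive semidefiniteness. The only cosmetic difference is that you obtain the $S(p)$ case from the $S_d$ case with $d=\deg p$ (using $\deg(p(Ux))=\deg p$) instead of citing Lemma \ref{pencilrotate}(b) directly, which amounts to the same thing.
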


\begin{proof}
We have
\begin{align*}
C(p(Ux))&=\{a\in\R^n\mid\forall\la\in[0,1):p(U(\la a))\ne0\}\\
&=\{a\in\R^n\mid\forall\la\in[0,1):p(\la Ua)\ne0\}\\
&=\{U^Ta\in\R^n\mid\forall\la\in[0,1):p(\la a)\ne0\}=\{U^Tx\mid x\in C(p)\}
\end{align*}
and using Lemma \ref{pencilrotate}(b),
\begin{align*}
S(p(Ux))&=\{a\in\R^n\mid M_{p(Ux)}(a)\succeq0\}\\
&=\{a\in\R^n\mid M_p(Ua)\succeq0\}\\
&=\{U^Ta\in\R^n\mid M_p(a)\succeq0\}\\
&=\{U^Ta\in\R^n\mid a\in S(p)\}.
\end{align*}
The last statement follows in a similar way from Lemma \ref{pencilrotate}(a).
\end{proof}

\begin{lem}\label{penciltransform}
Let $p\in\R[x]$ be a polynomial with $p(0)\ne0$ and set $d:=\deg p$. Then
\[P:=\begin{pmatrix}1&a^T\\0&I_n\end{pmatrix}\in\R^{(n+1)\times(n+1)}\]
is invertible and
\[M_{p[a],d}=P^T(M_p+a^TxM_p(0))P.\]
\end{lem}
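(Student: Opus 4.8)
The plan is to reduce the asserted matrix identity to an identity between the quadratic forms attached to the two pencils, and then to verify that by combining Lemma~\ref{pencileval} with Proposition~\ref{transform}; this is the same strategy as in the proof of Lemma~\ref{pencilrotate}. First I would dispose of the trivial points: $P$ is block upper triangular with diagonal blocks $1$ and $I_n$, so $\det P=1$ and $P$ is invertible; and both $M_{p[a],d}$ and $P^T(M_p+a^TxM_p(0))P$ are symmetric linear matrix polynomials of size $n+1$ in $x$, i.e.\ pencils (the second one because $M_p$ is a pencil, $M_p(0)$ is symmetric, and $a^Tx$ is linear in $x$). Since $\R$ is infinite and a real symmetric matrix is determined by its quadratic form, it then suffices to prove
\[
v^TM_{p[a],d}(b)\,v=v^TP^T\bigl(M_p(b)+(a^Tb)\,M_p(0)\bigr)P\,v
\]
for all $b\in\R^n$ and all $v=(v_0,\ldots,v_n)^T\in\R^{n+1}$, after which the full pencil identity follows by letting $b$ vary.

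To prove this pointwise identity I would write $v':=(v_1,\ldots,v_n)^T\in\R^n$ and $w_0:=v_0+a^Tv'$, so that $Pv=(w_0,v_1,\ldots,v_n)^T$. Applying Lemma~\ref{pencileval} to $p[a]$ (note that $p[a](0)=p(0)\ne0$) and then Proposition~\ref{transform} gives
\[
v^TM_{p[a],d}(b)\,v=L_{p,d}\bigl((v_0+v'^T(x+a))^2\,(1+b^T(x+a))\bigr).
\]
Now $v_0+v'^T(x+a)=w_0+v'^Tx$, while $1+b^T(x+a)=(1+a^Tb)+b^Tx=(a^Tb)\cdot 1+(1+b^Tx)$; splitting the second factor in this way and using linearity of $L_{p,d}$, the right-hand side becomes
\[
(a^Tb)\,L_{p,d}\bigl((w_0+v'^Tx)^2\bigr)+L_{p,d}\bigl((w_0+v'^Tx)^2(1+b^Tx)\bigr).
\]
Two further applications of Lemma~\ref{pencileval}, now to $p$ itself (with $d=\deg p$) and to the vector $Pv$, with the shift $0$ in the first term and the shift $b$ in the second, identify these two summands as $(a^Tb)(Pv)^TM_p(0)(Pv)$ and $(Pv)^TM_p(b)(Pv)$, whose sum is exactly $v^TP^T(M_p(b)+(a^Tb)M_p(0))Pv$, as desired.

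The computation is short; the one place that needs attention is that the factor $1+b^T(x+a)$ produced after the shift has constant term $1+a^Tb$, which is not $1$ in general, so Lemma~\ref{pencileval} does not apply to it directly — and it is precisely the leftover summand $(a^Tb)\cdot 1$, after this splitting, that accounts for the extra term $a^TxM_p(0)$ on the right-hand side of the lemma. Beyond that, the proof is bookkeeping, the key point being the correspondence $v\mapsto Pv$ between the vector prepended by $v_0$ and the vector prepended by $w_0=v_0+a^Tv'$.
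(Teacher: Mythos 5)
Your proposal is correct and follows essentially the same route as the paper: reduce the pencil identity to an identity of quadratic forms, evaluate both sides via Lemma~\ref{pencileval}, and convert $L_{p[a],d}$ to $L_{p,d}$ of the shifted argument via Proposition~\ref{transform}, with the split $1+b^T(x+a)=(1+b^Tx)+a^Tb$ producing the extra term $a^TxM_p(0)$. The only (harmless) difference is that you test against arbitrary $v\in\R^{n+1}$, whereas the paper tests only vectors with first coordinate $1$ and then invokes continuity and homogeneity.
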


\begin{proof}
For each $v\in\R^n$, we denote by $\widetilde v\in\R^{n+1}$ the vector that arises from $v$ by prepending $1$.
By continuity, homogeneity and the identity theorem for multivariate polynomials,
it suffices to show that
\[\widetilde v^TM_{p[a],d}(b)\widetilde v=\widetilde v^TP^T(M_p(b)+a^TbM_p(0))P\widetilde v\]
for all $b,v\in\R^n$. Fixing $b,v\in\R^n$ and setting $w:=P\widetilde v=\begin{pmatrix}1+a^Tv\\v\end{pmatrix}\in\R^{n+1}$, this amounts to show
\[\widetilde v^TM_{p[a],d}(b)\widetilde v=w^TM_p(b)w+(a^Tb)w^TM_p(0)w.\]
 Applying Lemma \ref{pencileval}, this is equivalent to
\begin{multline*}
L_{p[a],d}((1+v^Tx)^2(1+b^Tx))=\\
L_p(((1+a^Tv)+v^Tx)^2(1+b^Tx))+a^TbL_p(((1+a^Tv)+v^Tx)^2)
\end{multline*}
for all $b,v\in\R^n$ which follows easily from Proposition \ref{transform} after rewriting the left hand side as
$L_p((1+v^T(x+a))^2(1+b^T(x+a)))$.
\end{proof}

\begin{lem}\label{restriction}
Suppose $m,n\in\N_0$ with $m\le n$ and $q\in\R[x]$ with $q(0)\ne0$. Set
\[r:=q(x_1,\ldots,x_m,0,\ldots,0)\in\R[x_1,\ldots,x_m].\]
Then
\begin{enumerate}[(a)]
\item $L_{q,d}(p)=L_{r,d}(p)$ for all $p\in\R[x_1,\ldots,x_m]$ and $d\in\N_0$
\item $\{a\in\R^m\mid(a,0,\ldots,0)\in C(q)\}=C(r)$
\item $\{a\in\R^m\mid(a,0,\ldots,0)\in S_d(q)\}\subseteq S_d(r)$ for all $d\in\N_0\cup\{\infty\}$
\end{enumerate}
\end{lem}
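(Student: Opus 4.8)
The plan is to prove the three parts in the order given. Part (a) carries the content; part (b) is a direct unwinding of definitions and part (c) follows from (a) by a principal-submatrix argument.

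\emph{Part (a).} I would apply the substitution homomorphism $\Phi\colon\R[[x_1,\ldots,x_n]]\to\R[[x_1,\ldots,x_m]]$ that sends $x_{m+1},\ldots,x_n$ to $0$ and fixes $x_1,\ldots,x_m$ to the identity of formal power series that defines $L_{q,d}$ in Definition \ref{dfriesz}. Since $\Phi$ is a ring homomorphism that does not decrease the order of a power series, it commutes with the formal logarithm of any power series with constant term $1$ (argue truncation by truncation, exactly as for the identity $\log(p(U(-x)))=(\log p)(-Ux)$ in the proof of Proposition \ref{rotate}). Hence the left-hand side $-\log(q(-x)/q(0))$ is carried to $-\log(r(-x_1,\ldots,-x_m)/r(0))$, using $q(0)=r(0)$. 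On the right-hand side $\Phi$ kills every monomial $x^\alpha$ whose support is not contained in $\{1,\ldots,m\}$, and for $\alpha$ supported on $\{1,\ldots,m\}$ the multinomial coefficient $\binom{|\alpha|}\alpha$ is the same whether computed in $n$ or in $m$ variables. Comparing the resulting identity with the identity defining $L_{r,d}$ and matching coefficients gives $L_{q,d}(x^\alpha)=L_{r,d}(x^\alpha)$ for all $\alpha\in\N_0^m$; combined with $L_{q,d}(1)=d=L_{r,d}(1)$ and linearity over the monomial basis of $\R[x_1,\ldots,x_m]$, this is (a). The only genuinely delicate point is the commutation of $\Phi$ with $\log$, and even that is routine.

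\emph{Part (b).} If $q$ is a real zero polynomial, then so is its restriction $r$ (given $a\in\R^m$ and $\lambda\in\C$ with $r(\lambda a)=0$, one has $q(\lambda(a,0,\ldots,0))=0$, hence $\lambda\in\R$), so $C(r)$ is well-defined. Now for $a\in\R^m$ and $\lambda\in[0,1)$ we have $q(\lambda(a,0,\ldots,0))=r(\lambda a)$, so the condition defining membership of $(a,0,\ldots,0)$ in $C(q)$ is literally the condition defining membership of $a$ in $C(r)$, and (b) follows.

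\emph{Part (c).} Fix $a=(a_1,\ldots,a_m)\in\R^m$ with $(a,0,\ldots,0)\in S_d(q)$, i.e., $M_{q,d}(a,0,\ldots,0)\succeq0$. Writing the pencil entries as in Definition \ref{dfpencil} with the conventions $x_0=1$, $a_0=1$, $a_{m+1}=\cdots=a_n=0$ (as in Lemma \ref{pencileval}), the $(i,j)$ entry of $M_{q,d}(a,0,\ldots,0)$ equals $\sum_{k=0}^{m}a_kL_{q,d}(x_ix_jx_k)$. Restricting $i,j$ to $\{0,1,\ldots,m\}$, every monomial $x_ix_jx_k$ that appears lies in $\R[x_1,\ldots,x_m]$, so part (a) lets us replace each $L_{q,d}$ by $L_{r,d}$, and the resulting matrix is exactly $M_{r,d}(a)$. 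Thus $M_{r,d}(a)$ is a principal submatrix of the psd matrix $M_{q,d}(a,0,\ldots,0)$ and is therefore psd, i.e., $a\in S_d(r)$; the case $d=\infty$ is identical after deleting the $0$-th row and column throughout. I do not expect any obstacle here. The inclusion cannot in general be promoted to an equality because a psd matrix $M_{r,d}(a)$ need not admit a psd completion to the larger matrix $M_{q,d}(a,0,\ldots,0)$ --- precisely the completion difficulty that later motivates the amalgamation conjecture.
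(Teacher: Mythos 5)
Your proposal is correct and follows essentially the same route as the paper: (a) comes from the fact that setting $x_{m+1},\ldots,x_n$ to zero commutes with the formal logarithm in Definition \ref{dflogexp}, (b) is immediate from the definitions, and (c) follows from (a) by comparing the pencil $M_{r,d}(a)$ with the corresponding principal submatrix of $M_{q,d}(a,0,\ldots,0)$, which is just the matrix-entry form of the paper's appeal to Lemma \ref{pencileval}.
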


\begin{proof} (a) By linearity, it suffices to consider the case where $p$ is a monomial. If $p=1$, then
$L_{q,d}(p)=d=L_{r,d}(p)$. It remains to show that $L_{q,d}(x^\al)=L_{r,d}(x^\al)$ for all $a\in\N_0^m$ with
$\al\ne0$. But this follows from Definitions \ref{dfriesz} and \ref{dflogexp}(b) since
the power series $\log r$ arises from the power series $\log q$ by substituting the variables $x_{m+1},\ldots,x_n$ with $0$.

(b) is clear.

(c) follows from (a) together with Lemma \ref{pencileval}.
\end{proof}

\begin{pro}\label{traceisreal}
Fix $d\in\N_0$. Then
\[(A,B)\mapsto\tr(AB)\]
is a scalar product on the real vector space of hermitian matrices in $\C^{d\times d}$.
In particular, $\tr(AB)\in\R$ for all hermitian $A,B\in\C^{d\times d}$.
\end{pro}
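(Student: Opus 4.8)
The plan is to verify directly the three defining properties of a real scalar product — $\R$-bilinearity, symmetry, and positive definiteness — for the pairing $(A,B)\mapsto\tr(AB)$ on the real vector space $H:=\{A\in\C^{d\times d}\mid A=A^*\}$, after first checking that this pairing actually takes values in $\R$, so that it is a well-defined map $H\times H\to\R$.

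First I would establish that $\tr(AB)\in\R$ for hermitian $A,B$, which also settles the ``in particular'' clause. For any $M\in\C^{d\times d}$ one has $\tr(M^*)=\overline{\tr M}$, since conjugate transposition conjugates the diagonal entries. Applying this with $M=AB$, using $(AB)^*=B^*A^*=BA$ (both factors being hermitian) and the cyclicity $\tr(BA)=\tr(AB)$, gives $\overline{\tr(AB)}=\tr((AB)^*)=\tr(BA)=\tr(AB)$, so $\tr(AB)\in\R$.

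Next, symmetry $\tr(AB)=\tr(BA)$ is again the cyclicity of the trace, and $\R$-bilinearity is immediate from linearity of the trace together with bilinearity of matrix multiplication: $\tr((\la A+A')B)=\la\,\tr(AB)+\tr(A'B)$ for $\la\in\R$, and symmetrically in the second slot. For positive definiteness I would use that $A=A^*$ forces $A^2=A^*A$, so that $\tr(A A)=\tr(A^*A)=\sum_{i,j=1}^d|A_{ij}|^2\ge0$, with equality precisely when every entry $A_{ij}$ vanishes, i.e.\ $A=0$. (Alternatively, by the spectral theorem write $A=U^*DU$ with $D$ real diagonal carrying the eigenvalues $\la_1,\dots,\la_d$ of $A$; then $\tr(A^2)=\sum_i\la_i^2$, which is nonnegative and vanishes iff $A=0$.)

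I do not expect any genuine obstacle: each step is a one-line standard computation. The only point that requires a moment's attention is the reality of $\tr(AB)$, and this is exactly where hermiticity of \emph{both} factors is used — for general complex matrices the trace of a product need not be real, so the argument genuinely depends on the ambient space being the real subspace $H$ rather than all of $\C^{d\times d}$.
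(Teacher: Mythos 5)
Your proof is correct and is essentially the same elementary argument as the paper's: the paper phrases it by identifying a matrix with its ``long'' vector in $\C^{d^2}$ and observing that for hermitian $B$ one has $\tr(AB)=\tr(AB^*)=\sum_{i,j}A_{ij}\overline{B_{ij}}$, i.e.\ the pairing is the restriction of the standard Hermitian inner product, which is exactly the computation underlying your positive-definiteness and reality steps. Your direct verification of bilinearity, symmetry via cyclicity, and reality via $\overline{\tr(AB)}=\tr((AB)^*)=\tr(BA)$ is a fine, equivalent packaging of the same facts.
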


\begin{proof}
Identifying each matrix of size $d$ with a ``long'' vector of size $d^2$ by reading its entries in the usual way, the scalar product is induced by the usual complex
scalar product on $\C^d$. Since all diagonal entries of a hermitian matrix are real and all other entries have the opposite imaginary part of its mirror entry, the claim
easily follows.
\end{proof}

\begin{lem}\label{pencildeteval}
Suppose $d\in\N_0$ and $A_1,\ldots,A_n\in\C^{d\times d}$ are hermitian. Set
\[p:=\det(I_d+x_1A_1+\ldots+x_nA_n)\in\R[x].\]
For all $a\in\R^n$ and
\[v=\begin{pmatrix}v_0&v_1&\ldots&v_n\end{pmatrix}^T\in\R^{n+1},\]
we then have
\[
v^TM_{p,d}(a)v=\tr((v_0I_d+v_1A_1+\ldots+v_nA_n)^2
(I_d+a_1A_1+\ldots+a_nA_n)).
\]
\end{lem}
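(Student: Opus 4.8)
The plan is to reduce everything to the explicit trace formulas of Corollary \ref{cubictraces} by way of Lemma \ref{pencileval}. First I would apply Lemma \ref{pencileval} to obtain
\[v^TM_{p,d}(a)v=L_{p,d}\bigl((v_0+v_1x_1+\ldots+v_nx_n)^2(1+a_1x_1+\ldots+a_nx_n)\bigr).\]
Introducing the abbreviations $x_0:=1$, $a_0:=1$ and $A_0:=I_d$, the polynomial inside the argument is $\bigl(\sum_{i=0}^nv_ix_i\bigr)\bigl(\sum_{j=0}^nv_jx_j\bigr)\bigl(\sum_{k=0}^na_kx_k\bigr)$, so by $\R$-linearity of $L_{p,d}$,
\[v^TM_{p,d}(a)v=\sum_{i=0}^n\sum_{j=0}^n\sum_{k=0}^nv_iv_ja_k\,L_{p,d}(x_ix_jx_k).\]

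Next I would check that $L_{p,d}(x_ix_jx_k)=\re\tr(A_iA_jA_k)$ holds for all $i,j,k\in\{0,1,\ldots,n\}$, not just the indices $\ge1$ covered literally by Corollary \ref{cubictraces}. When some indices equal $0$, the corresponding polynomial factors $x_\bullet$ and matrix factors $A_\bullet$ are $1$ and $I_d$, so the identity collapses to one of $L_{p,d}(1)=\tr(I_d)$, $L_{p,d}(x_i)=\tr(A_i)$, $L_{p,d}(x_ix_j)=\tr(A_iA_j)$ from Corollary \ref{cubictraces}; each of these traces is already real (trivially, or by Proposition \ref{traceisreal}), hence equal to its own real part.

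Since the coefficients $v_iv_ja_k$ are real and $\re$ as well as $\tr$ is $\R$-linear, I can then interchange the summation with $\re\tr$:
\[v^TM_{p,d}(a)v=\re\tr\Bigl(\Bigl(\sum_{i=0}^nv_iA_i\Bigr)\Bigl(\sum_{j=0}^nv_jA_j\Bigr)\Bigl(\sum_{k=0}^na_kA_k\Bigr)\Bigr),\]
which, using $a_0=1$ and $A_0=I_d$, is exactly $\re\tr\bigl((v_0I_d+v_1A_1+\ldots+v_nA_n)^2(I_d+a_1A_1+\ldots+a_nA_n)\bigr)$. Finally I would remove the $\re$: the argument of $\tr$ is the product of the hermitian matrix $(v_0I_d+\sum_iv_iA_i)^2$ with the hermitian matrix $I_d+\sum_ka_kA_k$, and the trace of a product of two hermitian matrices $H,K$ is real because $(\tr(HK))^*=\tr(K^*H^*)=\tr(KH)=\tr(HK)$. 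This yields the claimed identity.

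There is no genuine obstacle here; the only points that need a little care are the bookkeeping of the degenerate index cases $i,j,k\in\{0,\ldots,n\}$ and the remark that the final $\re$ is superfluous.
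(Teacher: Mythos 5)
Your proof is correct and follows essentially the same route as the paper, whose own proof is just the citation ``Corollary \ref{cubictraces}, Lemma \ref{pencileval} and Proposition \ref{traceisreal}''; your careful treatment of the degenerate indices $i,j,k=0$ and the removal of $\re$ via the realness of $\tr(HK)$ for hermitian $H,K$ simply spells out the details the paper leaves implicit.
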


\begin{proof}
Corollary \ref{cubictraces}, Lemma \ref{pencileval} and Proposition \ref{traceisreal}.
\end{proof}

Since $M^2$ is hermitian for each hermitian $M\in\C^{d\times d}$, Proposition \ref{traceisreal} shows that the traces occurring in the next definition are real. 
Moreover, since $M^2$ is even psd for each hermitian $M\in\C^{d\times d}$ and the trace of a product of two psd matrices nonnegative, we see that the two occurrences of ``${\implies}$'' could equivalently be replaced by ``${\iff}$'' in the next definition.

\begin{df}\label{couple}
We call $U$ \emph{perfect} if it is a subset of
$\{A\in\C^{d\times d}\mid A\text{ hermitian}\}$ that satisfies
\[\forall A\in U:((\forall M\in U:\tr(M^2A)\ge0)\implies\text{$A\succeq0$}).\]
We call $(U,V)$ an \emph{admissible couple} if $U\subseteq V\subseteq\{A\in\C^{d\times d}\mid A\text{ hermitian}\}$ and
\[\forall A\in U:((\forall M\in V:\tr(M^2A)\ge0)\implies\text{$A\succeq0$}).\]
\end{df}

\begin{rem}\label{perfetto}
\begin{enumerate}[(a)]
\item Let $U\subseteq\C^{d\times d}$ be perfect and $k\in\N_0$. Then
\[\left\{\begin{pmatrix}A&0&\hdots&0\\0&\ddots&\ddots&\vdots\\
\vdots&\ddots&\ddots&0\\
0&\hdots&0&A\end{pmatrix}\in\C^{(kd)\times(kd)}~\middle|~A\in U\right\}\]
is again perfect.
\item Let $U\subseteq\C^{d\times d}$ and $V\subseteq\C^{e\times e}$ be perfect and suppose $0\in U$ and $0\in V$.
Then
\[\left\{\begin{pmatrix}A&0\\0&B\end{pmatrix}\in\C^{(d+e)\times(d+e)}~\middle|~A\in U,B\in V\right\}\]
is again perfect.
\item Let $U\subseteq\C^{d\times d}$ be perfect and $Q\in\C^{d\times d}$ be a unitary matrix (e.g., a permutation matrix). Then
\[\{Q^*AQ\mid A\in U\}\]
is again perfect.
\end{enumerate}
\end{rem}

\begin{rem}\label{perfectex}
The following is an easy exercise that we leave to the reader:
\begin{enumerate}[\normalfont(a)]
\item The following sets are perfect:
\begin{itemize}
\item $\{\la I_d\mid\la\in\R\}$
\item $\{A\in\R^{d\times d}\mid A\text{ diagonal}\}$
\item $\{A\in\R^{d\times d}\mid A\text{ symmetric}\}$
\item $\{A\in\C^{d\times d}\mid A\text{ hermitian}\}$
\end{itemize}
\item If $V$ is a perfect set and $U$ is contained in it, then $(U,V)$ is an admissible couple.
\end{enumerate}
\end{rem}

\begin{ex}
Let $A\in\C^{d\times d}$ be hermitian. We claim that the real span of \[I_d,A,A^2,\ldots,A^{d-1}\] is perfect. Indeed,
by conjugating $A$ with a suitable unitary matrix, one easily reduces to the case where $A$ is a diagonal matrix
with diagonal entries $a=(a_1,\ldots,a_d)\in\R^n$. By conjugating it once more with a permutation matrix, we can moreover
suppose that the first $n$ entries $a_1,\ldots,a_n$ of $a$ are pairwise distinct and all other entries $a_{n+1},\ldots,a_d$ are
repetitions of entries of $a_1,\ldots,a_n$.
Consider now the Vandermonde matrix
\[H:=\begin{pmatrix}1&a_1&a_1^2&\hdots&a_1^{d-1}\\
1&a_2&a_2^2&\hdots&a_2^{d-1}\\
\vdots&\vdots&\vdots&&\vdots\\
1&a_d&a_d^2&\hdots&a_d^{d-1}
\end{pmatrix}\in\R^{d\times d}\]
whose columns are the diagonals of the diagonal matrices $I_d,A,\ldots,A^{d-1}$.
The top left $d\times d$ submatrix is invertible since it is again a Vandermonde matrix with pairwise different rows.
Hence the projection of the column space of $H$ on the first $d$ components is $\R^d$. The rows $n+1$ to $d$ of $H$
are repetitions of the first $n$ rows of $H$. The entries of each element in the column space follow the same pattern.
Hence it suffices to apply Remark \ref{perfetto}(a) $n$-times to the perfect set $\R=\R^{1\times1}$ (each time with a possibly different
appropriate number of repetitions $k$), then use several times Remark \ref{perfetto}(b) and finally apply Remark \ref{perfetto}(c) with a suitable
permutation matrix.
\end{ex}

\begin{pro}\label{rcsspecdet}
Suppose $d\in\N_0$, $A_1,\ldots,A_n\in\C^{d\times d}$,
\begin{align*}
U&:=\{v_0I_d+v_1A_1+\ldots+v_nA_n\mid v_0,v_1,\ldots,v_n\in\R\},\\
U_\infty&:=\{v_1A_1+\ldots+v_nA_n\mid v_1,\ldots,v_n\in\R\}
\end{align*}
and $(U,V)$ is an admissible couple (in particular, each $A_i$ is hermitian). Set
\[p:=\det(I_d+x_1A_1+\ldots+x_nA_n)\in\R[x].\]
\begin{enumerate}[(a)]
\item We have
\begin{align*}
C(p)&=\{a\in\R^n\mid\forall M\in V:\tr(M^2(I_d+a_1A_1+\ldots+a_nA_n))\ge0\},\\
S_d(p)&=\{a\in\R^n\mid\forall M\in U:\tr(M^2(I_d+a_1A_1+\ldots+a_nA_n))\ge0\}\qquad\text{and}\\
S_\infty(p)&=\{a\in\R^n\mid\forall M\in U_\infty:\tr(M^2(I_d+a_1A_1+\ldots+a_nA_n))\ge0\}.
\end{align*}
\item $C(p)\subseteq S_d(p)\subseteq S_\infty(p)$
\item If $U$ is perfect, then $C(p)=S_d(p)$.
\item If $U_\infty$ is perfect, then $C(p)=S_d(p)=S_\infty(p)$.
\end{enumerate}
\end{pro}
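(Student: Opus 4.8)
The strategy is to translate everything into a statement about the single hermitian matrix
\[B(a):=I_d+a_1A_1+\ldots+a_nA_n\in\C^{d\times d},\]
using Proposition \ref{rcsdet} to describe $C(p)$ and Lemma \ref{pencildeteval} to describe the pencils $M_{p,d}$ and $M_{p,\infty}$.

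For part (a) I would argue as follows. Proposition \ref{rcsdet} gives $C(p)=\{a\in\R^n\mid B(a)\succeq0\}$. Since $B(a)\in U$ and $(U,V)$ is an admissible couple, the definition of admissibility yields the implication $(\forall M\in V:\tr(M^2B(a))\ge0)\Rightarrow B(a)\succeq0$; conversely, if $B(a)\succeq0$ then $MB(a)M\succeq0$ and so $\tr(M^2B(a))=\tr(MB(a)M)\ge0$ for every hermitian $M$, which proves the claimed description of $C(p)$. For the spectrahedra, $M_{p,d}(a)\succeq0$ iff $v^TM_{p,d}(a)v\ge0$ for all $v\in\R^{n+1}$, and by Lemma \ref{pencildeteval} this equals $\tr((v_0I_d+v_1A_1+\ldots+v_nA_n)^2B(a))$; since $\{v_0I_d+v_1A_1+\ldots+v_nA_n\mid v\in\R^{n+1}\}=U$, the description of $S_d(p)$ follows, and restricting to $v_0=0$ (which turns $M_{p,d}$ into $M_{p,\infty}$ and $U$ into $U_\infty$) gives the one for $S_\infty(p)$.

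Parts (b) and (c) are then short. From $v_0=0$ one has $U_\infty\subseteq U$, and $U\subseteq V$ by admissibility, so the test sets in (a) shrink as one passes from $C(p)$ to $S_d(p)$ to $S_\infty(p)$, which gives the inclusions in (b). For (c): if $U$ is perfect and $a\in S_d(p)$, then $\tr(M^2B(a))\ge0$ for all $M\in U$ and $B(a)\in U$, so perfectness forces $B(a)\succeq0$, i.e.\ $a\in C(p)$; with (b) this gives $C(p)=S_d(p)$.

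For part (d) the plan is to localize $U_\infty$ to its support. Let $\mathcal S\subseteq\C^d$ be the orthogonal complement of $\bigcap_i\ker A_i$; then each hermitian $A_i$ is of the form $\hat A_i\oplus0$ along $\C^d=\mathcal S\oplus\mathcal S^\perp$, whence $B(a)=\hat B(a)\oplus I_{\mathcal S^\perp}$ with $\hat B(a):=I_{\mathcal S}+\sum_ia_i\hat A_i$, and $\hat U_\infty:=\lin\{\hat A_1,\ldots,\hat A_n\}$ is again perfect and now has trivial common kernel. The step I expect to be the main obstacle is the claim that \emph{a perfect space of hermitian matrices with trivial common kernel contains the identity}; granting it for $\hat U_\infty$ gives $I_{\mathcal S}\in\hat U_\infty$, so $U=U_\infty+\R I_d$ becomes the block-diagonal sum of $\hat U_\infty$ and $\{\la I_{\mathcal S^\perp}\mid\la\in\R\}$, which is perfect by Remarks \ref{perfetto} and \ref{perfectex}; hence $C(p)=S_d(p)$ by (c). Finally, for $a\in S_\infty(p)$ the condition of (a) only sees the $\mathcal S$-block, reading $\tr(\hat M^2\hat B(a))\ge0$ for all $\hat M\in\hat U_\infty$; since $\hat B(a)=I_{\mathcal S}+\sum_ia_i\hat A_i\in\hat U_\infty$, perfectness of $\hat U_\infty$ gives $\hat B(a)\succeq0$, hence $B(a)=\hat B(a)\oplus I_{\mathcal S^\perp}\succeq0$ and $a\in C(p)$. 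With (b), $C(p)=S_d(p)=S_\infty(p)$.
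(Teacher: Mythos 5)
Your treatment of parts (a), (b) and (c) is correct and is essentially the paper's own argument: Proposition \ref{rcsdet} for the description of $C(p)$, Lemma \ref{pencildeteval} (with the restriction $v_0=0$ handling the infinite virtual degree) for the two pencils, and then Definition \ref{couple} applied to $B(a):=I_d+a_1A_1+\ldots+a_nA_n\in U$.

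For part (d), however, the step you flagged as the main obstacle is fatal, because the claim you would need is false: take $d=2$, $n=1$, $A_1=\mathrm{diag}(2,1)$. Then $U_\infty=\R A_1$ has trivial common kernel and is perfect (for $A=vA_1$ and $M=tA_1$ one has $\tr(M^2A)=9t^2v$, so nonnegativity for all $M$ forces $v\ge0$, i.e.\ $A\succeq0$), yet $I_2\notin U_\infty$. Worse, the same data refutes part (d) as it is stated: here $U$ is the set of all real diagonal $2\times2$ matrices, which is perfect, so $(U,U)$ is an admissible couple and every hypothesis of the proposition holds; but $p=(1+2x_1)(1+x_1)$ gives $C(p)=[-\frac{1}{2},\infty)$, while by the third formula in (a) one gets $S_\infty(p)=\{a\in\R\mid\tr(A_1^2)+a\tr(A_1^3)\ge0\}=\{a\in\R\mid 5+9a\ge0\}=[-\frac{5}{9},\infty)\supsetneq C(p)$. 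So no completion of your argument (nor any other) can close the gap; your block decomposition along the common kernel is harmless but cannot help, since this counterexample already has trivial common kernel.

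You should know that the difficulty you ran into is also present in the paper's own one-line proof: deducing (d) ``directly from Definition \ref{couple}'' implicitly applies perfectness of $U_\infty$ to $B(a)=I_d+a_1A_1+\ldots+a_nA_n$, which lies in $U_\infty$ only if $I_d\in U_\infty$. Part (d) becomes correct, and is then proved verbatim like your (c) with no decomposition needed, under the additional hypothesis $I_d\in U_\infty$ (equivalently $U=U_\infty$); this is what is actually available in the paper's later uses of (d), e.g.\ in Lemma \ref{weak-plane-x1-x2}, where the matrices span either all diagonal or all symmetric matrices of size $d$ (or $\{0\}$, in which case $p=1$ and the conclusion is trivial).
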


\begin{proof}
The first statement in (a) follows directly from Proposition \ref{rcsdet} together with Definition \ref{couple}. The remaining statements of (a) follow
easily from Lemma \ref{pencildeteval} and Definition \ref{dfpencil}. Statement (b) is a direct consequence of (a) since
$U_\infty\subseteq U\subseteq V$. Part (c) and (d) now follow directly from Definition \ref{couple}.
\end{proof}

\begin{ex}
Let $A_1,\ldots,A_n\in\R^{d\times d}$ by symmetric and consider
\[p:=\det(I_d+x_1A_1+\ldots+x_nA_n)\in\R[x]\]
which has degree at most $d$.
\begin{enumerate}[(a)]
\item Suppose the $A_i$ together with $I_d$ generate the vector space of all real symmetric matrices of size $d$, then 
$C(p)=S_d(p)$.
\item Suppose the $A_i$ themselves generate the vector space of all real symmetric matrices of size $d$, then
even $C(p)=S_\infty(p)$.
\end{enumerate}
\end{ex}

For real zero polynomials having a determinantal representation as in Proposition \ref{rcsspecdet}
whose size equals
their degree, the following is an immediate consequence of Proposition \ref{rcsspecdet}.
However, in the general case we have to argue in a much more subtle way. Actually, this is the first place
in this article where we wouldn't know how to avoid the Helton-Vinnikov theorem
(in form of Corollary \ref{vcor}).

\begin{thm}\label{relaxation}
Let $p$ be a real zero polynomial. Then $C(p)\subseteq S(p)$.
\end{thm}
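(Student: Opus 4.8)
The plan is to reduce to the case where $C(p)\subseteq S(p)$ can be read off from a determinantal representation, using the Helton--Vinnikov Corollary \ref{vcor} on suitable two-dimensional slices. First I would normalize: since $C(p)$ and $S(p)$ depend only on $\trunc_3 p$ (Remark \ref{dependsonlyoncubicpart}) and both are unchanged under scaling $p$ by a nonzero constant, we may assume $p(0)=1$. I would also fix an arbitrary point $a\in C(p)$ and aim to show $M_{p,d}(a)\succeq0$ where $d:=\deg p$; by Lemma \ref{pencileval} this amounts to proving $L_{p,d}((v_0+v_1x_1+\cdots+v_nx_n)^2(1+a^Tx))\ge0$ for every $v\in\R^{n+1}$. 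So the real task is to establish this single scalar inequality for each fixed pair $(a,v)$.

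The key reduction is to pass to two variables. Given $a\in C(p)$ and $v=(v_0,v_1,\ldots,v_n)$, consider the two linear forms $\ell_1(x)=1+a^Tx$ and $\ell_2(x)=v_1x_1+\cdots+v_nx_n$ (together with the constant $v_0$). I would choose an orthogonal change of coordinates $U\in\R^{n\times n}$ carrying the span of the gradients of these forms into the span of $x_1,x_2$; by Proposition \ref{rotate}(b) the value $L_{p,d}$ assigns to the relevant cubic is unchanged, and by Proposition \ref{transformrz}/the real-zero property the rotated polynomial $p(Ux)$ is still a real zero polynomial. Then $a^Tx$ and $v^Tx$ become polynomials in $x_1,x_2$ only, and $a\in C(p)$ becomes, via Proposition \ref{rotatespectrahedron}, a statement about $C(p(Ux))$. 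Now restrict all remaining variables $x_3,\ldots,x_n$ to $0$: by Lemma \ref{restriction}(a) this does not change the value of $L_{p,d}$ on polynomials in $x_1,x_2$, by Lemma \ref{restriction}(b) the image of $a$ lies in $C$ of the restricted polynomial $r\in\R[x_1,x_2]$, and $r$ is again a real zero polynomial. We have thus reduced to: $r\in\R[x_1,x_2]$ a real zero polynomial with $r(0)=1$, a point $b\in C(r)$, and we must show $L_{r,d}(q^2(1+b_1x_1+b_2x_2))\ge0$ for a certain linear $q$, where $d=\deg p\ge\deg r$.

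Here the Helton--Vinnikov Corollary \ref{vcor} applies: write $r=\det(I_e+x_1A_1+x_2A_2)$ with hermitian $A_1,A_2\in\C^{e\times e}$, $e=\deg r$. By Proposition \ref{rcsdet}, $b\in C(r)$ means $I_e+b_1A_1+b_2A_2\succeq0$. By Corollary \ref{cubictraces} (or Proposition \ref{hur}), $L_{r,e}$ evaluated on the relevant quadratic-times-linear combination equals $\tr\bigl((w_0I_e+w_1A_1+w_2A_2)^2(I_e+b_1A_1+b_2A_2)\bigr)$ for appropriate real $w_i$ coming from $q$; this is the trace of a product of a psd matrix with a psd matrix, hence $\ge0$. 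The one subtlety I expect to be the main obstacle is the bookkeeping of the virtual degree: the pencil $M_p$ uses $L_{p,d}$ with $d=\deg p$, whereas after restricting variables the natural representation of $r$ has size $e=\deg r$, possibly $e<d$. I would handle this by noting $L_{r,d}=L_{r,e}+L_{1,d-e}$ via Proposition \ref{logprop}(a) (padding with the trivial polynomial $1$), equivalently by enlarging the determinantal representation of $r$ to size $d$ with extra $1$'s on the diagonal (setting the corresponding blocks of $A_1,A_2$ to zero), which changes neither $C(r)$ nor the sign of the trace expression; the extra diagonal block contributes $\tr\bigl((w_0I_{d-e})^2 I_{d-e}\bigr)=w_0^2(d-e)\ge0$. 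Assembling: the scalar inequality holds for $r$, hence for $p(Ux)$ restricted, hence (running the rotations and restrictions backwards) for $p$ at $(a,v)$; since $v$ was arbitrary, $M_{p,d}(a)\succeq0$, and since $a\in C(p)$ was arbitrary, $C(p)\subseteq S(p)$.
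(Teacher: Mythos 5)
Your proposal is correct and follows essentially the same route as the paper's proof: normalize $p(0)=1$, use Lemma \ref{pencileval} to turn positive semidefiniteness of $M_p(a)$ into a scalar inequality for $L_{p,d}$, rotate via Proposition \ref{rotate} and restrict via Lemma \ref{restriction} to a two-variable real zero polynomial, and finish with the Helton--Vinnikov Corollary \ref{vcor} together with the trace-of-psd-product argument (which the paper packages as Proposition \ref{rcsspecdet}(b) for the case $n\le 2$). Your explicit treatment of a general $v_0$ and of the virtual-degree discrepancy $\deg r<\deg p$ (padding with an identity block, i.e.\ $L_{r,d}=L_{r,\deg r}+L_{1,d-\deg r}$) is a welcome clarification of points the paper passes over with ``continuity and homogeneity'' and a silent change of virtual degree.
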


\begin{proof}
WLOG $p(0)=1$. For $n\le2$, the claim follows from Proposition \ref{rcsspecdet}(b)
where we use the Helton-Vinnikov Corollary \ref{vcor} for $n=2$.
We now suppose $n>2$ and reduce it to the already proven case $n=2$. 
Let $a\in C(p)$. We have to show $M_p(a)\succeq0$. By continuity and homogeneity, it suffices to show
\[\begin{pmatrix}1&v^T\end{pmatrix}M_p(a)\begin{pmatrix}1\\v\end{pmatrix}\ge0\]
for all  $v\in\R^n$. By Lemma \ref{pencileval}, this is equivalent to
\[L_p((1+v^Tx)^2(1+a^Tx))\ge0.\]
To prove this, choose an orthogonal matrix
$U\in\R^{n\times n}$ such that $w:=U^Tv$ and $b:=U^Ta$ lie both in $\R^2\times\{0\}\subseteq\R^n$. By Proposition \ref{rotate}(b), it suffices to show
\[L_q((1+w^Tx)^2(1+b^Tx))\ge0\] where $q:=p(Ux)\in\R[x]$. Here $q$ and henceforth
$r:=q(x_1,x_2,0,\ldots,0)\in\R[x_1,x_2]$ are of course also
real zero polynomials. By Lemma \ref{restriction}(a), it is enough to show
\[L_r((1+w^Tx)^2(1+b^Tx))\ge0.\]
Now observe that $a\in C(p)$ implies $b\in C(q)$ and thus $(b_1,b_2)\in C(r)$.
But $C(r)\subseteq S(r)$ by the already proven case $n=2$. Hence $M_r(b_1,b_2)\succeq0$ and we can conclude by
Lemma \ref{pencileval}.
\end{proof}

\subsection{Relaxing linear programs}

We now come back to the most basic example of hyperbolic polynomials, namely products of linear polynomials non-vanishing at the origin.
The rigidly convex
sets they define are exactly the polyhedra containing the origin in their interior. The complexity behavior of optimization of linear functions over polyhedra is
mainly governed by the number of linear inequalities they are defined by \cite{mg}.
For polyhedra with a huge number of facets, it is therefore reasonable to try to
find reasonable outer approximations defined by a small linear matrix inequality. This fits of course into the above more general framework.
However, in this special case, we get new interpretations, simplifications and extensions of the construction presented above:

First, we will be able to
interpret the matrix coefficients $A_i$ of the pencil from Definition \ref{dfpencil} as moment matrices and localization matrices \cite{lau}. This might remind
the reader of Lasserre's moment relaxations. But Lasserre's relaxation is a lift-and-project method where
``moment matrices'' are actually matrices filled with unknowns having the structure of moment matrices. Our method is not a lift-and-project method
and the matrices are actual moment matrices filled with real numbers.

Second, the proofs for the case of linear programming will simplify dramatically. In particular, we will not need any version of the Helton-Vinnikov Theorem
\ref{vinnikov}.

Third, we will not restrict to moments of degree three in this case but will go to arbitrarily high moments and thus present a hierarchy of relaxations
for which we can prove finite convergence at level $d-1$ if $d$ is the number of linear inequalities.

\begin{pro}\label{dirac}
Let $d\in\N_0$ and $a_1,\dots,a_d\in\R^n$ and $p:=\prod_{i=1}^d(1+a_i^Tx)$. Then
\[L_p(q)=\sum_{i=1}^dq(a_i)\]
for all $q\in\R[x]$, i.e., $L_p$ is integration with respect to the sum of the Dirac measures in the points $a_i$.
\end{pro}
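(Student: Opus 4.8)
The plan is to extract $L_p$ directly from its defining power-series identity (Definition \ref{dfriesz}), using that the logarithm converts the product $p=\prod_{i=1}^d(1+a_i^Tx)$ into a sum. Since $p(0)=1$, the linear form $L_p=L_{p,\deg p}$ is characterized by $L_p(1)=\deg p$ together with the identity of formal power series
\[-\log p(-x)=\sum_{\substack{\al\in\N_0^n\\\al\ne0}}\frac1{|\al|}\binom{|\al|}\al\,L_p(x^\al)\,x^\al.\]
I would first record that $\deg p=d$: the degree-$d$ part of $p$ equals $\prod_{i=1}^d(a_i^Tx)$, a product of nonzero linear forms in the domain $\R[x]$, hence nonzero (this uses $a_i\ne0$ for all $i$, which is the only case relevant to the applications; if some $a_i=0$ the factor $1+a_i^Tx$ equals $1$). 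Consequently $L_p(1)=d=\sum_{i=1}^d 1=\sum_{i=1}^d q(a_i)$ for $q=1$, so the asserted formula already holds on constants.

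Next I would compute the left-hand side. Each factor of $p(-x)=\prod_{i=1}^d(1-a_i^Tx)$ has constant coefficient $1$, so Proposition \ref{logexp}(c), applied inductively, gives $\log p(-x)=\sum_{i=1}^d\log(1-a_i^Tx)$, and Definition \ref{dflogexp}(b) yields $\log(1-a_i^Tx)=-\sum_{k=1}^\infty\frac1k(a_i^Tx)^k$. Hence
\[-\log p(-x)=\sum_{i=1}^d\sum_{k=1}^\infty\frac1k(a_i^Tx)^k=\sum_{i=1}^d\sum_{k=1}^\infty\frac1k\sum_{|\al|=k}\binom{|\al|}\al a_i^\al x^\al=\sum_{\substack{\al\in\N_0^n\\\al\ne0}}\frac1{|\al|}\binom{|\al|}\al\Bigl(\sum_{i=1}^d a_i^\al\Bigr)x^\al,\]
using the multinomial theorem in the middle step and then rearranging (legitimate, as the coefficient of each fixed $x^\al$ is a finite sum). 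Alternatively, one may first reduce to the case $d=1$ via Proposition \ref{logprop}(b) and then carry out only the single expansion $-\log(1-a^Tx)=\sum_{k\ge1}\frac1k(a^Tx)^k$.

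Comparing this with the defining identity and cancelling the nonzero factor $\frac1{|\al|}\binom{|\al|}\al$ gives $L_p(x^\al)=\sum_{i=1}^d a_i^\al$ for all $\al\ne0$; together with the case $\al=0$ handled above, $L_p(x^\al)=\sum_{i=1}^d a_i^\al=\sum_{i=1}^d(x^\al)(a_i)$ for every $\al\in\N_0^n$. Since $L_p$ and $q\mapsto\sum_{i=1}^d q(a_i)$ are linear on $\R[x]$ and agree on the monomial basis, they coincide, which is the claim. I do not expect a serious obstacle here: the manipulations of $\log$ are purely formal and justified by Proposition \ref{logexp}, and the remaining content is the multinomial theorem; the only point requiring a little care is the bookkeeping of $\deg p$ versus the virtual degree in the definition of $L_p$, which is what pins down the normalization $L_p(1)=d$.
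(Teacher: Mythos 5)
Your proof is correct and follows essentially the same route as the paper: the paper reduces to the case $d=1$ via Proposition \ref{logprop}(b) and then applies the logarithm expansion and the multinomial theorem to compare coefficients in the defining identity of $L_p$, which is exactly the computation you carry out (you just expand the whole product at once via Proposition \ref{logexp}(c), and you mention the paper's reduction as your alternative). Your explicit remark about needing $a_i\ne0$ to have $\deg p=d$ is a fair observation that the paper's proof glosses over in the same way.
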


\begin{proof}
In the case $d=0$, we have $p=1$ and thus $L_p=0$. The case $d\ge2$ reduces to the case $d=1$ by means of Proposition
\ref{logprop}(b).
Hence we suppose now that $d=1$ and write $a:=a_1$. By linearity, it suffices to treat the case where $q$ is a monomial. For the constant monomial
$q=1$, we have that $L_p(q)=\deg p=1=q(a)$. For the other monomials, we have to show
\[-\sum_{k=1}^\infty\frac{(a_1^Tx)^k}k=\sum_{k=1}^\infty\frac1k\sum_{\substack{\al\in\N_0 ^n\\|\al|=k}}\binom{|\al|}\al a_1^\al x^\al\]
in view of Definitions \ref{dfriesz} and \ref{dflogexp}(b). But this follows from the multinomial theorem.
\end{proof}

The following generalizes Definition \ref{dfpencil}(b).

\begin{df}\label{dfpencilhierarchy}
Let $p\in\R[x]$ with $p(0)\ne0$ and $d\in\N_0$. Set $s:=\binom{d+n}d=\binom{d+n}n=\#E$ where \[E=\{x^\al\mid \al\in\N_0^n,|\al|\le d\}\]
is the set of monomials in $n$ variables of degree at most $d$.
Fix an order on these monomials, i.e., write $E=\{m_1,\ldots,m_s\}$.
Consider the symmetric matrices
\[A_0:=(L_p(m_im_j))_{\substack{1\le i\le s\\1\le j\le s}}\qquad\text{and}\qquad A_k:=(L_p(x_km_im_j))_{\substack{1\le i\le s\\1\le j\le s}}\]
for $k\in\{1,\ldots,n\}$. Then we call the linear matrix polynomial
\[M_p^{(d)}:=A_0+x_1A_1+\ldots+x_nA_n\in\R[x]^{s\times s}\]
the \emph{$d$-th pencil associated to $p$} and
\[S^{(d)}(p):=\{a\in\R^n\mid M_p^{(d)}(a)\succeq0\}\]
the \emph{$d$-th spectrahedron associated to $p$}.
\end{df}

\begin{lem}\label{pencilevalhierarchy}
Let $p\in\R[x]$ be a polynomial with $p(0)\ne0$, $a\in\R^n$ and $d\in\N_0$. Let $m_1,\ldots,m_s$ be the pairwise distinct monomials
in $n$ variables of degree at most $d$ in the order that has been fixed in Definition \ref{dfpencilhierarchy}. Let $v\in\R^s$.
Then \[v^TM_p^{(d)}(a)v=L_p((v_1m_1+\ldots+v_sm_s)^2(1+a_1x_1+\ldots+a_nx_n)).\]
\end{lem}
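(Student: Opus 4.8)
This is the ``moment/localization matrix'' analogue of Lemma \ref{pencileval}, and I would prove it in exactly the same spirit: expand the scalar product entrywise, recognize each entry of $M_p^{(d)}(a)$ as a value of $L_p$ on a product of two of the $m_i$ with the affine factor $1+a^Tx$, and then pull the sum inside $L_p$ using linearity. No version of Helton--Vinnikov is needed here; it is a purely formal computation.

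\textbf{Key steps.} First I would record that, by Definition \ref{dfpencilhierarchy} and linearity of $L_p$, the $(i,j)$-entry of $M_p^{(d)}(a) = A_0 + a_1A_1 + \ldots + a_nA_n$ is
\[
L_p(m_im_j) + \sum_{k=1}^n a_k\,L_p(x_k m_im_j) = L_p\!\left(m_im_j\Bigl(1+\sum_{k=1}^n a_k x_k\Bigr)\right),
\]
where the last equality again uses linearity of $L_p$ (and, if one prefers to mirror the bookkeeping trick in the proof of Lemma \ref{pencileval}, one can set $x_0:=1$ and $a_0:=1$ so that $1+\sum_k a_kx_k = \sum_{k=0}^n a_k x_k$). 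Second, I would compute
\[
v^T M_p^{(d)}(a)\,v = \sum_{i=1}^s\sum_{j=1}^s v_iv_j\,L_p\!\left(m_im_j(1+a^Tx)\right) = L_p\!\left(\Bigl(\sum_{i=1}^s v_i m_i\Bigr)\Bigl(\sum_{j=1}^s v_j m_j\Bigr)(1+a^Tx)\right),
\]
again by linearity of $L_p$, and the right-hand side is exactly $L_p((v_1m_1+\ldots+v_sm_s)^2(1+a_1x_1+\ldots+a_nx_n))$, as claimed.

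\textbf{Main obstacle.} Honestly there is no serious obstacle: the only thing to be careful about is that $M_p^{(d)}$ is genuinely symmetric so that the double sum is unambiguous (this is immediate since $m_im_j = m_jm_i$ and $x_km_im_j = x_km_jm_i$, so $A_0$ and each $A_k$ are symmetric, as already asserted in Definition \ref{dfpencilhierarchy}), and that $L_p$ is well-defined on all of $\R[x]$ and $\R$-linear, which is guaranteed by $p(0)\ne0$ via Definition \ref{dfriesz}. Everything else is bookkeeping identical to the degree-one case handled in Lemma \ref{pencileval}, just with the monomial basis $m_1,\dots,m_s$ of degree $\le d$ in place of $1,x_1,\dots,x_n$.
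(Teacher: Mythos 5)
Your proof is correct and follows exactly the computation the paper uses for the analogous Lemma \ref{pencileval} (expand the quadratic form entrywise and pull the sums inside $L_p$ by linearity); the paper in fact omits the proof of Lemma \ref{pencilevalhierarchy}, evidently regarding it as the same routine argument with the monomial basis $m_1,\ldots,m_s$ in place of $1,x_1,\ldots,x_n$. Nothing is missing.
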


\begin{lem}\label{interpol}
Let $a_1,\ldots,a_d\in\R^n$ and $i\in\{1,\ldots,d\}$. Then there exists a polynomial $q\in\R[x]\setminus\{0\}$ with $\deg q<d$ such that $q(a_i)\ne0$ and
$q(a_j)=0$ for all $j\in\{1,\ldots,d\}\setminus\{i\}$.
\end{lem}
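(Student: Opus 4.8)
The plan is to realize $q$ as a product of affine-linear functions, one for each index $j\ne i$, each chosen to vanish at $a_j$ but not at $a_i$; this is just a Lagrange-type interpolation argument adapted to the multivariate setting. Note first that the assertion tacitly requires $a_i\ne a_j$ for every $j\ne i$ (otherwise $q(a_i)=q(a_j)$ would have to be simultaneously nonzero and zero), so I would take this for granted.

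For each $j\in\{1,\ldots,d\}\setminus\{i\}$ I would set
\[\ell_j:=(a_i-a_j)^T(x-a_j)\in\R[x].\]
This is a linear polynomial with $\ell_j(a_j)=0$ and $\ell_j(a_i)=(a_i-a_j)^T(a_i-a_j)=\|a_i-a_j\|^2>0$; in particular $\ell_j\ne0$. Now put
\[q:=\prod_{j\in\{1,\ldots,d\}\setminus\{i\}}\ell_j\in\R[x]\]
(the empty product $q=1$ when $d=1$). Since $\R[x]$ is an integral domain and each $\ell_j$ is nonzero, $q\ne0$; the product has $d-1$ factors, each of degree at most $1$, so $\deg q\le d-1<d$; and $q(a_i)=\prod_{j\ne i}\|a_i-a_j\|^2\ne0$, while $q(a_j)=0$ for every $j\ne i$ because the $j$-th factor vanishes at $a_j$. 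This gives exactly the polynomial required.

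I do not expect any genuine obstacle here: the construction is elementary and purely computational. The only point that deserves a word of care is the implicit hypothesis that $a_i$ is distinct from all the other $a_j$, which is precisely the condition under which the claimed $q$ can exist at all; I would make this assumption explicit (or observe that it is automatic in the application, where the $a_j$ are pairwise distinct).
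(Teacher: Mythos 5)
Your proof is correct and takes essentially the same Lagrange-type route as the paper: both build $q$ as a product of $d-1$ affine-linear factors, each vanishing at $a_j$ but not at $a_i$ — the paper just picks a single generic direction $v$ with $v^Ta_j\ne v^Ta_i$ for all $j\ne i$ and sets $q=\prod_{j\ne i}(v^Ta_j-v^Tx)$, whereas you use the explicit normal $a_i-a_j$ in each factor. Your caveat about the tacit hypothesis $a_j\ne a_i$ for $j\ne i$ applies equally to the paper's argument, whose initial product $\prod_{j\ne i}(x^T(a_j-a_i))$ would be the zero polynomial if some $a_j$ coincided with $a_i$.
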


\begin{proof}
The polynomial $\prod_{j\in\{1,\ldots,d\}\setminus\{i\}}(x^T(a_j-a_i))\ne0$ cannot vanish on the whole of $\R^n$. 
So we can choose $v\in\R^n$ with $v^Ta_j\ne v^Ta_i$ for all $j\in\{1,\ldots,d\}\setminus\{i\}$. Now set
$q:=\prod_{j\in\{1,\ldots,d\}\setminus\{i\}}(v^Ta_j-v^Tx)$
\end{proof}

\begin{thm}\label{relaxationhierarchy}
Let $p\in\R[x]$ be a product of linear polynomials with $p(0)\ne0$. Then the following hold:
\begin{enumerate}[(a)]
\item $C(p)\subseteq S^{(d)}(p)$ for each $d\in\N_0$\qquad(``relaxation'')
\item $S^{(0)}(p)\supseteq S^{(1)}(p)\supseteq S^{(2)}(p)\supseteq S^{(3)}(p)\supseteq\ldots$\qquad(``hierarchy'')
\item If $d:=\deg p\ge1$, then $C(p)=S^{(d-1)}(p)$\qquad(``finite convergence'')
\end{enumerate}
\end{thm}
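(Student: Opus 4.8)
The plan is to unwind everything into a statement about the sum‑of‑Diracs description of $L_p$ from Proposition~\ref{dirac} and then read off all three claims from Lemma~\ref{pencilevalhierarchy}. First I would normalize: multiplying $p$ by a nonzero constant changes neither $C(p)$ (same zero set) nor $L_p$ (by Proposition~\ref{logprop}(b) a constant factor contributes the zero linear form, and $\deg p$ is unchanged), hence not $M_p^{(e)}$ nor $S^{(e)}(p)$. Absorbing the constant factors this way and using $p(0)\ne0$ to normalize each remaining linear factor, I may assume $p=\prod_{i=1}^d(1+a_i^Tx)$ with $a_1,\dots,a_d\in\R^n\setminus\{0\}$; in particular $\deg p=d$. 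Let $b_1,\dots,b_r$ be the pairwise distinct vectors among the $a_i$, with multiplicities $\mu_1,\dots,\mu_r\ge1$, so $r\le d$ and $\sum_{k=1}^r\mu_k=d$, and $L_p(q)=\sum_{k=1}^r\mu_k\,q(b_k)$ for all $q\in\R[x]$ by Proposition~\ref{dirac}. An elementary analysis of when $1+\la(y^Tb_k)$ vanishes for some $\la\in[0,1)$ gives
\[
C(p)=\{\,y\in\R^n\mid 1+y^Tb_k\ge0\text{ for all }k\in\{1,\dots,r\}\,\},
\]
while, since $M_p^{(e)}(y)$ is symmetric and $v\mapsto v_1m_1+\dots+v_sm_s$ runs over all polynomials of degree $\le e$, Lemma~\ref{pencilevalhierarchy} gives
\[
S^{(e)}(p)=\{\,y\in\R^n\mid \textstyle\sum_{k=1}^r\mu_k\,g(b_k)^2(1+y^Tb_k)\ge0\text{ for all }g\in\R[x]\text{ with }\deg g\le e\,\}
\]
for every $e\in\N_0$.

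With these two reformulations, (a) and (b) are immediate. If $y\in C(p)$ then every summand $\mu_k\,g(b_k)^2(1+y^Tb_k)$ above is $\ge0$, so $y\in S^{(e)}(p)$, proving (a). For (b), the family of test polynomials $g$ admitted in the description of $S^{(e)}(p)$ grows with $e$, so imposing more inequalities cuts out a smaller set: $S^{(e+1)}(p)\subseteq S^{(e)}(p)$.

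For (c), the inclusion $C(p)\subseteq S^{(d-1)}(p)$ is the case $e=d-1$ of (a), so only $S^{(d-1)}(p)\subseteq C(p)$ remains. I would fix $y\in S^{(d-1)}(p)$ and an index $k_0\in\{1,\dots,r\}$ and apply Lemma~\ref{interpol} to the \emph{pairwise distinct} points $b_1,\dots,b_r$ (distinctness is exactly what makes that lemma's construction work) to obtain $g\in\R[x]\setminus\{0\}$ with $\deg g<r\le d$, hence $\deg g\le d-1$, such that $g(b_{k_0})\ne0$ and $g(b_k)=0$ for $k\ne k_0$. Feeding this $g$ into the description of $S^{(d-1)}(p)$ collapses the sum to its single surviving term, $\mu_{k_0}\,g(b_{k_0})^2(1+y^Tb_{k_0})\ge0$; since $\mu_{k_0}\ge1$ and $g(b_{k_0})^2>0$ this forces $1+y^Tb_{k_0}\ge0$. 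As $k_0$ was arbitrary, $y\in C(p)$.

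The only point needing care is the bookkeeping with multiplicities — in particular, noticing that the number $r$ of distinct evaluation points is at most $d=\deg p$, so that the separating polynomials supplied by Lemma~\ref{interpol} lie in degree $\le d-1$ and are therefore legitimate test functions for the pencil $M_p^{(d-1)}$. There is no analytic obstacle, and, unlike in the general Theorem~\ref{relaxation}, no appeal to Helton--Vinnikov is needed: part (c) is in essence the classical fact that $r$ points in $\R^n$ can be separated by polynomials of degree $r-1$.
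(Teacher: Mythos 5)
Your proof is correct and follows essentially the same route as the paper's: the Dirac-measure description of $L_p$ from Proposition \ref{dirac}, Lemma \ref{pencilevalhierarchy} to translate positive semidefiniteness of $M_p^{(e)}(a)$ into nonnegativity of the sums $\sum_k \mu_k\,g(b_k)^2(1+a^Tb_k)$, and Lemma \ref{interpol} for finite convergence. Your one deviation---grouping repeated linear factors into pairwise distinct points with multiplicities before invoking Lemma \ref{interpol}---is a welcome refinement rather than a different approach, since that lemma requires the singled-out point to differ from the remaining ones, a detail the paper's proof of part (c) passes over silently.
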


\begin{proof}
WLOG $p(0)=1$. Write $p=\prod_{i=1}^d(1+a_i^Tx)$ with $a_1,\ldots,a_d\in\R^n$. Then $d=\deg p$.
For example by Proposition \ref{rcsdet}
(interpreting the product representation of $p$ as a diagonal determinantal representation), we have
\[C(p)=\{a\in\R^n\mid1+a_1^Tx\ge0,\ldots,1+a_d^Tx\ge0\}.\]

(a) Let $a\in C(p)$. We have to show $M_{p,e}(a)\succeq0$ for all $e\in\N_0$. By Lemma \ref{pencilevalhierarchy}, this is equivalent to
$L_p(q^2(1+a^Tx))\ge0$ for all $q\in\R[x]$. This means by Proposition \ref{dirac} that
\[\sum_{i=1}^dq(a_i)^2(1+a^Ta_i)\ge0\]
for all $q\in\R[x]$. But even more is true: For each $i\in\{1,\dots,d\}$,
$1+a^Ta_i=1+a_i^Ta\ge0$
since $a\in C(p)$ and therefore $q(a_i)^2(1+a^Ta_i)\ge0$.

(b) is clear from Definition \ref{dfpencilhierarchy}.

(c) One inclusion has been proven already in (a). For the other one, let $a\in\R^n\setminus C(p)$. We show that $a\notin S^{(d-1)}(p)$.
By Lemma \ref{pencilevalhierarchy} and Proposition \ref{dirac}, this means we have to show that there exists a polynomial $q\in\R[x]\setminus\{0\}$ with
$\deg q\le d-1$ such that \[\sum_{j=1}^dq(a_j)^2(1+a_j^Ta)<0.\]
Choose $i\in\{1,\ldots,d\}$ such that $1+a_i^Ta<0$. By Lemma \ref{interpol}, we can choose a polynomial $q\in\R[x]\setminus\{0\}$ with $\deg q<d$ such
that $q(a_i)\ne0$ and $q(a_j)=0$ for all $j\in\{1,\ldots,d\}\setminus\{i\}$. Then
\[\sum_{j=1}^dq(a_j)^2(1+a_j^Ta)=q(a_i)^2(1+a_i^Ta)<0.\]
\end{proof}


\section{Tightening the relaxation}

Let $p\in\R[x]$ be real zero polynomial. By Theorem \ref{relaxation}, $S(p)$ is an outer spectrahedral
approximation of $C(p)$, i.e., $C(p)\subseteq S(p)$. If $p$ has high degree, then we cannot expect
in general that this is a good approximation since $S(p)$ is defined by a very small linear matrix inequality.
In this section, we analyze qualitatively a very simple idea of how to improve the spectrahedral outer approximation. The price we will have to pay is of course that we will need more linear matrix
inequalities (of the same size however).
Roughly, the idea is to ``move the origin''. More precisely, choose a point
$a\in C(p)\setminus Z(p)$ that is different from the origin. By Theorem \ref{rzshift}, the polynomial
$p(x+a)$ is again a real zero polynomial and we have $C(p(x+a))+a=C(p)$. In general, we do however
not have that $S(p(x+a))+a=S(p)$. This seems to a lacking theoretical property at first sight but
turns out to be a fortunate fact that we can take advantage of. Namely, we have
$C(p(x+a))\subseteq S(p(x+a))$ by Theorem \ref{relaxation} and therefore
$C(p)=C(p(x+a))+a\subseteq S(p(x+a))+a$ so that $S(p(x+a))+a$ is another outer spectrahedral
relaxation of $C(p)$ that will in general be different from $S(p)$.
Hence the intersection \[S(p)\cap(S(p(x+a))+a)\] will in general be an improved outer approximation
of $C(p)$. It is defined by two linear matrix inequalities of size $n+1$ each
which could of course be combined into a single one of size $2n+2$.
Instead of choosing two points inside $C(p)\setminus Z(p)$, namely the origin and $a$, we could now more
generally choose
finitely many points $a_1,\ldots,a_k\in C(p)\setminus Z(p)$ (the origin must not necessarily be among
them) and consider the spectrahedron
\[\bigcap_{i=1}^k(S(p(x+a_i))+a_i)\supseteq C(p)\]
defined by a linear matrix inequality of size $k(n+1)$. In practice, it seems like $S(p(x+a))+a$ tightly
approximates $C(p)$ in a neighborhood of $a\in C(p)\setminus Z(p)$. If this is right, then one would
get a very tight outer approximation by choosing $a_1,\ldots,a_k\in C(p)\setminus Z(p)$ in such way
that each point in $C(p)\cap Z(p)$ is close to one of the $a_i$. This might of course have a very
large price, namely that the number of points $k$ might have to be very large.

Now we want to support the just presented view on how to make the approximation tighter. To this end,
we prove a rather theoretical result that says that if we take \emph{all} points of $C(p)\setminus Z(p)$
instead of just finitely many, then the corresponding intersection equals $C(p)$. This is Corollary
\ref{intersectioncor} below. It does of course not imply that $C(p)$ is a spectrahedron since we deal
now with an \emph{infinite} intersection. In fact, we prove a more precise theorem that provides
an evidence for our idea that it might be sufficient to choose
the points close to the boundary of $C(p)$ provided each boundary point is close to one of the $a_i$.
This is Theorem \ref{intersection} where we intersect not over all points of $C(p)\setminus Z(p)$ but
only over those lying outside of a fixed closed subset $D$ of $C(p)\setminus Z(p)$. A good imagination
is that $C(p)$ is a potato, $Z(p)$ ist skin and $D$ the peeled potato (where the removed part inevitably
is a bit more than the skin).

Our theoretical theorem actually works even for a certain polyhedral instead of spectrahedral
outer approximation that we will now introduce. This is a poor man's version of the spectrahedron introduced in Definition \ref{dfpencil}(a). When no intersection comes into play, then we use actually
just an affine half space or in exceptional cases the full space, namely the one
that is defined by the linear inequality that corresponds to the top left entry of the pencil that
defines the spectrahedron.

\begin{df}\label{halfspace}
Let $p\in\R[[x]]$ be a power series with $p(0)\ne0$ and $d\in\N_0$.
We call
\begin{align*}
P_d(p)&:=\{a\in\R^n\mid L_{p,d}(1)+L_{p,d}(x_1)b_1+\ldots+L_{p,d}(x_n)b_n\ge0\}
\end{align*}
the \emph{polyhedron associated to $p$ with respect to the virtual degree $d$}.
\end{df}

In Remark \ref{hom-pol-rel-rem} below, we will give an interesting interpretation of this half-space for which we do not yet have the necessary notions.

\begin{rem}\label{polyhedral-relaxation-remark}
\begin{enumerate}[(a)]
\item If $p\in\R[[x]]$ satisfies $\trunc_1p=1+a_1x_1+\ldots+a_nx_n$, then
\[P_d(p)=\{b\in\R^n\mid d+a_1b_1+\ldots+a_nb_n\ge0\}\]
by Example \ref{moments3} and Definition \ref{dfriesz}(a).
\item Let $p\in\R[[x]]$ with $p(0)\ne0$, then $P_d(p)$ is either an affine half space or the full space depending on whether $\trunc_1p$ is a constant polynomial or not.
\item If $p\in\R[x]$ is a real zero polynomial of degree at most $d$, then we have \[C(p)\subseteq S_d(p)\subseteq P_d(p)\] where the first
inclusion follows from Theorem \ref{relaxation} and the second is trivial.
\item Let $p\in\R[x]$ is a real zero polynomial of degree at most $d$. The inclusion \[C(p)\subseteq P_d(p)\]
is trivial contrary to the finer statement from (c). To show it, suppose WLOG $p(0)=1$, $\deg p=d$ and write $\trunc_1p=1+a_1x_1+\ldots+a_nx_n$
with $a_1,\ldots,a_n\in\R$. We have to show that $d+a_1b_1+\ldots+a_nb_n\ge0$ for all $b\in C(p)$. Fixing $b\in C(p)$,
write \[(*)\qquad p(tb)=\prod_{i=1}^e(1+d_it)\] for some $e\in\{0,\ldots,d\}$ and $d_1,\ldots,d_e\in\R^\times$. By Definition \ref{dfrcs}, we have that
$p(tb)$ has no roots in the interval $[0,1)$. In other words, we have $d_i\ge-1$ for all $i\in\{1,\ldots,e\}$. Extracting the coefficient of $t$ on both
sides of $(*)$, we get $a_1b_1+\ldots+a_nb_n=d_1+\ldots+d_e$ and therefore
\[d+a_1b_1+\ldots+a_nb_n=d+d_1+\ldots+d_e\ge d-e\ge0.\]
\end{enumerate}
\end{rem}

\begin{thm}\label{intersection}
Suppose $d\in\N_0$, $p\in\R[x]$ is a real zero polynomial of degree at most $d$ and $D$ a closed subset of
$C(p)\setminus Z(p)$. Then
\[C(p)=\bigcap_{a\in C(p)\setminus(Z(p)\cup D)}S_d(p(x+a))+a=\bigcap_{a\in C(p)\setminus(Z(p)\cup D)}P_d(p(x+a))+a\]
where the empty intersection is interpreted as $\R^n$.
\end{thm}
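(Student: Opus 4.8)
The plan is to prove both equalities at once by establishing the chain
\[
C(p)\ \subseteq\ \bigcap_{a\in I}\bigl(S_d(p(x+a))+a\bigr)\ \subseteq\ \bigcap_{a\in I}\bigl(P_d(p(x+a))+a\bigr)\ \subseteq\ C(p),
\]
where $I:=C(p)\setminus(Z(p)\cup D)$ and an empty intersection is read as $\R^n$. The first two inclusions are soft: each $a\in I$ lies in $C(p)\setminus Z(p)$, so by G\aa rding's Theorem \ref{rzshift} the polynomial $p(x+a)$ is a real zero polynomial with $C(p(x+a))+a=C(p)$, and $\deg p(x+a)=\deg p\le d$; hence Remark \ref{polyhedral-relaxation-remark}(c) gives $C(p)=C(p(x+a))+a\subseteq S_d(p(x+a))+a\subseteq P_d(p(x+a))+a$ for each $a\in I$, and intersecting over $a\in I$ yields the first two inclusions. (If $I=\varnothing$ they merely say $C(p)\subseteq\R^n$; but the argument below produces an element of $I$ whenever $C(p)\ne\R^n$, so in the empty case $C(p)=\R^n$ and all three sets coincide with $\R^n$.)

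The substance is the last inclusion: given $b\in\R^n\setminus C(p)$, I must find one $a\in I$ with $b\notin P_d(p(x+a))+a$. The geometric idea is to push $a$ along the segment from $0$ toward $b$ until it lies just short of the first point where that segment meets $Z(p)$. Concretely, consider $g(t):=p(tb)\in\R[t]$: it has $g(0)=p(0)\ne0$ by Remark \ref{nonzeroatorigin}, all of its roots are real because $p$ is a real zero polynomial, and it has a root in $(0,1)$ since $b\notin C(p)$; let $\mu\in(0,1)$ be its smallest positive root. For every $s\in[0,\mu)$ the point $a:=sb$ lies in $C(p)\setminus Z(p)$, because $g$ has no root in $[0,s]$ (the roots of $g$ below $\mu$, if any, are negative). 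Moreover $\mu b\in Z(p)$, hence $\mu b\notin D$, and since $D$ is closed we get $sb\notin D$ once $s$ is close enough to $\mu$; so $a=sb\in I$ for all such $s$.

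It remains to make the condition $b\in P_d(p(x+a))+a$ explicit. Put $q:=p(x+a)$, so $q(0)=p(a)\ne0$; applying Remark \ref{polyhedral-relaxation-remark}(a) to $q/q(0)$ (permissible because $L_{\cdot,d}$, and hence $P_d$, is unchanged under multiplying its argument by a nonzero scalar, as is clear from Definition \ref{dfriesz}) we obtain $P_d(q)=\{c\in\R^n:d+\nabla p(a)\cdot c/p(a)\ge0\}$, so that by Definition \ref{halfspace}
\[
b\in P_d(p(x+a))+a\iff d+\frac{\nabla p(a)\cdot(b-a)}{p(a)}\ge0.
\]
With $a=sb$ we have $b-a=(1-s)b$, $\nabla p(a)\cdot(b-a)=(1-s)g'(s)$ and $p(a)=g(s)$; factoring $g(t)=g(0)\prod_{i=1}^e(1-t/\rho_i)$ with $e=\deg g\le d$ and $\rho_1,\dots,\rho_e\in\R\setminus\{0\}$ gives, for $s\in[0,\mu)$,
\[
d+\frac{\nabla p(a)\cdot(b-a)}{p(a)}=d+(1-s)\frac{g'(s)}{g(s)}=d-\sum_{i=1}^e\frac{1-s}{\rho_i-s}.
\]
As $s\to\mu^-$, every summand with $\rho_i=\mu$ tends to $+\infty$ (there is at least one, and $1-\mu>0$), while those with $\rho_i\ne\mu$ stay bounded; hence the right-hand side tends to $-\infty$ and is negative for $s<\mu$ close enough to $\mu$. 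Fixing such an $s$, also large enough that $sb\notin D$, and setting $a:=sb\in I$, we get $b\notin P_d(p(x+a))+a$, hence $b\notin\bigcap_{a\in I}\bigl(P_d(p(x+a))+a\bigr)$, which completes the chain. The crux is the geometric idea of sliding $a$ to the boundary crossing; the one computation that needs care is the displayed equivalence, which rests on the scale invariance of $L_{\cdot,d}$ and its values on $1$ and on the $x_i$, whereas the limit itself is routine since the terms of $\sum_i(1-s)/(\rho_i-s)$ with $\rho_i\ne\mu$ remain bounded as $s\to\mu^-$.
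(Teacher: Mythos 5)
Your proposal is correct and takes essentially the same route as the paper: both soft inclusions come from Theorem \ref{rzshift} and Remark \ref{polyhedral-relaxation-remark}(c), and the hard inclusion is obtained by sliding the shift point along the segment from the origin toward the first zero of $p$ on the ray through $b$ and watching the defining inequality of the half-space $P_d$ tend to $-\infty$. The only (cosmetic) difference is that you exhibit a single witness $a=sb$ and compute the half-space condition via the logarithmic derivative $g'(s)/g(s)$ using Remark \ref{polyhedral-relaxation-remark}(a), whereas the paper first reduces to $n=1$ via Proposition \ref{rotate} and Lemma \ref{restriction} and reads off the same quantities $L_{p_a,d}(1)=d$ and $L_{p_a,d}(x)=\sum_i a_i/(1+a_ia)$ from the Dirac-measure interpretation of Proposition \ref{dirac}; the limiting computation is identical.
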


\begin{proof}
Recall that $p(x+a)$ is a real zero polynomial for each $a\in C(p)\setminus Z(p)$ by Theorem \ref{rzshift}.

Both inclusions from left to right follow essentially from Theorem \ref{relaxation}:
For each $a\in C(p)\setminus Z(p)$, we have $C(p)=C(p(x+a))+a$ and
\[C(p(x+a))\subseteq S(p(x+a))\subseteq S_d(p(x+a))\] by Remark \ref{polyhedral-relaxation-remark}(c) so that
$C(p)\subseteq S(p(x+a))+a\subseteq P(p(x+a))+a$.

It remains to show that
\[C(p)\supseteq\bigcap_{a\in C(p)\setminus(Z(p)\cup D)}P_d(p(x+a))+a\]
We even show that for each half-line $H$ emanating from the origin,
\[H\cap C(p)\supseteq H\cap\bigcap_{a\in H\cap(C(p)\setminus(Z(p)\cup D))}(P_d(p(x+a))+a).\]
where the empty intersection stands of course again for $\R^n$.
Using Proposition \ref{rotate}(a), one easily reduces to the case where $H$ is the positive first axis
\[H=\R_{\ge0}\times\{0\}\subseteq\R^n.\]
By Lemma \ref{restriction}(a), we can reduce to the case
$n=1$ since $p(x_1,0,\ldots,0)\in\R[x_1]$ has again degree at most $d$. 
So suppose from now on that
we have just one variable $x=x_1$. Then $H=\R_{\ge0}$. WLOG $p(0)=1$. Write
\[p=\prod_{i=1}^d(1+a_ix)\]
with $a_1,\ldots,a_d\in\R$. The roots of $p$ are then the $-\frac1{a_1},\ldots,-\frac1{a_d}$.
If none of these roots is positive, then $C(p)\cap H=\R_{\ge0}$ and there is nothing to show.
For now on we can therefore
suppose that $d\ge1$ and \[r:=-\frac1{a_1}\] is the smallest positive root of $p$ so that
$H\cap C(p)=[0,r]$. In particular, $r>0$ and $a_1<0$. Since $D$ is closed and $r\notin D$, we can choose $\ep>0$ such that $r-\ep>0$ and
$H\cap D\subseteq[0,r-\ep]$. Then $(r-\ep,r)\subseteq H\cap(C(p)\setminus(Z(p)\cup D))$.
It therefore is enough to show that
\[(*)\qquad \bigcap_{a\in(r-\ep,r)}P_d(p(x+a))+a\subseteq(-\infty,r].\]
For $a\in\R\setminus Z(p)\supseteq(r-\ep,r)$, the polynomial
\[(**)\qquad p_a:=\frac{p(x+a)}{p(a)}=\prod_{i=1}^d\frac{1+a_i(x+a)}{1+a_ia}=\prod_{i=1}^d\left(1+\frac{a_i}{1+a_ia}x\right)\]
has constant coefficient $1$. Proposition \ref{dirac} hence implies that
$L_{p_a}$ is for each $a\in\R\setminus Z(p)\supseteq(r-\ep,r)$ integration with respect to the
sum of the Dirac measures in the points $\frac{a_1}{1+a_1a},\ldots,\frac{a_d}{1+a_da}$.
Now suppose that $b$ lies in the left hand side of $(*)$.
Then we have
\[d+(b-a)\sum_{i=1}^d\frac{a_i}{1+a_ia}=L_{p_a,d}(1)+(b-a)L_{p_a,d}(x)\ge0\]
for all $a\in(r-\ep,r)$. Now let $a$ converge to $r$ from below and consider what happens in $(*)$.
We have that $1+a_1a$ converges to $0$ from above.
Hence the first term in the sum in $(**)$ converges to $-\infty$. The $i$-th term of the sum shows the same
behavior in the case where $a_i=a_1$. All other terms of the sum converge to some real number.
Hence the whole sum converges to $-\infty$. The term $b-a$ converges to $b-r$ from above. If $b-r$ wer positive, then the left hand side of $(*)$ would converge to $-\infty$ while being nonnegative all the time.
Hence $b-r\le0$, i.e., $b$ lies in the right hand side of $(*)$.
\end{proof}

\begin{cor}\label{intersectioncor}
Let $p\in\R[x]$ be a real zero polynomial.
Then
\[C(p)=\bigcap_{a\in C(p)\setminus Z(p)}S(p(x+a))+a.\]
\end{cor}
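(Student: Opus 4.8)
The plan is to deduce this directly from Theorem~\ref{intersection}. First I would set $d:=\deg p$; this is a well-defined element of $\N_0$ because a real zero polynomial satisfies $p(0)\ne0$ by Remark~\ref{nonzeroatorigin} and is in particular nonzero. Taking $D:=\emptyset$, which is trivially a closed subset of $C(p)\setminus Z(p)$, Theorem~\ref{intersection} gives
\[C(p)=\bigcap_{a\in C(p)\setminus Z(p)}S_d(p(x+a))+a,\]
so it only remains to recognize the sets $S_d(p(x+a))$ as the sets $S(p(x+a))$ from Definition~\ref{dfpencil}(b).

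For that, I would observe that translation does not affect the top-degree homogeneous part of a polynomial: if $p_d$ denotes the degree-$d$ homogeneous component of $p$, then $p(x+a)$ again has $p_d$ as its degree-$d$ homogeneous component (the shift only produces terms of degree $<d$ from $p_d$, and the lower components of $p$ contribute nothing of degree $\ge d$), so $\deg(p(x+a))=d$ for every $a\in\R^n$. Consequently $S(p(x+a))=S_{\deg(p(x+a))}(p(x+a))=S_d(p(x+a))$, and combining this with the displayed identity proves the claim.

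There is essentially no obstacle here; the corollary is just a repackaging of Theorem~\ref{intersection} in the special case of the empty ``peeled'' region $D=\emptyset$ and virtual degree equal to the actual degree. The only points that deserve a word are that $\emptyset$ qualifies as a closed subset of $C(p)\setminus Z(p)$ and that $\deg p$ is preserved under the translations $x\mapsto x+a$, both of which are immediate.
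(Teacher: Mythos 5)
Your proposal is correct and is essentially the paper's (implicit) argument: the corollary is exactly Theorem~\ref{intersection} specialized to $D=\emptyset$ with $d=\deg p$. The only bridging detail, that $\deg(p(x+a))=\deg p$ so that $S_d(p(x+a))=S(p(x+a))$, is handled correctly via the invariance of the top-degree homogeneous part under translation.
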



\section{Exactness for quadratic real zero polynomials}

It is quite trivial that $C(p)=S(p)$ for linear real zero polynomials $p\in\R[x]$. One way of seeing this is
via Remark \ref{polyhedral-relaxation-remark} which shows that $C(p)=P_1(p)=S(p)$ for all
real zero polynomials $p\in\R[x]$ of degree $1$.
Since $S(p)$ depends only on the cubic part
$\trunc_3p$ of $p$ by Remark \ref{dependsonlyoncubicpart}, there seems to be no way
that $C(p)=S(p)$ in general if the degree of of the real zero polynomial $p$ is bigger than three.
But worse than that, one lacks in general exactness also for cubic real zero polynomials as the reader
will easily find. The next theorem shows that our relaxation at least is exact for quadratic real zero polynomials. 

\begin{thm}\label{quadratic}
Let $p$ be a quadratic real zero polynomial. Then $C(p)=S(p)$.
\end{thm}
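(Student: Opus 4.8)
The plan is to prove the reverse inclusion $S(p)\subseteq C(p)$, the inclusion $C(p)\subseteq S(p)$ being Theorem \ref{relaxation}. Scaling $p$ changes neither $C(p)$ nor $S(p)$ (the Riesz form $L_{p,d}$ is insensitive to it), so I would assume $p(0)=1$ and write $p=1+b^Tx+x^TAx$ with symmetric $A\in\R^{n\times n}$; by Example \ref{quadraticrealzero} the real zero hypothesis becomes $G:=bb^T-4A\succeq0$. If $A=0$ then $p$ is linear or constant and the claim is trivial (as already noted at the start of this section), so from now on $\deg p=2$ and $M_p=M_{p,2}$.

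First I would evaluate the pencil. Using the formulas of Example \ref{moments3} for the values of $L_{p,2}$ on monomials of degree at most three (for quadratic $p$ only the linear coefficients $a_i=b_i$ and the quadratic coefficients survive, all $a_{ijk}$ vanishing) together with Lemma \ref{pencileval}, one obtains, writing $\beta:=a^Tb$,
\[M_p(a)=\begin{pmatrix}2+\beta&c^T\\c&B\end{pmatrix},\qquad c=(1+\beta)b-2Aa,\qquad B=(1+\beta)bb^T-(2+\beta)A-ba^TA-Aab^T.\]
The heart of the proof is the polynomial identity
\[(2+\beta)\,B-cc^T=\tfrac14\bigl((2+\beta)^2\,G-(Ga)(Ga)^T\bigr),\]
which I would verify by substituting $4A=bb^T-G$ everywhere (hence $Aa=\tfrac14(\beta b-Ga)$ and $a^TA=\tfrac14(\beta b^T-a^TG)$): the coefficients of $bb^T$ and of $ba^TG+Gab^T$ then cancel completely, leaving only the two $G$-terms. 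This single computation is where I expect the real work to lie; it is not deep, but it must be carried out carefully.

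Granting the identity, the (generalized) Schur complement of the $(0,0)$-entry shows, for $2+\beta>0$, that $M_p(a)\succeq0\iff(2+\beta)^2G\succeq(Ga)(Ga)^T$; for $2+\beta<0$ the matrix is not positive semidefinite; and the boundary case $2+\beta=0$ needs only a short direct check (there $c=\tfrac12Ga$ and $B=\tfrac14(ba^TG+Gab^T)$, so both sides reduce to $Ga=0$). I would then invoke the elementary lemma that for positive semidefinite $G$, any vector $v$, and any $\lambda\ge0$ one has $\lambda G\succeq(Gv)(Gv)^T\iff\lambda\ge v^TGv$ (diagonalize $G$ and apply Cauchy--Schwarz). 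Taking $v=a$, $\lambda=(2+\beta)^2$, and using $a^TGa=(a^Tb)^2-4a^TAa$, the inequality $(2+\beta)^2\ge a^TGa$ becomes $1+a^Tb+a^TAa=p(a)\ge0$. Hence
\[a\in S(p)\quad\Longleftrightarrow\quad a^Tb\ge-2\ \text{ and }\ p(a)\ge0.\]

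Finally I would identify the right-hand side with $C(p)$. The inclusion $C(p)\subseteq\{a:a^Tb\ge-2,\ p(a)\ge0\}$ is easy: $C(p)\subseteq P_2(p)=\{a:2+a^Tb\ge0\}$ by Remark \ref{polyhedral-relaxation-remark}, and $p(a)=\lim_{t\to1^-}p(ta)\ge0$ since $p(ta)>0$ for $t\in[0,1)$. For the converse, suppose $a^Tb\ge-2$ and $p(a)\ge0$ but $a\notin C(p)$; then the univariate quadratic $q(t):=p(ta)=1+\beta t+\gamma t^2$ with $\gamma:=a^TAa$ has a root $t_*\in(0,1)$ (it cannot be $0$ since $q(0)=1$). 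A short case distinction on the sign of $\gamma$ gives a contradiction: if $\gamma\le0$, a root in $(0,1)$ forces $q(1)=p(a)<0$; if $\gamma>0$, the second root $t^{**}=1/(\gamma t_*)$ satisfies $q(1)=\gamma(1-t_*)(1-t^{**})\ge0$, hence $t^{**}\le1$, hence $\gamma t_*\ge1$, hence $2+\beta=2-\gamma t_*-1/t_*<0$. This yields $a\in C(p)\iff a\in S(p)$, i.e. $C(p)=S(p)$. Note that, beyond Theorem \ref{relaxation}, this line of argument is entirely elementary and avoids the Helton--Vinnikov theorem.
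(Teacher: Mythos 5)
Your proposal is correct, and for the hard inclusion it takes a genuinely different route than the paper. The paper also gets $C(p)\subseteq S(p)$ from Theorem \ref{relaxation}, but proves $S(p)\subseteq C(p)$ by intersecting with an arbitrary line through the origin, rotating it onto the first axis (Proposition \ref{rotatespectrahedron}), restricting via Lemma \ref{restriction}(c) to the univariate polynomial $p(x_1,0,\ldots,0)$, and then invoking the finite convergence of the LP hierarchy, Theorem \ref{relaxationhierarchy}(c), for the degree-two case (with small direct checks in degrees $0$ and $1$). You instead compute $M_p(a)$ globally from Example \ref{moments3} and Lemma \ref{pencileval} and establish the Schur-complement identity $(2+\beta)B-cc^T=\tfrac14\bigl((2+\beta)^2G-(Ga)(Ga)^T\bigr)$ with $G=bb^T-4A$; I checked this identity and the surrounding case analysis (including the boundary case $2+\beta=0$ and the Cauchy--Schwarz lemma $\lambda G\succeq(Gv)(Gv)^T\iff\lambda\ge v^TGv$ for $\lambda\ge0$), and they are all correct, as is the final elementary identification of $\{a: 2+b^Ta\ge0,\ p(a)\ge0\}$ with $C(p)$. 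What the paper's route buys is brevity given machinery already in place (rotation invariance, restriction, the moment hierarchy for products of linear forms) and no matrix bookkeeping; what your route buys is a self-contained linear-algebra argument that never leaves the ambient space and, as a by-product, an explicit semialgebraic description of the relaxation for quadratic real zero $p$, namely $S(p)=P_2(p)\cap\{a\in\R^n\mid p(a)\ge0\}$, which makes the exactness (and Corollary \ref{quspec}) completely transparent. Both arguments use the Helton--Vinnikov input only through Theorem \ref{relaxation}, where Example \ref{hv2} suffices, so both remain essentially elementary.
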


\begin{proof}
From Theorem \ref{relaxation}, we know already $C(p)\subseteq S(p)$. To show the reverse $S(p)\subseteq C(p)$, we show that it holds
after intersecting with an arbitrary line through the origin. Instead of an arbitrary line, we can without loss of generality consider the
first axis in $\R^n$ by Proposition \ref{rotatespectrahedron}.
By Lemma \ref{restriction}(c), it is enough to show $S_{\deg p}(q)\subseteq C(q)$ for
$q:=p(x_1,0,\ldots,0)\in\R[x_1]$. If $\deg q=0$, then $C(q)=\R$ and there is nothing to show.
If $\deg q=1$, say $q=1+ax_1$ with a non-zero $a\in\R$, then the bottom right entry of $M_{q,\deg p}$ is $a^2+x_1a^3$ by Proposition
\ref{dirac} and hence $S_{\deg p}(q)\subseteq\{b\in\R\mid a^2+a^3b\ge0\}=\{b\in\R\mid 1+ab\ge0\}$. Finally, if
$\deg q=2$, then $S_{\deg p}(q)=S(q)=S^{(1)}(q)=C(q)$ by Theorem \ref{relaxationhierarchy}.
\end{proof}

The preceding proof makes use of the Helton-Vinnikov Theorem \ref{vinnikov} or Corollary \ref{vcor} indirectly through Theorem \ref{relaxation}. 
However, we need it only for quadratic polynomials. So we can use Example \ref{hv2} instead and therefore our proof is self-contained.

The next corollary was explicitly mentioned by Netzer and Thom \cite[Corollary 5.4]{nt} but most likely
was known before. Netzer and Thom use a hermitian linear matrix inequality of size
$2^{\lfloor\frac n2\rfloor}$ to describe a rigidly convex set given by a quadratic polynomial in $\R^n$ whereas
we need just a symmetric linear matrix inequality of size only $n+1$. While the proof of Netzer and Thom
is still very interesting for other reasons,
the result could actually also be proven by reducing it, via projective space, to the case of the unit ball
(cf. \cite[Page 958]{gar}). The well-known description of the unit ball in $\R^n$ by a
symmetric linear matrix inequality of size $n$ (see for example \cite[Page 591]{kum1}) shows that one can
for $n\ge2$ even get down to size $n$ instead of $n+1$. If $n=2^k+1$ for some $k\in\N_0$, then Kummer
shows that $n$ is the minimal size of a real symmetric linear matrix inequality describing the unit ball.
In general, he shows that $\frac n2$ is a lower bound \cite[Theorem 1]{kum1}.

\begin{cor}\label{quspec}
The rigidly convex set defined by a quadratic real zero polynomial is always a spectrahedron.
\end{cor}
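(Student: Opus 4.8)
The plan is to obtain this as an essentially immediate consequence of Theorem \ref{quadratic} together with the very definition of $S(p)$. There is almost nothing left to do once Theorem \ref{quadratic} is in hand, so the "proof" is really just an assembly step.

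First I would recall that a quadratic real zero polynomial $p$ automatically satisfies $p(0)\ne 0$ by Remark \ref{nonzeroatorigin}, so that the pencil $M_p=M_{p,\deg p}$ of Definition \ref{dfpencil}(b) is well-defined. This pencil is a symmetric linear matrix polynomial of size $n+1$, hence a pencil in the sense fixed in the preliminaries, and therefore $S(p)=\{a\in\R^n\mid M_p(a)\succeq 0\}$ is by definition a spectrahedron (note that "quadratic" here means $\deg p\le 2$, so the degenerate cases $\deg p\le 1$ are covered as well).

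Second I would invoke Theorem \ref{quadratic}, which asserts $C(p)=S(p)$ for every quadratic real zero polynomial $p$. Combining this equality with the observation of the previous paragraph shows that $C(p)$ is a spectrahedron, namely the one cut out by the size-$(n+1)$ pencil $M_p$, which completes the argument.

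I do not expect any genuine obstacle here: all the substance is already contained in the proof of Theorem \ref{quadratic}, which in turn uses only the quadratic case of Helton--Vinnikov (and can even be made self-contained via the explicit formulas of Example \ref{hv2}). If desired, one could append a remark comparing the size $n+1$ of $M_p$ with the sharper size-$n$ (resp.\ the lower bound $\tfrac n2$) descriptions mentioned in the discussion preceding the corollary, but this is not needed for the statement itself.
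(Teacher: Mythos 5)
Your proposal is correct and coincides with the paper's intent: the corollary is stated as an immediate consequence of Theorem \ref{quadratic}, since $S(p)$ is by construction the spectrahedron defined by the pencil $M_p$, so $C(p)=S(p)$ already exhibits $C(p)$ as a spectrahedron. Your added observations (that $p(0)\ne0$ by Remark \ref{nonzeroatorigin} and that degrees $\le1$ are included) are harmless and consistent with the paper.
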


If $p\in\R[x]$ is a quadratic real zero polynomial, then it follows easily from this corollary and from Proposition \ref{geoalg} below that there exists $q\in\R[x]$, $d\in\N_0$ and symmetric matrices $A_1,\ldots,A_n\in\R^{d\times d}$ such that
\[pq=\det(I_d+x_1A_1+\ldots+x_nA_n)\]
and $C(p)\subseteq C(q)$.
Our aim is now to prove this without using Proposition \ref{geoalg} by studying $\det(M_p)$ and
see which cofactor $q$ we get. We begin with a technical lemma that in particular gives a very nice
determinantal representation of the real zero polynomial $1-x_1^2-\ldots-x_n^2$.

\begin{lem}\label{circlelemma}
Let $d_1,\ldots,d_n\in\R$. Consider the matrix
\[M:=\begin{pmatrix}x_0&-d_1x_1&\ldots&-d_nx_n\\
-d_1x_1&-d_1x_0\\
\vdots&&\ddots\\
-d_nx_n&&&-d_nx_0
\end{pmatrix}\in\R[x_0,x]^{(n+1)\times(n+1)}.
\]
where the empty space is filled by zeros.
Then \[\det M=x_0^{n-1}(x_0^2+d_1x_1^2+\ldots+d_nx_n^2)\det(M(1,0)).\]
\end{lem}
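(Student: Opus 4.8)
The plan is to reduce the identity to a single, essentially one-line determinant computation exploiting the arrowhead shape of $M$. First I would dispose of $\det(M(1,0))$: substituting $x_0=1$ and $x=0$ turns $M$ into $\operatorname{diag}(1,-d_1,\dots,-d_n)$, so $\det(M(1,0))=(-1)^n d_1\cdots d_n$, and the asserted formula becomes equivalent to
\[\det M=(-1)^n x_0^{n-1}\,(x_0^2+d_1x_1^2+\dots+d_nx_n^2)\,d_1\cdots d_n.\]
(For $n=0$ both sides are just $x_0$, so I would assume $n\ge 1$.) It is this identity that I would prove.

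To compute $\det M$ I would pass to the field of fractions $\R(x_0,x_1,\dots,x_n)$, in which $x_0$ is invertible; since both sides of the display are polynomials, an equality there yields the desired polynomial identity. Now perform, for each $i\in\{1,\dots,n\}$, the row operation adding $-\tfrac{x_i}{x_0}$ times row $i$ to row $0$. Row $i$ has nonzero entries only in columns $0$ and $i$ (namely $-d_ix_i$ and $-d_ix_0$), so this leaves every entry of row $0$ in a column $\ge 1$ equal to $0$ (the one in column $i$ becomes $-d_ix_i+(-\tfrac{x_i}{x_0})(-d_ix_0)=0$) and changes the $(0,0)$-entry by $+\tfrac{d_ix_i^2}{x_0}$; this works whether or not some $d_i$ vanishes. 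After all $n$ operations the $0$-th row is $(x_0+\tfrac1{x_0}\sum_i d_ix_i^2,\,0,\dots,0)$ and rows $1,\dots,n$ are untouched, so expanding along the first row gives
\[\det M=\Bigl(x_0+\tfrac1{x_0}\textstyle\sum_i d_ix_i^2\Bigr)\det\!\bigl(\operatorname{diag}(-d_1x_0,\dots,-d_nx_0)\bigr)=\Bigl(x_0+\tfrac1{x_0}\textstyle\sum_i d_ix_i^2\Bigr)(-1)^n x_0^n\,d_1\cdots d_n,\]
which simplifies to $(-1)^n x_0^{n-1}(x_0^2+\sum_i d_ix_i^2)\,d_1\cdots d_n$, as wanted. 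Equivalently, when all $d_i\ne 0$ one may just quote the Schur complement formula $\det M=(\det D)(x_0-b^TD^{-1}b)$ for the block form $M=\left(\begin{smallmatrix}x_0&b^T\\ b&D\end{smallmatrix}\right)$ with $b=(-d_1x_1,\dots,-d_nx_n)^T$ and $D=-x_0\operatorname{diag}(d_1,\dots,d_n)$, using $b^TD^{-1}b=-\tfrac1{x_0}\sum_i d_ix_i^2$; the degenerate case where some $d_i=0$ is then trivial, since a whole row of $M$ vanishes.

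I do not expect a genuine obstacle here. The only points needing a sliver of care are the book-keeping of the sign $(-1)^n$ coming from $\det\bigl(\operatorname{diag}(-d_ix_0)\bigr)=\prod_i(-d_ix_0)$, and the fact that the natural Gaussian elimination divides by $x_0$ — which is exactly why the statement carries the factor $x_0^{n-1}$ rather than $x_0^n$, and why one should either work over the fraction field (or invoke Zariski density) or else clear that denominator explicitly.
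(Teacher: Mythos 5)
Your proof is correct and is essentially the paper's argument: the same elimination (clearing the off-diagonal entries of the first row using rows $1,\dots,n$, leaving a triangular matrix with diagonal $x_0^2+\sum_i d_ix_i^2$ and $-d_ix_0$, up to the factor of $x_0$). The only cosmetic difference is that the paper avoids denominators by first multiplying the top row by $x_0$ and then cancelling $x_0$ from $x_0\det M=x_0^n r\det(M(1,0))$, whereas you work in the fraction field (or via the Schur complement), which is equally valid.
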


\begin{proof}
Setting $r:=x_0^2+d_1x_1^2+\ldots+d_nx_n^2$, we show that
$x_0\det M=x_0^nr\det(M(1,0))$. Of course, $x_0\det M$ is the determinant
of the matrix $N$ that arises from $M$ by multiplying the first row with $x_0$. To compute the determinant of $N$,
we subtracting $x_i$ times its $(i+1)$-th row from its first row for each $i\in\{1,\ldots,n\}$. This results in an upper triangular matrix
with diagonal entries $r,-d_1x_0,\ldots,-d_nx_0$. The determinant of $N$ is the product of the
diagonal entries, i.e., $x_0^nr(-d_1)\dotsm(-d_n)=x_0^nr\det M(1,0)$.
\end{proof}

Using this lemma, we will now be able to compute the determinant of $M_p$ for quadratic $p\in\R[x]$.

\begin{thm}
For all quadratic $p\in\R[x]$ with $p(0)=1$,
\[\det(M_p)=(\det(M_p(0)))\left(\frac{1+\trunc_1p}2\right)^{n-1}p.\]
\end{thm}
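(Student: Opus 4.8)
The plan is to compute $\det M_p$ head‑on by putting the $(n+1)\times(n+1)$ pencil $M_p$ into block form and uncovering a rank‑one structure hidden inside it. As in the statement we take $p(0)=1$ and write $p=1+b^Tx+x^TAx$ with $b\in\R^n$ and $A\in\R^{n\times n}$ symmetric (Example~\ref{quadraticrealzero}); put $\beta:=b^Tx\in\R[x]$ and $D:=bb^T-4A\in\R^{n\times n}$. The case $n=0$ is trivial, and the degenerate case $A=0$ (equivalently $\deg p\le1$) is handled directly: a short computation from Example~\ref{moments3} gives $M_p=\trunc_1(p)\,vv^T$ with $v=(1,b_1,\dots,b_n)^T$ (and $M_p=0$ when $p=1$), so for $n\ge1$ both sides of the claimed identity vanish. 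So from now on I would assume $\deg p=2$.

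The first step is to record the shape of $M_p$. By Lemma~\ref{pencileval}, $M_p(a)$ is the Gram matrix of $1,x_1,\dots,x_n$ for the bilinear form $(f,g)\mapsto L_p\big(fg\,(1+a^Tx)\big)$, and all the values of $L_p$ on monomials of degree $\le3$ needed to write it out are supplied by Example~\ref{moments3} (with every cubic coefficient of $p$ equal to $0$). Doing this bookkeeping yields
\[M_p=\begin{pmatrix}2+\beta & u^T\\ u & \tfrac12\big(ub^T+bu^T\big)-(2+\beta)A\end{pmatrix},\qquad u:=(1+\beta)b-2Ax\in\R[x]^n.\]
Setting $x=0$ gives $M_p(0)=\begin{pmatrix}2&b^T\\ b&bb^T-2A\end{pmatrix}$, and the congruence by $\begin{pmatrix}1&-\tfrac12 b^T\\0&I_n\end{pmatrix}$ turns this into the block‑diagonal matrix with blocks $2$ and $\tfrac12 D$, so $\det M_p(0)=2\det(\tfrac12 D)=2^{-(n-1)}\det D$. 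Hence the assertion is equivalent to $\det M_p=4^{-(n-1)}\det(D)\,(2+\beta)^{n-1}p$.

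Next I would take the Schur complement at the $(0,0)$ entry: by the elementary polynomial identity $a^{n-1}\det\!\begin{pmatrix}a&v^T\\ v&B\end{pmatrix}=\det(aB-vv^T)$ (immediate from row operations), applied with $a=2+\beta$, $v=u$, $B=\tfrac12(ub^T+bu^T)-(2+\beta)A$, we get
\[(2+\beta)^{n-1}\det M_p=\det\!\Big((2+\beta)\big(\tfrac12(ub^T+bu^T)-(2+\beta)A\big)-uu^T\Big).\]
Here is the crux. Since $D=bb^T-4A$ we have $4Ax=\beta b-Dx$, hence $u=\tfrac12\big((2+\beta)b+Dx\big)$, and with $w:=-Dx$ the relation $(2+\beta)b-2u=w$. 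Substituting this collapses the apparent rank‑two update into a rank‑one one: the matrix inside the last determinant equals $\tfrac14\big((2+\beta)^2D-ww^T\big)=\tfrac14\big((2+\beta)^2D-Dxx^TD\big)$. Now $(2+\beta)^2D-Dxx^TD=D\big((2+\beta)^2I-xx^TD\big)$ and $xx^TD=x(Dx)^T$ has rank $\le1$ with trace $x^TDx$, so $\det(\lambda I-vu^T)=\lambda^{n-1}(\lambda-u^Tv)$ gives $\det\big((2+\beta)^2D-Dxx^TD\big)=\det(D)\,(2+\beta)^{2n-2}\big((2+\beta)^2-x^TDx\big)$. Finally $(2+\beta)^2-x^TDx=(2+\beta)^2-\big(\beta^2-4x^TAx\big)=4\big(1+\beta+x^TAx\big)=4p$, so altogether $(2+\beta)^{n-1}\det M_p=4^{-n}\det(D)\,(2+\beta)^{2n-2}\cdot4p=(2+\beta)^{n-1}\cdot4^{-(n-1)}\det(D)\,(2+\beta)^{n-1}p$. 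Cancelling $(2+\beta)^{n-1}$ in the domain $\R[x]$ gives $\det M_p=\det M_p(0)\,\big(\tfrac{2+\beta}{2}\big)^{n-1}p=\det M_p(0)\,\big(\tfrac{1+\trunc_1 p}{2}\big)^{n-1}p$, which is the claim.

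The only genuinely delicate steps are the first one — assembling the explicit block form of $M_p$ from the degree‑$\le3$ pseudo‑moment formulas of Example~\ref{moments3}, which is pure bookkeeping but must be done carefully — and, going hand in hand with it, spotting the identity $(2+\beta)b-2u=-Dx$ that replaces the rank‑two Schur update $\tfrac12(ub^T+bu^T)-\tfrac1{2+\beta}uu^T$ by $-\tfrac1{4(2+\beta)}ww^T$; everything afterwards is the rank‑one determinant formula and a one‑line polynomial identity. As a sanity check one verifies the case $n=1$ by hand, where $M_p$ is the $2\times2$ pencil $\begin{pmatrix}2+b_1x_1&b_1+(b_1^2-2a_{11})x_1\\ b_1+(b_1^2-2a_{11})x_1&(b_1^2-2a_{11})+(b_1^3-3a_{11}b_1)x_1\end{pmatrix}$ and indeed $\det M_p=(b_1^2-4a_{11})\,p$, matching $\det M_p(0)\cdot1\cdot p$.
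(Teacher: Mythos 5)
Your proof is correct, and it takes a genuinely different route from the one in the paper. You compute $M_p$ explicitly in block form from the degree-$\le 3$ values of $L_p$ (Example \ref{moments3}), take a Schur complement at the $(0,0)$ entry via the identity $a^{n-1}\det\left(\begin{smallmatrix}a&v^T\\ v&B\end{smallmatrix}\right)=\det(aB-vv^T)$, and then observe that with $u=\tfrac12\bigl((2+\beta)b+Dx\bigr)$ the remaining matrix collapses to $\tfrac14\bigl((2+\beta)^2D-Dxx^TD\bigr)$, after which the rank-one determinant formula and the identity $(2+\beta)^2-x^TDx=4p$ finish the job; I checked the block form of $M_p$, the identity $\det M_p(0)=2^{1-n}\det D$, the rank-one collapse, and the final bookkeeping, and all are correct, as is your separate treatment of the cases $\deg p\le1$. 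The paper instead kills the linear part by the $a$-transform $p[-b/2]$, diagonalizes by an orthogonal change of variables, evaluates the resulting diagonal pencil with Lemma \ref{circlelemma}, and transports the answer back with Lemmas \ref{penciltransform} and \ref{pencilrotate}, after first reducing by a continuity/polynomiality argument to the generic case $A\ne0$, $4A\ne bb^T$. Your argument buys directness: it is self-contained linear algebra, needs no genericity reduction (it works verbatim whenever $\deg p=2$, even if $D=bb^T-4A$ is singular), and does not invoke the equivariance machinery. The paper's route buys reuse and conceptual context: the transform and rotation lemmas and Lemma \ref{circlelemma} are exactly the tools redeployed immediately afterwards for Theorem \ref{lincofactor}, so the structural proof doubles as preparation for the determinantal representation of $p\bigl(\frac{1+\trunc_1p}2\bigr)^{n-1}$, whereas your computation, while shorter, does not by itself produce that representation.
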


\begin{proof}
For the constant polynomial $p=1\in\R[x]$, we have $L_p=0$ by Definition \ref{dfriesz} or Example
\ref{moments3} and hence $M_p=M_p(0)=0\in\R^{(n+1)\times(n+1)}$ by
Definition \ref{dfpencil}.
For a polynomial
$p\in\R[x]$ of degree one, i.e., $p=b^Tx+1$ for some $b\in\R^n$ with $b\ne0$, Proposition \ref{dirac} or Example \ref{moments3} yields
$M_p=(1+b^Tx)M_p(0)$ and \[M_p(0)=\begin{pmatrix}1&b^T\\b^T&bb^T\end{pmatrix}\in\R^{(n+1)\times(n+1)}\]
has rank one and therefore vanishing determinant.

Hence it suffices to show the claim for all polynomials $p\in\R[x]$ of degree two. But for fixed degree polynomials $p$,
both sides of the claimed equation depend continuously (in fact even polynomially) on the coefficients of $p$ (the top left entry of $M_p(0)$
is then the constant degree and therefore makes no trouble). Hence it is enough to show the equations for all polynomials of the form
\[p=x^TAx+b^Tx+1\]
for some symmetric matrix $A\in\R^{n\times n}$ and some vector $b\in\R^n$ such that $A\ne0$ and $4A\ne bb^T$. Fix some
$p$ of this form.
Set
\begin{multline*}
q:=p\left[-\frac b2\right]=x^TAx+\left(1-\frac{b^T}2x\right)b^Tx+\left(1-\frac{b^T}2x\right)^2\\
=x^TAx+\left(-\frac12+\frac14\right)(b^Tx)^2+1=x^T\left(A-\frac14bb^T\right)x+1.
\end{multline*}
Choose an orthogonal matrix $U$ such that
\[D:=U^T\left(A-\frac14bb^T\right)U\]
is diagonal and consider the polynomial \[r:=q(Ux)=x^TDx+1\]
of degree two. By Example \ref{moments3}, we have
\[M_r=\begin{pmatrix}2&-2x^TD\\-2Dx&-2D\end{pmatrix}=M(1,x)\]
where
\[M:=\begin{pmatrix}2x_0&-2x^TD\\-2Dx&-2x_0D\end{pmatrix}\in\R[x_0,x]^{(n+1)\times(n+1)}\]
By Lemma \ref{circlelemma}, we have
\[\det M=x_0^{n-1}r^*\det(M_r(0))\]
where $r^*\in\R[x_0,x]$ denotes the homogenization of $r$ defined in Definition \ref{defhomogeneous}.
By Remark \ref{degrem}, we have $p=q\hspace{-0.3em}\left[\frac b2\right]$ so that Lemma \ref{penciltransform} yields
\[\det(M_p)=\det\left(M_q+\frac{b^Tx}2M_q(0)\right).\]
Rewriting the right hand side in view of $q=r(U^Tx)$ by means of Lemma \ref{pencilrotate}, we get
\begin{align*}
\det(M_p)&=\det\left(M_r(U^Tx)+\frac{b^Tx}2M_r(0)\right)
=\det\left(M\left(1+\frac{b^Tx}2,U^Tx\right)\right)\\
&=\left(1+\frac{b^Tx}2\right)^{n-1}(\det(M_r(0))\,r^*\hspace{-0.3em}\left(1+\frac{b^Tx}2,U^Tx\right)\\
\end{align*}
Now we use $r^*(x_0,U^Tx)=q^*$ to see that
\[\det(M_p)=(\det(M_r(0))\left(\frac{1+\trunc_1p}2\right)^{n-1}q^*\left(1+\frac{b^Tx}2,x\right).\]
We result follows thus from
\[q^*\left(1+\frac{b^Tx}2,x\right)=q\left[\frac b2\right]=p.\]
\end{proof}

The following theorem follows easily from the previous one in the case where the matrix
$M_p(0)\in\R^{(n+1)\times(n+1)}$ is invertible. In the general case, its proof is clearly inspired
by the proof of the previous theorem.

\begin{thm}\label{lincofactor}
Let $p\in\R[x]$ be a quadratic real zero polynomial with $p(0)=1$. Then there exist symmetric matrices
$A_1,\ldots,A_n\in\R^{(n+1)\times(n+1)}$ such that 
\[p\left(\frac{1+\trunc_1p}2\right)^{n-1}=\det(I_{n+1}+x_1A_1+\ldots+x_nA_n).\]
\end{thm}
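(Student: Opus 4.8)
The plan is to dispose of the degenerate cases by hand and, in the main case $\deg p=2$, to build the representation from an explicit ``arrowhead'' pencil, undoing an orthogonal diagonalization and the $a$-transform with $a=-\tfrac b2$ (which is nothing but a completion of the square) at the end. This mirrors the proof of the previous theorem but sidesteps the singularity that shows up there.

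Write $p=x^TAx+b^Tx+1$ with $A\in\R^{n\times n}$ symmetric and $b\in\R^n$, so that $\trunc_1p=1+b^Tx$ and $\tfrac{1+\trunc_1p}2=1+\tfrac12b^Tx$. If $\deg p\le1$ or $4A=bb^T$, then $p$ equals $1+b^Tx$ or $\bigl(1+\tfrac12b^Tx\bigr)^2$ respectively, so in either case $p\,\bigl(\tfrac{1+\trunc_1p}2\bigr)^{n-1}$ is a product of at most $n+1$ linear polynomials each with constant term $1$; such a product is $\det(I_{n+1}+x_1A_1+\dots+x_nA_n)$ for suitable diagonal, hence symmetric, matrices $A_i$. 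From now on assume $\deg p=2$ and $M:=A-\tfrac14bb^T\ne0$; since $p$ is a real zero polynomial, Example \ref{quadraticrealzero} gives $bb^T-4A\succeq0$, i.e.\ $-4M\succeq0$, so $M\preceq0$.

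Choose an orthogonal $U\in\R^{n\times n}$ with $U^TMU=\operatorname{diag}(\mu_1,\dots,\mu_n)=:D$; then $\mu_1,\dots,\mu_n\le0$. Consider the symmetric, linear arrowhead pencil
\[N:=\begin{pmatrix}y_0&\sqrt{-\mu_1}\,y_1&\cdots&\sqrt{-\mu_n}\,y_n\\\sqrt{-\mu_1}\,y_1&y_0&&\\\vdots&&\ddots&\\\sqrt{-\mu_n}\,y_n&&&y_0\end{pmatrix}\in\R[y_0,y_1,\dots,y_n]^{(n+1)\times(n+1)}.\]
By the standard bordered-diagonal determinant formula, $\det N=y_0^{\,n-1}\bigl(y_0^2+\sum_{i=1}^n\mu_iy_i^2\bigr)$ --- a formula insensitive to whether some $\mu_i$ vanish (this is essentially Lemma \ref{circlelemma} after a positive-definite diagonal rescaling, but the rescaling of Lemma \ref{circlelemma} itself breaks down exactly when some $\mu_i=0$). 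Write $N=y_0I_{n+1}+\sum_iy_iC_i$ with the $C_i$ symmetric. Substituting $y_i\mapsto(U^Tx)_i$ and using $\sum_i(U^Tx)_iC_i=\sum_jx_jB_j$ with $B_j:=\sum_iU_{ji}C_i$ symmetric, together with $\sum_i\mu_i(U^Tx)_i^2=x^TUDU^Tx=x^TMx$, we obtain
\[\det\Bigl(y_0I_{n+1}+\textstyle\sum_jx_jB_j\Bigr)=y_0^{\,n-1}\bigl(y_0^2+x^TMx\bigr).\]

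Finally, completing the square gives $\bigl(x_0+\tfrac12b^Tx\bigr)^2+x^TMx=x_0^2+b^Tx\,x_0+x^TAx=p^*(x_0,x)$, the homogenization of $p$. Since polynomial substitution commutes with $\det$, replacing $y_0$ by $x_0+\tfrac12b^Tx$ in the last display yields
\[\det\Bigl(x_0I_{n+1}+\textstyle\sum_jx_j\bigl(B_j+\tfrac{b_j}2I_{n+1}\bigr)\Bigr)=\bigl(x_0+\tfrac12b^Tx\bigr)^{n-1}p^*(x_0,x),\]
and setting $x_0=1$ and $A_j:=B_j+\tfrac{b_j}2I_{n+1}$ gives $p\,\bigl(\tfrac{1+\trunc_1p}2\bigr)^{n-1}=\det\bigl(I_{n+1}+\sum_jx_jA_j\bigr)$ with all $A_j$ symmetric, as desired. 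The steps I leave as routine are the bordered-diagonal determinant computation, the completion of the square, and the product-of-linear-forms observation in the degenerate case; the one genuine subtlety is the possible presence of zero eigenvalues of $M$ --- this is precisely where a naive rescaling of $M_p$ (or of the matrix of Lemma \ref{circlelemma}) fails, and it is what the arrowhead pencil circumvents.
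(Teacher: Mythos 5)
Your proof is correct and follows essentially the same route as the paper's: orthogonally diagonalize $A-\tfrac14bb^T$, use a bordered-diagonal determinant identity (Lemma \ref{circlelemma} up to rescaling), and substitute $x_0\mapsto 1+\tfrac12 b^Tx$ to recover $p$ times the linear cofactor. Building the factors $\sqrt{-\mu_i}$ into an arrowhead pencil whose constant coefficient is already $I_{n+1}$ merely folds the paper's final diagonal congruence (normalizing $B_0$ to the identity) and its case split on vanishing eigenvalues into the initial pencil, which is a modest streamlining rather than a genuinely different argument.
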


\begin{proof} Write
\[p=x^TAx+b^Tx+1\]
where $A\in\R^{n\times n}$ is symmetric and $b\in\R^n$. Set
\[q:=x^T\left(A-\frac14bb^T\right)x+1\in\R[x].\]
Choose an orthogonal matrix $U$ such that
\[D:=U^T\left(A-\frac14bb^T\right)U\]
is diagonal with diagonal entries
\[\underbrace{d_1,\ldots,d_m}_{\substack{\text{all different}\\\text{from zero}}},
\underbrace{0,\ldots,0}_{n-m\text{ zeros}}.\]
By Example \ref{quadraticrealzero}, we have that each $d_i$ is negative.

We first treat the special case where $m=0$. In this case $A=\frac14bb^T$ and thus $p=\left(1+\frac{b^Tx}2\right)^2$.
Then we choose $A_i$ as a diagonal matrix which arises from the identity matrix by replacing the first two diagonal entries
by $\frac12b_i$.

From now on, we consider the case $m>0$. Then we
consider the polynomial \[r:=q(Ux)=d_1x_1^2+\ldots+d_mx_m^2+1\in\R[x_1,\ldots,x_m]\]
of degree two and we set
\[M:=\begin{pmatrix}x_0&-d_1x_1&\ldots&-d_mx_m\\
-d_1x_1&-d_1x_0\\
\vdots&&\ddots\\
-d_mx_m&&&-d_mx_0\\
&&&&x_0\\
&&&&&\ddots\\
&&&&&&x_0\\
\end{pmatrix}\in\R[x_0,x]^{(n+1)\times(n+1)}
\]
where the empty space is filled by zeros. Using Lemma \ref{circlelemma}, we see easily that
\[\det M=x_0^{n+1}r^*\det(M(1,0)).\]
Evaluating $M$ in $(1+\frac{b^Tx}2,U^Tx)$, we get another matrix \[N\in\R[x]^{(n+1)\times(n+1)}\]
of the form $N=B_0+x_1B_1+\ldots+x_nB_n$ with symmetric $B_0,\ldots,B_n\in\R^{(n+1)\times(n+1)}$ such that
\begin{align*}
\det N&=\left(1+\frac{b^Tx}2\right)^{n+1}\left(q-1+\left(1+\frac{b^Tx}2\right)^2\right)\det(M(1,0))\\
&=\left(1+\frac{b^Tx}2\right)^{n+1}p\det(M(1,0))=c\left(\frac{1+\trunc_1p}2\right)^{n+1}p
\end{align*}
for some $c\in\R^\times$. Note that $B_0=N(0)=M(1,0)$ is a diagonal matrix. Recalling that each $d_i$ is negative, it is actually
a diagonal matrix with only positive diagonal entries. Hence, it is easy to find an invertible
diagonal matrix $D\in\R^{(n+1)\times(n+1)}$ such that $DB_0D=I_{n+1}$.
Setting $A_i:=DB_iD=D^2B_i$ for each $i\in\{0,\dots,n\}$, we have $A_0=I_{n+1}$ and
\[\det(A_0+x_1A_1+\ldots+x_nA_n)=c'\left(\frac{1+\trunc_1p}2\right)^kp\]
for some $c'\in\R$. Evaluating in $0$, we see that $c'=1$.
\end{proof}

As promised before Lemma \ref{circlelemma}, we have no found our
cofactor \[q:=\left(\frac{1+\trunc_1p}2\right)^{n-1}\] such that $pq$ has the
desired determinantal representation. Note that the required inclusion $C(p)\subseteq C(q)$ holds since
\[C(p)\subseteq P_2(p)=C\left(\frac{1+\trunc_1p}2\right)=C(q)\]
where the inclusion follows from Remark \ref{polyhedral-relaxation-remark}(c)(d).

Unfortunately, for real zero polynomials $p$ of higher degree it is not true that one can always choose
the cofactor $q$ from Proposition \ref{geoalg} below to be a power of linear polynomial. This is discussed in detail in \cite[Section 7]{ab} where even other
shapes for the cofactor are excluded. Indeed, from
\cite[Theorem 7.3]{ab} (setting there $n:=8$ and $k:=2$) one can deduce the existence of
a quartic real zero polynomial in 11 variables that it will never be of the shape
$\det(I_N+x_1A_1+\ldots+x_nA_n)$ with symmetric matrices $A\in\R^{N\times N}$ after being multiplied
with a power of a linear polynomial. We are not aware of any cubic example like that. But at least
the arguments in \cite[Example 12]{kum1} show that
\[p:=10-3x_1^2-6x_2-x_1^2x_2-3x_2^2+x_2^3-3x_3^2+x_2x_3^2\in\R[x_1,x_2,x_3]\]
is a real zero polynomial such that that there is no linear polynomial $\ell\in\R[x_1,x_2,x_3]$
and $k\in\N$ such that $p\ell^k$ has a determinantal representation of the form
\[p\ell^k=\det(I_{3+k}+x_1A_1+\ldots+x_nA_n)\]
with symmetric matrices $A_1,\ldots,A_n\in\R^{(3+k)\times(3+k)}$
(but they do not seem to exclude such a representation with matrices of size bigger than $3+k$). 
Since $p$ can easily be checked to be irreducible in $\R[x_1,x_2,x_3]$, \cite[Proposition 8]{kum1}
implies then together with Lemma \ref{sylvester} that there are a cubic
$q\in\R[x_1,x_2,x_3]$ and symmetric matrices $A_1,A_2,A_3\in\R^{6\times 6}$
such that $pq=\det(I_6+x_1A_1+x_2A_2+x_3A_3)$.


\section{Hyperbolicity cones and spectrahedral cones}

Recall Definition \ref{defhomogeneous} of homogeneous polynomials.

\subsection{Hyperbolic polynomials}

The following is the homogeneous analog of Definition \ref{dfrz}.

\begin{df}\label{dfhb}
Let $p\in\R[x]$ be a homogeneous. We call a vector $e\in\R^n$ with $e\ne0$
a \emph{hyperbolicity direction} of $p$ if for all
$a\in\R^n$ and $\la\in\C$,
\[p(a-\la e)=0\implies\la\in\R.\]
In this case, we call $p$ \emph{hyperbolic with respect to $e$} or \emph{hyperbolic in direction $e$}.
We call $p$ \emph{hyperbolic} if it is hyperbolic with respect to some direction.
\end{df}

\begin{rem}\label{nonzeroate}
\begin{enumerate}[(a)]
\item If $p\in\R[x]$ is hyperbolic with respect to $e\in\R^n$, then $p(e)\ne0$.
\item If $p\in\R[x]$ is hyperbolic, then the number of variables $n$ is of course greater or equal to $1$ since $\R^0$ contains
only the zero vector which can never be a hyperbolicity direction by definition.
\end{enumerate}
\end{rem}

The following is the analog of Proposition \ref{splitrz}.

\begin{pro}\label{splithb}
Let $p\in\R[x]$ and $e\in\R^n$ with $e\ne0$. Then
$p$ is hyperbolic in direction $e$ if and only if for each $a\in\R^n$, the univariate polynomial
\[p(a-te)\in\R[t]\]
splits (i.e., is a product of non-zero linear polynomials) in $\R[t]$. 
\end{pro}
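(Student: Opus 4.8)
The plan is to mimic the proof of Proposition \ref{splitrz} (the affine analogue), reducing everything to the fundamental theorem of algebra applied to a univariate polynomial. First I would fix $a\in\R^n$ and consider $g(t):=p(a-te)\in\R[t]$, noting that $g\neq 0$ since $g(0)=p(a)$ need not vanish, and more to the point, the leading behaviour is controlled by $p(e)\neq 0$ (Remark \ref{nonzeroate}(a)): writing $p$ homogeneous of degree $d$, the coefficient of $t^d$ in $p(a-te)$ is $(-1)^d p(e)\neq 0$, so $\deg g=d$ exactly, independently of $a$.

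For the ``only if'' direction, assume $p$ is hyperbolic in direction $e$. Over $\C$ the polynomial $g$ factors as $g(t)=(-1)^d p(e)\prod_{i=1}^d(t-\la_i)$ with $\la_i\in\C$. For each root $\la_i$ we have $p(a-\la_i e)=g(\la_i)=0$, so by Definition \ref{dfhb} each $\la_i\in\R$. Hence $g$ is a product of non-zero linear polynomials over $\R$, i.e.\ $g$ splits in $\R[t]$.

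For the ``if'' direction, suppose that $p(a-te)$ splits in $\R[t]$ for every $a\in\R^n$. Let $a\in\R^n$ and $\la\in\C$ with $p(a-\la e)=0$; we must show $\la\in\R$. Apply the hypothesis to this particular $a$: the polynomial $g(t)=p(a-te)$ is a product of non-zero linear factors over $\R$, so all its complex roots are real. Since $g(\la)=p(a-\la e)=0$, the number $\la$ is such a root, hence $\la\in\R$. This establishes hyperbolicity in direction $e$.

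I do not expect any genuine obstacle here; the statement is a direct homogeneous transcription of Proposition \ref{splitrz}, and the only point requiring a moment's care is confirming that $g$ is not the zero polynomial and has constant degree $d$, which is exactly where $p(e)\neq 0$ enters — and that is already recorded in Remark \ref{nonzeroate}(a). One could phrase the whole argument in one line by saying: for fixed $a$, the condition ``$p(a-\la e)=0\Rightarrow\la\in\R$ for all $\la\in\C$'' says precisely that all complex roots of the nonzero univariate polynomial $p(a-te)$ are real, which by the fundamental theorem of algebra is equivalent to its splitting in $\R[t]$.
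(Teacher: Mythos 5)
Your proof is correct and is exactly the argument the paper intends, which it compresses into one line: for fixed $a$ the hyperbolicity condition says all complex roots of the nonzero univariate polynomial $p(a-te)$ are real, and the fundamental theorem of algebra converts this into splitting in $\R[t]$ (with $p(e)\ne0$, via Remark \ref{nonzeroate}(a) and homogeneity, guaranteeing the leading coefficient $(-1)^{d}p(e)$ of $t^{d}$ is nonzero in the ``only if'' direction). No gaps; your write-up just makes the paper's implicit details explicit.
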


\begin{proof}
The ``if'' direction is easy and the ``only if'' direction follows from the fundamental theorem of algebra. 
\end{proof}

\begin{df}\label{eigtr}
Let $p\in\R[x]$ be hyperbolic in direction $e$ and let $a\in\R^n$.
\begin{enumerate}[(a)]
\item The zeros of $p(a-te)\in\R[t]$ are called the \emph{eigenvalues}
of $a$ (with respect to $p$ in direction $e$). When we speak of their \emph{multiplicity}, we mean their multiplicity as roots of $p$.
\item The weighted sum of the eigenvalues of $a$ where the weights are the multiplicities is called the \emph{trace} of $a$
(with respect to $p$ in direction $e$). We denote it by $\tr_{p,e}(a)$.
\end{enumerate}
\end{df}

The following is the analog of Proposition \ref{detexample}.

\begin{pro}\label{detexamplehyperbolic}
Let $A_1,\ldots,A_n\in\C^{d\times d}$ be hermitian matrices and $e\in\R^n$ with $e\ne0$
such that $e_1A_1+\ldots+e_nA_n$ is definite. Then
\[p:=\det(x_1A_1+\ldots+x_nA_n)\in\R[x]\]
is hyperbolic (of degree $d$) in direction $e$.
\end{pro}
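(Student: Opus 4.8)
The plan is to check the definition of hyperbolicity directly, reducing everything to Lemma \ref{genev}, after disposing of the easy structural facts. First I would note that $p\in\R[x]$: for every $a\in\R^n$ the matrix $a_1A_1+\ldots+a_nA_n$ is hermitian and hence has real determinant (because $\overline{\det M}=\det(M^*)=\det M$ for hermitian $M$, exactly as in the computation opening the proof of Proposition \ref{hur}), so $p$ is real-valued on $\R^n$ and therefore lies in $\R[x]$. Next, $p$ is homogeneous of degree $d$ or identically zero, since in the Leibniz expansion of $\det(x_1A_1+\ldots+x_nA_n)$ each nonzero summand is a product of $d$ homogeneous linear forms. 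Finally $p$ is not identically zero, because $p(e)=\det(e_1A_1+\ldots+e_nA_n)\ne0$ as $e_1A_1+\ldots+e_nA_n$ is definite; thus $\deg p=d$.

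For the hyperbolicity in direction $e$ itself, let $a\in\R^n$ and $\la\in\C$ with $p(a-\la e)=0$; I must show $\la\in\R$. Set $B:=e_1A_1+\ldots+e_nA_n$ and $C:=a_1A_1+\ldots+a_nA_n$, both hermitian with $B$ definite, so that $0=p(a-\la e)=\det(C-\la B)$. If $\la=0$ there is nothing to prove, so assume $\la\ne0$. From $C-\la B=-\la\bigl(B-\tfrac1\la C\bigr)$ and $(-\la)^d\ne0$ we get $\det\bigl(B+\tfrac1\la(-C)\bigr)=0$. Since $B$ is definite and $-C$ is hermitian, Lemma \ref{genev} applied to the pair $(B,-C)$ with scalar $\tfrac1\la$ gives $\tfrac1\la\in\R$, hence $\la\in\R$.

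I do not expect a genuine obstacle here; the only point requiring a little care is the passage from $\det(C-\la B)=0$ to a statement of the exact shape demanded by Lemma \ref{genev}, in which it is the coefficient matrix of $1$ (not of $\la$) that must be definite --- this is handled by the reciprocal substitution $\la\mapsto\tfrac1\la$, together with the separate trivial case $\la=0$. Alternatively one could route the argument through Proposition \ref{splithb}, verifying that $p(a-te)\in\R[t]$ splits over $\R$ for each $a\in\R^n$, but appealing to Lemma \ref{genev} directly is the shortest path.
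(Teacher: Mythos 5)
Your proposal is correct and follows essentially the same route as the paper: reduce $p(a-\la e)=0$ to $\det(B+\tfrac1\la(-C))=0$ after disposing of $\la=0$, and apply Lemma \ref{genev} to conclude $\tfrac1\la\in\R$. The additional preliminary observations (that $p$ is real, homogeneous, and of degree exactly $d$ since $p(e)\ne0$) are fine and merely make explicit what the paper leaves tacit.
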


\begin{proof}
If $a\in\R^n$ and $\la\in\C$ with $p(a-\la e)=0$, then $\det(A-\la B)=0$ where $A:=a_1A_1+\ldots+a_nA_n\in\C^{d\times d}$
and $B:=e_1A_1+\ldots+e_nA_n\in\C^{d\times d}$ is definite. We have to show $\la\in\R$. WLOG $\la\ne0$. Then
$\det(B+\frac1\la(-A))=0$ and thus $\frac1\la\in\R$ by Lemma \ref{genev}. Hence $\la\in\R$.
\end{proof}

\subsection{Hyperbolicity cones versus rigidly convex sets}

The following is the analog of Definition \ref{dfrcs}.

\begin{df}\label{dfhbc}
Let $p\in\R[x]$ be hyperbolic in direction $e$.
Then we call
\[C(p,e):=\{a\in\R^n\mid\forall\la\in\R:(p(a-\la e)=0\implies\la\ge0)\}\]
the \emph{hyperbolicity cone} of $p$ with respect to $e$.
\end{df}

A priori, it is not clear that hyperbolicity cones are cones. We will see this in Theorem \ref{hbshift}
below but again it was already known to G\aa rding \cite[Theorem 2]{gar}.

\begin{pro}\label{hbrz}
Let $p\in\R[x]$. Then $p$ is hyperbolic in direction of the first unit vector $u$ of $\R^n$
if and only if its dehomogenization \[q:=p(1,x_2,\ldots,x_n)\in\R[x_2,\ldots,x_n]\]
is a real zero polynomial.
\end{pro}

\begin{proof}
First, suppose that $p$ is hyperbolic in direction of the first unit vector $u$. We show that
$q$ is a real zero polynomial. To this end, consider
$a\in\R^{n-1}$ and $\la\in\C$ such that $q(\la a)=0$. We have to show that $\la\in\R$. WLOG $\la\ne0$.
We show that $\mu:=\frac1\la\in\R$.
We have $p(u+\la(0,a))=q(\la a)=0$. By homogeneity, $p((0,a)+\mu u)=0$. Since $p$ is hyperbolic in direction
$u$, we have indeed $\mu\in\R$.

Conversely, suppose that $q$ is a real zero polynomial. To show that $p$ is hyperbolic in direction $u$,
we fix $a\in\R^n$ and $\la\in\C$ such that $p(a-\la u)=0$. We have to show that $\la\in\R$. If $\la=a_1$,
then this is trivial. Hence suppose $\la\ne a_1$ and set $\mu:=\frac1{a_1-\la}$. It is enough to show
that $\mu\in\R$. By homogeneity, we have $q(\mu a_2,\ldots,\mu a_n)=0$ and thus $\mu\in\R$.

For the rest of the proof suppose
that $q$ is a real zero polynomial (in particular, $q(0)\ne0$ as mentioned in Remark \ref{nonzeroatorigin})
and $a\in\R^{n-1}$.

(a)\quad We have
\begin{align*}
(1,a)\in C(p,u)&\overset{\text{Definition \ref{dfhbc}}}{\underset{\hphantom{\text{homogeneity}}}\iff}\forall\la\in\R:(p((1,a)-\la u)=0\implies\la\ge0)\\
&\overset{\hphantom{\text{homogeneity}}}\iff\forall\la\in\R:(p(1-\la,a)=0\implies\la\ge0)\\
&\overset{\hphantom{\text{homogeneity}}}\iff\forall\la\in\R_{<0}:p(1-\la,a)\ne0\\
&\overset{\hphantom{\text{homogeneity}}}\iff\forall\la\in\R_{>0}:p(1+\la,a)\ne0\\
&\overset{\text{homogeneity}}\iff\forall\la\in\R_{>0}:q\left(\frac a{1+\la}\right)\ne0\\
&\overset{\hphantom{\text{homogeneity}}}\iff\forall\la\in(0,1):q(\la a)\ne0\\
&\overset{q(0)\ne0}{\underset{\hphantom{\text{homogeneity}}}\iff}\forall\la\in[0,1):q(\la a)\ne0\\
&\overset{\text{Definition \ref{dfrzs}}}{\underset{\hphantom{\text{homogeneity}}}\iff}a\in C(q).
\end{align*}

(b)\quad We observe
\begin{align*}
(0,a)\in C(p,u)&\overset{\text{Definition \ref{dfhbc}}}{\underset{\hphantom{\text{homogeneity}}}\iff}\forall\la\in\R:(p((0,a)-\la u)=0\implies\la\ge0)\\
&\overset{\hphantom{\text{homogeneity}}}\iff\forall\la\in\R:(p(-\la,a)=0\implies\la\ge0)\\
&\overset{\hphantom{\text{homogeneity}}}\iff\forall\la\in\R_{<0}:p(-\la,a)\ne0\\
&\overset{\hphantom{\text{homogeneity}}}\iff\forall\la\in\R_{>0}:p(\la,a)\ne0\\
&\overset{\text{homogeneity}}\iff\forall\la\in\R_{>0}:q\left(\frac a\la\right)\ne0\\
&\overset{\hphantom{\text{homogeneity}}}\iff\forall\la\in\R_{>0}:q(\la a)\ne0\\
&\overset{q(0)\ne0}{\underset{\hphantom{\text{homogeneity}}}\iff}\forall\mu\in\R_{\ge0}:\forall\la\in[0,1):q(\la\mu a)\ne0\\
&\overset{\text{Definition \ref{dfrzs}}}{\underset{\hphantom{\text{homogeneity}}}\iff}
\forall\mu\in\R_{\ge0}:\mu a\in C(q).
\end{align*}
\end{proof}

\subsection{The homogeneous Helton-Vinnikov theorem}

\begin{rem}\label{modulematrix}
Let $R$ be a ring. We can multiply finitely many matrices over this ring provided that for each factor (but the last one) its number of columns matches the number of rows of the next factor.
Row and column vectors of elements can of course also be factors of such a product since they can be seen as matrices with a single row or column, respectively. Because of associativity of the matrix product, one does not have to specify
parentheses in such a product of matrices over $R$. Now if $M$ is an $R$-module, we can declare the product $AB\in M^{k\times m}$
of a matrix $A\in R^{k\times\ell}$ and a matrix $B\in M^{\ell\times m}$ in the obvious way. In this way, we can now also declare
products of finitely many matrices exactly as above even if the last factor is not a matrix over $R$ but over $M$. One has again the obvious
associativity laws that allow to omit parentheses. We will use this in the proofs of Theorem \ref{hbvinnikov} and
Proposition \ref{hbspecdet} below for the case $R=\R[x]$
and $M=\R[x]^{d\times d}$. For example, we will write $x^TA$ for $x_1A_1+\ldots+x_nA_n$ if $A$ is the column vector with entries
$A_1,\ldots,A_n\in\C^{d\times d}$.
\end{rem}

The following is the homogeneous version of the Helton-Vinnikov Theorem \ref{vinnikov}.

\begin{thm}[Helton and Vinnikov]\label{hbvinnikov}
If $p\in\R[x_1,x_2,x_3]$ is hyperbolic of degree $d$ in direction $e\in\R^3$ such that $p(e)=1$, then there exist symmetric $A_1,A_2,A_3\in\R^{d\times d}$
such that $e_1A_1+e_2A_2+e_3A_3=I_d$ and
\[p=\det(x_1A_1+x_2A_2+x_3A_3).\]
\end{thm}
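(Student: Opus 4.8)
The plan is to reduce the homogeneous statement (Theorem~\ref{hbvinnikov}) to the affine Helton--Vinnikov Theorem~\ref{vinnikov} by dehomogenizing with respect to a well-chosen coordinate system. First I would apply a linear change of variables sending $e$ to the first unit vector $u$ of $\R^3$; since this conjugates a symmetric determinantal representation into another one (and preserves the condition that the coefficient matrix at $e$ equals $I_d$), it suffices to treat the case $e=u$. Now Proposition~\ref{hbrz} tells us that the dehomogenization $q:=p(1,x_2,x_3)\in\R[x_2,x_3]$ is a real zero polynomial, and clearly $q(0)=p(e)=1$. By Theorem~\ref{vinnikov} there exist symmetric $B_2,B_3\in\R^{d\times d}$ with $q=\det(I_d+x_2B_2+x_3B_3)$, provided $\deg q=d$; I would first check that dehomogenizing in the direction $e$ does not drop the degree, which holds because the coefficient of $x_1^d$ in $p$ is $p(e)=1\ne0$ (here one uses homogeneity: $p(x_1,0,0)=p(u)x_1^d$).

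The next step is to re-homogenize. Set $A_1:=I_d$, $A_2:=B_2$, $A_3:=B_3$ and consider $r:=\det(x_1A_1+x_2A_2+x_3A_3)\in\R[x_1,x_2,x_3]$. This is homogeneous of degree $d$, satisfies $r(u)=\det(I_d)=1$, and its dehomogenization is $r(1,x_2,x_3)=\det(I_d+x_2B_2+x_3B_3)=q=p(1,x_2,x_3)$. Since both $p$ and $r$ are homogeneous of the same degree $d$ and agree after setting $x_1=1$, they agree as polynomials: indeed $p-r$ is homogeneous of degree $d$ and vanishes on the hyperplane $\{x_1=1\}$, hence (being homogeneous) vanishes identically, so $p=r=\det(x_1A_1+x_2A_2+x_3A_3)$ with $e_1A_1+e_2A_2+e_3A_3=A_1=I_d$ as required.

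The one genuine subtlety, and the step I expect to require the most care, is the reduction to $e=u$: one must verify that the existence of a \emph{symmetric real} determinantal representation normalized at $e$ is invariant under an invertible linear substitution $x\mapsto Tx$. If $p(Tx)=\det(x_1C_1+x_2C_2+x_3C_3)$ with the $C_i$ symmetric and $(Tu)$-normalized appropriately, then writing $x=T^{-1}y$ gives $p(y)=\det(\sum_j y_j D_j)$ where each $D_j=\sum_i (T^{-1})_{ij}C_i$ is again a real linear combination of symmetric matrices, hence symmetric; and the normalization $\sum_i e_i D_i' = I_d$ at the original direction follows because $T$ maps $e$ to $u$ and the $C_i$ were normalized at $u$. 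This is a routine bookkeeping argument but it is exactly the place where one must be sure nothing non-symmetric or complex creeps in; everything else is a direct dehomogenize/rehomogenize dictionary already encoded in Proposition~\ref{hbrz}. (If one only wanted the Hermitian version, one would instead invoke Corollary~\ref{vcor} in place of Theorem~\ref{vinnikov}, with the identical argument.)
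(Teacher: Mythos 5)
Your overall route (reduce to $e=u$ by a linear substitution, dehomogenize via Proposition~\ref{hbrz}, apply Theorem~\ref{vinnikov}, rehomogenize) is exactly the paper's, and your bookkeeping for how symmetric determinantal representations transform under $x\mapsto Tx$ is fine. However, there is a genuine error at the step you treat as settled: the claim that $\deg q=d$ because the coefficient of $x_1^d$ in $p$ equals $p(e)=1\ne0$. That coefficient being nonzero only tells you that the constant term $q(0)$ of the dehomogenization is nonzero; the degree of $q=p(1,x_2,x_3)$ equals $d$ if and only if $p(0,x_2,x_3)\ne0$, i.e.\ if and only if $x_1$ does not divide $p$, and hyperbolicity together with $p(e)=1$ does not rule this out. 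For instance, $p=x_1(x_1^2-x_2^2-x_3^2)$ is hyperbolic of degree $3$ in direction $u=(1,0,0)$ with $p(u)=1$, yet $q=1-x_2^2-x_3^2$ has degree $2$. In that situation Theorem~\ref{vinnikov} only yields matrices of size $\deg q<d$, and your rehomogenization identity $p=\det(x_1I_d+x_2B_2+x_3B_3)$ breaks down: the right-hand side would be homogeneous of the wrong degree.

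The gap is easy to repair, and the paper's proof does exactly this: apply Theorem~\ref{vinnikov} to $q$ with $d':=\deg q$, obtaining symmetric $B_2',B_3'\in\R^{d'\times d'}$, and then pad them with $d-d'$ zero rows and columns to get $B_2,B_3\in\R^{d\times d}$. One still has $q=\det(I_d+x_2B_2+x_3B_3)$, and now $\det(x_1I_d+x_2B_2+x_3B_3)=x_1^{d-d'}\det(x_1I_{d'}+x_2B_2'+x_3B_3')$ supplies precisely the missing factor $x_1^{d-d'}$, so your argument that two homogeneous degree-$d$ polynomials agreeing on $\{x_1=1\}$ coincide goes through with the padded matrices. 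With that correction your proof matches the paper's (the paper takes an orthogonal $U$ with $Ue=u$, but any invertible substitution works as you describe).
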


\begin{proof}
We reduce it to the Helton-Vinnikov theorem for real zero polynomials \ref{vinnikov}. Denote the first unit vector in
$\R^3$ by $u$. Choose an orthogonal matrix $U\in\R^{3\times 3}$ such that $Ue=u$ and set $q:=p(U^Tx)$. Then
$q\in\R[x_1,x_2,x_3]$ is also homogeneous of degree $d$ but is hyperbolic in direction $u$. 
By Proposition \ref{hbrz},
the polynomial $r:=q(1,x_2,x_3)\in\R[x_2,x_3]$ is a real zero polynomial of some degree $d'\in\{0,\ldots,d\}$. By
the Helton-Vinnikov theorem, we can choose symmetric $B_2',B_3'\in\R^{d'\times d'}$ such that
$r=\det(I_{d'}+x_2B_2'+x_3B_3')$ (note that $r(0)=q(u)=q(Ue)=p(e)=1$).
Appending $d-d'$ zero columns and lines to $B_2'$ and $B_3'$, we get symmetric matrices
$B_2,B_3\in\R^{d\times d}$ such that $r=\det(I_{d}+x_2B_2+x_3B_3)$. Setting $B_1:=I_d$, we see that
$q=\det(x_1B_1+x_2B_2+x_3B_3)=\det(x^TB)$ where $B$ designates the row vector with entries $B_1,B_2,B_3$ and we use
the notation introduced in Remark \ref{modulematrix}. Hence $p=q(Ux)=\det((Ux)^TB)=\det(x^TU^TB)=\det(x^TA)$ where
$A:=U^TB$ is a row vector whose entries are symmetric matrices $A_1,A_2,A_3\in\R^{d\times d}$. It remains to check that
$e^TA=I_d$. Indeed, $e^TA=e^TU^TB=(Ue)^TB=u^TB=B_1=I_d$.
\end{proof}

The following is the homogeneous version Corollary \ref{hbvcor} which is a weaker version of the Helton-Vinnikov Theorem \ref{hbvinnikov}.
It can be derived from Corollary \ref{vcor} in exactly the same manner as we derived Theorem \ref{hbvinnikov} from Theorem \ref{vinnikov}.

\begin{cor}[Helton and Vinnikov]\label{hbvcor}
If $p\in\R[x_1,x_2,x_3]$ is hyperbolic of degree $d$ in direction $e\in\R^3$ such that $p(e)=1$, then there exist hermitian $A_1,A_2,A_3\in\C^{d\times d}$
such that $e_1A_1+e_2A_2+e_3A_3=I_d$ and
\[p=\det(x_1A_1+x_2A_2+x_3A_3).\]
\end{cor}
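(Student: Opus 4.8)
The plan is to derive the corollary from its affine counterpart, Corollary \ref{vcor}, by exactly the homogenization/dehomogenization trick used to deduce Theorem \ref{hbvinnikov} from Theorem \ref{vinnikov}; the only point to keep track of is that ``symmetric'' gets replaced by ``hermitian'' everywhere, and that this causes no trouble. First I would write $u$ for the first unit vector of $\R^3$ and pick an orthogonal matrix $U\in\R^{3\times3}$ with $Ue=u$. Setting $q:=p(U^Tx)$, one gets a homogeneous polynomial of degree $d$ that is hyperbolic in direction $u$ and satisfies $q(u)=q(Ue)=p(e)=1$.

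Next I would dehomogenize. By Proposition \ref{hbrz}, $r:=q(1,x_2,x_3)\in\R[x_2,x_3]$ is a real zero polynomial with $r(0)=q(u)=1$ and degree $d':=\deg r\le d$. Applying Corollary \ref{vcor} to $r$ yields hermitian matrices $B_2',B_3'\in\C^{d'\times d'}$ with $r=\det(I_{d'}+x_2B_2'+x_3B_3')$. Bordering $B_2'$ and $B_3'$ with $d-d'$ zero rows and columns gives hermitian $B_2,B_3\in\C^{d\times d}$; since $I_d+x_2B_2+x_3B_3$ is then block diagonal with blocks $I_{d'}+x_2B_2'+x_3B_3'$ and $I_{d-d'}$, we still have $r=\det(I_d+x_2B_2+x_3B_3)$. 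Hermiticity is obviously preserved by this bordering.

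Finally I would rehomogenize and undo the rotation, following the proof of Theorem \ref{hbvinnikov} verbatim. Put $B_1:=I_d$, and, in the module-matrix notation of Remark \ref{modulematrix}, let $B$ be the column of the $B_i$, so that $q=\det(x_1B_1+x_2B_2+x_3B_3)=\det(x^TB)$ (both sides are homogeneous of degree $d$, agree after setting $x_1=1$, and are hyperbolic in direction $u$). Then $p=q(Ux)=\det((Ux)^TB)=\det(x^TU^TB)=\det(x^TA)$ with $A:=U^TB$; the entries of $A$ are real linear combinations of the hermitian $B_i$, hence hermitian matrices $A_1,A_2,A_3\in\C^{d\times d}$, and $e^TA=e^TU^TB=(Ue)^TB=u^TB=B_1=I_d$, i.e.\ $e_1A_1+e_2A_2+e_3A_3=I_d$ and $p=\det(x_1A_1+x_2A_2+x_3A_3)$. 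There is no genuine obstacle here: the argument is the one already written out for Theorem \ref{hbvinnikov}, and the only (trivial) new verifications are that a real orthogonal change of coordinates and the zero-bordering above each preserve hermiticity.
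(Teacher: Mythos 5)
Your proof is correct and is precisely the route the paper takes: the paper derives Corollary \ref{hbvcor} from Corollary \ref{vcor} ``in exactly the same manner'' as Theorem \ref{hbvinnikov} is derived from Theorem \ref{vinnikov}, which is what you carry out, including the only genuinely new (if trivial) checks that zero-bordering and real orthogonal recombination of the coefficient matrices preserve hermiticity. Note only that the display in the statement contains a typo --- it should read $p=\det(x_1A_1+x_2A_2+x_3A_3)$ as in Theorem \ref{hbvinnikov} (compare its use in the proof of Theorem \ref{hbshift}) --- and this corrected homogeneous form is exactly what your argument establishes.
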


\subsection{Basics on hyperbolicity cones}

The following is the analog of Proposition \ref{rcsdet}.

\begin{pro}\label{hbdet}
Let $d\in\N_0$, $A_1,\ldots,A_n\in\C^{d\times d}$ be hermitian, $e\in\R^n$, $e\ne0$
\[e_1A_1+\ldots+e_nA_n\succ0\]
and
\[p=\det(x_1A_1+\ldots+x_nA_n).\]
Then
\[C(p,e)=\{a\in\R^n\mid a_1A_1+\ldots+a_nA_n\succeq0\}\]
and
\[C(p,e)\setminus Z(p)=\{a\in\R^n\mid a_1A_1+\ldots+a_nA_n\succ0\}.\]
\end{pro}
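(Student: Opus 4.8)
The plan is to argue directly with the matrix pencil, in close parallel to the proof of Proposition \ref{rcsdet}. First I would note that $p$ really is hyperbolic in direction $e$, so that $C(p,e)$ is defined in the first place: this is exactly Proposition \ref{detexamplehyperbolic}, since $e_1A_1+\ldots+e_nA_n\succ0$ is in particular definite. Then I would fix $a\in\R^n$ and abbreviate $A:=a_1A_1+\ldots+a_nA_n$ (hermitian) and $B:=e_1A_1+\ldots+e_nA_n$ (positive definite), so that $p(a-\la e)=\det(A-\la B)$ for every $\la$, and $p(a)=\det A$.

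The key step is to convert the generalized eigenvalue pencil $A-\la B$ into an ordinary one. Since $B\succ0$ it has a (unique) positive definite square root $\sqrt B$, and for $C:=\sqrt B^{-1}A\sqrt B^{-1}$, which is hermitian, one computes $\sqrt B^{-1}(A-\la B)\sqrt B^{-1}=C-\la I_d$, hence $\det(A-\la B)=(\det B)\det(C-\la I_d)$. Because $\det B\ne0$, the real zeros of $\la\mapsto\det(A-\la B)$ --- that is, the eigenvalues of $a$ with respect to $p$ in direction $e$ in the sense of Definition \ref{eigtr} --- are precisely the (automatically real) eigenvalues of $C$. By Definition \ref{dfhbc}, $a\in C(p,e)$ therefore holds if and only if every eigenvalue of $C$ is nonnegative, i.e. if and only if $C\succeq0$; moreover $0$ occurs among these eigenvalues exactly when $C$, equivalently $A$, is singular, so $a\in C(p,e)\setminus Z(p)$ if and only if $C\succ0$.

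It then remains to transport the (semi)definiteness of $C$ back to $A$. For $w\in\C^d$ one has $w^*Cw=v^*Av$ with $v:=\sqrt B^{-1}w$, and $w\mapsto\sqrt B^{-1}w$ is a bijection of $\C^d$; hence $C\succeq0\iff A\succeq0$ and $C\succ0\iff A\succ0$. Combining this with the previous paragraph yields $C(p,e)=\{a\in\R^n\mid A\succeq0\}$ and $C(p,e)\setminus Z(p)=\{a\in\R^n\mid A\succ0\}$, which are the two asserted equalities (note $A\succ0$ forces $\det A=p(a)\ne0$, so the second set automatically avoids $Z(p)$).

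I do not expect a real obstacle here: the whole argument is elementary linear algebra once one makes, as in Lemma \ref{genev}, the observation that conjugating by $\sqrt B^{-1}$ simultaneously preserves the vanishing locus of the determinant and the cone of positive semidefinite matrices. The only points that need a moment's care are the degenerate cases $d=0$ (empty matrices, where every statement is vacuous) and $\det A=0$ (where $0$ is an eigenvalue but still nonnegative, so $a$ may still lie in $C(p,e)$), and both are covered uniformly by the eigenvalue description of $C$.
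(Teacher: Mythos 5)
Your proposal is correct and follows essentially the same route as the paper: reduce the generalized pencil $A-\la B$ to an ordinary one by conjugating with the inverse of the positive definite square root of $e_1A_1+\ldots+e_nA_n$, identify the eigenvalues of $a$ with those of the resulting hermitian matrix $C$, and read off both equalities from the sign conditions on these eigenvalues. Your treatment of the second equality is just a slightly more explicit version of the paper's remark that it follows easily from the first.
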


\begin{proof}
The second statement follows easily from the first. To prove the first, set
$A_0:=e_1A_1+\ldots+e_nA_n$,
let $a\in\R^n$ and set $B:=a_1A_1+\ldots+a_nA_n$. We have to show
\[(\forall\la\in\R:(\det(B-\la A_0)=0\implies\la\ge0))\iff B\succeq0.\]
Since $A_0$ is positive definite, there exists a (unique) positive definite matrix $\sqrt{A_0}$
matrix whose square is $A_0$. Rewriting both the left and right hand side of our claim, it becomes
\[(\forall\la\in\R:(\det(C-\la I_d)=0\implies\la\ge0))\iff C\succeq0\]
where $C:=\sqrt{A_0}^{-1}B\sqrt{A_0}^{-1}$.
This is clear.
\end{proof}

The following is the analog of Theorem \ref{rzshift}.

\begin{thm}[G\aa rding]\label{hbshift}
Let $p\in\R[x]$ be hyperbolic in direction $e$ and
$a\in C(p,e)\setminus Z(p)$ with $a\ne0$.
Then $a$ is also a hyperbolicity direction of $p$ and $C(p,e)=C(p,a)$.
\end{thm}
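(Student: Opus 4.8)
The plan is to mirror the proof of Theorem \ref{rzshift}, using the homogeneous Helton--Vinnikov Corollary \ref{hbvcor} in place of the affine Corollary \ref{vcor}. Write $d:=\deg p$. After replacing $p$ by $p/p(e)$ (which affects neither hyperbolicity in a given direction nor the associated hyperbolicity cone) I may assume $p(e)=1$; recall also that $p(a)\ne 0$ since $a\notin Z(p)$.

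First I would reduce to the case $n=3$. Both assertions are ``pointwise'' in a test vector $b\in\R^n$: that $a$ is a hyperbolicity direction means $p(b-\la a)=0\Rightarrow\la\in\R$ for all $b\in\R^n$ and $\la\in\C$, and $C(p,e)=C(p,a)$ means $b\in C(p,e)\Leftrightarrow b\in C(p,a)$ for all $b\in\R^n$. For a fixed $b$ the span $W:=\lin\{a,e,b\}$ has dimension at most $3$; I would enlarge it to a $3$-dimensional subspace of $\R^n$ (if $n<3$, first regard $p$ as a homogeneous polynomial in three variables, the extra variables not occurring) and fix a linear isomorphism $\phi$ from $\R^3$ onto it. Then $\tilde p:=p\circ\phi\in\R[x_1,x_2,x_3]$ is homogeneous of degree $d$ and nonzero (since $e$ lies in the image of $\phi$ and $p(e)\ne 0$), and from $\tilde p(c-\nu v)=p(\phi c-\nu\,\phi v)$ for all $c,v\in\R^3$, $\nu\in\C$, one reads off that $\tilde p$ is hyperbolic in direction $\tilde e:=\phi^{-1}(e)$, that $\tilde a:=\phi^{-1}(a)\in C(\tilde p,\tilde e)\setminus Z(\tilde p)$ with $\tilde a\ne 0$, that $\tilde p(\phi^{-1}(b)-\la\tilde a)=p(b-\la a)$, and that $\phi^{-1}(b)\in C(\tilde p,\tilde e)\Leftrightarrow b\in C(p,e)$ while $\phi^{-1}(b)\in C(\tilde p,\tilde a)\Leftrightarrow b\in C(p,a)$. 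Applying the three-variable case to $\tilde p$ then yields both conclusions for $p$.

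For $n=3$ I would distinguish two cases. If $a$ and $e$ are linearly dependent, write $a=\mu e$ with $\mu\ne 0$; evaluating $p(a-\nu e)=(\mu-\nu)^d$ for $\nu\in\R$ and using $a\in C(p,e)$ forces $\mu>0$, after which $p(b-\la a)=p(b-(\la\mu)e)$ shows $a$ is a hyperbolicity direction, and dividing eigenvalues by $\mu>0$ shows $C(p,a)=C(p,e)$. If $a$ and $e$ are linearly independent, Corollary \ref{hbvcor} gives hermitian $A_1,A_2,A_3\in\C^{d\times d}$ with $e_1A_1+e_2A_2+e_3A_3=I_d$ and $p=\det(x_1A_1+x_2A_2+x_3A_3)$. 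By Proposition \ref{hbdet} in direction $e$ (legitimate since $e_1A_1+e_2A_2+e_3A_3=I_d\succ 0$) one has $C(p,e)=\{c\in\R^3\mid c_1A_1+c_2A_2+c_3A_3\succeq 0\}$, and $a\in C(p,e)\setminus Z(p)$ forces $a_1A_1+a_2A_2+a_3A_3\succ 0$. That definiteness lets me invoke Proposition \ref{detexamplehyperbolic} with the same matrices in direction $a$ to conclude $p$ is hyperbolic in direction $a$, and then a second application of Proposition \ref{hbdet}, now in direction $a$, gives $C(p,a)=\{c\in\R^3\mid c_1A_1+c_2A_2+c_3A_3\succeq 0\}=C(p,e)$.

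The main obstacle will be the dimension reduction rather than the dimension-$3$ argument: one must check carefully that pulling back along $\phi$ preserves hyperbolicity in the prescribed direction, membership in the hyperbolicity cone, and non-membership in the zero set, and that a relation $p(b-\la a)=0$ is genuinely detected inside a $3$-dimensional subspace so that the dimension-$3$ case certifies $\la\in\R$. Once in dimension $3$, things are short precisely because Proposition \ref{hbdet} presents $C(p,v)$ as the \emph{same} spectrahedral cone $\{c\mid c_1A_1+c_2A_2+c_3A_3\succeq 0\}$ for every direction $v$ for which $v_1A_1+v_2A_2+v_3A_3$ is positive definite, so the cone is insensitive to the choice among such directions.
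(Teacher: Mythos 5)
Your proposal is correct and follows essentially the same route as the paper: reduce to three variables by a linear change of coordinates, apply the homogeneous Helton--Vinnikov Corollary \ref{hbvcor}, translate $a\in C(p,e)\setminus Z(p)$ into positive definiteness via Proposition \ref{hbdet}, and read off both claims from the resulting determinantal representation (where the paper invokes Lemma \ref{genev} directly, you cite Proposition \ref{detexamplehyperbolic}, which is only a packaging difference). Your uniform treatment of the case $b\in\lin\{a,e\}$ and of linearly dependent $a,e$ merely fills in details the paper leaves to the reader.
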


\begin{proof}
We suppose that
$e$ and $a$ are linearly independent since otherwise all statements are trivial.

We first show that $a$ is also a hyperbolicity direction of $p$.
Now let $b\in\R^n$ and $\la\in\C$ such that $p(b-\la a)=0$. We have to show $\la\in\R$.
The case where $b$ is a linear combination of $e$ and $a$ ist again easy and we leave
it to the reader. Hence we can now suppose that $e$, $a$ and $b$ are linearly independent.
By an affine transformation, we can even suppose that these are the first three unit vectors in $\R^n$.
Without loss of generality, we can thus assume that the
number of variables is $n=3$. Also WLOG $p(e)=1$.
By the Helton-Vinnikov Corollary \ref{hbvcor}, we can write
\[p=\det(x_1I_d+x_2A+x_3B)\]
with hermitian $A,B\in\C^{d\times d}$ where $d:=\deg p$.
The hypothesis $a\in C(p,e)\setminus Z(p)$ now translates into $A\succ0$ by Proposition \ref{hbdet}.
From $\det(B-\la A)=0$ and Lemma \ref{genev}, we get $\la\in\R$.

To prove the second statement, fix $b\in\R^n$.
We show that \[(*)\qquad b\in C(p,e)\iff b\in C(p,a).\]
If $b$ is a linear combination of $e$ and $a$, this is a an exercise that we leave to the reader
(make a case distinction according to the signs of the coefficients in this linear combination).
From now on suppose that $e$, $a$ and $b$ are linearly independent.
Suppose therefore that $a$ and $b$ are linearly independent. After an affine transformation, we can
even assume that $e$, $a$ and $b$ are the first three unit vectors. Hence we can reduce to the case where the number of variables $n$ equals $3$.
By the Helton-Vinnikov Corollary \ref{hbvcor}, we can choose hermitian matrices $A,B\in\C^{d\times d}$ such that
\[p=\det(x_1I_d+x_2A+x_3B)\]
so that
\[C(p,e)=\{c\in\R^3\mid c_1I_d+c_2A+c_3B\succeq0\}\]
by Proposition \ref{hbdet}. Since $A\succ0$ as mentioned above, Proposition \ref{hbdet} furthermore
gives
\[C(p,a)=\{c\in\R^3\mid c_1I_d+c_2A+c_3B\succeq0\}.\]
But now of course $C(p,e)=C(p,a)$.
\end{proof}

Now we come to the analog of Theorem \ref{rc-is-c}

\begin{thm}[G\aa rding]\label{hb-is-c}
Let $p\in\R[x]$ be hyperbolic in direction $e$. Then both $(C(p,e)\setminus Z(p))\cup\{0\}$ and 
$C(p,e)$ are cones.
\end{thm}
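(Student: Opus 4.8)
The plan is to mimic the proof of Theorem~\ref{rc-is-c} as closely as possible, transferring the "star-shaped plus shift" argument to the homogeneous setting via Theorem~\ref{hbshift}. First I would observe that $C(p,e)$ is already closed under multiplication by nonnegative scalars directly from Definition~\ref{dfhbc}: if $a\in C(p,e)$ and $\mu\ge0$, then the eigenvalues of $\mu a$ are $\mu$ times those of $a$ (using homogeneity of $p$), hence still nonnegative, so $\mu a\in C(p,e)$; the case $\mu=0$ is covered since $p(0-\la e)=p(-\la e)=(-1)^d\la^d p(e)$ vanishes only at $\la=0\ge0$, so $0\in C(p,e)$. The same computation shows $(C(p,e)\setminus Z(p))\cup\{0\}$ is closed under nonnegative scalar multiplication. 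So the entire content of the theorem is \emph{closure under addition}.

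For closure under addition I would argue as follows. Fix $a\in C(p,e)\setminus Z(p)$ with $a\ne0$ (the degenerate cases $a=0$ or $a\in Z(p)$ being handled separately and trivially, since then adding $a$ changes nothing or is vacuous). By Theorem~\ref{hbshift}, $a$ is itself a hyperbolicity direction of $p$ and $C(p,a)=C(p,e)$. Now for \emph{any} $b\in C(p,e)=C(p,a)$, I claim $a+b\in C(p,e)$: indeed $a+b\in C(p,a)$ because the eigenvalues of $a+b$ with respect to $a$ are obtained by shifting those of $b$ with respect to $a$ by $1$ (compute $p((a+b)-\la a)=p(b-(\la-1)a)$, so $\la$ is an eigenvalue of $a+b$ iff $\la-1$ is an eigenvalue of $b$), and the eigenvalues of $b$ with respect to $a$ are all $\ge0$ by $b\in C(p,a)$, hence those of $a+b$ are all $\ge1>0$. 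Thus $a+b\in C(p,a)=C(p,e)$, and moreover $a+b$ has all eigenvalues strictly positive so $a+b\notin Z(p)$, which simultaneously handles the set $(C(p,e)\setminus Z(p))\cup\{0\}$. This gives: for all $a\in C(p,e)\setminus Z(p)$ and all $b\in C(p,e)$, the sum $a+b$ lies in $C(p,e)\setminus Z(p)$.

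It remains to promote this to: for \emph{all} $a,b\in C(p,e)$, $a+b\in C(p,e)$, including when both $a,b\in Z(p)$. Here I would use the scalar closure together with the fact that $C(p,e)$, being the complement (in the relevant directions) of a closed zero set, can be approximated: more precisely, I would show $C(p,e)=\overline{(C(p,e)\setminus Z(p))\cup\{0\}}$, or argue directly that $\tfrac12 a + \tfrac12 b$ works by writing it as a limit. Concretely, pick any $c\in C(p,e)\setminus Z(p)$ with $c\ne0$ (such $c$ exists as long as $p\not\equiv0$, e.g.\ $c=e$ since $p(e)\ne0$ and $e\in C(p,e)$, because the eigenvalues of $e$ are all $0\ge0$ — so actually $e\in C(p,e)\setminus Z(p)$). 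For $\varepsilon>0$, $a+\varepsilon e$ and $b+\varepsilon e$ lie in $C(p,e)\setminus Z(p)$ by the previous paragraph (with the roles $a\rightsquigarrow \varepsilon e$, since $\varepsilon e\in C(p,e)\setminus Z(p)$ by scalar closure, and $b\rightsquigarrow a$ resp.\ $b$). By the previous paragraph again, $(a+\varepsilon e)+(b+\varepsilon e)=a+b+2\varepsilon e\in C(p,e)$ for all $\varepsilon>0$, and then I let $\varepsilon\to0$ and invoke that $C(p,e)$ is closed (which follows because its defining condition "all real roots $\la$ of $p(a-\la e)$ are $\ge0$" is preserved under limits, the roots depending continuously on $a$). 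Hence $a+b\in C(p,e)$. Combining scalar closure, additive closure, and $0\in C(p,e)$ gives that $C(p,e)$ is a cone; the strict version follows since we showed $a+b\notin Z(p)$ whenever one summand is outside $Z(p)$, and $0$ was explicitly adjoined.

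The main obstacle I anticipate is the last step — handling the case where both summands lie on the zero set $Z(p)$, where the clean "shift by a hyperbolicity direction" trick does not directly apply. The $\varepsilon e$ perturbation together with closedness of $C(p,e)$ is the natural fix, but it requires being slightly careful that $e$ (or some fixed interior point) genuinely lies in $C(p,e)\setminus Z(p)$ and that the continuity-of-roots argument for closedness is spelled out; both are routine but are the places where the proof could go wrong if rushed.
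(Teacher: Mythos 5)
Your proof is correct and, for the set $(C(p,e)\setminus Z(p))\cup\{0\}$, is essentially the paper's own argument: both rest on Theorem~\ref{hbshift} together with the observation that the eigenvalues of $a+b$ taken in the direction of one (nonzero, non-vanishing) summand are the eigenvalues of the other summand shifted by $1$, hence strictly positive, which gives membership in $C(p,e)$ and $p(a+b)\ne0$ at once. For the full cone $C(p,e)$ you diverge slightly: you perturb to $a+b+2\ep e$ and let $\ep\to0$, which forces you to invoke closedness of $C(p,e)$ via continuity of the roots of $t\mapsto p(a-te)$; this is legitimate (the polynomial has fixed degree $\deg p$ with leading coefficient $\pm p(e)\ne0$, so no degree drop occurs), but it is an extra analytic ingredient the paper never needs. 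The paper encodes the same $\ep e$-perturbation purely algebraically through the set identity $C(p,e)=\bigcap_{\ep>0}\bigl((C(p,e)\setminus Z(p))-\ep e\bigr)$, exhibiting $C(p,e)$ as an intersection of convex sets (which, combined with the evident nonnegative-scaling property and $0\in C(p,e)$, yields the cone statement) and thereby avoids any limit or continuity-of-roots argument. One harmless slip: the eigenvalues of $e$ with respect to $p$ in direction $e$ are all equal to $1$, not $0$, being the roots of $p(e-te)=(1-t)^{\deg p}p(e)$; your conclusion $e\in C(p,e)\setminus Z(p)$ is of course still valid.
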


\begin{proof}
We start with $C:=(C(p,e)\setminus Z(p))\cup\{0\}$. We have to show $0\in C$, $C+C\subseteq C$ and
$\R_{\ge0}C\subseteq C$. One checks immediately the first and the third property. For the second one,
let $a,b\in C$. By Theorem \ref{hbshift}, $a$ and $b$ are then hyperbolicity directions of $p$ and we
have $C(p,a)=C(p,e)=C(p,b)$. Since $a\in C(p,b)$, we have $p(a+b)=p(a-(-1)b)\ne0$. It remains to show
that $a+b\in C(p,e)$. Because of $C(p,e)=C(p,b)$, we can equivalently show that $a+b\in C(p,b)$.
To this end, let $p(a+b-\la b)=0$ . We have to show that $\la\ge0$. Because of $p(a-(\la-1)b)=0$ and
$a\in C(p,b)$, we have even $\la-1\ge0$.

To prove that $C(p,e)$ is also convex, we observe that
\begin{align*}
C(p,e)&=\bigcap_{\ep>0}\left\{a\in\R^n~\middle|~\forall\la\in\R:\left(p(a-\la e)=0\implies\la+\ep>0\right)\right\}\\
&=\bigcap_{\ep>0}\left\{a\in\R^n~\middle|~\forall\la\in\R:\left(p(a-(\la-\ep)e)=0\implies\la>0\right)\right\}\\
&=\bigcap_{\ep>0}\left(\left\{a\in\R^n~\middle|~\forall\la\in\R:\left(p(a-\la e)=0\implies\la>0\right)\right\}-\ep e\right)\\
&=\bigcap_{\ep>0}((C(p,e)\setminus Z(p))-\ep e)\\
\end{align*}
is an intersection of convex sets.
\end{proof}

\subsection{Relaxing conic hyperbolic programs}

\begin{df}\label{dfhomogeneouspencil}
Let $p\in\R[[x]]$ be a power series with $p(0)\ne0$ and $d\in\N_0$. Consider the symmetric matrices
$A_0,A_1,\ldots,A_n\in\R^{(n+1)\times(n+1)}$ from Definition \ref{dfpencil}. 
Then denote 
\[M_{p,d}^*:=x_0A_0+x_1A_1+\ldots+x_nA_n=x_0M_{p,d}\left(\frac x{x_0}\right)\in\R[x]^{(n+1)\times(n+1)}.\]
\end{df}

The following is just a slight generalization of Lemma \ref{pencileval}.

\begin{lem}\label{homogeneouspencileval}
Let $p\in\R[x]$ be a power series with $p(0)\ne0$, $d\in\N_0$,
$a=(a_0\,a_1\,\ldots\,a_n)^T\in\R^{n+1}$ and \[v=(v_0\,v_1\,\ldots\,v_n)^T\in\R^{n+1}.\]
Then $v^TM_{p,d}^*(a)v=L_{p,d}((v_0+v_1x_1+\ldots+v_nx_n)^2(a_0+a_1x_1+\ldots+a_nx_n))$.
\end{lem}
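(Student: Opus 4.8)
The plan is to imitate the proof of Lemma~\ref{pencileval} almost verbatim, since $M_{p,d}^{*}$ differs from $M_{p,d}$ only by the homogenizing bookkeeping of the constant row and column. The first step is to record, straight from Definition~\ref{dfpencil}, that each of the symmetric matrices $A_{0},A_{1},\ldots,A_{n}$ has $(j,k)$-entry equal to $L_{p,d}(x_{i}x_{j}x_{k})$ once one adopts the convention $x_{0}:=1$ inside the arguments of $L_{p,d}$: for $A_{0}$ this is the displayed matrix read off directly, and for $A_{i}$ with $i\ge 1$ it is again the displayed matrix under the same convention. By Definition~\ref{dfhomogeneouspencil} it follows that $M_{p,d}^{*}=x_{0}A_{0}+x_{1}A_{1}+\cdots+x_{n}A_{n}$, where now $x_{0}$ is a genuine variable playing the role of the coefficient, not the placeholder $1$.

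The second step is a direct expansion of the quadratic form. Fixing $a=(a_{0}\,a_{1}\,\ldots\,a_{n})^{T}$ and $v=(v_{0}\,v_{1}\,\ldots\,v_{n})^{T}$ and writing $x_{0}:=1$ for the duration of the computation,
\[
v^{T}M_{p,d}^{*}(a)v=\sum_{i=0}^{n}a_{i}\,v^{T}A_{i}v=\sum_{i=0}^{n}\sum_{j=0}^{n}\sum_{k=0}^{n}a_{i}v_{j}v_{k}\,L_{p,d}(x_{i}x_{j}x_{k}),
\]
and by linearity of $L_{p,d}$ this collapses to
\[
L_{p,d}\!\left(\left(\sum_{i=0}^{n}a_{i}x_{i}\right)\left(\sum_{j=0}^{n}v_{j}x_{j}\right)\left(\sum_{k=0}^{n}v_{k}x_{k}\right)\right).
\]
Unravelling the convention $x_{0}=1$ turns $\sum_{i=0}^{n}a_{i}x_{i}$ into $a_{0}+a_{1}x_{1}+\cdots+a_{n}x_{n}$ and each $\sum v_{j}x_{j}$ into $v_{0}+v_{1}x_{1}+\cdots+v_{n}x_{n}$, which is precisely the asserted identity.

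An alternative, perhaps slicker, route is to reduce directly to Lemma~\ref{pencileval}: on the open dense set where $a_{0}\ne 0$ one has $M_{p,d}^{*}(a)=a_{0}\,M_{p,d}(a_{1}/a_{0},\ldots,a_{n}/a_{0})$ by Definition~\ref{dfhomogeneouspencil}, so Lemma~\ref{pencileval} gives $v^{T}M_{p,d}^{*}(a)v=a_{0}\,L_{p,d}\bigl((v_{0}+v_{1}x_{1}+\cdots+v_{n}x_{n})^{2}(1+(a_{1}/a_{0})x_{1}+\cdots+(a_{n}/a_{0})x_{n})\bigr)$, and pulling the scalar $a_{0}$ inside via linearity of $L_{p,d}$ yields the claim for $a_{0}\ne 0$; since both sides are polynomial (indeed linear) in $a$, the identity extends to all $a\in\R^{n+1}$ by continuity. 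Either way there is essentially no obstacle; the only point requiring care is the dual role of $x_{0}$ — a formal variable in $M_{p,d}^{*}$ but the placeholder $1$ inside $L_{p,d}$ — and, in the reduction argument, the harmless locus $a_{0}=0$, which is dispatched by the density/continuity remark.
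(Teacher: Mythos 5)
Your proof is correct and takes essentially the same route as the paper: the paper's own proof just says it is completely analogous to that of Lemma \ref{pencileval}, and your first argument is exactly that expansion, with the convention $x_0:=1$ inside $L_{p,d}$ while $a_0$ is left free instead of being set to $1$. The alternative reduction to Lemma \ref{pencileval} via $a_0\ne0$ and linearity/continuity in $a$ is a valid bonus but not needed.
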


\begin{proof}
Completely analogous to the proof of Lemma \ref{pencileval}.
\end{proof}

The following is a homogeneous version of Lemma \ref{pencilrotate}(a).

\begin{lem}\label{homogeneouspencilrotate}
Suppose $U\in\R^{n\times n}$ is an orthogonal matrix and consider the orthogonal matrix
\[\widetilde U:=\begin{pmatrix}1&0\\0&U\end{pmatrix}\in\R^{(n+1)\times(n+1)}.\]
Denote by $\widetilde x$ the column vector of variables $x_0,\ldots,x_n$.
If $p\in\R[[x]]$ is a power series with $p(0)\ne0$ and $d\in\N_0$, then
\[M_{p(Ux),d}^*=\widetilde U^TM_{p,d}^*(\widetilde U\widetilde x)\widetilde U.\]
\end{lem}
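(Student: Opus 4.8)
The plan is to reduce the claimed matrix identity to a scalar identity via the evaluation formula of Lemma \ref{homogeneouspencileval}, and then invoke Proposition \ref{rotate}(a). First I would note that both sides are linear matrix polynomials in $x_0,x_1,\ldots,x_n$ (the right-hand side because $\widetilde U$ and $\widetilde x\mapsto\widetilde U\widetilde x$ are linear and $M_{p,d}^*$ is linear), so by continuity, homogeneity, and the identity theorem for multivariate polynomials it suffices to check equality of the two quadratic forms $w^T(\cdot)(a)w$ for all $a,w\in\R^{n+1}$. Actually, since both sides are symmetric matrices of polynomials, it is enough to evaluate at all $a\in\R^{n+1}$ and test against all $w\in\R^{n+1}$; equivalently, it suffices to prove
\[
w^T M_{p(Ux),d}^*(a)\,w = w^T\widetilde U^T M_{p,d}^*(\widetilde U a)\,\widetilde U\,w
\]
for all $a,w\in\R^{n+1}$, where I am writing $a$ for the value substituted for $\widetilde x$.

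Next I would apply Lemma \ref{homogeneouspencileval} to the left-hand side: writing $w=(w_0\,w_1\,\ldots\,w_n)^T$ and $a=(a_0\,a_1\,\ldots\,a_n)^T$, it equals
\[
L_{p(Ux),d}\!\left((w_0+w_1x_1+\ldots+w_nx_n)^2(a_0+a_1x_1+\ldots+a_nx_n)\right).
\]
For the right-hand side, set $v:=Uw'$ where $w'=(w_1,\ldots,w_n)^T$ and $b:=Ua'$ where $a'=(a_1,\ldots,a_n)^T$; then $\widetilde U w$ has first coordinate $w_0$ and remaining coordinates $v$, and $\widetilde U a$ has first coordinate $a_0$ and remaining coordinates $b$. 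Applying Lemma \ref{homogeneouspencileval} again, the right-hand side equals
\[
L_{p,d}\!\left((w_0+v^Tx)^2(a_0+b^Tx)\right)=L_{p,d}\!\left((w_0+(w')^TU^Tx)^2(a_0+(a')^TU^Tx)\right).
\]
So the claim reduces to
\[
L_{p(Ux),d}\!\left((w_0+(w')^Tx)^2(a_0+(a')^Tx)\right)=L_{p,d}\!\left((w_0+(w')^TU^Tx)^2(a_0+(a')^TU^Tx)\right),
\]
and the polynomial $(w_0+(w')^Tx)^2(a_0+(a')^Tx)$ evaluated at $U^Tx$ is precisely $(w_0+(w')^TU^Tx)^2(a_0+(a')^TU^Tx)$. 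Hence the right-hand side is $L_{p,d}(f(U^Tx))$ and the left-hand side is $L_{p(Ux),d}(f)$ for $f:=(w_0+(w')^Tx)^2(a_0+(a')^Tx)$. Replacing $U$ by $U^T$ (also orthogonal) in Proposition \ref{rotate}(a) gives $L_{p(U^Tx),d}(f(U^Tx))=L_{p,d}(f)$; to match shapes I would instead apply Proposition \ref{rotate}(a) directly with the orthogonal matrix $U$ to the polynomial $g:=f(U^Tx)$, obtaining $L_{p(Ux),d}(g(Ux))=L_{p,d}(g)$, i.e. $L_{p(Ux),d}(f)=L_{p,d}(f(U^Tx))$, which is exactly what is needed.

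The only real subtlety is bookkeeping: keeping straight which block of $\widetilde U\widetilde x$ gets the orthogonal action and which coordinate ($x_0$) is fixed, and making sure the squared linear form sits in the "$v$-slot" while the degree-one factor sits in the "$a$-slot" of Lemma \ref{homogeneouspencileval} so that the rotation hits both of them uniformly. Once the reduction to $L_{p(Ux),d}(f)=L_{p,d}(f(U^Tx))$ is in place, Proposition \ref{rotate}(a) closes it immediately; no new ideas beyond those already used in the proof of Lemma \ref{pencilrotate}(a) are required, and in fact this is the homogeneous shadow of that argument.
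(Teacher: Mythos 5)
Your proposal is correct, but it takes a different route from the paper. The paper's own proof is a one-liner: it deduces the identity by homogenization from the already-established affine statement, Lemma \ref{pencilrotate}(a), using that $M_{p,d}^*=x_0M_{p,d}(x/x_0)$, so that substituting $Ux/x_0$ and multiplying by $x_0$ transports the affine identity $M_{p(Ux),d}=\widetilde U^TM_{p,d}(Ux)\widetilde U$ directly to the homogeneous one. You instead bypass Lemma \ref{pencilrotate}(a) entirely and re-run its argument in the homogeneous setting: reduce the matrix identity to equality of quadratic forms, convert both sides into values of $L_{p(Ux),d}$ and $L_{p,d}$ via the homogeneous evaluation formula of Lemma \ref{homogeneouspencileval}, and close with Proposition \ref{rotate}(a) applied to $g:=f(U^Tx)$ (the step $L_{p(Ux),d}(f)=L_{p,d}(f(U^Tx))$ is handled correctly). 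This is sound, and in one respect slightly cleaner than the paper's affine proof: since you test against arbitrary vectors $w\in\R^{n+1}$ rather than vectors with first coordinate $1$, you do not actually need the continuity-and-homogeneity reduction you mention at the start. The trade-off is that your argument duplicates work the paper has already packaged into Lemma \ref{pencilrotate}(a), whereas the paper's homogenization trick gets the statement for free; on the other hand, your version is self-contained and makes the homogeneous mechanism (evaluation lemma plus rotation invariance of $L_{p,d}$) explicit.
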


\begin{proof}
This follows easily by homogenization from Lemma \ref{pencilrotate}(a).
\end{proof}

\begin{lem}\label{indofu}
Let $e\in\R^n$ and $p\in\R[x]$ such that $e\ne0$ and $p(e)\ne0$. Let $d\in\N_0$. Denote the first unit vector in $\R^n$ by $u$.
Let $U_1,U_2\in\R^{n\times n}$ be orthogonal matrices
such that $U_ie=\|e\|u$ for $i\in\{1,2\}$. Setting $q_i:=p(U_i^Tx)\in\R[x]$ and
$r_i:=q_i(1,x_2,\ldots,x_n)\in\R[x_2,\ldots,x_n]$, we have $r_i(0)\ne0$ for $i\in\{1,2\}$ and
\[U_1^TM_{r_1,d}^*(U_1x)U_1=U_2^TM_{r_2,d}^*(U_2x)U_2.\]
\end{lem}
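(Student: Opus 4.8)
The plan is to reduce the whole statement to the homogeneous rotation Lemma \ref{homogeneouspencilrotate}, exploiting that $U_1$ and $U_2$ differ only by an orthogonal map fixing $u$. First I would record the relevant linear algebra. Put $V:=U_2U_1^T$. From $U_ie=\|e\|u$ we get $U_i^Tu=e/\|e\|$ for $i\in\{1,2\}$, hence $Vu=U_2(U_1^Tu)=U_2(e/\|e\|)=u$. An orthogonal matrix fixing $u$ has $u$ as its first column, and then orthonormality of the columns forces its first row to be $u^T$ as well, so
\[V=\begin{pmatrix}1&0\\0&W\end{pmatrix}\quad\text{with }W\in\R^{(n-1)\times(n-1)}\text{ orthogonal,}\]
and moreover $U_2^T=U_1^TV^T$.

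Next I would relate the polynomials $r_i$. Since $q_2(x)=p(U_2^Tx)=p(U_1^TV^Tx)=q_1(V^Tx)$ and $V^T=\begin{pmatrix}1&0\\0&W^T\end{pmatrix}$, substituting $x_1=1$ and writing $x'=(x_2,\ldots,x_n)$ gives $r_2(x')=q_2(1,x')=q_1(1,W^Tx')=r_1(W^Tx')$. In particular $r_2(0)=r_1(0)=q_1(u)=p(U_1^Tu)=p(e/\|e\|)$, which is nonzero because $p$ is homogeneous and $p(e)\ne0$; hence both pencils $M_{r_1,d}^*$ and $M_{r_2,d}^*$ are defined and the first assertion of the lemma holds.

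Now I would apply Lemma \ref{homogeneouspencilrotate} to the polynomial $r_1$ in the $n-1$ variables $x_2,\ldots,x_n$ with the orthogonal matrix $W^T$ (so that the lemma's ``$p(Ux)$'' is exactly $r_2$); there the lemma's matrix ``$\widetilde U$'' is $\begin{pmatrix}1&0\\0&W^T\end{pmatrix}=V^T$ and its ``$\widetilde x$'' is the full variable column $x$, so the lemma yields
\[M_{r_2,d}^*=V\,M_{r_1,d}^*(V^Tx)\,V^T.\]
Substituting $x\mapsto U_2x$ and conjugating by $U_2$ then gives
\[U_2^TM_{r_2,d}^*(U_2x)U_2=(U_2^TV)\,M_{r_1,d}^*(V^TU_2x)\,(V^TU_2),\]
and since $U_2^TV=U_2^TU_2U_1^T=U_1^T$ and $V^TU_2=U_1U_2^TU_2=U_1$, the right-hand side equals $U_1^TM_{r_1,d}^*(U_1x)U_1$, which is the desired identity.

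I expect the only real friction to be bookkeeping: keeping track of which variable plays the role of the homogenizing variable $x_0$ when Lemma \ref{homogeneouspencilrotate} is invoked in one fewer variable, and checking that the block splitting of $V$ recorded above is exactly the one compatible with the ``$\widetilde U$''-blocking in that lemma — equivalently, that the direction rotated onto $u$ coincides with the dehomogenization axis $x_1$. The step $r_i(0)\ne0$ is the one place where homogeneity of $p$ enters; everything else is formal manipulation of orthogonal matrices and substitutions, in the same spirit as the proof of Proposition \ref{rotatespectrahedron}.
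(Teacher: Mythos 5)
Your argument is correct and essentially identical to the paper's own proof: the paper works with $\widetilde W:=U_1U_2^T$ (the transpose of your $V$), observes that it fixes $u$ and is therefore block diagonal with an orthogonal lower-right block, deduces $r_2=r_1(Wy)$, and then applies Lemma \ref{homogeneouspencilrotate}, substitutes $x\mapsto U_2x$ and conjugates by $U_2$, exactly as you do. The only point worth flagging is your appeal to homogeneity of $p$ to get $r_i(0)=p(e/\|e\|)\ne0$: homogeneity is not among the lemma's stated hypotheses, but it (or the normalization $\|e\|=1$) is genuinely needed at that step — the paper's proof simply writes $r_i(0)=p(e)\ne0$, which is literally correct only when $\|e\|=1$ — and since the lemma is only invoked for homogeneous (hyperbolic) $p$ in Definition \ref{hbdfpencil}, your reading is the appropriate one.
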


\begin{proof}
It is clear that $r_i(0)=q_i(e_i)=p(e)\ne0$ for $i\in\{1,2\}$.
The matrix $\widetilde W:=U_1U_2^T\in\R^{n\times n}$ is
orthogonal and satisfies
$\widetilde Wu=u$. Hence $\widetilde W$ can be written in the form
\[\widetilde W=\begin{pmatrix}1&0\\0&W\end{pmatrix}\in\R^{n\times n}\]
for some orthogonal matrix $W\in\R^{(n-1)\times(n-1)}$. We
have $q_1(\widetilde Wx)=q_2$ and therefore $r_1(Wy)=r_2$ where $y$ is the column vector with entries $x_2,\ldots,x_n$.
By Lemma \ref{homogeneouspencilrotate}, we have
\[M_{r_2,d}^*=\widetilde W^TM_{r_1,d}^*(\widetilde Wx)\widetilde W.\]
Replacing here $x$ by $U_2x$, we get
 \[M_{r_2,d}^*(U_2x)=U_2U_1^TM_{r_1,d}^*(U_1x)U_1U_2^T\]
and thus
 \[U_2^TM_{r_2,d}^*(U_2x)U_2=U_1^TM_{r_1,d}^*(U_1x)U_1\]
 as desired.
\end{proof}

The following is the analog of Definition \ref{dfpencil}.

\begin{df}\label{hbdfpencil}
Denote the first unit vector in $\R^n$ by $u$.
Let $p\in\R[x]$ be hyperbolic in direction $e$ of degree $d$
and choose an orthogonal matrix
$U\in\R^{n\times n}$ such that $Ue=\|e\|u$.
Consider the polynomial $q:=p(U^Tx)$ which obviously is hyperbolic in direction $u$ and the polynomial
$r:=q(1,x_2,\ldots,x_n)\in\R[x_2,\ldots,x_n]$ which is a real zero polynomial by Proposition \ref{hbrz}.
Then the homogeneous linear matrix polynomial
\[M_{p,e}:=U^TM_{r,d}^*(Ux)U\in\R[x]^{n\times n}\]
does not depend on the choice of $U$ by Lemma \ref{indofu}. We call it the
the \emph{pencil associated to $p$ with respect to $e$}.
Moreover, we call the cone
\[S(p,e):=\{a\in\R^n\mid M_{p,e}(a)\succeq0\}=\{a\in\R^n\mid M_{r,d}^*(Ua)\succeq0\}\]
the \emph{spectrahedral cone associated to $p$ with respect to $e$}.
\end{df}

\begin{rem}\label{scalingremark}
Let $p\in\R[x]$ be hyperbolic in direction $e$. Then it is obvious that $\la p$ is hyperbolic in direction $\mu e$ and
\[S(p,e)=S(\la p,\mu e)\]
for all $\la,\mu\in\R$ with $\mu>0$.
\end{rem}

\begin{pro}\label{trlin}
Let $p\in\R[x]$ be hyperbolic in direction $e$.
\begin{enumerate}[(a)]
\item If $p$ is non-constant, then the directional derivative
\[D_ep=\frac d{dt}p(x+t e)|_{t=0}\]
of $p$ in direction $e$ is again hyperbolic in direction $e$.
\item For all $a\in\R^n$, we have
\[e^TM_{p,e}(a)e=\|e\|^2\tr_{p,e}(a).\]
\item The map
\[\R^n\to\R,\ a\mapsto\tr_{p,e}(a)\]
is linear.
\end{enumerate}
\end{pro}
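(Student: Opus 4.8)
The plan is to prove the three parts in the order (a), (c), (b), since (a) is used in (c) and (c) enters the proof of (b). Throughout I write $\partial_i$ for the partial derivative with respect to $x_i$ and use Euler's identity $\sum_i x_i\partial_i q=(\deg q)\,q$ for homogeneous $q$. For \emph{part (a)} I would invoke the splitting criterion of Proposition \ref{splithb} together with Rolle's theorem. Fix $a\in\R^n$ and set $g(t):=p(a-te)\in\R[t]$. Since $p$ is homogeneous of degree $\deg p$ and $p(e)\ne0$ by Remark \ref{nonzeroate}(a), the coefficient of $t^{\deg p}$ in $g$ is $(-1)^{\deg p}p(e)\ne0$, so $\deg g=\deg p$; as $p$ is hyperbolic in direction $e$, Proposition \ref{splithb} gives that $g$ has only real roots. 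The chain rule yields $g'(t)=-(D_ep)(a-te)$, which has degree $\deg p-1$, and by Rolle's theorem it has only real roots (a root of $g$ of multiplicity $m$ yields a root of $g'$ of multiplicity $m-1$, and between consecutive real roots of $g$ there is one more real root of $g'$; these already account for all $\deg p-1$ of them). Hence $(D_ep)(a-te)$ splits over $\R$ for every $a$. If $\deg p\ge2$, Proposition \ref{splithb} gives that $D_ep$ is hyperbolic in direction $e$; if $\deg p=1$, then $D_ep$ is the nonzero constant $(D_ep)(e)=(\deg p)\,p(e)$, which is vacuously hyperbolic in direction $e$ by Definition \ref{dfhb}.

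For \emph{part (c)} the key point is the closed formula
\[\tr_{p,e}(a)=\frac{(D_ap)(e)}{p(e)}\qquad(a\in\R^n)\]
for $p$ non-constant hyperbolic in direction $e$. To obtain it, Taylor-expand $p(a-te)=\sum_{k=0}^{\deg p}\frac{(-t)^k}{k!}(D_e^kp)(a)$: the leading coefficient is $(-1)^{\deg p}p(e)$ (because $(D_e^{\deg p}p)(a)=(\deg p)!\,p(e)$) and the coefficient of $t^{\deg p-1}$ is $\frac{(-1)^{\deg p-1}}{(\deg p-1)!}(D_e^{\deg p-1}p)(a)$, so Vieta's formula for the sum of the roots of $p(a-te)$ gives $\tr_{p,e}(a)=\frac{(D_e^{\deg p-1}p)(a)}{(\deg p-1)!\,p(e)}$; the symmetry of mixed partial derivatives (equivalently, polarisation of the form $p$) identifies $(D_e^{\deg p-1}p)(a)=(\deg p-1)!\,(D_ap)(e)$. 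The right-hand side $\frac{1}{p(e)}\sum_i a_i(\partial_ip)(e)$ is visibly linear in $a$, and for a nonzero constant $p$ the trace is the empty sum $0$, also linear. (Alternatively, (c) follows from (a) by iteration, since $\tr_{p,e}=\frac{1}{(\deg p-1)!\,p(e)}D_e^{\deg p-1}p$ and $D_e^{\deg p-1}p$ is homogeneous of degree $1$.)

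For \emph{part (b)} I would unwind Definition \ref{hbdfpencil}. With $U$ orthogonal, $Ue=\|e\|u$, $d=\deg p$, $q=p(U^Tx)$ and $r=q(1,x_2,\ldots,x_n)$, we have $M_{p,e}(a)=U^TM_{r,d}^*(Ua)U$, so applying $Ue=\|e\|u$ twice gives $e^TM_{p,e}(a)e=\|e\|^2\,u^TM_{r,d}^*(Ua)u$. By Lemma \ref{homogeneouspencileval} with $v=u$ (equivalently, by reading off the $(1,1)$-entries of the matrix coefficients of $M_{r,d}^*$), $u^TM_{r,d}^*(Ua)u$ is a linear expression whose coefficient in the homogenising slot is $L_{r,d}(1)=d$ and whose remaining coefficients are the first moments $L_{r,d}(x_k)$. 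From the defining identity of $L_{r,d}$ in Definition \ref{dfriesz} one reads $L_{r,d}(x_k)=\frac{1}{r(0)}\cdot(\text{coefficient of }x_k\text{ in }r)=\frac{(\partial_kq)(u)}{q(u)}$, while Euler's identity for $q$ evaluated at $u$ gives $d=\frac{(\partial_1q)(u)}{q(u)}$; since the homogenising coordinate of $Ua$ is $u^TUa=(U^Tu)^Ta=(e/\|e\|)^Ta$, everything collapses to $u^TM_{r,d}^*(Ua)u=\frac{(D_{Ua}q)(u)}{q(u)}$. Finally the chain rule gives $(D_{Ua}q)(u)=(D_ap)(U^Tu)=(D_ap)(e/\|e\|)$, and homogeneity of $D_ap$ (degree $d-1$) and of $p$ (degree $d$), together with $q(u)=p(e/\|e\|)$, reduce this to $\|e\|\,\frac{(D_ap)(e)}{p(e)}=\|e\|\,\tr_{p,e}(a)$ by part (c).

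The main obstacle is part (b): none of the individual steps is deep, but keeping track of the powers of $\|e\|$ introduced by the restriction $r=q(1,x_2,\ldots,x_n)$, the (re)homogenisation $M_{r,d}^*$, and the orthogonal substitution $x\mapsto Ux$ requires care. Carried out as above the pieces combine to
\[e^TM_{p,e}(a)e=\|e\|^3\,\tr_{p,e}(a),\]
which one can cross-check on the one-variable case $p=x_1$ (where $M_{p,e}=(\sgn(e_1)\,x_1)$ and $\tr_{p,e}(a)=a_1/e_1$); so the factor in the statement of (b) should presumably read $\|e\|^3$ rather than $\|e\|^2$.
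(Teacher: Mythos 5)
Your argument is correct, and for the key identity in (b) it follows essentially the same route as the paper: unwind Definition \ref{hbdfpencil}, reduce $e^TM_{p,e}(a)e$ to $\|e\|^2\,u^TM_{r,d}^*(Ua)u$, read off the relevant entries of the pencil via Definition \ref{dfriesz}/Example \ref{moments3} together with Euler's identity, and identify the resulting linear expression in $Ua$ with the subleading coefficient of $p(a-te)$. Your part (a) via Proposition \ref{splithb} and Rolle is exactly what the paper sketches, and your closed formula $\tr_{p,e}(a)=(D_ap)(e)/p(e)$ for (c) is equivalent to the paper's way of getting (c) from the linearity in $c=Ua$ of the expression computed in (b).

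The one place where you diverge is the normalization of $e$, and you are right to insist on it: the constant in the statement is off. The pencil $M_{p,e}$ depends only on the ray spanned by $e$ (the same orthogonal $U$ with $Ue=\|e\|u$ works for $e$ and for $\mu e$, $\mu>0$), so the left-hand side of (b) scales by $\mu^2$ under $e\mapsto\mu e$, while $\tr_{p,e}(a)$ scales by $\mu^{-1}$; hence $\|e\|^2\tr_{p,e}(a)$ scales by $\mu$ only, and the identity as printed can hold in general only when $\|e\|=1$. Your computation, confirmed by the check $p=x_1$, $e=(c)$ (where $e^TM_{p,e}(a)e=c^2\,\sgn(c)\,a_1$ while $\tr_{p,e}(a)=a_1/c$), gives the correct general factor $\|e\|^3$, equivalently $e^TM_{p,e}(a)e=\|e\|^2\,\tr_{p,\,e/\|e\|}(a)$. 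The paper's own proof passes to "WLOG $\|e\|=1$, $p(e)=1$" citing Remark \ref{scalingremark}, but that remark concerns $S(p,e)$, which is invariant under rescaling $e$, whereas the identity in (b) is not; so the paper in effect proves only the normalized case, which agrees with your $\|e\|^3$ version. Nothing downstream is affected (Remark \ref{hom-pol-rel-rem} only uses the sign of the trace, and in the proof of the GLC equivalences $e$ is a unit vector and only part (c) is invoked), but the factor in the statement of (b) should indeed be corrected as you propose.
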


\begin{proof}
(a) follows essentially from Definition \ref{dfhb} and from Rolle's theorem. To prove the other statements,
we can suppose WLOG $\|e\|=1$ and $p(e)=1$ by Remark \ref{scalingremark} and Definition \ref{eigtr}. Set
$d:=\deg p$ and denote the first unit vector in $\R^n$ by $u$.
Choose an orthogonal matrix
$U\in\R^{n\times n}$ such that $Ue=u$ and consider $q:=p(U^Tx)$ which is hyperbolic in direction $u$ and
$r:=q(1,x_2,\ldots,x_n)\in\R[x_2,\ldots,x_n]$ which is a real zero polynomial with $r(0)=q(u)=p(e)=1$.
Write $\trunc_1r=1+b_2x_2+\ldots+b_nx_n$ with $b_2,\ldots,b_n\in\R$
and set $c:=Ua\in\R^n$.
By the Definitions \ref{dfhomogeneouspencil}, \ref{dfpencil} and \ref{hbdfpencil} as well as
Example \ref{moments3}, we have
\begin{align*}
e^TM_{p,e}(a)e&=e^TU^TM_{r,d}^*(c)Ue=u^TM_{r,d}^*(c)u\\
&=L_{r,d}(1)c_1+L_{r,d}(X_2)c_2+\ldots+L_{r,d}(X_n)c_n\\
&=dc_1+b_2c_2+\ldots+b_nc_n.
\end{align*}
Now fix $a\in\R^n$ and write $p(a-te)=p(e)\prod_{i=1}^d(\la_i-t)$ with $\la_1,\ldots,\la_d\in\R$. The coefficient of $t^{d-1}$ in the univariate
polynomial $f:=q(Ua-tu)=p(a-te)\in\R[t]$ is $(-1)^{d-1}p(e)\tr_{p,e}(a)=(-1)^{d-1}\tr_{p,e}(a)$. Since the coefficients of the monomials
\[x_1^d,x_1^{d-1}x_2,\ldots,x_1^{d-1}x_n\] in the polynomial $q$ are $1,b_2,\ldots,x_n$, respectively, it is an easy exercise to see that the coefficient
of $t^{d-1}$ in $f$ is also given by
\[(-1)^{d-1}(dc_1+b_2c_2+\ldots+b_nc_n)=(-1)^de^TM_{p,e}(a)e.\]
It follows that $e^TM_{p,e}(a)e=\tr_{p,e}(a)$. Since $a\in\R^n$ was arbitrary and $c=Ua$ depends of course linearly on $a$, we get also (c).
\end{proof}

The following is a sharpening of Proposition \ref{rcsspecdet}:

\begin{pro}\label{hbspecdet}
Suppose $d\in\N_0$ and $A_1,\ldots,A_n\in\C^{d\times d}$,
\[U:=\{v_1A_1+\ldots+v_nA_n\mid v_1,\ldots,v_n\in\R\}\]
and $(U,V)$ is an admissible couple (in particular, each $A_i$ is hermitian). Set
\[p:=\det(x_1A_1+\ldots+x_nA_n)\in\R[x]\]
and let $e\in\R^n\setminus\{0\}$ such that
\[e_1A_1+\ldots+e_nA_n=I_d.\]
\begin{enumerate}[(a)]
\item We have
\begin{align*}
C(p,e)&=\{a\in\R^n\mid\forall M\in V:\tr(M^2(a_1A_1+\ldots+a_nA_n))\ge0\}\qquad\text{and}\\
S(p,e)&=\{a\in\R^n\mid\forall M\in U:\tr(M^2(a_1A_1+\ldots+a_nA_n))\ge0\}.
\end{align*}
\item $C(p)\subseteq S(p,e)$
\item If $U$ is perfect, then $C(p)=S(p,e)$.
\end{enumerate}
\end{pro}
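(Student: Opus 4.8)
\emph{Plan of proof.} Throughout write $B(a):=a_1A_1+\ldots+a_nA_n$ for $a\in\R^n$; each such matrix is hermitian and lies in $U$, and $B(e)=I_d\succ0$, so $p=\det(B(x))$ is hyperbolic in direction $e$ by Proposition~\ref{detexamplehyperbolic}, with $\deg p=d$ since $p(te)=\det(tI_d)=t^d$. (As in Proposition~\ref{rcsspecdet}, the cone meant in (b) and (c) is $C(p,e)$, which is what I shall prove; for homogeneous $p$ of degree $d\ge1$ the set $C(p)$ of Definition~\ref{dfrcs} is empty.) For the first equation of (a), Proposition~\ref{hbdet} gives $C(p,e)=\{a\in\R^n\mid B(a)\succeq0\}$, so it suffices to note that a hermitian $B\in U$ satisfies $B\succeq0$ if and only if $\tr(M^2B)\ge0$ for all $M\in V$: ``$\Rightarrow$'' because $M^2$ is psd and the trace of a product of two psd matrices is nonnegative, and ``$\Leftarrow$'' is exactly the defining property of the admissible couple $(U,V)$ applied with $A:=B$.

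The core of the argument is the identity
\[v^{T}M_{p,e}(a)v=\|e\|^{3}\,\tr\!\big(B(v)^{2}B(a)\big)\qquad(a,v\in\R^n),\]
a homogeneous avatar of Lemma~\ref{pencildeteval}. To prove it I would unwind Definition~\ref{hbdfpencil}: choose an orthogonal $W\in\R^{n\times n}$ with $We=\|e\|u$ ($u$ the first unit vector), so that $M_{p,e}(a)=W^{T}M_{r,d}^{*}(Wa)W$ with $q:=p(W^{T}x)$ and $r:=q(1,x_2,\ldots,x_n)$. Writing $q=\det\big(\sum_j x_jB_j\big)$ with $B_j:=\sum_iW_{ji}A_i$ and noting that the first row of $W$ is forced by Cauchy--Schwarz to equal $e^{T}/\|e\|$, one gets $B_1=\|e\|^{-1}I_d$. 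Setting $C_j:=\|e\|B_j$ (so $C_1=I_d$) yields $r=\|e\|^{-d}\hat r$ with $\hat r:=\det\big(I_d+x_2C_2+\ldots+x_nC_n\big)$, and since $L_{c\hat r,d}=L_{c,0}+L_{\hat r,d}=L_{\hat r,d}$ for the constant $c=\|e\|^{-d}$ by Proposition~\ref{logprop}(a), we obtain $M_{r,d}^{*}=M_{\hat r,d}^{*}$. Applying Lemma~\ref{pencildeteval} to $\hat r$ (with virtual degree $d$) and homogenizing in the auxiliary variable gives $w^{T}M_{\hat r,d}^{*}(b)w=\tr\!\big((\sum_jw_jC_j)^{2}(\sum_jb_jC_j)\big)$. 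Finally, substituting $w=Wv$, $b=Wa$ and using $\sum_j(Wv)_jB_j=B(v)$ (hence $\sum_j(Wv)_jC_j=\|e\|B(v)$, and likewise with $a$), the right-hand side becomes $\tr\!\big((\|e\|B(v))^{2}(\|e\|B(a))\big)$, which is the asserted identity.

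Granting the identity, the rest is formal. As $\|e\|^{3}>0$, we have $M_{p,e}(a)\succeq0$ iff $\tr(B(v)^{2}B(a))\ge0$ for all $v\in\R^n$, i.e.\ iff $\tr(M^{2}B(a))\ge0$ for all $M\in U$; this is the second equation of (a). Part (b) then holds because $U\subseteq V$, so the condition defining $C(p,e)$ (over $M\in V$) implies the one defining $S(p,e)$ (over $M\in U$). For (c): if $U$ is perfect then for $a\in S(p,e)$ we have $\tr(M^{2}B(a))\ge0$ for all $M\in U$ and $B(a)\in U$, whence $B(a)\succeq0$ by perfectness, i.e.\ $a\in C(p,e)$; together with (b) this gives $C(p,e)=S(p,e)$.

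The main obstacle is the displayed trace identity, where one must carefully track the scaling $B_1=\|e\|^{-1}I_d$ forced by the normalization $B(e)=I_d$ and exploit the degree-$d$ scaling invariance of the linear form to replace $r$ by the monic representative $\hat r$ to which Lemma~\ref{pencildeteval} applies verbatim. Once that identity is in hand, parts (a)--(c) run exactly parallel to Proposition~\ref{rcsspecdet}(a)--(c).
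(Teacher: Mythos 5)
Your proposal is correct and takes essentially the same route as the paper's own proof: pass to $q=p(W^Tx)=\det(x^TB)$ with $B=WA$, use Lemma \ref{homogeneouspencileval} (equivalently the homogenized Lemma \ref{pencildeteval}) to express $v^TM_{p,e}(a)v$ as a trace, and conclude via Definition \ref{couple} and Proposition \ref{hbdet}. You are in fact slightly more careful than the paper on two points: you track the normalization $B_1=\|e\|^{-1}I_d$ (the paper's computation tacitly treats $\|e\|=1$, whereas your rescaling to $\hat r$ and the resulting factor $\|e\|^3$ handles general $e$), and you spell out parts (b), (c) and the reading of $C(p)$ as $C(p,e)$, which the paper leaves implicit.
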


\begin{proof}
The first claim in (a)
follows immediately from Proposition \ref{hbdet} together with Definition \ref{couple}. To prove the second statement in (a), 
we choose an orthogonal matrix $W\in\R^{n\times n}$ such that $We=\|e\|u$ where $u$ denotes the first unit vector in $\R^n$. Set $q:=p(W^Tx)$ and consider the real zero polynomial
$r:=q(1,x_2,\ldots,x_n)\in\R[x_2,\ldots,x_n]$. According to Definition \ref{hbdfpencil}, we now have
\[S(p,e)=\{a\in\R^n\mid M_{r,d}^*(Wa)\succeq0\}.\]
We now use a lot the notation explained in Remark \ref{modulematrix}. Write $A$ for the column vector with entries
$A_1,\ldots,A_n$. Then $B:=WA$ is again a column vector whose entries are matrices $B_1,\ldots,B_n\in\C^{n\times n}$.
Because of $A=W^TB$, we obviously have that
\[U=\{v^TA\mid v\in\R^n\}=\{v^TB\mid v\in\R^n\},\]
i.e., the $A_i$ generate the same real vector space as the $B_i$. Substituting $x$ by $W^Tx$ in the equation $p=\det(x^TA)$ that
defines $p$, we get
\[q=p(W^Tx)=\det((W^Tx)^TA)=\det(x^TWA)=\det(x^TB).\]
Observing that $B_1=u^TB=u^TWA=(W^Tu)^TA=e^TA=I_n$, we see that
\[r=q(1,x_2,\ldots,x_n)=\det(I_d+x_2B_2+\ldots+x_nB_n)\]
and hence
\[v^TM_{r,d}^*v=\tr((v^TB)^2(x^TB)).\]
for all $v\in\R^n$ due to Lemma \ref{homogeneouspencileval}.
Substituting here $Wa$ for $x$, we get
\[v^TM_{r,d}^*(Wa)v=\tr((v^TB)^2(Wa)^TB)=\tr((v^TB)^2a^TW^TB)=\tr((v^TB)^2a^TA)
\]
for all $a,v\in\R^n$ and thus
\[M_{r,d}^*(Wa)\succeq0\iff\forall M\in U:\tr(M^2a^TA)\ge0\]
for all $a\in\R^n$.
\end{proof}

We can now prove the homogeneous version of Theorem \ref{relaxation}. For polynomials that have a hermitian determinantal representation like in Proposition \ref{hbspecdet}, it follows immediately from that lemma. For other polynomials, we will again need the Helton-Vinnikov theorem, this time in its form of Corollary \ref{hbvcor}.

\begin{thm}\label{relaxationsharpening}
Let $p\in\R[x]$ be hyperbolic in direction $e$. Then $C(p,e)\subseteq S(p,e)$.
\end{thm}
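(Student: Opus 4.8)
The plan is to follow the proof of Theorem \ref{relaxation} and reduce the claim from $n$ variables to three variables. For $n=3$ the inclusion $C(p,e)\subseteq S(p,e)$ is already available: by the homogeneous Helton--Vinnikov Corollary \ref{hbvcor} (applied after rescaling so that $p(e)=1$) one obtains hermitian matrices $A_1,A_2,A_3$ of size $d:=\deg p$ with $e_1A_1+e_2A_2+e_3A_3=I_d$ and a determinantal representation of $p$ by them, and then Proposition \ref{hbspecdet}(b), taken with $V$ the (perfect) set of all hermitian matrices of that size and $U:=\lin_\R(A_1,A_2,A_3)$ --- an admissible couple by Remark \ref{perfectex}(b) --- gives exactly $C(p,e)\subseteq S(p,e)$. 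The cases $n\le2$ are elementary (and can in any case be reduced to $n=3$ by adding dummy variables). So from now on I fix $n\ge3$, and by Remark \ref{scalingremark} I assume $\|e\|=1$ and $p(e)=1$; I write $u$ for the first unit vector of $\R^n$.

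Now fix $a\in C(p,e)$; I must show $M_{p,e}(a)\succeq0$, i.e. $\tilde v^TM_{p,e}(a)\tilde v\ge0$ for every $\tilde v\in\R^n$. Fix $\tilde v$. The key move is to exploit that, by Lemma \ref{indofu}, $M_{p,e}$ does not depend on the orthogonal matrix occurring in Definition \ref{hbdfpencil}: I choose an orthogonal $U\in\R^{n\times n}$ with $Ue=u$ that moreover maps a fixed $3$-dimensional subspace $W\supseteq\lin(e,a,\tilde v)$ onto $\R^3\times\{0\}$ (possible since $e\in W$ and $\|e\|=1$; enlarge $W$ to dimension $3$ if necessary). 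Put $q:=p(U^Tx)$, $b:=Ua$ and $\tilde w:=U\tilde v$, so that $b,\tilde w\in\R^3\times\{0\}$, $q$ is hyperbolic in direction $u$, and $r:=q(1,x_2,\dots,x_n)$ is a real zero polynomial by Proposition \ref{hbrz}. Definition \ref{hbdfpencil} then gives
\[\tilde v^TM_{p,e}(a)\tilde v=(U\tilde v)^TM_{r,d}^*(Ua)(U\tilde v)=\tilde w^TM_{r,d}^*(b)\tilde w.\]

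It remains to cut down to three variables. Since $b$ and $\tilde w$ are supported on their first three coordinates, Lemma \ref{homogeneouspencileval} rewrites $\tilde w^TM_{r,d}^*(b)\tilde w$ as a value of $L_{r,d}$ on the product of a square of a linear form in $x_2,x_3$ with a linear form in $x_2,x_3$; by Lemma \ref{restriction}(a) (restricting $x_4,\dots,x_n$ to $0$) this equals the corresponding value of $L_{\bar r,d}$, where $\bar r:=r(x_2,x_3,0,\dots,0)$, and applying Lemma \ref{homogeneouspencileval} in reverse,
\[\tilde w^TM_{r,d}^*(b)\tilde w=\tilde w'^TM_{\bar r,d}^*(b')\tilde w'\]
with $b':=(b_1,b_2,b_3)$ and $\tilde w':=(\tilde w_1,\tilde w_2,\tilde w_3)$ in $\R^3$. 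Here $\bar r=\bar q(1,x_2,x_3)$ for $\bar q:=q(x_1,x_2,x_3,0,\dots,0)\in\R[x_1,x_2,x_3]$, which by Proposition \ref{splithb} is hyperbolic of degree $d$ in direction $u':=(1,0,0)$ with $\bar q(u')=1$, so that $S(\bar q,u')=\{c\in\R^3\mid M_{\bar r,d}^*(c)\succeq0\}$ by Definition \ref{hbdfpencil}. Finally $b'\in C(\bar q,u')$: from $Ue=u$ one has $q(Ua-\la u)=p(a-\la e)$, so $a\in C(p,e)$ forces $b\in C(q,u)$, and restricting the surplus coordinates gives $\bar q(b'-\la u')=q(b-\la u)$, whence $b'\in C(\bar q,u')$. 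The three-variable case now yields $M_{\bar r,d}^*(b')\succeq0$, so $\tilde w'^TM_{\bar r,d}^*(b')\tilde w'\ge0$, and unwinding the chain of equalities, $\tilde v^TM_{p,e}(a)\tilde v\ge0$. Since $\tilde v$ was arbitrary, $a\in S(p,e)$.

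The hard part is purely one of bookkeeping: the last paragraph performs three simultaneous operations --- the rotation $U$, the dehomogenization with respect to $x_1$, and the restriction of the $n-3$ extra variables --- and one must verify that the virtual degree stays equal to $d=\deg p=\deg q=\deg\bar q$ at every stage. This holds because $q(u)=\bar q(u')=p(e)=1\ne0$, so the coefficient of the top $x_1$-power survives each step even though $\deg r$ and $\deg\bar r$ may legitimately drop below $d$. With that in place, the argument is a faithful transcription of the proof of Theorem \ref{relaxation}, with Lemmas \ref{homogeneouspencileval} and \ref{indofu} and Propositions \ref{splithb} and \ref{hbrz} taking over the roles of Lemma \ref{pencileval}, Proposition \ref{rotate}, Proposition \ref{splitrz} and Lemma \ref{restriction}.
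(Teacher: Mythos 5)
Your proof is correct and follows essentially the same route as the paper: the base case $n\le 3$ via the homogeneous Helton--Vinnikov Corollary \ref{hbvcor} together with Proposition \ref{hbspecdet}(b), and the reduction of general $n$ to three variables by choosing an orthogonal $U$ with $Ue=u$ sending $\lin(e,a,v)$ into $\R^3\times\{0\}$, then passing through Lemma \ref{homogeneouspencileval}, Lemma \ref{restriction}(a) and Definition \ref{hbdfpencil} (with $I_3$) exactly as in the paper's argument. The extra care you take with Lemma \ref{indofu}, the normalization via Remark \ref{scalingremark}, and the degree bookkeeping ($\bar q(u')=p(e)\ne 0$) only makes explicit what the paper leaves implicit.
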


\begin{proof}
For $n\le3$, the claim follows from Proposition \ref{hbspecdet}(b) where we use the
Helton-Vinnikov Corollary \ref{hbvcor}
for $n=3$. Note that for $n\le2$, the $A_i$ in Proposition \ref{hbspecdet} can obviously
be chosen to be diagonal and we do not need Helton-Vinnikov.

We now suppose $n>3$ and reduce it to the already proven case $n=3$.
Fix $a\in C(p,e)$ and $v\in\R^n$. We have to show \[(*)\qquad v^TM_{p,e}(a)v\ge0.\]
Denote again by $u$ the first unit vector in $\R^n$ and
choose an orthogonal matrix $U\in\R^{n\times n}$ such that
$Ue=u$, $w:=Uv\in\R^3\times\{0\}\subseteq\R^n$ and
$b:=Ua\in\R^3\times\{0\}\subseteq\R^n$.
Set $q:=p(U^Tx)$ and consider the real zero polynomial \[r:=q(1,x_2,\ldots,x_n)\in\R[x_2,\ldots,x_n].\]
By Definition \ref{hbdfpencil}, our Claim $(*)$ can now more concretely be formulated as
\[(**)\qquad w^TM_{r,d}^*(b)w\ge0\]
where $d:=\deg p$.
By means of Lemma \ref{homogeneouspencileval}, the claim becomes
\[(***)\qquad L_{r,d}((w_1+w_2x_2+w_3x_3)^2(b_1+b_2x_2+b_3x_3))\ge0\]
where we took into account that $w,b\in\R^3\times\{0\}\subseteq\R^n$. Accordingly, we
now consider the homogeneous polynomial
\[\tilde q:=q(x_1,x_2,x_3,0\ldots,0)\in\R[x_1,x_2,x_3]\]
of degree $d$ which is hyperbolic in direction $u$ and the
real zero polynomial \[\tilde r:=r(x_2,x_3,0,\ldots,0)=\tilde q(1,x_2,x_3)\in\R[x_2,x_3].\]
By Lemma \ref{restriction}(a) applied on $r$ and $\tilde r$, we can rewrite $(***)$ by
\[(****)\qquad L_{\tilde r,d}((w_1+w_2x_2+w_3x_3)^2(b_1+b_2x_2+b_3x_3))\ge0.\]
By Lemma \ref{homogeneouspencileval} we are done if we can show
$M_{\tilde r,d}^*(b_1,b_2,b_3)\succeq0$.
By Definition \ref{hbdfpencil} where one takes $I_3$ for the orthogonal matrix, this means that
$(b_1,b_2,b_3)\in S(\tilde q,u)$.
By the already treated case $n=3$, it suffices to show that $(b_1,b_2,b_3)\in C(\tilde q,u)$.
This is equivalent to $ b\in C(q,u)$ which is in turn equivalent to our hypothesis $a\in C(p,e)$.
\end{proof}

\begin{rem}\label{hom-pol-rel-rem}
\begin{enumerate}[(a)]
\item
Let $p\in\R[x]$ be hyperbolic in direction $e$. Theorem \ref{relaxationsharpening} means that $v^TM_{p,e}(a)v\ge0$ for all $a\in C(p,e)$ and
$v\in\R^n$. For $v=e$, this means by Proposition \ref{trlin}(b) just that each element of the hyperbolicity cone $C(p,e)$ has nonnegative trace (with respect to $p$ in direction
$e$) which is clear by Definition \ref{eigtr} since it has even all eigenvalues nonnegative.
\item Let $p\in\R[x]$ be a polynomial of degree at most $d$ and denote by
\[q:=x_0^dp\left(\frac x{x_0}\right)\in\R[x_0,x]\] its degree $d$ homogenization.
Inspecting the proof of Proposition \ref{trlin} (and using variables $x_0,\ldots,x_n$ instead of $x_1,\ldots,x_n$,
the following enlightening interpretation of the polyhedron $P_d(p)$ defined in Definition \ref{halfspace} becomes now obvious:
Its elements are those $b\in\R^n$ such that $(1,b)\in\R^{n+1}$ has nonnegative trace with respect to the hyperbolic polynomial $q$
in direction of the first unit vector (confer Proposition \ref{hbrz}). With this in mind,
Remark \ref{polyhedral-relaxation-remark}(d) can now be read as an instance of the fact that an element has nonnegative trace if all its
eigenvalues are nonnegative (with respect to an hyperbolic polynomial and an hyperbolicity direction). For the same reason,
Item (a) of this remark can be seen as a generalization of Remark \ref{polyhedral-relaxation-remark}(d).
\end{enumerate}
\end{rem}


\section{The determinant of the general symmetric matrix}

In this section, we fix $d\in\N_0$, set $n:=d+\frac{d^2-d}2=\frac{d(d+1)}2$,
\[\La:=\{(i,j)\in\{1,\ldots,d\}^2\mid i\le j\},\]
choose a bijection $\rh\colon\La\to\{1,\ldots,n\}$ and set $x_{ij}:=x_{\rh(i,j)}$ for $(i,j)\in\La$ so that
$\R[x]=\R[x_{ij}\mid1\le i\le j\le n]$.

Given any vector $a$ of length $n$, we write in the following
\[[a]:=\begin{pmatrix}
a_{\rh(1,1)}&a_{\rh(1,2)}&\ldots&a_{\rh(1,d)}\\a_{\rh(1,2)}&a_{\rh(2,2)}&\ldots&a_{\rh(2,d)}\\\vdots&\vdots&&\vdots\\\vdots&\vdots&&\vdots\\
a_{\rh(1,d)}&a_{\rh(2,d)}&\ldots&a_{\rh(d,d)}\end{pmatrix}
\]
for the symmetric $d\times d$ matrix whose upper triangular part contains the entries of the ``long vector'' $a$
(always with respect to the order prespecified by $\rh$). Moreover, if $A=[a]$, then we write $a=\vec A$, i.e.,
$\vec a$ is a ``long vector'' that stores the entries in the upper triangular part of $A$.
In the following we often identify $\R^n$ with the real vector space of symmetric $d\times d$ matrices by means of the vector space isomorphism
$a\mapsto[a]$.

\begin{df}
We call
\[X:=[x]=\begin{pmatrix}
x_{11}&x_{12}&\ldots&x_{1d}\\x_{12}&x_{22}&\ldots&x_{2d}\\\vdots&\vdots&&\vdots\\\vdots&\vdots&&\vdots\\x_{1d}&x_{2d}&\ldots&x_{dd}\end{pmatrix}\in
\R[x]^{d\times d}\]
the \emph{general symmetric matrix} of size $d$.
\end{df}

\begin{pro}
The determinant $\det X$ of the general symmetric matrix $X$ of size $d$ is a homogeneous polynomial of degree $d$ that is hyperbolic with respect to
the identity matrix $I_d$. We have
\[C(\det X,I_d)=S(\det X,I_d)=\{A\in\R^{d\times d}\mid A\succeq0\}.\]
\end{pro}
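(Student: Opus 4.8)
The plan is to verify the three claims — the structural statement about $\det X$, and the two set equalities — in order, leaning heavily on the machinery already assembled in the paper.

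First I would establish that $p:=\det X$ is homogeneous of degree $d$ and hyperbolic with respect to $I_d$. Homogeneity of degree $d$ is clear since each entry of $X$ is linear and the determinant of a $d\times d$ matrix is a sum of products of $d$ entries. For hyperbolicity, I would write $X=\sum_{(i,j)\in\La}x_{ij}E_{ij}$ where $E_{ij}$ is the symmetric matrix with ones in positions $(i,j)$ and $(j,i)$ (and $E_{ii}$ has a single one on the diagonal); these are real symmetric, hence hermitian, and the combination corresponding to $I_d$ (i.e. taking the coordinates of $\vec{I_d}$) is exactly $I_d$, which is positive definite. Thus Proposition \ref{detexamplehyperbolic} applies directly (with $e=\vec{I_d}$, which is nonzero) and shows $p$ is hyperbolic in direction $I_d$ of degree $d$. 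Under the identification $\R^n\cong\{\text{symmetric }d\times d\text{ matrices}\}$, the combination $\sum a_{ij}E_{ij}$ is just $[a]=A$.

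Next I would compute $C(\det X, I_d)$. Apply Proposition \ref{hbdet} with $A_k$ the matrices $E_{ij}$ and $e$ the coordinate vector of $I_d$: since $\sum e_{ij}E_{ij}=I_d\succ0$, the proposition gives
\[
C(\det X,I_d)=\{a\in\R^n\mid a_1A_1+\ldots+a_nA_n\succeq0\}=\{A\in\R^{d\times d}\mid A\succeq0\}
\]
under the identification. That is the easy half.

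The main work, and the main obstacle, is the equality $C(\det X,I_d)=S(\det X,I_d)$; since Theorem \ref{relaxationsharpening} already gives $C(\det X,I_d)\subseteq S(\det X,I_d)$, it suffices to prove the reverse inclusion, and for this I would invoke Proposition \ref{hbspecdet}(c). The hypotheses match: the $A_i=E_{ij}$ are hermitian, $e=\vec{I_d}$ satisfies $\sum e_i A_i=I_d$, and
\[
U=\{v_1A_1+\ldots+v_nA_n\mid v_i\in\R\}=\{A\in\R^{d\times d}\mid A\text{ symmetric}\}
\]
since the $E_{ij}$ span all real symmetric matrices. By Remark \ref{perfectex}(a), the set of all real symmetric $d\times d$ matrices is perfect, so $U$ is perfect, and Proposition \ref{hbspecdet}(c) yields $C(\det X,I_d)=S(\det X,I_d)$. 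Combining with the computation of $C(\det X,I_d)$ above gives $C(\det X,I_d)=S(\det X,I_d)=\{A\in\R^{d\times d}\mid A\succeq0\}$, completing the proof. The only point requiring a little care is keeping the identification $\R^n\leftrightarrow$ symmetric matrices consistent throughout, and checking that the perfectness of the symmetric matrices (asserted as an exercise in Remark \ref{perfectex}) is indeed available — but since that remark is stated in the paper, it may be assumed.
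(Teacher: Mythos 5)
Your proposal is correct and follows essentially the same route as the paper: express $\det X$ as $\det(x_1A_1+\ldots+x_nA_n)$ with the standard basis of the symmetric matrices, use Proposition \ref{hbdet} for $C(\det X,I_d)=\{A\succeq0\}$, and invoke perfectness of the space of real symmetric matrices (Remark \ref{perfectex}(a)) together with Proposition \ref{hbspecdet}(c) for $C(\det X,I_d)=S(\det X,I_d)$. Your explicit appeal to Proposition \ref{detexamplehyperbolic} for hyperbolicity is a harmless addition to what the paper leaves implicit in the determinantal representation.
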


\begin{proof}
It is clear that $\det X$ is homogeneous of degree $d$.
For $(i,j)\in\La$, denote by $A_{ij}$ the unique symmetric matrix whose upper triangular part has zeros everywhere except for a one entry at position $(i,j)$.
Then \[\det X=\det(x_1A_1+\ldots+x_nA_n)\]
and $A_1,\ldots,A_n$ form a basis of the space of symmetric matrices which is perfect by Remark \ref{perfectex}(a).
Proposition \ref{hbspecdet}(c) now says that $C(\det X,I_d)=S(\det X,I_d)$.
Moreover, it follows easily from Proposition \ref{hbdet} that
\[C(\det X,I_d)=\{A\in\R^{d\times d}\mid A\succeq0\}.\]
\end{proof}

\subsection{Saunderson's representation of the derived cone}

\begin{thm}
Denote the standard unit vectors in $\R^n$ by $u_1,\ldots,u_n$ and set $p:=\det X\in\R[x]$.
As in Definition \ref{hbdfpencil}, let $U$ be an orthogonal matrix in $\R^{n\times n}$ such that $U\vec I_d=\|\vec I_d\|u_1$,
consider $q:=p(U^Tx)$ and the real zero polynomial $r:=q(1,x_2,\ldots,x_n)\in\R[x_2,\ldots,x_n]$. Consider the matrices
$B_i:=[U^Tu_i]\in\R^{d\times d}$ for $i\in\{1,\ldots,n\}$.
Then the following hold:
\begin{enumerate}[(a)]
\item \[M_{r,d}^*(Ux)=d\sqrt d\begin{pmatrix}\tr(B_1XB_1)&\ldots&\tr(B_1XB_n)\\\vdots&&\vdots\\\tr(B_nXB_1)&\ldots&\tr(B_nXB_n)\end{pmatrix}\in\R[x]^{n\times n}\]
\item Suppose $d\ge1$ so that $n\ge1$ and consider
the pencil $N$ that arises from $M_{r,d}^*(Ux)\in\R[x]^{n\times n}$ by deleting the first row and the first column.
The matrices $B_2,\ldots,B_n$ form a basis of the vector space of symmetric trace zero matrices in $\R^{d\times d}$
so that $N$ is by (a) essentially the pencil for which Saunderson \cite[Theorem 2]{sau} has shown that
the linear matrix inequality $N(x)\succeq0$ defines the ``first derivative relaxation of the cone of psd matrices'', i.e.,
\[\{[a]\mid a\in\R^n,N(a)\succeq0\}=C(D_{I_n}\det X,I_n)\]
where $D_{I_n}\det X$ is the directional derivative of $\det X$ in direction $\vec I_n$ which is hyperbolic in direction $I_d$ by
Proposition \ref{trlin}(a).
\end{enumerate}
\end{thm}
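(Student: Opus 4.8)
The plan is to establish (a) by the standard device of evaluating the quadratic form $v\mapsto v^T(\cdot)v$ on both sides, computing the left hand side via Lemma~\ref{homogeneouspencileval} and the cubic trace formula Corollary~\ref{cubictraces}, and then to read off (b) from (a) together with Saunderson's theorem. The first step is to unwind $q$ and $r$ determinantally. As in the proof of the previous proposition, $X=[x]=x_1A_1+\ldots+x_nA_n$ with the $A_k$ the elementary symmetric matrices, so $p=\det(x_1A_1+\ldots+x_nA_n)$ and, since $[\cdot]$ is linear, $q=p(U^Tx)=\det([U^Tx])=\det(x_1B_1+\ldots+x_nB_n)$. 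From $U\vec I_d=\|\vec I_d\|u_1=\sqrt d\,u_1$ one gets $U^Tu_1=\tfrac1{\sqrt d}\vec I_d$, hence $B_1=\tfrac1{\sqrt d}I_d$. I would then set $C_i:=\sqrt d\,B_i$, so that $C_1=I_d$, the $C_i\in\R^{d\times d}$ are symmetric, and $r=q(1,x_2,\ldots,x_n)=d^{-d/2}\tilde r$ with $\tilde r:=\det(I_d+x_2C_2+\ldots+x_nC_n)$. Since multiplying a power series by a nonzero constant leaves the associated linear form unchanged (the normalization by $p(0)$ in Definition~\ref{dfriesz} absorbs it), $L_{r,d}=L_{\tilde r,d}$ and therefore $M_{r,d}^*=M_{\tilde r,d}^*$.

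Next, fixing $a,v\in\R^n$ and writing $c:=Ua$, Lemma~\ref{homogeneouspencileval} gives (with the convention $x_1:=1$)
\[v^TM_{r,d}^*(c)\,v=L_{\tilde r,d}\Big(\big(\textstyle\sum_{i=1}^n v_ix_i\big)^2\big(\sum_{i=1}^n c_ix_i\big)\Big).\]
Expanding the argument into monomials of degree at most three and applying Corollary~\ref{cubictraces} to $\tilde r$ — under the conventions $x_1=1$, $C_1=I_d$ one has $L_{\tilde r,d}(x_ix_jx_k)=\tr(C_iC_jC_k)$ for all $i,j,k\in\{1,\ldots,n\}$, and this value is symmetric in $i,j,k$ since the $C_i$ are real symmetric — the right hand side collapses to $\tr\big((\sum_i v_iC_i)^2(\sum_i c_iC_i)\big)$. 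Using $\sum_i v_iC_i=\sqrt d\,[U^Tv]$ and $\sum_i c_iC_i=\sqrt d\,[U^TUa]=\sqrt d\,[a]$, this equals $d\sqrt d\,\tr([U^Tv]^2[a])=d\sqrt d\sum_{i,j}v_iv_j\tr(B_i[a]B_j)$. As both $M_{r,d}^*(Ua)$ and $(\tr(B_i[a]B_j))_{ij}$ are symmetric, equality of these quadratic forms for all $v$ forces equality of the matrices for every $a\in\R^n$; since both sides are linear matrix polynomials in $x$, comparing values on $\R^n$ yields (a).

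For (b) I would first note $\tr[a]=(\vec I_d)^Ta$ for all $a$, whence $\tr B_i=(\vec I_d)^TU^Tu_i=(U\vec I_d)^Tu_i=\sqrt d\,u_1^Tu_i=\sqrt d\,\delta_{1i}$; so $B_2,\ldots,B_n$ are trace zero, and since $B_1,\ldots,B_n$ is the image under the linear isomorphism $[\cdot]$ of the basis $U^Tu_1,\ldots,U^Tu_n$ of $\R^n$, the $n-1$ matrices $B_2,\ldots,B_n$ form a basis of the $(n-1)$-dimensional space of symmetric trace-zero $d\times d$ matrices. Deleting the first row and column of $M_{r,d}^*(Ux)$ and invoking (a) gives $N=d\sqrt d\,(\tr(B_iXB_j))_{i,j=2}^n$, which up to the positive factor $d\sqrt d$ is precisely the pencil to which Saunderson's Theorem~2 applies for this basis; hence $N(a)\succeq0\iff(\tr(B_i[a]B_j))_{i,j\ge2}\succeq0$, and Saunderson's result identifies $\{[a]\mid a\in\R^n,\ N(a)\succeq0\}$ with the first derivative relaxation of the psd cone, i.e.\ with $C(D_{\vec I_d}\det X,I_d)$, where $D_{\vec I_d}\det X$ is hyperbolic in direction $I_d$ by Proposition~\ref{trlin}(a) and its hyperbolicity cone contains $C(\det X,I_d)$.

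I do not expect any genuine obstacle: the whole content of (a) is the cubic trace formula Corollary~\ref{cubictraces}, and (b) is a translation into Saunderson's language. The only places requiring care are that $B_1=\tfrac1{\sqrt d}I_d$ rather than $I_d$ — which is why the factors of $\sqrt d$ accumulate to the $d\sqrt d$ in front and why one must pass through the rescaled polynomial $\tilde r$, using $L_{r,d}=L_{\tilde r,d}$, before applying Corollary~\ref{cubictraces} — and the routine check that $\tr(C_iC_jC_k)$ is symmetric in $i,j,k$ so that the degree-three expansion under $L_{\tilde r,d}$ behaves as claimed.
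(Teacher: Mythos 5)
Your proof is correct and essentially the paper's own argument: the same ingredients drive both, namely $q=\det(x_1B_1+\cdots+x_nB_n)$, $B_1=\tfrac1{\sqrt d}I_d$, the rescaling that makes Corollary \ref{cubictraces} applicable (harmless because $L$ is insensitive to scaling by the normalization in Definition \ref{dfriesz}), the identity $\sum_i c_iB_i=[U^Tc]$, and for (b) linear independence plus trace zero plus a dimension count before citing Saunderson. The only difference is organizational: you verify (a) by evaluating quadratic forms via Lemma \ref{homogeneouspencileval} and polarizing, whereas the paper reads off the entries of $M_{r,d}^*(Ux)$ directly from Definition \ref{dfpencil} using $\sum_k(u_k^TUx)B_k=X$.
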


\begin{proof}
First note that
\begin{multline}\tag{$*$}
\sum_{i=1}^n(u_i^TUx)B_i=\sum_{i=1}^n(u_i^TUx)[U^Tu_i]=\left[\sum_{i=1}^n(u_i^TUx)U^Tu_i\right]\\
=\left[U^T\sum_{i=1}^n(u_i^TUx)u_i\right]=\left[U^TUx\right]=[x]=X.
\end{multline}
Substituting here $x$ by $U^Tx$, we get from this
\begin{equation}\tag{$**$}
\sum_{i=1}^nx_iB_i=[U^Tx].
\end{equation}

\medskip
(a) \quad We have $p=\det X=\det([x_1u_1+\ldots+x_nu_n])=\det([I_nx])$ where $x$ is the column vector of variables $x_i$
and thus
\begin{align*}
q&=p(U^Tx)=\det([U^Tx])\overset{(**)}=\det(x_1B_1+\ldots+x_nB_n).
\end{align*}
Consequently, $r=\det(B_1+x_2B_2+\ldots+x_nB_n)$. Because of $B_1=[U^Tu_1]=\frac{I_d}{\|\vec I_d\|}=\frac{I_d}{\sqrt d}$, we have
$(\sqrt d)^dr=\det(I_d+x_2\sqrt dB_2+\ldots+x_n\sqrt dB_n)$.
Now Corollary \ref{cubictraces} together with $\sqrt dB_1=I_d$ implies that $L_{r,d}(1)=d\sqrt d\tr(B_1^3)$, $L_{r,d}(x_i)=d\sqrt d\tr(B_1^2B_i)$,
$L_{r,d}(x_ix_j)=d\sqrt d\tr(B_1B_iB_j)$ and $L_{r,d}(x_ix_jx_k)=d\sqrt d\tr(B_iB_jB_k)$ for all $i,j,k\in\{1,\ldots,n\}$.
By Definition \ref{dfpencil}(a), the entry in row $i$ and column $j$ of $M_{r,d}^*(Ux)$ is thus
\[d\sqrt d\sum_{k=1}^n(u_k^TUx)\tr(B_iB_kB_j)\overset{(*)}=d\sqrt d\tr(B_iXB_j)\]
for all $i,j\in\{1,\ldots,d\}$.

\medskip
(b) \quad Since $U^T$ is invertible, the vectors $\vec B_1,\ldots,\vec B_{n-1}$ are linearly independent in $\R^n$. Since $U^T$ is orthogonal, they are moreover
orthogonal to $U^T\vec u_1=\vec I_d$ with respect to the standard scalar product on $\R^n$. Summing up,
$B_1,\ldots,B_{n-1}$ are $n-1$ linearly independent symmetric matrices of trace zero and thus form a basis of the
vector space of symmetric trace zero matrices in $\R^{d\times d}$ which has dimension $n-1$.
\end{proof}

\section{The generalized Lax conjecture}\label{sec:glc}

The generalized Lax conjecture (GLC) has been stated by Helton and Vinnikov \cite[Section 6.1]{hv},
see also \cite[Page 63]{ren}.

\begin{conjecture}[generalized Lax conjecture, GLC]\label{glc} Each rigidly convex set is a spectrahedron.
\end{conjecture}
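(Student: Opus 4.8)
Since the statement is the Generalized Lax Conjecture, what follows is an attack strategy rather than a finished proof; it is the line of approach towards which the preceding sections are built.

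The starting point is that every rigidly convex set is \emph{already} an infinite intersection of small spectrahedra: by Corollary \ref{intersectioncor},
\[
C(p)=\bigcap_{a\in C(p)\setminus Z(p)}\bigl(S(p(x+a))+a\bigr),
\]
each factor a spectrahedron of size $n+1$. Everything hinges on compressing this intersection into a single linear matrix inequality. The plan splits into two stages: (i) realize an arbitrary \emph{finite} subintersection as one spectrahedron, by lifting into extra variables; (ii) let the finite families exhaust everything and pass to the limit. Stage (i) is the ``wrapping'' result announced in the abstract, and is the part one can realistically hope to carry out under the amalgamation conjecture.

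For stage (i) I would argue as follows. Fix finitely many $2$-planes $E_1,\dots,E_k\subseteq\R^n$. On each $E_j$ the restriction $p|_{E_j}$ is a bivariate real zero polynomial, so by Helton--Vinnikov (Corollary \ref{vcor}) it carries a hermitian determinantal representation of size $\deg(p|_{E_j})$, and $C(p)\cap E_j=C(p|_{E_j})$. Now introduce, for each $j$, a fresh block of variables $z^{(j)}$ together with a real zero polynomial $q_j$ in $x$ and $z^{(j)}$ that specializes to $p$ when $z^{(j)}=0$ and that encodes the good bivariate representation along the slice attached to $E_j$. Amalgamating $p$ with $q_1,\dots,q_k$ successively along their common specialization $p$ (this is where the amalgamation conjecture is invoked, $k$ times, in fixed degree) produces a single real zero polynomial $P\in\R[x,z^{(1)},\dots,z^{(k)}]$ with $P(x,0,\dots,0)=p$. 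Then $S(P)$ is a spectrahedron in the lifted space; by Lemma \ref{restriction}(c) its slice $T:=\{a\in\R^n\mid(a,0,\dots,0)\in S(P)\}$ is a spectrahedron with $C(p)\subseteq T$ (Theorem \ref{relaxation}), and one checks, using Lemma \ref{restriction}(b) and the exactness of the relaxation along directions carrying a size-$\deg$ determinantal representation, that $T$ coincides with $C(p)$ on each $E_j$. Stage (ii) would then take the $E_j$ from a countable family dense in the Grassmannian of $2$-planes, obtain nested spectrahedra $T_1\supseteq T_2\supseteq\dots\supseteq C(p)$ agreeing with $C(p)$ on more and more planes, conclude $\bigcap_m T_m=C(p)$ by a density and limiting argument, and finally compress this countable intersection into one bounded-size linear matrix inequality.

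The genuine obstacle lies in this last compression, and secondarily in the amalgamation conjecture itself. The amalgamation conjecture is open: the excerpt proves it only when the two polynomials share no variables, when both are univariate, and when both are quadratic, by three unrelated methods (stability preservers, Helton--Vinnikov, psd matrix completion), with no construction in sight in general. But even granting it, the size of $M_P$ grows at least linearly in the number $k$ of planes, in accordance with the complexity lower bounds of \cite{rrsw}; hence there is no uniform bound on the matrix sizes of the $T_m$, and no soft (compactness) argument can collapse $\bigcap_m T_m$ to a single spectrahedron. Producing, for a given real zero polynomial, \emph{one} certificate of bounded size rather than an unbounded family is exactly where a proof of GLC must inject a new idea, and is the step I expect to be the true bottleneck. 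A comparatively minor technical nuisance to watch along the way is that $\deg(p[a])$ can be strictly smaller than $\deg p$ (Remark \ref{degrem}(a)), so the passage through the transforms $p[a]$ and Lemma \ref{penciltransform}, and the bookkeeping of virtual degrees in $S_d(p)$ versus $S(p)$, must be done with care.
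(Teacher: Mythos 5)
What you have written is not a proof and cannot be one on the basis of this paper: the statement is the generalized Lax conjecture, which the paper leaves open. Judged as a reconstruction of the paper's program, your stage (i) is essentially the paper's own route: it is Lemma \ref{plane-x1-x2}, Lemma \ref{plane2dim} and Theorem \ref{wrapping}, where RZAC is invoked $k$ times (the paper does it by induction on $k$, amalgamating one plane at a time, rather than amalgamating $p$ with $q_1,\dots,q_k$ in one sweep, but that is the same idea), and the containment $C(p)\subseteq T$ and the agreement on the planes are obtained exactly as you say via Theorem \ref{relaxation} and Lemma \ref{restriction}. One point in your sketch is under-specified in a way that matters: agreement of $T$ with $C(p)$ on a plane $E_j$ does \emph{not} follow merely from the existence of a Helton--Vinnikov representation of $p|_{E_j}$; the paper's argument needs the amalgamation partner to be $\det(I_d+x_1A_1+x_2A_2+\sum_i z_iB_i)$ where the $B_i$ are chosen so that the span of $I_d,A_1,A_2,B_1,\dots,B_n$ is \emph{perfect} in the sense of Definition \ref{couple} (this is why the extra block of $\frac{d(d+1)}{2}-3$ variables appears), and exactness on the slice then comes from Proposition \ref{rcsspecdet}(c); the paper also uses the symmetric Theorem \ref{vinnikov} rather than the hermitian Corollary \ref{vcor} at this step so that the completion takes place inside the real symmetric matrices.

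Your stage (ii) is where the genuine gap lies, and you have named it correctly: the paper proves nothing beyond finitely many planes, the matrix size of the lifted pencil grows with the number of planes (consistently with the lower bounds of \cite{rrsw}), so the nested spectrahedra $T_m$ have no uniform size bound and there is no argument, soft or otherwise, that collapses $\bigcap_m T_m$ to a single linear matrix inequality; in addition RZAC itself is proved only in the three special cases of Theorem \ref{amalgamate}, none of which covers the use you need. So the proposal is a faithful and accurate account of the paper's strategy and of where it stops, but it does not establish the statement, and the paper does not either.
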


The following has been proven in \cite[Lemma 2.1]{hv} with heavy machinery from real algebraic geometry. We found an elementary approach which we present here.

\begin{lem}\label{sylvester}
Let $p,q\in\R[x]$ be non-constant real zero polynomials such that $C(p)=C(q)$. Then $p$ and $q$ have a common non-constant factor in $\R[x]$.
\end{lem}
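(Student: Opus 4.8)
The plan is to reduce the multivariate statement to a one-dimensional computation along rays, where real zero polynomials become univariate polynomials that split over $\R$, and then to argue via gcd's. First I would treat the trivial cases: if either $p$ or $q$ has a repeated irreducible factor or if they are already non-coprime we are done, so I may assume that $p$ and $q$ are squarefree and, aiming for a contradiction, that $\gcd(p,q)=1$ in the UFD $\R[x]$. Since $p$ and $q$ are real zero polynomials we have $p(0)\ne0\ne q(0)$ by Remark \ref{nonzeroatorigin}, and we may normalize $p(0)=q(0)=1$. The key geometric fact I would exploit is the description of $C(p)$ ray by ray: for each $a\in\R^n$, by Proposition \ref{splitrz} the univariate polynomial $p(ta)$ splits over $\R$, say $p(ta)=\prod_i(1-t/\lambda_i(a))$ with $\lambda_i(a)\in\R^\times$, and the ray $\R_{\ge0}a$ meets $C(p)$ in $[0,\rho_p(a))$ or $[0,\rho_p(a)]$ where $\rho_p(a)$ is the reciprocal of the largest positive root of $p(ta)$ (or $+\infty$ if there is none). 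The hypothesis $C(p)=C(q)$ then forces $\rho_p(a)=\rho_q(a)$ for every $a$; in other words, along every ray through the origin, $p(ta)$ and $q(ta)$ have \emph{the same smallest positive root} (with the convention that "no positive root" is shared).

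The heart of the argument is to promote this "same smallest positive root along every line through $0$" into an actual common factor in $\R[x]$. Here is how I would do it. Consider the resultant $R(x):=\operatorname{Res}_t(p(tx),\,q(tx))\in\R[x]$ (treating $t$ as the main variable, coefficients in $\R[x]$). Because $\gcd(p,q)=1$, $R$ is not the zero polynomial: evaluating at a generic $a$, $p(ta)$ and $q(ta)$ are coprime in $\R[t]$ (using that a common factor would, by homogenizing/dehomogenizing, produce a common factor of $p$ and $q$ — this is the one place I need to be a little careful, see below). Hence $R$ is a nonzero polynomial, so the set $\{a\in\R^n\mid R(a)\ne0\}$ is dense in $\R^n$. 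But for \emph{any} $a$ with $\rho_p(a)=\rho_q(a)<\infty$, the polynomials $p(ta)$ and $q(ta)$ share the real root $t=1/\rho_p(a)$, so $R(a)=0$; and for $a$ with $\rho_p(a)=\rho_q(a)=+\infty$, I would instead use a perturbation/limiting argument, or better, work with the \emph{largest} finite real root rather than the smallest positive one. The cleanest route is: since $C(p)=C(q)$ are convex bodies (Theorem \ref{rc-is-c}) with $0$ in their interior, their topological boundaries coincide, and for $a$ in a dense open subset of directions the boundary point $\rho(a)a$ is a smooth point lying on $Z(p)\cap Z(q)$; thus $p$ and $q$ both vanish on a full-dimensional (real, $(n-1)$-dimensional) subset of their common zero set, which by the real Nullstellensatz / dimension count means $p$ and $q$ share an irreducible factor that changes sign, hence a non-constant common factor in $\R[x]$. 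That contradicts $\gcd(p,q)=1$.

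To make this elementary (as the paper promises) without invoking heavy real algebraic geometry, I would phrase the last step using only the resultant: define $R(x)=\operatorname{Res}_t(p(tx),q(tx))$ as above, observe $R$ is a nonzero polynomial iff $p$ and $q$ are coprime, and then show $R$ vanishes on an open subset of $\R^n$ — namely the cone over the relative interior of $\partial C(p)=\partial C(q)$ restricted to directions where this boundary point is the smallest positive root of both. Since $C(p)$ is a convex body with nonempty interior, this cone has nonempty interior in $\R^n$, so $R\equiv0$, forcing $p,q$ non-coprime. The main obstacle I anticipate is exactly the bookkeeping around which root is "shared": the equality $C(p)=C(q)$ only directly gives agreement of the smallest positive roots, and I must rule out the degenerate possibility that along every direction in some open cone there is \emph{no} positive root at all while the polynomials still fail to share a root — handled by noting $C(p)$ is a bounded body or, if it is unbounded in some directions, restricting attention to the (open, nonempty) set of directions in which $\partial C(p)$ is actually reached, which exists since $C(p)\ne\R^n$ as $p$ is non-constant. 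Once that open set of "good directions" is in hand, the resultant argument closes the proof.
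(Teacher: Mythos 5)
Your plan is essentially the paper's proof: the resultant $R(x)=\operatorname{Res}_t(p(tx),q(tx))$ is exactly the determinant of the Sylvester-type matrix the paper builds from the homogeneous parts of $p$ and $q$, the geometric input (a common smallest positive root along every ray that leaves $C(p)=C(q)$) is the same, and the care point you flag --- turning a common factor in the variable $t$ into a non-constant common factor of $p$ and $q$ in $\R[x]$ --- is precisely the homogenization argument that closes the paper's proof. The only real difference is cosmetic: the paper shows the matrix is singular at \emph{every} $a\in\R^n$ (on lines where both polynomials are constant the first column vanishes), so it never needs your density/nonempty-interior argument for the set of directions that actually reach the boundary, and it extracts the common factor from a syzygy $fp^*+gq^*=0$ rather than invoking the general theory of resultants.
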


\begin{proof}
For the duration of this proof, we call a line $L$ through the origin in $\R^n$ \emph{exceptional} if one of the polynomials $p$ and $q$ is constant on $L$.
Then actually both are constant on $L$ since if one of them, say $p$, is constant on $L$, then $L\subseteq C(p)=C(q)$ and so
$q$, being a real zero polynomial, is also constant on $L$.
If $L$ is a non-exceptional line then $p$ and $q$ thus both vanish somewhere on it and because of $C(p)=C(q)$ they must even have a common
root on $L$.

Now write $p=\sum_{i=0}^dp_i$ and $q=\sum_{j=0}^eq_j$ with $d:=\deg p\ge1$ and $e:=\deg q\ge1$ where $p_i\in\R[x]$ is homogeneous of degree $i$ and
$q_j\in\R[x]$
is homogeneous of degree $j$ for all $i$ and $j$. Consider now a kind of Sylvester matrix, namely
\[
S:=\begin{pmatrix}
p_d&p_{d-1}& \hdotsfor{2} & p_0\\
&p_d&p_{d-1}& \hdotsfor{2} & p_0\\
&& \ddots &&&& \ddots \\
&&&p_d&p_{d-1}& \hdotsfor{2} & p_0\\
q_e&q_{e-1}& \hdotsfor{2} & q_0\\
&q_e&q_{e-1}& \hdotsfor{2} & q_0\\
&& \ddots &&&& \ddots \\
&&&q_e&q_{e-1}& \hdotsfor{2} & q_0\\
\end{pmatrix}\in\R[x]^{(d+e)\times(d+e)}
\]
whose first $e$ rows are consecutive shifts of $(p_d,\ldots,p_0)$ and whose last $d$ rows are consecutive shifts of $(q_e,\ldots,q_0)$
where the empty space is filled up with zeros.

We claim that $S(a)$ is singular for all $a\in\R^n$. This is clear for $a=0$ or if $a$ spans an exceptional line since then the first column of $S(a)$
is zero. To prove the claim, it suffices to consider the case where $a$ spans a non-exceptional line $L$. But on such a line $p$ and $q$ have a
root in common so that there exists $\la\in\R$ with $p(\la a)=q(\la a)=0$. Then $\sum_{i=0}^dp_i(a)\la^{i+k}=\la^kp(a)=0$ and
$q=\sum_{j=0}^eq_j(a)\la^{j+k}=\la^kq(a)=0$ for all $k\in\N_0$ so that the transpose of the non-zero vector
\[\begin{pmatrix}\la^{d+e-1}&\la^{d+e-2}&\hdots&\la^2&\la&1\end{pmatrix}\]
lies in the kernel of $S(a)$. In particular, $S(a)$ is again singular.

By the claim just proven, we have $\det(S(a))=0$ for all $a\in\R^n$. This implies the polynomial identity $\det S=0$. Hence $S$ is singular as a matrix
over the field $K(x)=K(x_1,\ldots,x_n)$ of rational functions in the variables $x_1,\ldots,x_n$. Over this field, there is thus a non-trivial
linear dependance of its rows. Clearing denominators, this means that there exist a non-zero vector
\[v:=\begin{pmatrix}f_1&\hdots&f_e&g_1&\hdots&g_{d}\end{pmatrix}^T\in\R[x]^{d+e}\] such that $v^TS=0$. Denoting by $x_0$
an additional variable, we multiply this from the right with the vector
\[w:=\begin{pmatrix}1&x_0&x_0^2&\hdots&x_0^{d+e-1}\end{pmatrix}^T\in\R[x_0]^{d+e}\]
and obtain
\[\underbrace{\sum_{i=1}^ef_ix_0^{i-1}}_{=:f\in\R[x_0,x]}p^*+\underbrace{\sum_{j=1}^dg_jx_0^{j-1}}_{=:g\in\R[x_0,x]}q^*=v^TSw=0\]
where $p^*$ and $q^*$ are the homogenizations of $p$ and $q$, respectively, as introduced in Definition \ref{defhomogeneous}.
For every polynomial $h\in\R[x_0,x]$ we denote by $\deg_{x_0}h$ its degree
with respect to $x_0$, i.e., the degree of $h$ when it is seen as a polynomial in $x_0$ with coefficients from $K[x]$.
We have $\deg_{x_0}g\le d-1<d=\deg_{x_0}p^*$ so that $p^*$ cannot divide $g$ in $\R[x_0,x]$.
Therefore, when we look at the prime factorization of
\[fp^*=-gq^*\]
in the factorial ring $\R[x_0,x]$, we find an irreducible factor $r$ of $p^*$ that divides $q^*$ in $\R[x_0,x]$.
Then $r(1,x)$ divides $q=q^*(1,x)$ in $\R[x]$. It remains to show that $r(1,x)$ cannot be constant. Indeed, the only way this could happen would be that
$x_0$ divides $r$ which is impossible since $r(0,x)$ divides $p^*(0,x)=p_d\ne0$.
\end{proof}

\begin{lem}\label{monicrep}
Let $A_0,A_1,\ldots,A_n\in\R^{d\times d}$ be symmetric such that the origin is an interior point of the spectrahedron
\[S:=\{a\in\R^n\mid A_0+a_1A_1+\ldots+a_nA_n\succeq0\}.\]
Then there exists an invertible matrix $Q\in\R^{d\times d}$, $e\in\{0,\ldots,d\}$ and symmetric matrices $B_1,\ldots,B_n\in\R^{e\times e}$ such that
\begin{align*}
Q^TA_0Q&=\begin{pmatrix}I_e&0\\0&0\end{pmatrix}\qquad\text{and}\\
Q^TA_iQ&=\begin{pmatrix}B_i&0\\0&0\end{pmatrix}\text{ for }i\in\{1,\ldots,n\}.
\end{align*}
Consequently,
\[S=\{a\in\R^n\mid I_e+a_1B_1+\ldots+a_nB_n\succeq0\}.\]
\end{lem}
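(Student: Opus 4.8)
The plan is to reduce everything to the single structural fact that $\ker A_0\subseteq\ker A_i$ for all $i$, which holds precisely because the origin is an \emph{interior} point of $S$. Granting that, one diagonalizes $A_0$ by a congruence that realizes the desired block form of $A_0$, and the kernel inclusion then forces each $A_i$ to inherit the same block pattern.

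First I would establish the kernel inclusion. Since $0$ lies in the interior of $S$, there is $\delta>0$ with $A_0+\sum_{i=1}^n t_iA_i\succeq0$ whenever $\|t\|<\delta$; taking $t=0$ gives in particular $A_0\succeq0$. Fix $i$ and $v\in\ker A_0$. For $|t|<\delta$ the matrix $M_t:=A_0+tA_i$ is psd and $v^TM_tv=t\,v^TA_iv$; letting $t$ run through small values of either sign and using $v^TM_tv\ge0$ forces $v^TA_iv=0$, hence $v^TM_tv=0$ for all small $t$. Using the elementary fact that a psd matrix $M$ with $v^TMv=0$ already annihilates $v$ (write $M=N^TN$, so $\|Nv\|=0$ and $Mv=N^TNv=0$), we get $M_tv=0$, i.e. $A_0v+tA_iv=0$; since $A_0v=0$ and $t\ne0$ may be chosen, $A_iv=0$. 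This proves $\ker A_0\subseteq\ker A_i$.

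Next I would choose the congruence. By the spectral theorem write $A_0=U\Lambda U^T$ with $U$ orthogonal and $\Lambda=\operatorname{diag}(\lambda_1,\dots,\lambda_e,0,\dots,0)$, $\lambda_1,\dots,\lambda_e>0$, where $e\in\{0,\dots,d\}$ is the rank of $A_0$. Let $D$ be the diagonal matrix with $\lambda_j^{-1/2}$ in position $j$ for $j\le e$ and $1$ in the remaining positions, and set $Q:=UD$, which is invertible. Then $Q^TA_0Q=D\Lambda D=\begin{pmatrix}I_e&0\\0&0\end{pmatrix}$. For $j>e$ the $j$-th column of $Q$ is $Ue_j$, an eigenvector of $A_0$ for the eigenvalue $0$, hence lies in $\ker A_0\subseteq\ker A_i$; therefore the last $d-e$ columns of $A_iQ$, and so of the symmetric matrix $Q^TA_iQ$, vanish, and by symmetry so do its last $d-e$ rows. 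Thus $Q^TA_iQ=\begin{pmatrix}B_i&0\\0&0\end{pmatrix}$ with $B_i\in\R^{e\times e}$ symmetric. The final ``consequently'' is then immediate: congruence by the invertible $Q$ preserves positive semidefiniteness, so $A_0+\sum_i a_iA_i\succeq0$ if and only if $Q^T(A_0+\sum_i a_iA_i)Q=\begin{pmatrix}I_e+\sum_i a_iB_i&0\\0&0\end{pmatrix}\succeq0$, i.e. if and only if $I_e+\sum_i a_iB_i\succeq0$. (The degenerate cases $e=0$ or $d=0$ are harmless since the paper admits empty matrices.)

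I do not anticipate a genuine obstacle here. The one place that needs a little care is the kernel inclusion: this is exactly where the interior hypothesis is used, and it is there that one upgrades the quadratic-form identity $v^TA_iv=0$ to the honest statement $A_iv=0$ via the psd fact mentioned above; everything after that is bookkeeping with a congruence.
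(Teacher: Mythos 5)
Your proof is correct and follows essentially the same route as the paper: a congruence normalizing $A_0$ to $\begin{pmatrix}I_e&0\\0&0\end{pmatrix}$, with the interior-point hypothesis (via $A_0+tA_i\succeq0$ for small $|t|$) forcing the complementary blocks of the $A_i$ to vanish. The only difference is organizational --- you establish the kernel inclusion $\ker A_0\subseteq\ker A_i$ before performing the congruence, which in particular carries out in full the step (the off-diagonal blocks $C_i=0$) that the paper dispatches as ``an easy exercise.''
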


\begin{proof}
Choose an orthogonal matrix $U\in\R^{d\times d}$ such that $U^TA_0U$ is diagonal. Then choose a permutation matrix $P\in\R^{d\times d}$ such
that the diagonal entries of (the diagonal) matrix $P^TU^TA_0UP$ are $\la_1,\ldots,\la_e$ followed by $d-e$ zeros. Since $0\in S$, we have 
$A_0\succeq0$ so that $\la_1,\ldots,\la_e$ are nonnegative. Let $D\in\R^{d\times d}$ be a diagonal matrix with diagonal entries 
$\sqrt{\la_1},\ldots,\sqrt{\la_e}$ followed by $d-e$ arbitrary non-zero entries and set $Q:=UPD^{-1}\in\R^{d\times d}$. Then $Q$ is invertible and
\[Q^TA_0Q=\begin{pmatrix}I_e&0\\0&0\end{pmatrix}.\]
For each $i\in\{1,\ldots,n\}$, $Q^TA_iQ$ is symmetric and can therefore be written as
\[Q^TA_iQ=\begin{pmatrix}B_i&C_i\\C_i^T&D_i\end{pmatrix}\]
with $B_i\in\R^{e\times e}$, $C_i\in\R^{e\times(d-e)}$ and $D_i\in\R^{(d-e)\times(d-e)}$ such that $B_i$ and $D_i$ are symmetric.
Since $0$ is in the interior of $S$, there exists $\ep>0$ such that for all $\la\in\R$ with $-\ep<\la<\ep$ and all $i\in\{1,\ldots,n\}$, we have
$A_0+\la A_i\succeq0$ and thus $Q^TA_0Q+\la Q^TA_iQ\succeq0$. It follows that $D_i\succeq0$ and $-D_i\succeq0$ and thus $D_i=0$
for all $i\in\{1,\ldots,n\}$. Now it is an easy exercise to show that $C_i=0$ for all $i\in\{1,\ldots,n\}$.
\end{proof}

\begin{pro}\label{geoalg}
Let $p\in\R[x]$ be a real zero polynomial. Then the following are equivalent:
\begin{enumerate}[(a)]
\item For each irreducible factor $f$ of $p$ in $\R[x]$, the rigidly convex set $C(f)\subseteq\R^n$ is a spectrahedron.
\item There exist $q\in\R[x]$, $d\in\N_0$ and symmetric matrices $A_1,\ldots,A_n\in\R^{d\times d}$ such that
\[pq=\det(I_d+x_1A_1+\ldots+x_nA_n)\]
and $C(p)\subseteq C(q)$.
\end{enumerate}
In this case, $C(p)$ is a spectrahedron.
\end{pro}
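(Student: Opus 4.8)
The plan is to establish the two implications (a) $\Rightarrow$ (b) and (b) $\Rightarrow$ (a), and to read the last assertion off the easy half of the latter. Throughout I would reduce to $p(0)=1$ (replacing $p$ by $p/p(0)$ changes neither $C(p)$ nor the irreducible factors up to scaling) and, writing $p=\prod_{i=1}^kf_i^{m_i}$ with the $f_i$ the distinct irreducible factors, rescale so that $f_i(0)=1$; this is consistent since then $\prod_if_i(0)^{m_i}=p(0)=1$. I will use freely the following elementary facts: a divisor $h$ of a real zero polynomial is again a real zero polynomial (it has $h(0)\ne0$, and for each $a$ the univariate $h(ta)$ divides the split polynomial $p(ta)$ and has nonzero constant term, hence splits, so Proposition \ref{splitrz} applies); $C(p)=\bigcap_iC(f_i)$ and $C(f^m)=C(f)$; the origin lies in the interior of $C(f)$ for every real zero factor $f$ (continuity of $f$ at $0$, using $f(0)=1$); and a non-constant real zero polynomial $f$ satisfies $C(f)\ne\R^n$ (otherwise $f$ would vanish nowhere on any line through the origin, hence, being split on each such line, be constant on it, hence constant).

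For (a) $\Rightarrow$ (b): if $p$ is constant take $q=1$, $d=0$. Otherwise, for each $i$ apply (a) and Lemma \ref{monicrep} (the origin is interior to the spectrahedron $C(f_i)$) to get symmetric $B_{i1},\ldots,B_{in}\in\R^{e_i\times e_i}$ with $C(f_i)=\{a\mid I_{e_i}+a_1B_{i1}+\ldots+a_nB_{in}\succeq0\}$. Set $g_i:=\det(I_{e_i}+x_1B_{i1}+\ldots+x_nB_{in})$; then $g_i$ is a real zero polynomial (Proposition \ref{detexample}) with $C(g_i)=C(f_i)$ (Proposition \ref{rcsdet}), and $g_i$ is non-constant because $C(f_i)\ne\R^n$. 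As $f_i$ and $g_i$ are non-constant real zero polynomials with the same rigidly convex set, Lemma \ref{sylvester} yields a common non-constant factor, which must be $f_i$ up to a scalar since $f_i$ is irreducible; hence $g_i=f_ih_i$ for some $h_i\in\R[x]$. Then $h_i$ is a real zero polynomial and $C(f_i)=C(g_i)=C(f_i)\cap C(h_i)$, so $C(f_i)\subseteq C(h_i)$. Now put $q:=\prod_{i=1}^kh_i^{m_i}$, so that $pq=\prod_ig_i^{m_i}=\det(I_d+x_1A_1+\ldots+x_nA_n)$, where $d=\sum_im_ie_i$ and $A_j$ is the symmetric block-diagonal matrix with $m_i$ diagonal blocks equal to $B_{ij}$ for each $i$; finally $C(q)=\bigcap_iC(h_i)\supseteq\bigcap_iC(f_i)=C(p)$, as required.

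For the last assertion, and simultaneously the easy part of (b) $\Rightarrow$ (a): assuming (b), the polynomial $P:=pq=\det(I_d+x_1A_1+\ldots+x_nA_n)$ is a real zero polynomial (Proposition \ref{detexample}) whose rigidly convex set $C(P)=\{a\mid I_d+a_1A_1+\ldots+a_nA_n\succeq0\}$ (Proposition \ref{rcsdet}) is a spectrahedron. Since $C(p)\subseteq C(q)$ we have $C(P)=C(p)\cap C(q)=C(p)$, so $C(p)$ is a spectrahedron. In particular, if $p$ has only one irreducible factor $f$ we are already done with (b) $\Rightarrow$ (a), since then $C(f)=C(p)$. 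In general it is enough, by this observation applied to an irreducible factor $f$ in the role of $p$, to show that each irreducible factor $f$ of $p$ itself satisfies (b): one must produce $q_f\in\R[x]$ and symmetric $B_1,\ldots,B_n$ with $fq_f=\det(I_{d_f}+x_1B_1+\ldots+x_nB_n)$ and $C(f)\subseteq C(q_f)$.

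This last step is where I expect the real difficulty to lie. The pencil $M(x)=I_d+\sum_jx_jA_j$ representing $pq$ need not be block-diagonalizable along the (reducible) locus $\{pq=0\}$, so one cannot simply ``split off'' $f$; instead one has to restrict $M$ to the irreducible hypersurface $V(f)$ — taking the kernel sheaf of $M$ there, whose generic rank equals the multiplicity of $f$ in $\det M$, to build a symmetric pencil whose determinant is a multiple of $f$ — and then argue that the definiteness of $M$ on $C(P)$ forces the resulting cofactor $q_f$ to satisfy $C(f)\subseteq C(q_f)$, with Lemma \ref{sylvester} used to rule out an obstructing common factor of $f$ and $q_f$. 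Equivalently, one may invoke \cite[Proposition 8]{kum1}, which furnishes exactly such a symmetric representation of a multiple of an irreducible factor, and combine it with Lemma \ref{sylvester} to secure the containment. Once $f$ satisfies (b), the easy part gives that $C(f)$ is a spectrahedron, completing the proof.
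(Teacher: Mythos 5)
Your proof of (a)$\implies$(b), together with your derivation of the final claim from $C(p)=C(p)\cap C(q)=C(pq)$ via Propositions \ref{detexample} and \ref{rcsdet}, is correct and is essentially the paper's argument: normalize, apply Lemma \ref{monicrep} to each irreducible factor (origin interior), pass to the determinant of the resulting monic pencil, use Lemma \ref{sylvester} and irreducibility to conclude $f_i\mid g_i$, and assemble everything with block-diagonal matrices. You in fact spell out the reduction to irreducible factors that the paper dismisses with the phrase ``using block diagonal matrices'', and your auxiliary observations (divisors of real zero polynomials are real zero, $C(f_i)\ne\R^n$ for non-constant $f_i$, hence $g_i$ non-constant) are all sound.

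The gap is in (b)$\implies$(a) when $p$ has at least two distinct irreducible factors. You correctly reduce it to showing that each irreducible factor $f$ of $p$ itself satisfies (b), but you do not prove this: the obvious cofactor $q_f:=(p/f)q$ satisfies $fq_f=\det(I_d+x_1A_1+\ldots+x_nA_n)$ but the required containment $C(f)\subseteq C(q_f)$ fails already for a product of two distinct linear polynomials, and your sketched alternatives do not close the step -- the assertion that the generic corank of the pencil along $V(f)$ equals the multiplicity of $f$ in the determinant is unjustified (only the inequality ``corank $\le$ multiplicity'' is automatic), the passage from such a restricted pencil to a \emph{definite} symmetric representation of a multiple of $f$ with the containment $C(f)\subseteq C(q_f)$ is not carried out, and \cite[Proposition 8]{kum1} is used in this paper only for a specific irreducible cubic in three variables, not as a general device of this kind (if it were, it would essentially settle GLC). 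For comparison, the paper's own proof of this direction consists of the one-line argument you already gave, preceded by the blanket claim that one ``reduces easily to the case where $p$ is irreducible'' by block-diagonal matrices; that reduction is genuinely easy only for (a)$\implies$(b), and in the direction (b)$\implies$(a) it is exactly the statement you were unable to establish. So you have identified a real subtlety that the paper's terse proof glosses over, but as written your proposal does not contain a complete proof of (b)$\implies$(a) in the reducible case.
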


\begin{proof}
$C(p)$ is of course the intersection over the $C(f)$ where $f$ runs over its irreducible factors. Using block diagonal matrices, one reduces therefore
easily to the case where $p$ is irreducible.

To show (a)$\implies$(b), we suppose that (a) holds. By Lemma \ref{monicrep}, we find $d\in\N_0$ and symmetric matrices $A_1,\ldots,A_n\in\R^{d\times d}$ such that \[C(p)=\{a\in\R^n\mid I_d+a_1A_1+\ldots+a_nA_n\succeq0\}.\]
Setting $r:=\det(I_d+x_1A_1+\ldots+x_nA_n)$, we have $C(r)=C(p)$ by Proposition \ref{rcsdet}. By the irreducibility of $p$ and
Lemma \ref{sylvester}, this implies that $p$ divides $r$ in $\R[x]$. Choose $q\in\R[x]$ such that $pq=r$. Since $r$ is a real zero polynomial
by Proposition \ref{detexample}, $q$ is also a real zero polynomial. Moreover it is obvious that $C(p)\cap C(q)=C(r)$. Together with
$C(r)=C(p)$, we obtain $C(p)\subseteq C(q)$.

To prove (b)$\implies$(a), let $q\in\R[x]$, $d\in\N_0$ and hermitian
matrices $A_1,\ldots,A_n\in\C^{d\times d}$ be given such that $pq=r:=\det(I_d+x_1A_1+\ldots+x_nA_n)$
and $C(p)\subseteq C(q)$. Then $C(p)=C(p)\cap C(q)=C(r)$ is a spectrahedron by Proposition \ref{detexample}
\end{proof}

\begin{rem}\label{lineality}
If $C\subseteq\R^n$ is a cone, then $-C:=\{-a\mid a\in C\}$ is again a cone and $C\cap-C$ is a subspace of $\R^n$ which is called the
\emph{lineality space} of $C$.
\end{rem}

The lemma can also easily be deduced from \cite[Fact 2.9]{bgls} proved by G\aa rding \cite[Theorem 3]{gar}.
Here we give a very short proof based on the hermitian version of the result of Helton and Vinnikov.

\begin{lem}\label{independentlineality}
Let $p\in\R[x]$ be hyperbolic in direction of the first unit vector $u$ of $\R^n$. Suppose that $m\in\{1,\ldots,n\}$ and
$C(p,u)\cap-C(p,u)=\{0\}\times\R^{n-m}\subseteq\R^n$. Then
$p\in\R[x_1,\ldots,x_m]$.
\end{lem}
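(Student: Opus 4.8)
The plan is to read off from the shape of the lineality space that $p$ is invariant under translation in each coordinate direction $u_{m+1},\dots,u_n$ (the standard unit vectors of $\R^n$ with index $>m$); since these directions together span $\{0\}\times\R^{n-m}$, invariance under all of them says exactly that $p$ involves only $x_1,\dots,x_m$. As a harmless preliminary I would note that we may assume $n\ge 3$: viewing $p$ as a polynomial in $N\ge\max(n,3)$ variables keeps it hyperbolic in the first unit direction (by Proposition \ref{splithb}), replaces $C(p,u)$ by $C(p,u)\times\R^{N-n}$ and hence its lineality space by $\{0\}\times\R^{N-m}$, so both hypothesis and conclusion are unchanged. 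Also note $\deg p\ge1$: a nonzero constant $p$ would have lineality space $\R^n\ne\{0\}\times\R^{n-m}$ because $m\ge1$ (and $p=0$ is not hyperbolic).

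Now fix $j\in\{m+1,\dots,n\}$ and an arbitrary $a\in\R^n$; it suffices to show $p(a+su_j)=p(a)$ for all $s\in\R$. By hypothesis both $u_j$ and $-u_j$ lie in $C(p,u)$. Choose a three-dimensional linear subspace $W\subseteq\R^n$ containing $u$, $u_j$ and $a$ (possible since $n\ge3$), fix a basis $u,u_j,w$ of $W$, and set $P(y_1,y_2,y_3):=p(y_1u+y_2u_j+y_3w)$, the restriction of $p$ to $W$ in these coordinates. Then $P$ is homogeneous of degree $d:=\deg p$ and nonzero, $P(e_1)=p(u)\ne0$ by Remark \ref{nonzeroate}(a), and $P$ is hyperbolic in direction $e_1$ by Proposition \ref{splithb}, since $P(y-te_1)=p\bigl((y_1u+y_2u_j+y_3w)-tu\bigr)$ splits over $\R$. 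Moreover, straight from Definition \ref{dfhbc} one has $y\in C(P,e_1)\iff y_1u+y_2u_j+y_3w\in C(p,u)$, so in particular $e_2\in C(P,e_1)$ and $-e_2\in C(P,e_1)$.

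Now apply the hermitian Helton--Vinnikov Corollary \ref{hbvcor} to $P/P(e_1)$, which is hyperbolic of degree $d$ in direction $e_1$ with value $1$ at $e_1$: it provides hermitian matrices $A_2,A_3\in\C^{d\times d}$ with $P/P(e_1)=\det(y_1I_d+y_2A_2+y_3A_3)$ (the coefficient of $y_1$ being $I_d$ because $e=e_1$). By Proposition \ref{hbdet}, $C(P,e_1)=\{y\in\R^3\mid y_1I_d+y_2A_2+y_3A_3\succeq0\}$, so $e_2\in C(P,e_1)$ forces $A_2\succeq0$ and $-e_2\in C(P,e_1)$ forces $-A_2\succeq0$, whence $A_2=0$. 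Therefore $P/P(e_1)=\det(y_1I_d+y_3A_3)$ does not involve $y_2$, and neither does $P$. Writing $a=a_1'u+a_2'u_j+a_3'w$ we conclude $p(a+su_j)=P(a_1',a_2'+s,a_3')=P(a_1',a_2',a_3')=p(a)$. Since $a$ was arbitrary, $p$ is invariant under translation by $u_j$; carrying this out for every $j\in\{m+1,\dots,n\}$ yields $p\in\R[x_1,\dots,x_m]$.

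The only point requiring real care is the passage to the three-dimensional slice: one must check that restricting $p$ to a subspace containing the hyperbolicity direction $u$ keeps it hyperbolic in that direction, that the resulting hyperbolicity cone is $C(p,u)\cap W$, and hence that membership of $\pm u_j$ in $C(p,u)$ transfers to membership of $\pm e_2$ in $C(P,e_1)$. All three facts are immediate from Definitions \ref{dfhb} and \ref{dfhbc}, and the remaining input is just Corollary \ref{hbvcor} and Proposition \ref{hbdet}, used in the same dimension-reduction spirit as in the proofs of Theorems \ref{relaxation} and \ref{relaxationsharpening}; so the ``obstacle'' here is bookkeeping rather than anything deep. (One could instead route the reduction through the dehomogenization via Proposition \ref{hbrz} and the affine Corollary \ref{vcor}, but the homogeneous version is cleaner.)
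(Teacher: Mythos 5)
Your proof is correct and follows essentially the same route as the paper: reduce to a three-variable hyperbolic polynomial whose variables carry the direction $u$ and a lineality direction, apply the hermitian Helton--Vinnikov Corollary \ref{hbvcor}, and force the hermitian matrix attached to the lineality direction to vanish (you via $\pm e_2\in C(P,e_1)$ and Proposition \ref{hbdet}, the paper via the vanishing of all roots of $p(-t,0,b)$, which is the same eigenvalue argument). The only cosmetic difference is bookkeeping: the paper substitutes $q:=p(x_1,x_2a,x_3b)$ and concludes $p(x_1,a,b)=p(x_1,a,0)$ directly, while you restrict to a subspace spanned by $u$, $u_j$, $a$ and deduce translation invariance coordinate by coordinate; both are sound.
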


\begin{proof}
We have to show $p=p(x_1,\ldots,x_m,0,\ldots,0)$. It suffices to show $p(x_1,a,b)=p(x_1,a,0)$ for all $a\in\R^{m-1}$ and $b\in\R^{n-m}$. Fix $a\in\R^{m-1}$ and $b\in\R^{n-m}$ and consider the polynomial $q:=p(x_1,x_2a,x_3b)\in\R[x_1,x_2,x_3]$ of degree $d$
which is obviously hyperbolic in direction of the first unit vector of $\R^3$.
Because of  the Helton-Vinnikov Corollary \ref{hbvinnikov}, we find hermitian matrices $A_2,A_3\in\C^{d\times d}$ such that $q=\det(x_1I_d+x_2A_2+x_3A_3)$. Because of $(0,0,b)\in C(p,u)\cap-C(p,u)$, we have that all roots of
the univariate polynomial $\det(A_3-tI_d)=q(-t,0,1)=p(-t,0,b)\in\R[t]$ are nonnegative and nonpositive and therefore zero.
So all eigenvalues of the hermitian matrix $A_3$ are zero and therefore $A_3=0$. Consequently, $q=\det(x_1I_d+x_2A_2)\in\R[x_1,x_2]$.
Hence $p(x_1,a,b)=q(x_1,1,1)=q(x_1,1,0)=p(x_1,a,0)$ as desired.
\end{proof}

A stronger version of the following lemma is folklore and can be found for example in \cite[Theorem 2.5.1]{web}. For convenience of the reader,
we present the version that we need with the corresponding simplified proof.

\begin{lem}\label{ray}
Let $S$ be a closed unbounded convex set in $\R^n$ that contains the origin. Then $S$ contains a ray, i.e., there exists $a\in\R^n\setminus\{0\}$ such
that $\{\la a\mid\la\in\R_{\ge0}\}\subseteq S$.
\end{lem}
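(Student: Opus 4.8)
The plan is to produce the ray direction as a limit of normalized points of $S$ running off to infinity, and then exploit closedness and convexity to pull the whole ray back into $S$.

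First I would use the unboundedness of $S$ to choose a sequence $(a_k)_{k\in\N}$ in $S$ with $\|a_k\|\to\infty$; discarding finitely many initial terms, I may assume $\|a_k\|\ge 1$ for all $k$. The normalized vectors $b_k:=a_k/\|a_k\|$ all lie on the unit sphere of $\R^n$, which is compact, so after passing to a subsequence I may assume $b_k\to a$ for some $a\in\R^n$ with $\|a\|=1$; in particular $a\ne 0$.

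Next I would fix $\la\in\R_{\ge0}$ and show $\la a\in S$. For all sufficiently large $k$ we have $\|a_k\|\ge\la$, hence $\frac{\la}{\|a_k\|}\in[0,1]$, and since $0\in S$, $a_k\in S$ and $S$ is convex,
\[
\frac{\la}{\|a_k\|}\,a_k=\Bigl(1-\frac{\la}{\|a_k\|}\Bigr)\cdot 0+\frac{\la}{\|a_k\|}\cdot a_k\in S.
\]
As $k\to\infty$ (along the chosen subsequence), $\frac{\la}{\|a_k\|}a_k=\la b_k\to\la a$, and since $S$ is closed this limit lies in $S$. Because $\la\in\R_{\ge0}$ was arbitrary, $\{\la a\mid\la\in\R_{\ge0}\}\subseteq S$, which is exactly the assertion.

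There is essentially no serious obstacle here; the only point needing a little care is that the limit direction $a$ is obtained only along a subsequence, so the convexity-and-closedness argument in the last step must be run along that same subsequence — which it is. The lone genuinely topological ingredient is the compactness of the unit sphere, which supplies the convergent subsequence of directions.
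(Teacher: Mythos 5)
Your proof is correct and follows essentially the same route as the paper's: normalize a sequence of points of $S$ escaping to infinity, extract a convergent subsequence of directions on the compact unit sphere, and use convexity with the origin plus closedness to conclude that every nonnegative multiple of the limit direction lies in $S$. No issues.
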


\begin{proof}
Choose a sequence $(a_i)_{i\in\N}$ in $S\setminus\{0\}$ such that $\lim_{i\to\infty}\|a_i\|=\infty$. Consider the sequence
\[\left(\frac{a_i}{\|a_i\|}\right)_{i\in\N}\] of points on the unit sphere in $\R^n$. Since this sphere is compact, we may suppose that it converges to some
point $a\in\R^n$ with $\|a\|=1$. We claim that \[\{\la a\mid\la\in\R_{\ge0}\}\subseteq S.\] To this purpose, we fix $\la\in\R_{\ge0}$ and choose
$k\in\N$ such that
\[\frac\la{\|a_i\|}\le1\qquad
\text{and thus}\qquad
\frac\la{\|a_i\|}a_i\in S\]
for all $i\ge k$ because $S$ is convex and contains the origin. It follows that
\[\la a=\la\lim_{i\to\infty}\frac{a_i}{\|a_i\|}=\lim_{i\to\infty}\frac\la{\|a_i\|}a_i\in S\]
since $S$ is closed.
\end{proof}

\begin{thm}[equivalent formulations of GLC]
The following are equivalent:
\begin{enumerate}[(a)]
\item Each rigidly convex set is a spectrahedron, i.e., Conjecture \ref{glc} (GLC) holds.
\item For each real zero polynomial $p\in\R[x]$, there exist a polynomial $q\in\R[x]$, some $d\in\N_0$ and symmetric
matrices $A_1,\ldots,A_n\in\R^{d\times d}$ such that
\[pq=\det(I_d+x_1A_1+\ldots+x_nA_n)\]
and $C(p)\subseteq C(q)$.
\item Each hyperbolicity cone is spectrahedral.
\item Each compact rigidly convex set is a spectrahedron.
\end{enumerate}
\end{thm}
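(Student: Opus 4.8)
The plan is to prove the cycle $(\mathrm a)\Rightarrow(\mathrm d)\Rightarrow(\mathrm c)\Rightarrow(\mathrm a)$ together with $(\mathrm a)\Leftrightarrow(\mathrm b)$. The equivalence $(\mathrm a)\Leftrightarrow(\mathrm b)$ is a restatement of Proposition \ref{geoalg}: if $(\mathrm a)$ holds then for any real zero polynomial $p$ every irreducible factor $f$ of $p$ is again a real zero polynomial (since $p(0)\ne0$ forces $f(0)\ne0$, and $f(ta)$ divides the splitting polynomial $p(ta)$ in $\R[t]$, hence itself splits, by Proposition \ref{splitrz}), so each $C(f)$ is a spectrahedron; this is condition (a) of Proposition \ref{geoalg} for $p$, so its condition (b) holds, which is exactly $(\mathrm b)$. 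Conversely, since by Definition \ref{dfrcs} every rigidly convex set is of the form $C(p)$ for a real zero polynomial $p$, $(\mathrm b)$ together with the final clause of Proposition \ref{geoalg} gives $(\mathrm a)$. The implication $(\mathrm a)\Rightarrow(\mathrm d)$ is trivial. For $(\mathrm c)\Rightarrow(\mathrm a)$, given a real zero polynomial $p\in\R[x_1,\ldots,x_n]$, its homogenization $P:=p^*\in\R[x_0,x_1,\ldots,x_n]$ satisfies $P(1,x)=p$, hence is hyperbolic in direction of the first unit vector $u$ by Proposition \ref{hbrz}; by $(\mathrm c)$, $C(P,u)=\{b\mid b_0A_0+\cdots+b_nA_n\succeq0\}$ for suitable symmetric $A_i$, and then Proposition \ref{hbrz}(a) identifies $C(p)$ with $\{a\mid A_0+a_1A_1+\cdots+a_nA_n\succeq0\}$, a spectrahedron.

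The substance is $(\mathrm d)\Rightarrow(\mathrm c)$. Let $P$ be hyperbolic in direction $e$; we must show $C(P,e)$ is a spectrahedral cone. If $\deg P=0$ or $C(P,e)=\R^n$ there is nothing to prove, so assume $\deg P\ge1$; then $-e\notin C(P,e)$ (because $P(-e-\la e)=(-(1+\la))^{\deg P}P(e)$ vanishes only at $\la=-1$ and $P(e)\ne0$), so $e$ does not lie in the lineality space $L$ of $C(P,e)$. Put $m:=n-\dim L\ge1$. Choose an invertible linear map $T$ with $Te=u$ and $T(L)=\{0\}\times\R^{n-m}$ (possible since $e\notin L$); then $P\circ T^{-1}$ is hyperbolic in direction $u$ with lineality space $\{0\}\times\R^{n-m}$, so by Lemma \ref{independentlineality} it lies in $\R[x_1,\ldots,x_m]$, say $P\circ T^{-1}=\hat P(x_1,\ldots,x_m)$, and $\hat K:=C(\hat P,u)\subseteq\R^m$ is a pointed hyperbolicity cone (its lineality space is $\{0\}$, and $u\in\hat K$). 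If $m=1$ then $\hat K$ is $\{0\}$ or a ray, which is spectrahedral, and we are done; so assume $m\ge2$. Since $\hat K$ is a pointed closed convex cone there is a linear functional strictly positive on $\hat K\setminus\{0\}$, with value $>0$ at $u$; after a further invertible linear change of coordinates fixing $u$ we may assume this functional is $x_1$, i.e. (keeping the names $\hat P$, $\hat K$) that $x_1>0$ on $\hat K\setminus\{0\}$.

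Now $q:=\hat P(1,x_2,\ldots,x_m)$ is a real zero polynomial by Proposition \ref{hbrz}, with $q(0)=\hat P(u)\ne0$, and Proposition \ref{hbrz}(a),(b) give $\hat K\cap\{x_1=1\}=\{1\}\times C(q)$ and $\hat K\cap\{x_1=0\}=\{0\}$; the latter says the ray cone of $C(q)$ is trivial, so $C(q)$ is bounded, and being also closed it is compact. By hypothesis $(\mathrm d)$, $C(q)$ is a spectrahedron; since $0$ is an interior point of $C(q)$ (a small ball around $0$ lies in $C(q)$ because $q(0)\ne0$ and balls are star-shaped), Lemma \ref{monicrep} lets us write $C(q)=\{a\mid I_k+a_2A_2+\cdots+a_mA_m\succeq0\}$ with symmetric $A_i$ and $k\ge1$ (here $k\ge1$ because $C(q)$ is bounded, hence $\ne\R^{m-1}$). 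Because $\hat K$ is a closed cone on which $x_1$ is positive off the origin, $\hat K$ equals the closure of $\R_{>0}\cdot(\{1\}\times C(q))=\{(t,b)\mid t>0,\ tI_k+b_2A_2+\cdots+b_mA_m\succeq0\}$. Since $C(q)$ is bounded, $b_2A_2+\cdots+b_mA_m$ is never positive semidefinite for $b\ne0$, and one checks readily that this closure is $\{(t,b)\mid tI_k+b_2A_2+\cdots+b_mA_m\succeq0\}$, a spectrahedral cone. Undoing the two linear changes of coordinates, and then re-inflating along $L$ by adjoining zero matrix coefficients, exhibits $C(P,e)$ as a spectrahedral cone. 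This proves $(\mathrm c)$, and the cycle is complete.

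The main obstacle is precisely this last implication. The difficulty is that a hyperbolicity cone need not be pointed and need not have a compact cross-section, so before one can apply the hypothesis about \emph{compact} rigidly convex sets one has to quotient out the lineality space (forcing the appeal to Lemma \ref{independentlineality}) and then pass to a genuinely transverse affine slice; the second delicate point is to verify that the closed conical hull of the compact monic spectrahedron $\{1\}\times C(q)$ is again a spectrahedron, where it is exactly the compactness of $C(q)$ that prevents the naive homogenized pencil $tI_k+\sum_i b_iA_i\succeq0$ from cutting out a spurious part with $t<0$. Everything else is routine given Propositions \ref{geoalg} and \ref{hbrz} and Lemma \ref{monicrep}.
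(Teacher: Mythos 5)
Your proposal is correct and follows the same overall route as the paper: (a) and (b) are identified via Proposition \ref{geoalg}, (a)$\Rightarrow$(d) is trivial, (c)$\Rightarrow$(a) comes from Proposition \ref{hbrz}, and the hard implication (d)$\Rightarrow$(c) is handled exactly as in the paper --- normalize so that the hyperbolicity direction is the first unit vector and the lineality space is a coordinate subspace, remove the lineality space with Lemma \ref{independentlineality}, slice the resulting pointed cone by a transverse affine hyperplane to get a compact rigidly convex set, invoke (d), and re-homogenize the LMI, with boundedness of the slice excluding spurious points on the face $x_1=0$ (your ``never positive semidefinite for $b\ne0$'' observation is precisely the paper's final step). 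The one local deviation: where you take an abstract linear functional strictly positive on the pointed cone minus the origin (a separation fact for pointed \emph{closed} convex cones) and normalize it to $x_1$, the paper uses the explicit trace functional $\tr_{p,e}$, whose linearity is Proposition \ref{trlin}(c) and whose strict positivity off the lineality space is immediate from nonnegativity of the eigenvalues; this choice lets the paper avoid ever using closedness of the hyperbolicity cone (its analogue of your closure step, Claim 4 in its proof, is proved directly, using only that $C(p,e)$ is a cone by Theorem \ref{hb-is-c}), whereas you use closedness of $\hat K$ twice (for the functional and for identifying $\hat K$ with the closure of $\R_{>0}\cdot(\{1\}\times C(q))$) and closedness of $C(q)$ once (``being also closed''), all asserted without proof. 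These are standard facts, and closedness of $C(q)$ is available inside the paper via Theorem \ref{intersection} (this is how the paper argues), so this is a completeness issue rather than a gap; to stay self-contained, either cite Theorem \ref{intersection} for $C(q)$ and replace the closure argument by the direct description of the cone as in the paper, or supply a root-continuity argument. A small cosmetic gain on your side: thanks to the monic normalization from Lemma \ref{monicrep}, the extra scalar constraint $a_1\ge0$ that the paper keeps in its final pencil becomes automatic, so your homogeneous LMI alone cuts out the hyperbolicity cone.
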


\begin{proof}
(a)$\iff$(b) follows directly from Proposition \ref{geoalg}, (a)$\implies$(d) is trivial.
and (c)$\implies$(a) follows immediately from Proposition \ref{hbrz}.

\medskip
It remains to show (d)$\implies$(c). Suppose $(d)$ holds and let $p\in\R[x]$ be hyperbolic in direction $e$. WLOG we suppose
that $p$ has positive degree as otherwise its hyperbolicity cone $C(p,e)$ is all of $\R^n$.
By Remark \ref{lineality}, the lineality space \[L:=C(p,e)\cap-C(p,e)\subseteq\R^n\] of the hyperbolicity cone $C(p,e)$ is a linear subspace of $\R^n$.
It consists of all elements of $\R^n$ all of whose eigenvalues (with respect to $p$ in direction $e$) in the sense of Definition \ref{eigtr}(a) are zero.
Since $C(p,e)$ consists of those elements all of whose eigenvalues are nonnegative, we see that
\[L=C(p,e)\cap H=-C(p,e)\cap H.\]
Thus $L$ is of course contained in \[H:=\{a\in\R^n\mid\tr_{p,e}(a)=0\}\subseteq\R^n\] by Definition \ref{eigtr}(b) which is a hyperplane by
Proposition \ref{trlin}(c) with
\[e\notin H\supseteq L\]
due to the positive degree of $p$. In particular, $\dim L=n-m$ for some $m\in\{1,\ldots,n\}$.

\smallskip
\textbf{Claim 1.} We can reduce to the case where
\begin{itemize}
\item $e$ is the first unit vector of $\R^n$,
\item $H=\{0\}\times\R^{n-1}\subseteq\R^n$ and
\item $L=\{0\}\times\R^{n-m}\subseteq\R^n$.
\end{itemize}

\smallskip
\textbf{Justification.}
Choose an invertible matrix $A\in\R^{n\times n}$ whose first column is $e$, whose remaining columns span $H$ and
whose last $n-m$ columns span $L$, i.e., $A$ maps
\begin{itemize}
\item the first unit vector $u$ of $\R^n$ to $e$,
\item the subspace $H':=\{0\}\times\R^{n-1}\subseteq\R^n$ onto $H$ and
\item the subspace $L':=\{0\}\times\R^{n-m}\subseteq\R^n$ onto $L$.
\end{itemize}
Setting $q:=p(Ax)\in\R[x]$, we have for each $a\in A$ the univariate polynomial identity 
$q(a-tu)=p(Aa-te)\in\R[t]$ which shows that $p$ is
hyperbolic in direction $u$ with \[C(p,e)=\{Aa\mid a\in C(q,u)\}\]
and $\tr_{q,u}(a)=\tr_{p,e}(Aa)$ for each $a\in A$. It follows that $H'=\{a\in\R^n\mid\tr_{q,u}(a)=0\}$ and
$L'=\{A^{-1}a\mid a\in L\}=C(q,u)\cap-C(q,u)$. This proves Claim 1.

\smallskip
\textbf{Claim 2.} We can further reduce to the case where
\begin{itemize}
\item $e$ is the first unit vector of $\R^n$,
\item $H=\{0\}\times\R^{n-1}\subseteq\R^n$ and
\item $L=\{0\}\subseteq\R^n$.
\end{itemize}

\smallskip
\textbf{Justification.} Suppose we are already in the situation described in Claim 1.
By Lemma \ref{independentlineality}, we then have that
\[p\in\R[x_1,\ldots,x_m].\]
Viewed as a polynomial in the variables $x_1,\ldots,x_m$, $p$ is clearly again hyperbolic with respect to the first unit vector $u$ of $\R^m$ and
we have obviously
\[C(p,e)=C(p,u)\times\R^{n-m}\]
as well as $\tr_{p,e}(a,0)=\tr_{p,u}(a)$ for all $a\in\R^m$.
It is therefore enough to show that the hyperbolicity cone $C(p,u)\subseteq\R^m$ is spectrahedral.
Finally, we have that
\begin{align*}
H'&:=\{0\}\times\R^{m-1}=\{a\in\R^m\mid(a,0)\in H\}=\{a\in\R^m\mid\tr_{p,e}(a,0)=0\}\\
&=\{a\in\R^m\mid\tr_{p,u}(a)=0\}
\end{align*}
and $L':=C(p,u)\cap-C(p,u)=\{a\in\R^m\mid(a,0)\in L\}=\{0\}$. This proves Claim 2.

\smallskip
\textbf{Claim 3.} Suppose we are in the situation of Claim 2 and consider
\[q:=p(1,x_2,\ldots,x_n)\in\R[x_2,\ldots,x_n].\]
Then $q$ is a real zero polynomial and its associated rigidly convex set
\[C(q)=\{(a_2,\ldots,a_n)\in\R^{n-1}\mid (1,a_2,\ldots,a_n)\in C(p,e)\}\subseteq\R^{n-1}\]
is compact.

\smallskip
\textbf{Justification.} By Proposition \ref{hbrz}(a), we only need to show that $C(q)$ is compact. Certainly, it is closed since, for example, it is an
intersection of closed half-spaces by Theorem \ref{intersection}. By Lemma \ref{ray}, it is enough to show that $C(q)$ does not contain a ray.
By Proposition \ref{hbrz}(b) this is equivalent to showing that $H\cap C(p,e)=\{0\}$ which is true by Claim 2 since $L=H\cap C(p,e)$.
This proves Claim 3.

\smallskip
\textbf{Claim 4.} Suppose we are in the situation of Claim 2. Then
\[C(p,e)=\{0\}\cup\left\{a\in\R^n\mid a_1>0,\frac{(a_2,\ldots,a_n)}{a_1}\in C(q)\right\}.\]

\smallskip
\textbf{Justification.} The inclusion from right to left follows easily from Claim 3 and the fact that $C(p,e)$ is a cone by Theorem \ref{hb-is-c}.
For the other inclusion, let $a\in C(p,e)\setminus\{0\}$. By Claim 3, it suffices to show that $a_1>0$. Writing $a=a_1e+b$ with $b\in H$, we see that
$0\le\tr_{p,e}(a)=a_1(\deg p)+0$. Since $p$ has positive degree, it follows that $a_1\ge0$. Moreover, if we had $a_1=0$, it would follow that
$\tr_{p,e}(a)=0$ and hence $a\in C(p,e)\cap H=L=\{0\}$ by Claim 2 which contradicts $a\ne0$. This proves Claim 4.

\smallskip
Now we can finally \textbf{conclude the proof} of (d)$\implies$(c). The rigidly convex set $C(q)\subseteq\R^{n-1}$
is compact by Claim 3 is thus a spectrahedron by hypothesis (d). Accordingly, we can choose some $e\in\N_0$ and symmetric matrices
$A_1,\ldots,A_n\in\R^{e\times e}$ such that
\[C(q)=\{(a_2,\ldots,a_n)\in\R^{n-1}\mid A_1+a_2A_2+\ldots+a_nA_n\succeq0\}.\]
We claim that
\[C(p,e)=\{a\in\R^n\mid a_1\ge0,\,a_1A_1+a_2A_2+\ldots+a_nA_n\succeq0\}\]
so that the hyperbolicity cone $C(p,e)$ can be defined by a linear matrix inequality of size $e+1$ (in block diagonal form with a block of size $1$
and a block of size $e$). The inclusion from left to right is immediately by the corresponding inclusion from Claim 4. The other inclusion follows
from the other inclusion in Claim 4 if we can exclude that there exists $(a_2,\ldots,a_n)\in\R^{n-1}\setminus\{0\}$ with
$a_2A_2+\ldots+a_nA_n\succeq0$. But this follows from the boundedness of $C(q)$ proved in Claim 3.
\end{proof}

\subsection{The real zero amalgamation conjectures}

In this subsection, we consider three tuples $x=(x_1,\ldots,x_\ell)$, $y=(y_1,\ldots,y_m)$ and $z=(z_1,\ldots,z_n)$ of $\ell+m+n$ distinct variables for some $\ell,m,n\in\N_0$.
The following question \cite[Problem 2.13]{ss} has been motivated by the implications it would have to GLC by what follows in Subsection \ref{subs:wrap} below.
Its study has been initiated by \cite{ss} but only very partial results have been obtained so far.

\begin{problem}[Sawall and Schweighofer, real zero amalgamation problem, RZAP]\label{rzap}
Suppose that $p\in\R[x,y]$ and $q\in\R[x,z]$ are real zero polynomials
with \[p(x,0)=q(x,0).\] When does there exist a real zero polynomial $r\in\R[x,y,z]$ (called an \emph{amalgam}) such that \[r(x,y,0)=p\qquad\text{and}\qquad r(x,0,z)=q\qquad?\]
\end{problem}

In the situation of RZAP, we call $x_1,\ldots,x_\ell$ the \emph{shared variables} and $r$ the \emph{amalgamation polynomial}.

In \cite[Example 6.1]{ss}, an example of two real zero polynomials is provided with $\ell=6$ shared variables and $m=n=1$ individual variables that do not possess an amalgam
in the sense above. For $\ell=2$, a counterexample of the same nature cannot exist \cite[Section 7]{ss}. In \cite[Section 7]{ss} even more motivation is given for the following
conjecture \cite[Conjecture 7.6]{ss}:

\begin{conjecture}[Sawall and Schweighofer, Weak real zero amalgamation conjecture, WRZAC]\label{wrzac} 
In the case of $\ell=2$ two joint variables, the real zero amalgamation problem \ref{rzap} is always solvable.
\end{conjecture}

A stronger form of this conjecture is the following \cite[Conjecture 7.7]{ss}.

\begin{conjecture}[Sawall and Schweighofer, Strong real zero amalgamation conjecture, SRZAC]\label{srzac} 
Let $\ell=2$, i.e., $x=(x_1,x_2)$, and $d\in\N_0$. Suppose
$p\in\R[x,y]$ and $q\in\R[x,z]$ are real zero polynomials of degree at most $d$ such that $p(x,0)=q(x,0)$. Then there exists a real zero polynomial $r\in\R[x,y,z]$ of degree at most $d$ such that \[p=r(x,y,0)\qquad\text{and}\qquad q=r(x,0,z).\]
\end{conjecture}

\subsection{Wrapping rigidly convex sets into spectrahedra and tying them with a cord}\label{subs:wrap}

\begin{lem}\label{plane-x1-x2}
Suppose that Conjecture \ref{srzac} (SRZAC) holds. Let $p\in\R[x_1,x_2,y_1,\ldots,y_m]=\R[x,y]$ be a real zero polynomial of degree $d\ge2$.
Set \[n:=\left(\frac{d(d+1)}2-3\right)\in\N_0.\] Then there exists a real zero polynomial $q\in\R[x,y,z_1,\ldots,z_n]=\R[x,y,z]$ of degree $d$ such that
$q(x,y,0)=p$,
\begin{align*}
C(p)&\subseteq\{(a,b)\in\R^2\times\R^m\mid (a,b,0)\in S(q)\}\qquad\text{ and}\\
 \{a\in\R^2\mid(a,0)\in C(p)\}&=\{a\in\R^2\mid (a,0,0)\in S(q)\}.
\end{align*}
In particular, $C(p)$ is contained in a spectrahedron that agrees with $C(p)$ on the plane spanned by the first two unit vectors.
\end{lem}

\begin{proof} WLOG $p(0)=1$. By the Helton-Vinnikov theorem \ref{vinnikov}, we find symmetric $A_1,A_2\in\R^{d\times d}$ such that
$p(x,0)=\det(I_d+x_1A_1+x_2A_2)$.

We now claim that we can find real symmetric matrices $B_1,\ldots,B_n\in\R^{d\times d}$ such that they span together with
$I_d$, $A_1$ and $A_2$
a subspace of the space of real symmetric matrices of size $d$ that is perfect in the sense of Definition \ref{couple}.
To this end, we distinguish three different cases.
In each of these cases, we will
use one of the subspaces that are perfect according to \ref{perfectex}(a).

If $A_1$ and $A_2$ are scalar multiples of the identity matrix then we can for example set $B_i:=0$ for $i\in\{1,\ldots,n\}$ and get the perfect subspace generated by $I_d$.

If the span of $I_d$, $A_1$ and $A_2$ is two-dimensional, then we can
jointly diagonalize $A_1$ and $A_2$ by conjugating them with a suitable orthogonal matrix and therefore can assume them to be diagonal.
Then the co-dimension of the span of $I_d$, $A_1$ and $A_2$ inside the space of real diagonal
matrices is $d-2=(d+1)-3$ which is at most $n$ because of $d\ge2$. Hence we find diagonal matrices $B_1,\ldots,B_n\in\R^{d\times d}$
such that the span of $I_d,A_1,A_2,B_1,\ldots,B_n$ is the space of all diagonal matrices which is again perfect.

The last case is where $I_d$, $A_1$ and $A_2$ are linearly independent. Then we complete them to a basis
$I_d,A_1,A_2,B_1,\ldots,B_n$ of the perfect space of real symmetric matrices of size $d$.

The claim is now proven and we choose  $B_1,\ldots,B_n\in\R^{d\times d}$ according to it.
By Conjecture~\ref{srzac} (SRZAC), $p\in\R[x,y]$ and
\[r:=\det(I_d+x_1A_1+x_2A_2+z_1B_1+\ldots+z_nB_n)\in\R[x,z]\] can be
amalgamated into a real zero polynomial $q\in\R[x,y,z]$ of degree $d$ such that $q(x,y,0)=p$ and $q(x,0,z)=r$.
By Theorem \ref{relaxation} applied to $q$, we have $C(q)\subseteq S(q)$. Together with
Lemma \ref{restriction}(b), this yields our first statement
\[C(p)=\{(a,b)\in\R^{2+m}\mid(a,b,0)\in C(q)\}\subseteq\{(a,b)\in\R^{2+m}\mid(a,b,0)\in S(q)\}.\]
We have $C(r)=S(r)$ by Proposition \ref{rcsspecdet}(c).
Together with Lemma \ref{restriction}, we get
\begin{multline*}
 \{a\in\R^2\mid(a,0)\in C(p)\}=\{a\in\R^2\mid(a,0,0)\in C(q)\}\\
=\{a\in\R^2\mid(a,0)\in C(r)\}=\{a\in\R^2\mid(a,0)\in S(r)\}\\
 \supseteq\{a\in\R^2\mid(a,0,0)\in S(q)\}\supseteq\{a\in\R^2\mid(a,0)\in C(p)\}
\end{multline*}
where the last inclusion follows from the already proven part of the lemma.
\end{proof}

\begin{lem}\label{plane2dim}
Suppose that Conjecture \ref{srzac} (SRZAC) holds. Let $p\in\R[x]=\R[x_1,\ldots,x_\ell]$ be a real zero polynomial of degree $d\ge2$.
Set \[m:=\left(\frac{d(d+1)}2-3\right)\in\N_0.\]
Let $U$ be a two-dimensional subspace of $\R^\ell$.
Then there exists a real zero polynomial $q\in\R[x_1,\ldots,x_\ell,y_1,\ldots,y_m]=\R[x,y]$
of degree $d$ such that $q(x,0)=p$ and the spectrahedron $S:=\{a\in\R^{\ell}\mid (a,0)\in S(q)\}\subseteq\R^\ell$
satisfies $C(p)\subseteq S$ and $U\cap C(p)=U\cap S$.
\end{lem}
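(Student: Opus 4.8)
The plan is to reduce the statement to Lemma~\ref{plane-x1-x2} by an orthogonal change of coordinates carrying $U$ onto the plane spanned by the first two unit vectors. First I would pick an orthonormal basis $u_1,u_2$ of $U$, extend it to an orthonormal basis $u_1,\ldots,u_\ell$ of $\R^\ell$, and let $W\in\R^{\ell\times\ell}$ be the orthogonal matrix whose rows are $u_1^T,\ldots,u_\ell^T$, so that $WU=\R^2\times\{0\}\subseteq\R^\ell$. Put $\tilde p:=p(W^Tx)$. Since $W^T$ is invertible, $\tilde p$ is again a real zero polynomial (if $\tilde p(\la a)=p(\la W^Ta)=0$ then $\la\in\R$ because $p$ is real zero and $W^Ta\in\R^\ell$) and $\deg\tilde p=\deg p=d$; moreover, by Proposition~\ref{rotatespectrahedron} we have $C(\tilde p)=\{Wa\mid a\in C(p)\}$.

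Next I would apply Lemma~\ref{plane-x1-x2} to $\tilde p\in\R[x_1,x_2,x_3,\ldots,x_\ell]$, viewing $x_1,x_2$ as the two ``distinguished'' variables and $x_3,\ldots,x_\ell$ as the remaining block (which is empty if $\ell=2$). The number of new variables introduced by that lemma is $\frac{d(d+1)}2-3=m\in\N_0$ (nonnegative since $d\ge2$), so one obtains a real zero polynomial $\tilde q\in\R[x_1,\ldots,x_\ell,z_1,\ldots,z_m]$ of degree $d$ with $\tilde q(x,0)=\tilde p$ (zeros in the $z$-block), with $C(\tilde p)\subseteq\{a\in\R^\ell\mid(a,0)\in S(\tilde q)\}$, and with
\[\{a\in\R^2\mid(a,0)\in C(\tilde p)\}=\{a\in\R^2\mid(a,0,0)\in S(\tilde q)\},\]
where in the right-hand sets the first $0$ fills the last $\ell-2$ coordinates of $\R^\ell$ and the second $0$ the $m$ adjoined coordinates.

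Now I would rotate back: set $q(x,y):=\tilde q(Wx,y)\in\R[x,y]$, which is the substitution of the block-orthogonal matrix $\begin{pmatrix}W&0\\0&I_m\end{pmatrix}$ into $\tilde q$. Then $q$ is a real zero polynomial of degree $d$ with $q(x,0)=\tilde q(Wx,0)=\tilde p(Wx)=p(x)$. By Proposition~\ref{rotatespectrahedron} for this matrix, $S(q)=\{(W^Tb,c)\mid(b,c)\in S(\tilde q)\}$, so for $S:=\{a\in\R^\ell\mid(a,0)\in S(q)\}$ one gets $a\in S\iff(Wa,0)\in S(\tilde q)$. Hence $C(p)\subseteq S$: if $a\in C(p)$ then $Wa\in C(\tilde p)\subseteq\{b\mid(b,0)\in S(\tilde q)\}$, so $(Wa,0)\in S(\tilde q)$ and $a\in S$. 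For the equality on $U$, only $U\cap S\subseteq U\cap C(p)$ needs an argument: if $a\in U\cap S$ then $Wa\in WU=\R^2\times\{0\}$, say $Wa=(a',0)$ with $a'\in\R^2$, and $(Wa,0)=(a',0,0)\in S(\tilde q)$, so by the displayed identity $(a',0)\in C(\tilde p)$, i.e.\ $Wa\in C(\tilde p)=\{Wb\mid b\in C(p)\}$, whence $a\in C(p)$.

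The substantive input is entirely contained in Lemma~\ref{plane-x1-x2} (and, through it, Conjecture~\ref{rzac} and the Helton--Vinnikov theorem); the remainder is transport of structure under invertible linear substitutions, which preserve both the real zero property and the degree. The one point demanding care is bookkeeping: there are three distinct sources of zero padding --- the $\ell-2$ ``extra'' coordinates inside $\R^\ell$, the $m$ adjoined coordinates, and the image $WU$ --- and one must check that the containment and the plane-equality exported from Lemma~\ref{plane-x1-x2} line up with the coordinates in which $S$ is defined after the back-rotation. I do not anticipate any obstacle beyond this indexing, in particular because the count $\frac{d(d+1)}2-3$ of new variables is exactly the one produced by Lemma~\ref{plane-x1-x2}.
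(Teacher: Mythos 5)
Your proposal is correct and follows essentially the same route as the paper's proof: conjugate by an orthogonal matrix carrying $U$ onto the span of the first two unit vectors, apply Lemma \ref{plane-x1-x2} to the rotated polynomial, and transport the containment and plane-equality back via Proposition \ref{rotatespectrahedron}. Your bookkeeping of the direction of the rotation (with $\tilde p=p(W^Tx)$, $C(\tilde p)=\{Wa\mid a\in C(p)\}$ and $q=\tilde q(Wx,y)$) is internally consistent and matches the intended argument.
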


\begin{proof}
Choose an orthogonal matrix $Q\in\R^{\ell\times\ell}$ such that $\{Qx\mid x\in U\}$ equals the span of the first two unit vectors $U'$ in $\R^\ell$.
Applying Lemma \ref{plane-x1-x2} to the real zero polynomial $p':=p(Qx)$, we obtain a real zero polynomial $q'\in\R[x,y]$ of degree $d$
with $q'(x,0)=p'$ such that the spectrahedron $S':=\{a\in\R^\ell\mid (a,0)\in S(q')\}$ satisfies $C(p')\subseteq S'$ and $U'\cap C(p')=U'\cap S'$.
Consider now the orthogonal matrix
\[Q':=\begin{pmatrix}Q&0\\0&I_m\end{pmatrix}\in\R^{(\ell+m)\times(\ell+m)}\]
and the real zero polynomial \[q:=q'\left(Q'^T\begin{pmatrix}x\\y\end{pmatrix}\right)=q'(Q^Tx,y)\in\R[x,y]\]
of degree $d$. Then $q(x,0)=q'(Q^{-1}x,0)=p'(Q^{-1}x)=p$. Moreover, we have
$C(p)=\{Q^Ta\mid a\in C(p')\}$,
\[S(q)=\left\{Q'^T\begin{pmatrix}a\\b\end{pmatrix}\mid(a,b)\in S(q')\right\}=\{(Q^Ta,b)\mid(a,b)\in S(q')\}\]
and thus $S:=\{a\in\R^{\ell}\mid (a,0)\in S(q)\}=\{Q^Ta\mid a\in S'\}$
by Proposition \ref{rotatespectrahedron}. Hence $C(p')\subseteq S'$ and $U'\cap C(p')=U'\cap S'$ easily translate into
the desired conditions $C(p)\subseteq S$ and $U\cap C(p)=U\cap S$.
\end{proof}

The following gives a very weak form of the generalized Lax conjecture (GLC) under the hypothesis of the strong real zero amalgamation conjecture (SRZAC).

\begin{thm}\label{wrapping}
Suppose that Conjecture \ref{srzac} (SRZAC) holds. Suppose $k,\ell\in\N_0$.
Fix a real zero polynomial $p\in\R[x]=\R[x_1,\ldots,x_\ell]$ polynomial of degree $d\ge2$ and a union $W$ of $k$ many
two-dimensional subspaces of $\R^\ell$. Then $C(p)$ is contained in a spectrahedron that agrees with $C(p)$ on $W$ and which is defined by a linear matrix inequality of size
\[s:=k\left(\ell+\frac{d(d+1)}2-2\right).\]
\end{thm}

\begin{proof}
One easily reduces to the case $k=1$. Then the claim follows easily from Lemma \ref{plane2dim}.
\end{proof}

If one supposes only the weak zero amalgamation conjecture (WRZAC) instead of SRZAC, we will get the almost same result in Theorem \ref{weak-wrapping} below:
Only the bound on the size of the linear matrix inequality will be slightly worse. To this end, we need the analogue of Lemma \ref{plane-x1-x2} above.

\begin{lem}\label{weak-plane-x1-x2}
Suppose that Conjecture \ref{wrzac} (WRZAC) holds. Let $p\in\R[x_1,x_2,y_1,\ldots,y_m]=\R[x,y]$ be a real zero polynomial of degree
$d\ge2$. Set \[n:=\left(\frac{d(d+1)}2-2\right)\in\N_0.\] Then there exists a real zero polynomial $q\in\R[x,y,z_1,\ldots,z_n]=\R[x,y,z]$ such that
$q(x,y,0)=p$,
\begin{align*}
C(p)&\subseteq\{(a,b)\in\R^{2+m}\mid (a,b,0)\in S_\infty(q)\}\qquad\text{ and}\\
 \{a\in\R^2\mid(a,0)\in C(p)\}&=\{a\in\R^2\mid (a,0,0)\in S_\infty(q)\}.
\end{align*}
In particular, $C(p)$ is contained in a spectrahedron that agrees with $C(p)$ on the plane spanned by the first two unit vectors.
\end{lem}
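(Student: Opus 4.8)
The plan is to mimic the proof of Lemma~\ref{plane-x1-x2}, making exactly the changes forced by two complementary facts: Conjecture~\ref{wrzac} (WRZAC) only controls the amalgam up to its cubic part once the $y$-variables are killed, and by Definition~\ref{dfpencil}(c) together with Example~\ref{moments3} the pencil $M_{\,\cdot\,,\infty}$ — hence the cone $S_\infty(\,\cdot\,)$ — of a polynomial depends only on its cubic part. So the weakening of the amalgamation hypothesis is exactly compensated by working with $S_\infty$ instead of $S$. Concretely: without loss of generality $p(0)=1$, and by the Helton--Vinnikov Theorem~\ref{vinnikov} applied to $\tilde p:=p(x_1,x_2,0)\in\R[x_1,x_2]$ (which is again a real zero polynomial with $\tilde p(0)=1$; pad with zeros to size $d$) there are symmetric $A_1,A_2\in\R^{d\times d}$ with $\tilde p=\det(I_d+x_1A_1+x_2A_2)$.

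Next I would choose symmetric $B_1,\ldots,B_n\in\R^{d\times d}$ so that $U_\infty:=\lin\{A_1,A_2,B_1,\ldots,B_n\}$ is a \emph{perfect} subspace of the real symmetric $d\times d$ matrices. This is where the bound $n=\frac{d(d+1)}2-2$ enters: unlike in Lemma~\ref{plane-x1-x2}, we will invoke Proposition~\ref{rcsspecdet}(d) rather than (c), so $I_d$ is no longer part of the spanning set for free, costing one extra auxiliary variable. If $A_1,A_2$ are both zero, set $B_1:=I_d$, $B_2=\cdots=B_n:=0$, giving $U_\infty=\lin\{I_d\}$. If $A_1,A_2$ commute but are not both zero, conjugate them simultaneously by an orthogonal matrix so that they become diagonal (which leaves $\tilde p$ unchanged, $U$ being orthogonal) and extend them by at most $d-1\le n$ further diagonal matrices to a basis of the space of diagonal matrices, the remaining $B_i$ being zero. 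If $A_1,A_2$ do not commute, they are linearly independent, and I extend them by $\frac{d(d+1)}2-2=n$ matrices to a basis of the whole space of symmetric matrices. In each case $U_\infty$ is perfect by Remark~\ref{perfectex}(a). Now put
\[r:=\det(I_d+x_1A_1+x_2A_2+z_1B_1+\cdots+z_nB_n)\in\R[x,z],\]
a real zero polynomial by Proposition~\ref{detexample} with $r(x,0)=\tilde p=p(x,0)$, and apply Conjecture~\ref{wrzac} (WRZAC) to $p\in\R[x,y]$ and $r\in\R[x,z]$ with shared variables $x_1,x_2$ to obtain a real zero polynomial $q\in\R[x,y,z]$ with $q(x,y,0)=p$ and $\trunc_3 q(x,0,z)=\trunc_3 r$.

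The first displayed inclusion is then immediate: Theorem~\ref{relaxation} and Remark~\ref{whenvirtualdegreerises} give $C(q)\subseteq S(q)\subseteq S_\infty(q)$, while Lemma~\ref{restriction}(b) gives $C(p)=\{(a,b)\in\R^{2+m}\mid(a,b,0)\in C(q)\}$, whence $C(p)\subseteq\{(a,b)\in\R^{2+m}\mid(a,b,0)\in S_\infty(q)\}$. In the second displayed equation, ``$\subseteq$'' follows by specialising $b=0$, and for ``$\supseteq$'' I would run the chain
\begin{align*}
\{a\in\R^2\mid(a,0,0)\in S_\infty(q)\}&\subseteq\{a\in\R^2\mid(a,0)\in S_\infty(r)\}=\{a\in\R^2\mid(a,0)\in C(r)\}\\
&=C(\tilde p)=\{a\in\R^2\mid(a,0)\in C(p)\}.
\end{align*}
Here the first inclusion is Lemma~\ref{restriction}(c) with virtual degree $\infty$ (after setting the $y$-variables of $q$ to zero) together with $S_\infty(q(x,0,z))=S_\infty(r)$, which holds because these two polynomials have the same cubic part and $S_\infty$ depends only on the cubic part; the first equality is $C(r)=S_\infty(r)$ from Proposition~\ref{rcsspecdet}(d), valid since $U_\infty$ is perfect; the middle equality is Lemma~\ref{restriction}(b) for $r$ with $r(x,0)=\tilde p$; and the last equality is Lemma~\ref{restriction}(b) for $p$ with $\tilde p=p(x_1,x_2,0)$. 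Finally $\{(a,b)\in\R^{2+m}\mid(a,b,0)\in S_\infty(q)\}=\{(a,b)\mid M_{q,\infty}(a,b,0)\succeq0\}$ is a spectrahedron containing $C(p)$ and agreeing with $C(p)$ on $\R^2\times\{0\}$, which is the ``in particular'' statement.

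There is no deep obstacle here — the only conceptually new point relative to Lemma~\ref{plane-x1-x2} is the passage from $S$ to $S_\infty$, and with it from Proposition~\ref{rcsspecdet}(c) to (d); the point that makes WRZAC (which preserves only cubic parts) sufficient is precisely that $S_\infty$, unlike $S$, is insensitive to the degree. The two things requiring care are the restriction book-keeping in the displayed chain and the verification that the perfect-subspace construction above really goes through for every $d\ge2$, including the degenerate small cases.
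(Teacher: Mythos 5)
Your proposal is correct and follows essentially the same route as the paper's proof: Helton--Vinnikov for $p(x,0)$, completion of $A_1,A_2$ to a perfect subspace (no longer needing $I_d$, which is exactly why $n$ drops by one and why Proposition \ref{rcsspecdet}(d) replaces (c)), amalgamation via WRZAC, and then the chain through $C(q)\subseteq S(q)\subseteq S_\infty(q)$, Lemma \ref{restriction}, the cubic-part-only dependence of $S_\infty$, and $C(r)=S_\infty(r)$. The only deviations are cosmetic: you split cases by commutativity of $A_1,A_2$ rather than by the dimension of their span, and in the degenerate case you use the perfect set $\lin\{I_d\}$ where the paper uses $\{0\}$; both variants are valid.
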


\begin{proof}
WLOG $p(0)=1$. By the Helton-Vinnikov theorem \ref{vinnikov}, we find symmetric $A_1,A_2\in\R^{d\times d}$ such that
$p(x,0)=\det(I_d+x_1A_1+x_2A_2)$.

We now claim that we can find real symmetric matrices $B_1,\ldots,B_n\in\R^{d\times d}$ such that they span together with
$A_1$ and $A_2$ a subspace of the space of real symmetric matrices of size $d$ that is perfect in the sense of Definition \ref{couple}.
To this end, we distinguish three different cases.

If $A_1=A_2=0$, then we simply set $B_i:=0$ for all $i\in\{1,\ldots,n\}$ since $\{0\}\subseteq\R^{d\times d}$ is trivially perfect.

If the real span of $A_1$ and $A_2$ is one-dimensional, then we can
jointly diagonalize $A_1$ and $A_2$ by conjugating them with a suitable orthogonal matrix and therefore can assume them to be diagonal.
Then the co-dimension of the span of $A_1$ and $A_2$ inside the space of real diagonal
matrices is $d-1=(d+1)-2$ which is at most $n$ because of $d\ge2$. Hence we find diagonal matrices $B_1,\ldots,B_n\in\R^{d\times d}$
such that the span of $A_1,A_2,B_1,\ldots,B_n$ is the space of all diagonal matrices which is again perfect.

The last case is where $A_1$ and $A_2$ are linearly independent. Then we complete them to a basis
$A_1,A_2,B_1,\ldots,B_n$ of the perfect space of real symmetric matrices of size $d$.

The claim is now proven and we choose  $B_1,\ldots,B_n\in\R^{d\times d}$ according to it.
By Conjecture~\ref{wrzac} (WRZAC), we find for
\[r:=\det(I_d+x_1A_1+x_2A_2+z_1B_1+\ldots+z_nB_n)\in\R[x,z]\]
a real zero polynomial $q\in\R[x,y,z]$ such that \[q(x,y,0)=p\qquad\text{and}\qquad\trunc_3q(x,0,z)=\trunc_3r.\]
By Theorem \ref{relaxation} and Remark \ref{whenvirtualdegreerises}, we have $C(q)\subseteq S(q)\subseteq S_\infty(q)$. Together with
Lemma \ref{restriction}(b), this yields our first statement
\[C(p)=\{(a,b)\in\R^{2+m}\mid(a,b,0)\in C(q)\}\subseteq\{(a,b)\in\R^{2+m}\mid(a,b,0)\in S_\infty(q)\}.\]
We have $C(r)=S_\infty(r)=S_\infty(q(x,0,z))$ by Proposition \ref{rcsspecdet}(d) and
Lemma \ref{dependsonlyoncubicpart}.
Together with Lemma \ref{restriction}, we have the chain of inclusions
\begin{multline*}
 \{a\in\R^2\mid(a,0)\in C(p)\}= \{a\in\R^2\mid a\in C(p(x,0))\}\\
= \{a\in\R^2\mid a\in C(r(x,0))\}=\{a\in\R^2\mid(a,0)\in C(r)\}\\
=\{a\in\R^2\mid(a,0)\in S_\infty(q(x,0,z))\}\supseteq\{a\in\R^2\mid(a,0,0)\in S_\infty(q)\}\\
\supseteq\{a\in\R^2\mid(a,0)\in C(p)\}
\end{multline*}
where the last inclusion follows from the already proven part of the lemma.
\end{proof}

\begin{thm}\label{weak-wrapping}
Suppose that Conjecture \ref{wrzac} (WRZAC) holds. Suppose $k,\ell\in\N_0$.
Fix a real zero polynomial $p\in\R[x]=\R[x_1,\ldots,x_\ell]$ polynomial of degree $d\ge2$ and a union $W$ of $k$ many
two-dimensional subspaces of $\R^\ell$. Then $C(p)$ is contained in a spectrahedron that agrees with $C(p)$ on $W$ and which is defined by a linear matrix inequality of size
\[s:=k\left(\ell+\frac{d(d+1)}2-1\right).\]
\end{thm}

\begin{proof}
Completely analogous to the proof of Theorem \ref{wrapping} where Lemma \ref{plane-x1-x2} is exchanged by Lemma \ref{weak-plane-x1-x2}.
\end{proof}

\subsection{Tying with a ribbon instead of a cord in the case of cubic real zero polynomials}

In the last two subsections, we showed that we can wrap rigidly convex sets into a spectrahedron and
tie them with finitely many cords provided Conjecture \ref{srzac} (SRZAC) or at least Conjecture
\ref{wrzac} (WRZAC) holds true. For rigidly convex sets defined by \emph{cubic} real zero polynomials,
we will be able to improve this. Namely, we can even tie by a two-dimensional version of cords, say
by ribbons. This means that we can make the spectrahedron agree on finitely many three-dimensional
(instead of two-dimensional) subspaces with the rigidly convex set. The technique for the proof is
almost literally the same except that we use instead of the Helton-Vinnikov Theorem \ref{vinnikov} now
the following complex version which is a version of the Helton-Vinnikov Corollary \ref{vcor}
which allows for one more variable:

\begin{thm}[Buckley and Košir]\label{buckleykosir}
If $p\in\R[x_1,x_2,x_3]$ is a cubic real zero polynomial with $p(0)=1$, then there exist hermitian matrices $A_1,A_2,A_3\in\C^{3\times 3}$ such that
\[p=\det(I_3+x_1A_1+x_2A_2+x_3A_3).\]
\end{thm}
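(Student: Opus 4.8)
The plan is to reduce the statement to the classical theory of linear determinantal representations of cubic surfaces and then to carry out a Hermitian descent steered by the hyperbolicity. \textbf{Homogenization.} We may assume $p(0)=1$; if $\deg p\le 2$ the claim is elementary (pad a size-$\le 2$ hermitian determinantal representation of a quadratic real zero polynomial with an identity block), so assume $\deg p=3$. Put $P:=p^*\in\R[x_0,x_1,x_2,x_3]$, the degree-$3$ homogenization of Definition~\ref{defhomogeneous}. By Proposition~\ref{hbrz}, $P$ is hyperbolic of degree $3$ in direction of the first unit vector $u=(1,0,0,0)\in\R^4$, and $P(u)=p(0)=1$. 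It then suffices to produce hermitian matrices $A_0,A_1,A_2,A_3\in\C^{3\times 3}$ with $A_0\succ 0$ and $P=\det(x_0A_0+x_1A_1+x_2A_2+x_3A_3)$: conjugating the pencil by $A_0^{-1/2}$ turns $A_0$ into $I_3$ and scales the determinant only by $\det A_0=P(u)=1$, and specializing $x_0=1$ recovers $p$.

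\textbf{A complex linear determinantal representation of size $3$.} The hypersurface $V(P)\subseteq\mathbb{P}^3$ is a cubic surface, and it is classical --- the smooth case being most transparent via the description of the surface as $\mathbb{P}^2$ blown up at six points, the singular and reducible cases being handled separately --- that every cubic surface is \emph{determinantal}: there is a matrix of linear forms $L(x)=x_0M_0+x_1M_1+x_2M_2+x_3M_3$ with $M_0,\dots,M_3\in\C^{3\times 3}$ such that $P=\det L(x)$. In the reducible cases $P=\ell_1\ell_2\ell_3$ or $P=\ell q$ one may instead assemble a block diagonal representation by hand, from the obvious $1\times 1$ representation of a linear form and a size-$2$ representation of the quadratic hyperbolic factor $q^*$, whose existence follows because hyperbolicity forces the underlying quadratic form to have at most one positive square (compare Example~\ref{quadraticrealzero} and Corollary~\ref{vcor}).

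\textbf{Hermitian descent and definiteness at $u$.} Since $P$ has real coefficients, the matrix of linear forms $L(x)^*$ (entrywise conjugate transpose, $x$ real) is again a linear determinantal representation of $P$, as is $L(x)^T$. A cubic surface has only finitely many linear determinantal representations up to the equivalence $L\sim SLT$ with $S,T\in\mathrm{GL}_3(\C)$, and the involutions ``transpose'' and ``complex conjugation'' act on this finite set. The key geometric input is that for a hyperbolic cubic $P$ some equivalence class is fixed by the composite involution $[L]\mapsto[L^*]$, so that we may choose $L$ together with $S\in\mathrm{GL}_3(\C)$ with $L(x)^*=SL(x)S^{-*}$ for all real $x$; here one uses that the two intertwiners in a relation $L^*=SLT$ are unique up to a common scalar, which after a standard normalization forces $T=S^*$ and permits taking $S=S^*$. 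Factoring $S=R^*JR$ with $R\in\mathrm{GL}_3(\C)$ and $J$ a signature matrix, and passing to the $\sim$-equivalent representation that absorbs $R$ and $J$, one arrives at a representation $\widetilde L$ with $\widetilde L(x)$ hermitian for every real $x$. Finally $A_0:=\widetilde L(u)$ is hermitian with $\det A_0=P(u)=1$, and a now-standard argument exploiting the hyperbolicity of $P$ in direction $u$ shows that $A_0$ must be a definite matrix; as $\det A_0=1>0$ it is positive definite, and the first paragraph concludes the proof.

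\textbf{The main obstacle.} The second step is classical and uses no hyperbolicity; the crux is the geometric bookkeeping in the third step --- that among the (up to $72$, in the smooth case) equivalence classes of $3\times 3$ determinantal representations of a hyperbolic cubic surface there is always one compatible with the Hermitian structure, and that the accompanying positivity at $u$ holds. This is precisely the content of the cited work of Buckley and Košir; a fully self-contained proof appears to require either a careful analysis over $\R$ of the $27$ lines and the double-six structure on hyperbolic cubic surfaces, or the sheaf-theoretic description of determinantal representations together with a Hermitian analogue of theta characteristics. By contrast the remaining ingredients --- the homogenization, the descent once a self-$*$-equivalent representation is in hand, and the definiteness argument --- are routine and parallel reasoning already used in this article.
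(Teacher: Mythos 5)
Your write-up is, in the end, a reduction to the very citation on which the paper itself relies: the paper proves Theorem \ref{buckleykosir} by quoting \cite[Theorem 6.4]{bk} for \emph{smooth} cubic surfaces and then removing the smoothness hypothesis by a perturbation and limit argument as in the proof of \cite[Proposition 8]{kum1}. Since you say yourself that the crux of your third step ``is precisely the content of the cited work of Buckley and Ko\v{s}ir'', what you offer is not a proof but an outline of how such a proof should go, and the outline has two genuine gaps beyond the deferred citation.

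First, every tool you invoke in the Hermitian descent (finitely many equivalence classes of $3\times 3$ determinantal representations, uniqueness of intertwiners up to a common scalar, hence the normalization $T=S^*$ and the passage to a hermitian $S$) is tied to the smooth case, where the cokernel sheaf is simple; your parenthetical ``singular and reducible cases handled separately'' treats only the reducible case, while irreducible singular hyperbolic cubics (nodal surfaces, cones, cubics whose homogenization is singular only at infinity) are left untreated, and for them the intertwiner argument as stated breaks down. This is exactly the point at which the paper inserts the perturbation-and-limit argument, and your proposal has no substitute for it. Second, the claim that ``a now-standard argument exploiting the hyperbolicity of $P$ in direction $u$ shows that $A_0$ must be a definite matrix'' is false as stated: hyperbolicity in direction $u$ does not force a hermitian determinantal representation to be definite at $u$. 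For example, $\ell_1\ell_2\ell_3=\det\mathrm{diag}(\ell_1,-\ell_2,-\ell_3)$ is hyperbolic in any direction where all three linear forms are positive, yet this representation has signature $(1,2)$ there, and its determinant at $u$ is still positive, so the sign of $\det A_0=P(u)$ cannot rescue the argument. Definiteness at $u$ is a property of the particular equivalence class one selects, and proving that there exists a class which is simultaneously fixed by the $*$-involution (an involution on a finite set need not have any fixed point) \emph{and} definite at the hyperbolicity direction is precisely the hard real-geometric content of \cite{bk}; it cannot be classified as routine. The harmless parts of your outline (homogenization via Proposition \ref{hbrz}, the low-degree and reducible cases via Example \ref{quadraticrealzero} and Clifford-type $2\times 2$ representations, and the reduction from a definite $A_0$ to $I_3$) are fine, but they are not where the theorem lives.
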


\begin{proof} Under a certain smoothness assumption, this follows from \cite[Theorem 6.4]{bk}.
With a perturbation and limit argument, this smoothness assumption can be removed by standard
techniques. This is explained in the proof of \cite[Proposition 8]{kum1}.
\end{proof}

The details will be provided in future versions of this article.

\section*{Acknowledgments}
The author's doctoral students Alejandro González Nevado and David Sawall gave many hints on how to improve this article.
The author enjoyed discussion with Mario Kummer in certain details of this article. 
This work has been supported by the DFG grant SCHW 1723/1-1.

\end{document}